\documentclass[12pt]{amsart}
\frenchspacing
\usepackage{amsthm}
\usepackage{amsmath}
\usepackage[margin=1.25in]{geometry}
\usepackage{amssymb}
\usepackage{verbatim}
\usepackage{cleveref}
\usepackage{longtable,enumitem,color}
\usepackage{stmaryrd}
\usepackage{graphicx}
\usepackage{amscd}
\usepackage[all]{xy}
\usepackage{enumitem}
	{\providecommand{\noopsort}[1]{} 
\usepackage{chngcntr}\counterwithin{table}{section}

\theoremstyle{plain}
\newtheorem{theorem}{Theorem}[section]
\newtheorem{corollary}[theorem]{Corollary}\newtheorem*{nonumbercorollary}{Corollary}
\newtheorem{lemma}[theorem]{Lemma}
\newtheorem{sublem}[theorem]{Sublemma}

\newtheorem*{addendum}{Addendum}
\newtheorem{proposition}[theorem]{Proposition}

\newtheorem{main}{Theorem}
\newtheorem{maincorollary}[main]{Corollary}
\theoremstyle{definition}
\newtheorem{definition}[theorem]{Definition}

\theoremstyle{remark}

\numberwithin{equation}{section}


\newcommand{\ep}{\epsilon}

\newcommand{\deltad}{x}
\newcommand{\epse}{y}
\newcommand{\Z}{\mathbb{Z}}\newcommand{\Q}{\mathbb{Q}}
\newcommand{\R}{\mathbb{R}}
\newcommand{\s}{\mathbb{S}}


\newcommand{\SO}{\mathrm{SO}}

\newcommand{\SU}{\mathrm{SU}}

\newcommand{\rs}{r}

\newcommand{\Lt}{\mathfrak{t} }

\DeclareMathOperator{\cod}{cod}

\newcommand{\inner}[1]{\left\langle #1 \right\rangle}
\newcommand{\st}{~|~}

\newcommand{\id}{\mathrm{id}}

\renewcommand{\subset}{\subseteq}

\newcommand{\barbeta}{\bar b}

\newcommand{\fQ}{{\mathbb{Q}}} 
                                    
                      
\newcommand{\mult}{{\mu}}                                    

\newcommand{\Sph}{\mathbb{S}}

\newcommand{\gS}{\mathsf{S}}
\newcommand{\gC}{\mathsf{C}}
\newcommand{\gH}{\mathsf{H}}
\newcommand{\gQ}{\mathsf{Q}}

\newcommand{\multtwo}{{\nu}} 




\newcommand{\eps}{\varepsilon}



\newcommand{\RP}{\mathbb{R\mkern1mu P}}
\newcommand{\CP}{\mathbb{C\mkern1mu P}}
\newcommand{\HP}{\mathbb{H\mkern1mu P}}
\newcommand{\CaP}{\mathrm{Ca}\mathbb{\mkern1mu P}^2}



\renewcommand{\R}{{\mathbb{R}}}
\renewcommand{\Z}{{\mathbb{Z}}}





\newcommand{\gE}{\ensuremath{\operatorname{\mathsf{E}}}}
\newcommand{\gF}{\ensuremath{\operatorname{\mathsf{F}}}}
\newcommand{\gA}{{\ensuremath{\operatorname{\mathsf{A}}}}}
\newcommand{\gT}{{\ensuremath{\operatorname{\mathsf{T}}}}}
\newcommand{\gR}{{\ensuremath{\operatorname{\mathsf{R}}}}}
\renewcommand{\H}{\ensuremath{\operatorname{\mathsf{H}}}}

\renewcommand{\SO}{\ensuremath{\operatorname{\mathsf{SO}}}}

\renewcommand{\SU}{\ensuremath{\operatorname{\mathsf{SU}}}}

\renewcommand{\S}{\ensuremath{\operatorname{\mathsf{S}}}}


\newcommand{\gAm}{\gamma}
\newcommand{\bEta}{\beta}
\newcommand{\aLpha}{\alpha}



\def\con#1=#2(#3){#1 \equiv #2 \bmod{#3}}

\newcommand{\ml}{\langle}                     
\newcommand{\mr}{\rangle}                    






\DeclareMathOperator{\pr}{pr}

\DeclareMathOperator{\Ker}{Ker}

\DeclareMathOperator{\spann}{span}







\newcommand{\sectone}{Preliminaries}
\newcommand{\secttwo}{Proof of the  $\S^1$-splitting theorem}
\newcommand{\sectthree}{Proof of a weak version of Theorem \ref{thm:t5}}
\newcommand{\subseceuler}{Positive Euler characteristic}
\newcommand{\sectfour}{Torus actions on $\Sph^h \times \HP^n$}
\newcommand{\sectfourone}{Addendum to Theorem~\ref{thm:t5}}

\newcommand{\subsecbthree}{Vanishing of the third Betti number}
\newcommand{\subsecshxHP}{Topology of fixed-point components in $\Sph^h\times \HP^n$}
\newcommand{\subsecevensphere}{From rational spheres
 to integral spheres in even dimensions}
\newcommand{\sectfive}{Torus actions with a standard two-skeleton}
\newcommand{\subsecmodel}{The graph of a torus action on a manifold of $\CP$ or $\HP$ type}
\newcommand{\subsecstraight}{The combinatorial structure of the graph of the one-skeleton}
\newcommand{\subsecHP}{
Recognizing $\HP^{n/4}$}
\newcommand{\subsecCP}{
Recognizing $\CP^{n/2}$}
\newcommand{\sectfiveCb}{
Proof of Theorem \ref{thm:t8NoCurvature} under the assumptions of b)}
\newcommand{\sectfiveodd}{
Proof of Theorem \ref{thm:t8NoCurvature} in odd dimensions}
\newcommand{\sectsix}{Analyzing the one-skeleton}
\newcommand{\sectseven}{Two results on GKM actions}
\newcommand{\tocsizeone}{\small}
\newcommand{\tocsizeoneone}{\footnotesize}
\newcommand{\tocsizetwo}{\footnotesize}




\title[Splitting of torus representations and applications]{Splitting of torus representations and applications in the Grove symmetry program}
\author{Lee Kennard}
\address{Department of Mathematics, Syracuse University, Syracuse, NY 13244, U.S.A.}\email{ltkennar@syr.edu}
\author{Michael Wiemeler}
\address{Mathematisches Institut, M\"unster University, Einsteinstra{\ss}e 62, 48149 M\"unster}\email{wiemelerm@uni-muenster.de}
\author{Burkhard Wilking}
\address{Mathematisches Institut, M\"unster University, Einsteinstra{\ss}e 62, 48149 M\"unster}\email{wilking@uni-muenster.de}
\date{\today}\begin{document}


\begin{abstract}
A 1930s conjecture of Hopf states that an even-dimensional compact Riemannian manifold with positive sectional curvature has positive Euler characteristic. We prove this conjecture under the additional assumption that the isometry group has rank at least five. The fundamental new tool used to achieve this is a reduction to, and structural results concerning, a representation theoretic problem involving torus representations all of whose isotropy groups are connected.
\end{abstract}
\maketitle

The classification of positively curved manifolds is a wide open problem that goes back to the origins of Riemannian geometry. On the one hand, very few examples are not diffeomorphic 
 to rank one symmetric spaces
 $\s^n$, $\CP^n$, $\HP^n$, and $\CaP$.
In fact, apart from them, the only other
 compact, simply connected manifolds, 
which are known to admit Riemannian metrics with positive sectional curvature, occur in dimensions 
$6$, $7$, $12$, $13$, and
$24$. On the other hand, few obstructions are known. In fact,
among simply connected, compact manifolds admitting non-negatively curved metrics, there is no known obstruction to admitting positively curved metrics.

In 1992, Karsten Grove suggested one 
should try to study 
the structure of positively (and non-negatively) curved manifolds under symmetry assumptions. He was motivated by a result of Hsiang and Kleiner \cite{HsiangKleiner89} and the case of {\it homogeneous metrics} that was resolved in the 1970s (see \cite{Wallach72,Berard-Bergery76,WilkingZiller18}).
Some of the major results include the 
classification of positively curved manifolds (up to diffeomorphism or homotopy) with large 
symmetry rank \cite{GroveSearle94, Rong02, Wilking03, FangRong05} or low cohomogeneity 
\cite{Wilking06,GroveWilkingZiller08,VerdianiZiller14}. 
 Dessai \cite{Dessai07} showed
the 
vanishing of the indices of certain twisted Dirac operators 
for two-connected positively curved manifolds 
with circle symmetry (see also \cite{Dessai05,Weisskopf17}).

A hope of
the program was also that it should result in motivating new constructions. Successes here include realizations of fundamental groups that answered a 1960s question of Chern (see \cite{Shankar98,Bazaikin99,GroveShankar00,
GroveShankarZiller06}), a new example of a manifold admitting positive curvature that remains the only new example constructed since 1996 (see \cite{Dearricott11,GroveVerdianiZiller11}), and more recently the construction of non-negatively curved metrics on all seven-dimensional homotopy spheres, capping a decades-long search 
 (see \cite{GromollMeyer74,GroveZiller00,GoetteKerinShankar20}).

Our first main result analyzes the fixed-point components of isometric torus actions.

\begin{main}\label{thm:t5}
If $\gT^5$ acts effectively by isometries on a connected, closed, orientable, positively curved Riemannian manifold, then every component of the fixed-point set has the rational cohomology of a sphere or a complex or   quaternionic projective space.
\end{main}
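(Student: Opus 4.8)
The plan is to fix a component $F$ of the fixed-point set of the $\gT^5$-action on the positively curved manifold $M^n$ and a point $p\in F$, and to study the slice representation at $p$. As the fixed-point set of a compact connected group of isometries, $F$ is a closed, totally geodesic submanifold, hence itself positively curved; it is orientable because its normal bundle inherits a complex structure from the $\gT^5$-weights and $M$ is orientable. The normal space $V:=(T_pF)^\perp\subset T_pM$ is a $\gT^5$-representation without nonzero fixed vectors, so it splits equivariantly into two-dimensional weight spaces with nonzero weights $\alpha_1,\dots,\alpha_m$ in the weight lattice $\Hom(\gT^5,S^1)$; in particular $\dim F\equiv\dim M\bmod 2$. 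For each weight $\alpha$, the component $N_\alpha$ of $\Fix(\ker\alpha;M)$ containing $p$ is a totally geodesic, positively curved submanifold strictly containing $F$, with $T_pN_\alpha=T_pF\oplus V^{\ker\alpha}$. The first step is to reduce to the case in which the slice representation $V$ — equivalently, the $\gT^5$-action on the unit sphere $S(V)$ — has only connected isotropy groups; here positive curvature enters essentially, through Wilking's connectedness principle applied to the submanifolds $\Fix(H;M)$ (a disconnected isotropy group $H=\gT^5_x$ would force a nontrivial element of $H/H^0$ to act on the strictly larger positively curved submanifold $\Fix(H^0;M)$ while fixing $\Fix(H;M)$, which a Frankel-type argument excludes, given the codimension constraints that positive curvature and symmetry rank $5$ provide). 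For a connected-isotropy representation the weights obey a sharp combinatorial constraint — for every subset $S$, the $\alpha_i$ with $i\in S$ span a saturated sublattice of $\Hom(\gT^5,S^1)$; in particular each $\alpha_i$ is primitive and parallel weights agree up to sign — and this is exactly the representation-theoretic problem highlighted in the abstract.

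\textbf{The structure theorem.} Next I would invoke the structural classification of $\gT^5$-representations all of whose isotropy groups are connected — the representation-theoretic core of the paper. I expect that, after grouping equal weights into single summands, such a representation is forced into a short list of types, the ones compatible with positive curvature being a ``sphere type'' (for instance the diagonal rotation of a Euclidean space, modulo its ineffective kernel), a ``$\CP$ type'' modeled on the isotropy representation of $\CP^k$ at a fixed point of the maximal torus of $\PU(k+1)$, and the analogous ``$\HP$ type'' modeled on $\HP^k$. The reason the hypothesis is symmetry rank $5$ is that this rank is large enough to kill the sporadic low-rank weight configurations, so that the normal geometry of $F$ is forced, uniformly along all of $F$, into exactly one of these three families.

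\textbf{Globalization and recognition of $F$.} Finally I would upgrade this infinitesimal picture to a statement about the manifold $F$. The weight data equips $M$ near $F$ with the combinatorial graph — the one- and two-skeleton — of its torus action, and each $N_\alpha$ is a positively curved manifold carrying the circle action of $\gT^5/\ker\alpha$ for which $F$ is a fixed-point component with all normal weights equal. Propagating the three-type alternative across the family $\{N_\alpha\}$ — an induction on the symmetry rank, gluing with Mayer--Vietoris along fixed-point sets of circle subgroups together with the connectedness principle, and then applying the recognition lemmas that detect the rational cohomology type of $\CP^{n/2}$ or $\HP^{n/4}$ from the graph of a torus action (the delicate intermediate case, in which $F$ looks rationally like a product $S^h\times\HP^k$, being handled by a separate study of torus actions on such spaces) — yields that $F$ has the rational cohomology of a sphere, a complex projective space, or a quaternionic projective space.

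\textbf{The main obstacle.} The decisive difficulty is the structure theorem: proving that in symmetry rank $5$ the connectedness condition genuinely pins the weights down to the three model families, with no exceptional configurations surviving. This is a delicate problem about integral lattices — one must control how ``every subset of weights spans a saturated sublattice'' interacts with restriction to subtori and with passage to the weight spaces of individual circle subgroups — and it should fail, or become substantially weaker, in ranks at most $4$, which is precisely why rank $5$ appears in the hypothesis. Subsidiary obstacles are the codimension bookkeeping needed to make the connectedness principle yield the reduction to connected isotropy, and, in the globalization, arranging the local weight data consistently over all of $F$ so that the recognition lemmas can be applied.
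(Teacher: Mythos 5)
The decisive step of your proposal --- the ``structure theorem'' asserting that a faithful $\gT^5$-representation with connected isotropy groups is forced into three families modeled on the isotropy representations of $\s^n$, $\CP^k$, $\HP^k$ --- is not available and is not true as stated. The connected-isotropy condition is far weaker than that: the paper's Theorem~\ref{thm:d+1choose2} shows the extremal weight systems are $\{e_i\}\cup\{e_i-e_j\}$ (already not of your three types, and any sub-configuration compatible with Lemma~\ref{lem:repsplitting} can occur), and the only structural output actually proved is the $\S^1$-splitting theorem: after passing to the fixed-point set of a suitable \emph{one-dimensional subgroup} $\gH$, the representation splits as a product of a semi-free circle representation and a $\gT^{d-2}$-representation. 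This holds for every $d\geq 3$; the rank-$5$ hypothesis does not ``kill sporadic weight configurations'' at the representation level, but is what allows two successive applications of the splitting theorem (Proposition~\ref{pro:4periodicN}) so that one still has a $\gT^3$ acting on a fixed-point component $N\supseteq F$ with three pairwise transversely intersecting totally geodesic fixed-point sets of circles. Your proposal never produces these transverse intersections, and with them goes the actual engine of the proof: the Connectedness Lemma, Periodicity Lemma and Four-Periodicity Theorem, which give four-periodic rational cohomology of $N$, followed by the $b_3$~Lemma (an equivariant-cohomology argument killing $b_3$ in dimensions $\equiv 2 \bmod 4$), Lemma~\ref{lem:shxHP}, and the Section~\ref{sec:endgame} analysis ruling out the $\s^h\times\HP^m$ alternative; the statement about $F$ itself then follows from Bredon's theorem applied to the torus action on $N$. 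Your closing remark that the $\s^h\times\HP^k$ case needs a separate study of torus actions on such spaces matches the paper, but without the periodicity machinery there is no route to reach that dichotomy in the first place.

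Two secondary inaccuracies: the reduction to connected isotropy (equivalently, to no nontrivial finite isotropy groups) is a purely representation-theoretic step --- one restricts to the fixed subspace of a maximal finite isotropy group --- and does not use positive curvature or any Frankel-type argument, contrary to your setup; and the graph/GKM recognition lemmas you invoke in the globalization belong to the proof of Theorem~\ref{thm:t8NoCurvature}, not of Theorem~\ref{thm:t5}, whose conclusion about the components $F$ is obtained by Bredon's theorem once $N$ is known to be a rational rank-one symmetric space.
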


We mention that the six-dimensional Wallach flag $\SU(3)/\gT^2$
is the fixed-point set of an isometric two-torus action 
on the positively curved $24$-dimensional Wallach flag. 
In odd dimensions, the fixed point components are rational spheres, since they are odd dimensional as well.  
In even dimensions, orientability is equivalent to being simply connected and the fixed-point set $F$ of $\gT^5$ is not empty by 
results of Synge and Berger (see Theorem \ref{thm:Berger}). 
 Moreover, the Euler characteristics of $M$ and $F$  coincide.
  Hence, Theorem \ref{thm:t5} implies 
   the following (see Section \ref{sec:PositiveEuler} for further details).

\begin{maincorollary} \label{cor:hopf}
If $M$ is an even-dimensional, connected, closed, positively curved Riemannian manifold whose isometry group has rank at least five, then $\chi(M) > 0$. 

\end{maincorollary}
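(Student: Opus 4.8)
The plan is to derive this from Theorem~\ref{thm:t5} by the standard fixed-point-set argument, the only preliminary work being a reduction to the orientable case. First I would dispose of non-orientable $M$: by Synge's theorem an even-dimensional, non-orientable, closed, positively curved manifold has $\pi_1(M)\cong\Z/2$, so its orientation double cover $\tM$ is simply connected, closed, orientable and positively curved (it is locally isometric to $M$), with $\chi(\tM)=2\chi(M)$. Since the orientation cover is natural, every isometry $\phi$ of $M$ lifts to the isometry $(p,\mathfrak o)\mapsto(\phi(p),(d\phi_p)_*\mathfrak o)$ of $\tM$, giving an injective homomorphism $\Isom(M)\embedded\Isom(\tM)$ and hence $\rank\Isom(\tM)\ge\rank\Isom(M)\ge 5$. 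Thus it suffices to prove $\chi(M)>0$ when $M$ is orientable. In that case $\Isom(M)$ is a compact Lie group (Myers--Steenrod) of rank at least five, so a maximal torus contains a subtorus $\gT^5$, which acts effectively by isometries on $M$.

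Next I would apply Berger's theorem (Theorem~\ref{thm:Berger}) to conclude that the fixed-point set $F=M^{\gT^5}$ is non-empty, say with connected components $F_1,\dots,F_r$. Each $F_j$ is a closed, totally geodesic submanifold, and near a point $p\in F_j$ the submanifold $F_j$ is the image under $\exp_p$ of the $\gT^5$-fixed subspace of $T_pM$; hence the normal space at $p$ carries a real $\gT^5$-representation with no nonzero fixed vectors, which is therefore a sum of two-dimensional irreducibles. Consequently $\codim F_j$ is even and $\dim F_j\equiv\dim M\equiv 0\pmod 2$. By Theorem~\ref{thm:t5}, each $F_j$ has the rational cohomology of a sphere, of a complex projective space, or of a quaternionic projective space.

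Finally I would add up Euler characteristics. An even-dimensional rational cohomology sphere has Euler characteristic $2$, and a rational cohomology $\CP^k$ or $\HP^k$ has Euler characteristic $k+1\ge 1$; in every case $\chi(F_j)>0$. Since the Euler characteristic is additive over connected components, $\chi(F)=\sum_{j=1}^r\chi(F_j)>0$, and since the Euler characteristics of $M$ and of its torus fixed-point set coincide (recalled in the introduction, and classical for torus actions on closed manifolds), $\chi(M)=\chi(F)>0$.

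Granting Theorem~\ref{thm:t5}, there is no real obstacle here: the argument is formal. The one place that uses positive curvature is the appeal to Berger's theorem for $F\neq\emptyset$, and the one place where a numerical coincidence must be excluded is the parity observation $\dim F_j\equiv 0\pmod 2$, which rules out the single rational cohomology type appearing in Theorem~\ref{thm:t5} with vanishing Euler characteristic, namely an odd-dimensional rational sphere.
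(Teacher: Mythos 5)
Your argument is correct and is essentially the paper's own: Berger's theorem gives a non-empty fixed-point set of a five-torus, each component has even codimension and hence positive Euler characteristic by Theorem~\ref{thm:t5} (ruling out odd-dimensional rational spheres by parity), and $\chi(M)=\chi(F)$. The only cosmetic difference is that the paper, to respect logical order, deduces the positivity of $\chi(F_j)$ from the weaker Proposition~\ref{pro:4periodicN} rather than from Theorem~\ref{thm:t5} itself, and your explicit reduction to the orientable case via the orientation double cover is a point the paper leaves implicit.
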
 


A conjecture of Hopf going back to questions asked in 1931 claims that this should hold 
without the additional
assumption of torus symmetry (see \cite{Hopf32}). 
 Previous results required bounds on the torus rank that grew 
linearly or logarithmically in the manifold dimension, and in
   the latter case, one also required additional assumptions on topology of the underlying manifold (see \cite{PuettmannSearle02,RongSu05,SuWang08} and \cite{Kennard13,AmannKennard14}).
%
%

Theorem \ref{thm:t5} provides a rational cohomology computation of each component of the fixed-point set under the assumptions of positive curvature and $\gT^5$ symmetry. We also recover the rational cohomology of $M$ under stronger assumptions. In addition to Theorem \ref{thm:t5}, we need the following tool which does not require curvature 
assumptions.

\begin{main}\label{thm:t8NoCurvature}
Let $M^n$ be a connected, closed, oriented manifold with $H^{i}(M;\Q)=0$ for all odd $i\le \tfrac{n}{2}$. Assume that $M^n$ is endowed with a smooth and effective torus action of
$\gT^d$ with $d\ge 4$ such that at least one of the following holds:
\begin{enumerate}
\item[a)] Every fixed-point component of every codimension three subtorus  is a rational cohomology $\s^m$, $\CP^m$, or $\HP^m$. 
\item[b)]  Every fixed-point component of every codimension two subtorus is a rational cohomology $\s^m$, $\CP^m$, or $\HP^m$, and there is a $n$-dimensional symmetric space $C$ of rank one with $\chi(M^n)=\chi(C)>0$.
\end{enumerate}
 Then $M$ has the  rational cohomology of $\s^n$, $\CP^{\frac n 2}$, $\HP^{\frac n 4}$, or $\CaP$.  
\end{main}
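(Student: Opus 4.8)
The plan is to argue by induction on the dimension $n$, using the torus action to reduce to fixed-point components of subtori, whose cohomology is controlled by the hypothesis. First I would fix a generic circle $\gS^1 \In \gT^d$ and consider the induced $\gT^{d-1} = \gT^d/\gS^1$-action on each component $F$ of $M^{\gS^1}$. Such an $F$ is a fixed-point component of a codimension-one subtorus, hence a fixed-point component of a codimension-three (or codimension-two) subtorus of any codimension-two (resp.\ codimension-three) subtorus containing $\gS^1$; the point is that after choosing $\gS^1$ appropriately, the residual $\gT^{d-1}$-action on $F$ still satisfies the hypothesis of case a) or b), with $d-1 \ge 3$. To make the induction close one needs $d\ge 4$ so that at least one reduction step is available before the torus rank drops below what is needed; the base of the induction is when $F$ is already forced by the hypothesis itself to be a rational $\gS^m$, $\CP^m$, or $\HP^m$.

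The core mechanism is a Borel-type localization / Chang--Skjelbred argument combined with the structure of the Leray--Hirsch or Serre spectral sequence for the inclusion $F \embedded M$. Concretely, I would use that $H^*(M;\Q)$ injects into $H^*(M^{\gS^1};\Q)$ after inverting the Euler class (so in particular $\bodd(M) \le \bodd(M^{\gS^1})$ and the total Betti numbers are controlled), and that the vanishing hypothesis $H^i(M;\Q)=0$ for odd $i \le n/2$ is inherited in a suitable form by the fixed-point components — this is where one invokes the given structural results, together with the observation that a rational $\gS^m$, $\CP^m$, or $\HP^m$ has no odd-degree rational cohomology below its middle dimension. By induction each component $F_j$ of $M^{\gS^1}$ is a rational $\gS^{m_j}$, $\CP^{m_j}$, $\HP^{m_j}$, or $\CaP$. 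Poincaré duality on $M$, the additivity of Euler characteristics ($\chi(M) = \sum_j \chi(F_j)$), and the constraint on $\btotal$ then force the list of components to be extremely short — ideally a single component, or two components in the sphere case — and one reads off that $H^*(M;\Q)$ is generated by a single class in degree $2$, $4$, or $8$ (or is a rational homology sphere). In case b) the hypothesis $\chi(M) = \chi(C) > 0$ for a rank-one symmetric space $C$ pins down the Euler characteristic exactly and hence the candidate, removing the ambiguity that codimension-two (rather than codimension-three) control would otherwise leave.

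The main obstacle I anticipate is the bookkeeping needed to guarantee that the residual action on a fixed-point component \emph{still} satisfies the hypotheses — i.e.\ that "codimension $k$ in $M$" translates correctly to "codimension $k$ or $k-1$ in $F$" under the various possible isotropy configurations, and that the generic circle can be chosen so that $M^{\gS^1}$ is not too degenerate (e.g.\ so that $\gS^1$ is a principal isotropy-free direction on enough of $M$). A secondary difficulty is ruling out "mixed" answers: a priori the fixed-point components could be rational projective spaces of \emph{different} types (a $\CP^a$ and an $\HP^b$, say), and one must show that the cohomological constraints on $M$ — Poincaré duality, the odd-vanishing range, the total-Betti-number bound coming from localization — are incompatible with such a mixture unless $M$ is a rational sphere. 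I expect the even/odd dimensional cases to be handled slightly differently (in odd dimensions there is no nonempty fixed set forced a priori, and the target is only $\gS^n$), matching the separate treatments flagged in the section headings; the argument above is the even-dimensional heart, with the odd-dimensional case a simpler variant where one shows $\bodd$ forces a rational sphere directly.
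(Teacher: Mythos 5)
Your overall framework (reduce to fixed-point data of subtori, use localization to control Betti numbers, use the Euler characteristic to pin down the candidate) points in the right general direction, but the decisive step is missing. Knowing that each component of $M^{\gS^1}$ is a rational $\s^{m_j}$, $\CP^{m_j}$, or $\HP^{m_j}$, together with $\chi(M)=\sum_j\chi(F_j)$, Poincar\'e duality, and the total Betti number bound, does \emph{not} determine the ring $H^*(M;\Q)$: spaces such as $\s^2\times\HP^k$ or $\CP^1\times\CP^k$ have vanishing odd cohomology, admit torus actions whose fixed components are rational projective spaces, and can match the Euler characteristic and total Betti number of $\CP^{n/2}$ or $\HP^{n/4}$. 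Even under equivariant formality, $H^*(M)$ is recovered from the image of the injection $H^*_{\gT}(M)\to H^*_{\gT}(M^{\gT})$, and that image is a global invariant not visible from the list of fixed components. This is exactly what the paper spends Section~\ref{sec:recognition} computing: the Chang--Skjelbred Lemma identifies the image with that of the one-skeleton, one then reconstructs the labeled graph of the one-skeleton, proves the weights admit a linear model $u_i\pm u_j$ in the style of Hsiang--Su (Sublemma~\ref{lem:linearmodel}), and finally exhibits an explicit class $\alpha\in H^4_{\gT^d}(M)$ whose powers form an $H^*(B\gT^d)$-basis via a Vandermonde determinant. None of this is replaceable by the bookkeeping you describe, and it is here that the hypotheses on codimension-two (or -three) subtori actually enter; note also that a circle in $\gT^d$ has codimension $d-1\ge 3$, so its fixed components are not directly controlled by the hypotheses, which constrain the \emph{small} fixed sets of \emph{large} subtori (the two- and three-skeleta), not the large fixed sets of circles.

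A second, independent gap is the odd-dimensional case, which you describe as a simpler variant. It is the opposite: the hypothesis only kills odd cohomology up to degree $\tfrac{n-1}{2}$, the fixed-point set may be empty, and equivariant formality fails a priori, so the Chang--Skjelbred Lemma cannot be applied directly. The paper (Theorem~\ref{thm:oddt7}) has to pass to orbifold quotients $(M\setminus F)/\gS^1$ by almost-free circle subactions, control $H^*(B\gT^\ell)$-torsion in odd-degree equivariant cohomology via Gysin sequences (Lemma~\ref{lem:gysin}), and only then run a skeleton argument. Similarly, deducing case a) from case b) in even dimensions is not automatic: it requires the GKM-type analysis of Section~\ref{sec:GKM} to bound the edge multiplicity $m$ and hence compute $\chi(M)$. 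As written, your proposal would establish at best the Betti numbers of $M$ in favorable cases, not its cohomology ring.
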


This theorem provides a partial converse to a result of Bredon (Theorem \ref{thm:SinglyGenerated}) stating that fixed-point components of torus actions on manifolds of rational sphere, $\CP$, $\HP$, or $\CaP$ type are of rational sphere, $\CP$, or $\HP$ type.
  It should be clear that for odd $n$ the assumption on the 
  Euler characteristic in b) is never satisfied since it is zero.

 For a  positively curved manifold whose isometry group has rank eight, 
  the condition on the subtori in a) follows by Theorem \ref{thm:t5}. 
  In even dimensions, we can do even better and, as a consequence of Theorem~\ref{thm:z2codim2}, show that the assumptions of b) 
  of Theorem~\ref{thm:t8NoCurvature} are satisfied
  provided one has $\gT^7$-symmetry. More precisely, we have 

\begin{maincorollary}	\label{maincor:t7}	
If $\gT^7$ acts effectively by isometries
on a connected, closed, orientable, positively curved, even-dimensional manifold $M^n$ with 
$H^i(M;\Q)=0$ for all odd $i$, 
then $M^n$ has the rational cohomology of $\s^n$, $\CP^{\frac n 2}$, or $\HP^{\frac n 4}$.

Moreover, 
every involution in $\gT^7$ has at most two fixed-point components, with equality only if 
their dimensions add up to $n-d$, where $d\in \{2,4,n\}$ denotes the degree of the generator of the cohomology of $M^n$.
\end{maincorollary}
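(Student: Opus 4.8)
The plan is to deduce the first assertion from Theorem~\ref{thm:t8NoCurvature}\,b) together with Corollary~\ref{cor:hopf}, and then to extract the \emph{Moreover} clause by Smith theory once the rational cohomology type of $M$ is known. Throughout, $n$ is even and $M$ is simply connected by Synge's theorem; by hypothesis $H^i(M;\Q)=0$ for every odd $i$ (hence for odd $i\le n/2$), and $d=7\ge 4$, so the preliminary hypotheses of Theorem~\ref{thm:t8NoCurvature} hold.

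First I would verify the subtorus condition in Theorem~\ref{thm:t8NoCurvature}\,b). Let $H\le\gT^7$ be any codimension-two subtorus; then $H\cong\gT^5$ abstractly, and since $\gT^7$ acts effectively the kernel $K$ of the $H$-action is finite, so $H/K$ is a $5$-torus acting effectively by isometries on $M$ with $\Fix(H/K)=\Fix(H)$. By Theorem~\ref{thm:t5}, every component of $\Fix(H)$ is a rational cohomology $\s^m$, $\CP^m$, or $\HP^m$, which is the first requirement of b). For the second requirement I must exhibit a rank-one symmetric space $C^n$ with $\chi(M)=\chi(C)>0$. Recall that $\chi(M)$ equals the Euler characteristic of the fixed-point set of any torus acting on $M$, and that $\chi(M)>0$ by Corollary~\ref{cor:hopf} (the isometry group has rank $\ge5$). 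To pin down $\chi(M)=\chi(\Fix(\gT^7))$ I would invoke Theorem~\ref{thm:z2codim2}: it controls the fixed-point components of the $2$-torsion of $\gT^7$, and, combined with the fact that each component of $\Fix(\gT^7)$ is — via Bredon's Theorem~\ref{thm:SinglyGenerated} applied inside a component of some $\Fix(H)$ — a rational $\s^\ell$, $\CP^\ell$, or $\HP^\ell$, it forces $\chi(M)$ to be one of $2$, $\tfrac n2+1$, $\tfrac n4+1$, i.e.\ $\chi(\s^n)$, $\chi(\CP^{n/2})$, or $\chi(\HP^{n/4})$.

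With both halves of b) verified, Theorem~\ref{thm:t8NoCurvature} yields that $M$ has the rational cohomology of $\s^n$, $\CP^{n/2}$, $\HP^{n/4}$, or $\CaP$; but $\CaP$ would force $n=16$ and $\chi(M)=3$, contradicting $\chi(M)\in\{2,9,5\}$, so $M$ has the rational cohomology of $\s^n$, $\CP^{n/2}$, or $\HP^{n/4}$, with $d\in\{n,2,4\}$ the degree of a generator. For the \emph{Moreover} clause, fix an involution $\iota\in\gT^7$; its fixed-point set is a disjoint union of totally geodesic — hence positively curved — submanifolds, and there are at most two of them by Theorem~\ref{thm:z2codim2}. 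If there are exactly two, $N_1^{n_1}$ and $N_2^{n_2}$, I would run the Smith sequence for $\langle\iota\rangle$ acting on $M$: using the available $\Z_2$-cohomological control, $N_1$ and $N_2$ are $\Z_2$-cohomology spheres, or $\CP$'s, or $\HP$'s of the same kind as $M$, and comparing Poincaré polynomials — as in the classical description of fixed sets of involutions on $\s^n$, $\CP^m$, $\HP^m$, where one obtains, e.g., $\CP^a\sqcup\CP^b$ with $a+b=\tfrac n2-1$ — gives $n_1+n_2=n-d$; when $M$ is a rational $\s^n$ the fixed set of an involution is a single sphere unless it equals $\s^0$, two points, where again $n_1+n_2=0=n-d$.

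The step I expect to be the main obstacle is the Euler-characteristic input above, i.e.\ Theorem~\ref{thm:z2codim2} and its use to squeeze $\chi(M)$ onto the list $\{\chi(\s^n),\chi(\CP^{n/2}),\chi(\HP^{n/4})\}$; \emph{a priori} there is no reason for $\chi(M)$ to equal the Euler characteristic of a rank-one symmetric space of dimension $n$, and it is precisely here that positive curvature together with the strong hypothesis $d=7$ is spent — the remaining ingredients (passing to effective quotients of subtori, Synge's and Frankel's theorems, Bredon's theorem, Smith theory) being routine.
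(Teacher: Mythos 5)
Your overall strategy is the paper's: verify hypothesis b) of Theorem~\ref{thm:t8NoCurvature} using Theorem~\ref{thm:t5} for the codimension-two subtori and Theorem~\ref{thm:z2codim2} for the Euler characteristic. But the step you treat as a citation is exactly where the work lies, and as written it has gaps. First, Theorem~\ref{thm:z2codim2} has two substantive hypotheses that you never verify: every fixed-point component of a codimension-two subtorus must be contained in a \emph{larger} fixed-point component $\hat N$ of $\CP$, $\HP$, or $\Sph$ type on which the torus acts with a codimension-three kernel, and this structure must persist (together with orientability) on the fixed-point set of any finite $2$-group. The first of these is strictly more than Theorem~\ref{thm:t5}; it is supplied by the Addendum to Theorem~\ref{thm:t5} in Section~\ref{sec:addendum}, and the second uses Theorem~\ref{thm:FPSstructure}. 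Second, Theorem~\ref{thm:z2codim2} only computes $\chi(F\cap C)$ for a single component $C$ of the one-skeleton; to conclude anything about $\chi(M)=\chi(F)$ you need the one-skeleton to be connected, which the paper gets from the Chang--Skjelbred Lemma (equivariant formality holds because the odd Betti numbers vanish). Without this, $\chi(M)$ could a priori be a sum of several such Euler characteristics and need not lie on your list. Third, even granting the theorem, its statement allows $\chi=3$ when $n=16$ (the Cayley plane is a rank-one symmetric space), so your claim $\chi(M)\in\{2,\tfrac n2+1,\tfrac n4+1\}$ needs an extra input; the clean fix is Theorem~\ref{thm:PositiveEuler-Rigidity}, which rules out $\chi(M)=3$ under $\gT^5$-symmetry.

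The \emph{Moreover} clause is the one place where your route would actually fail. You propose to get the dimension count $n_1+n_2=n-d$ from Smith theory ``once the rational cohomology type of $M$ is known,'' invoking the classical description of fixed sets of involutions on $\s^n$, $\CP^m$, $\HP^m$. Those classical results require control of the $\Z_2$-cohomology of $M$, and here you only know its \emph{rational} cohomology; nothing in the hypotheses prevents large $2$-torsion, so Smith theory gives no bound on the fixed sets of an involution. The paper instead reads off both the ``at most two components'' statement and the dimension sum $n-w$ directly from the Moreover clause of Theorem~\ref{thm:z2codim2} (whose proof works with the $\Z_2$-weights of the isotropy representations, not with $H^*(M;\Z_2)$), combined with the observation that every fixed-point component of an involution in $\gT^7$ is an even-dimensional, closed, positively curved manifold with an isometric $\gT^7$-action, hence by Berger's theorem contains a $\gT^7$-fixed point and therefore meets the connected one-skeleton $C$. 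Since you already cite Theorem~\ref{thm:z2codim2} for the ``at most two'' part, the Smith-theory detour is both unjustified and unnecessary.
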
		


The assumption on the odd Betti numbers is natural in light of the Bott-Grove-Halperin ellipticity conjecture (see \cite{Grove02}). Indeed, if the conjecture is true, then $M$ is rationally elliptic and hence, by Corollary~\ref{cor:hopf}, has vanishing odd Betti numbers. 
In this context, we should also mention that one can combine Theorem~\ref{thm:t5} with a result of Allday \cite{Allday78} to 
see that a rationally elliptic, positively curved, odd-dimensional,  simply connected closed manifold with $\gT^6$-symmetry
satisfies
the identity $-1=\sum_{i=1}^\infty (-1)^i\dim_\Q\bigl(\pi_i(M)\otimes \Q\bigr)$. We also have a variant of Corollary~\ref{maincor:t7}.

\begin{maincorollary}	\label{maincor:t8}	
Let $M^n$ be a closed, odd-dimensional, simply connected and positively curved Riemannian manifold. Suppose
 $H^{i}(M,\Q)=0$ holds for all odd $i\le \tfrac{n-1}{2}$.  
 If $\gT^8$ acts effectively by isometries, then $M^n$ is a rational  sphere.
\end{maincorollary}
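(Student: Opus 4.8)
The plan is to deduce this from Theorem~\ref{thm:t5} together with Theorem~\ref{thm:t8NoCurvature}a), using the odd-dimensionality to rule out the projective cases. First, since $M^n$ is odd-dimensional and simply connected, every fixed-point component $F$ of every subtorus $\gT'\subseteq \gT^8$ is itself odd-dimensional, hence cannot be a rational $\CP^m$ or $\CaP$; by Theorem~\ref{thm:t5} applied to any rank-five subtorus of $\gT'$ (which still acts effectively by isometries on $F$, after reducing the kernel), each such $F$ is a rational cohomology sphere or a rational $\HP^m$, and the latter is ruled out by parity as well, unless $\dim F$ is divisible by $4$ --- wait, $\HP^m$ has dimension $4m$, which is even, so it is excluded too. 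Thus every fixed-point component of every codimension-three subtorus is a rational cohomology sphere, which is more than enough to invoke Theorem~\ref{thm:t8NoCurvature}a). Here I must be slightly careful: Theorem~\ref{thm:t5} requires a $\gT^5$-action, so to apply it to a fixed-point component $F$ of a codimension-three subtorus $\gT'$ I need the quotient $\gT^8/\gT'$, which has rank three, to act effectively on $F$, together with enough of $\gT'$; the point is that the full rank-eight torus acts on $F$ through a quotient of rank at least $\operatorname{rk}\gT^8 - \dim(\text{kernel on }F)$, and since $F$ has codimension bounded by the dimension of the ambient sphere-like pieces, the effective rank is at least five. This effective-rank bookkeeping is exactly the kind of argument used throughout the Grove program and should be routine given the tools already in the paper.

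Next I apply Theorem~\ref{thm:t8NoCurvature}a) with $d=8\ge 4$: the hypothesis $H^i(M;\Q)=0$ for odd $i\le \tfrac{n-1}{2}$ is exactly what is assumed, and we have just verified condition a). The conclusion is that $M^n$ has the rational cohomology of $\s^n$, $\CP^{n/2}$, $\HP^{n/4}$, or $\CaP$. Since $n$ is odd, the last three are impossible, so $M^n$ is a rational cohomology sphere, as claimed.

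The one genuine subtlety --- and the step I expect to be the main obstacle to write cleanly --- is the reduction ensuring that Theorem~\ref{thm:t5} is actually applicable to the fixed-point components: one needs that for every codimension-three subtorus $\gT'$ and every component $F$ of $\Fix(\gT',M)$, there is a rank-five subtorus of $\gT^8$ acting effectively by isometries on $F$. The induced action of $\gT^8$ on $F$ factors through $\gT^8/(\gT^8)_F^0$, where $(\gT^8)_F^0$ is the identity component of the kernel; the effective rank drops by at most $\operatorname{codim}_M F$ minus something, and since any fixed-point component of a torus action has codimension at most... here one uses that if $F$ had codimension too large then a smaller subtorus already fixes it pointwise, feeding back into the induction on $d$. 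A cleaner route is to note that Theorem~\ref{thm:t5}'s conclusion about fixed-point components is what really propagates: one shows by downward induction on the rank of the subtorus that \emph{every} fixed-point component of \emph{every} subtorus of $\gT^8$ is a rational sphere, $\CP$, or $\HP$, the base case being the full manifold's top-rank subtori via Theorem~\ref{thm:t5}, and the inductive step being that a fixed-point component inside such a space is again of this type by Bredon's theorem (Theorem~\ref{thm:SinglyGenerated}). Combined with the parity exclusions above, this gives that all codimension-three fixed-point components are rational spheres unconditionally, and then Theorem~\ref{thm:t8NoCurvature}a) finishes the argument.
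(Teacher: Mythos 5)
Your overall strategy is the paper's: the authors note that Corollary~\ref{maincor:t8} ``immediately follows by combining Theorem~\ref{thm:t8NoCurvature} under the assumptions of a) with Theorem~\ref{thm:t5}'', and your endgame (condition a) plus odd parity of $n$ forces the $\s^n$ alternative) is exactly right. But the step you single out as the ``one genuine subtlety'' is not a subtlety at all --- it is a misreading of how Theorem~\ref{thm:t5} is meant to be applied, and your attempted repair introduces real errors. Condition a) of Theorem~\ref{thm:t8NoCurvature} asks about the fixed-point components of a codimension-three subtorus $\gT'\subseteq\gT^8$, i.e.\ of a rank-five torus. You verify it by applying Theorem~\ref{thm:t5} to $\gT'$ \emph{itself} acting on $M$: $\gT'$ acts effectively (its kernel sits inside the trivial kernel of the $\gT^8$-action), $M$ is connected, closed, orientable (being simply connected) and positively curved, and the conclusion of Theorem~\ref{thm:t5} is \emph{literally} a statement about the components of $M^{\gT'}$ --- each is a rational $\s^m$, $\CP^m$, or $\HP^m$. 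No torus needs to act on $F$, so there is no ``effective-rank bookkeeping'' to do, and your parenthetical assertion that a rank-five subtorus of $\gT'$ ``still acts effectively by isometries on $F$'' is false on its face: $\gT'$ acts trivially on its own fixed-point set. The trailing argument about codimensions (``the effective rank drops by at most $\codim_M F$ minus something\dots'') does not go through and is in any case unnecessary, as is the proposed downward induction via Bredon's theorem.

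Two smaller remarks. Your parity observation (odd-dimensional fixed-point components of subtori cannot be rational $\CP$, $\HP$, or $\CaP$, hence are rational spheres) is correct --- the paper makes the same remark after Theorem~\ref{thm:t5} --- but it is superfluous here, since condition a) of Theorem~\ref{thm:t8NoCurvature} explicitly permits $\CP^m$ and $\HP^m$. And the hypothesis matching is fine: for odd $n$ the conditions $H^i(M;\Q)=0$ for odd $i\le\tfrac{n}{2}$ and for odd $i\le\tfrac{n-1}{2}$ coincide, and $d=8\ge 4$. With the application of Theorem~\ref{thm:t5} stated correctly, the proof is the two-line argument the paper intends.
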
		
 
 As mentioned earlier, other than spheres, 
  all known examples of odd-dimensional, simply connected, positively curved 
  manifolds occur in dimension 7 and 13. For all of them, 
  the odd Betti numbers vanish in the first half of the cohomology ring. 
  Notice that Corollary~\ref{maincor:t8} immediately follows 
  by combining Theorem~\ref{thm:t8NoCurvature} under the assumptions of a) with Theorem~\ref{thm:t5}.

There are several tools needed to prove these results. Three of the most important previously known tools for us are the Connectedness Lemma and Periodicity Lemma of the third author and the Four-Periodicity Theorem of the first author (see \cite{Wilking03,Kennard13}). Together these results guarantee that, if we have two totally geodesic submanifolds of a positively curved closed manifold $M^n$ intersecting transversely in a manifold $B^b$ with $b\geq \frac n 2$, then the rational cohomology of $M^n$ is four-periodic. This means either $M^n$ is a rational homology sphere or  there is a cohomology class $a\in H^4(M^n;\Q)$ such that multiplication by $a$ induces  isomorphisms in all degrees one could hope for. In order to arrive at a situation where one can apply this theorem, one analyzes the isotropy representation of a torus action at a fixed-point. Here we provide new tool which is a somewhat surprising statement on representations of tori.

\begin{main}[$\S^1$-splitting]\label{thm:S1splitting}
If $\rho:\gT^d \rightarrow \SO(V)$ is a faithful representation with $d \geq 3$, then there exists a one-dimensional subgroup $\gH \subseteq \gT^d$ such that the induced representation $\bar \rho:\gT^d/\H \to \SO(V^{\gH})$ on the fixed-point set of $\gH$ is faithful and isomorphic to a product representation $\bar\rho_1 \oplus \bar\rho_2:\gS^1 \times \gT^{d-2} \to
\SO(V_1) \oplus \SO(V_2)$ 
where $\bar \rho_1$ has the second factor $\gT^{d-2}$ in its kernel and 
$\bar \rho_2$ has the first factor $\gS^1$ in its kernel.
\end{main}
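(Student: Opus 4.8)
The plan is to decompose $V$ into irreducible real $\gT^d$-subrepresentations and work with the associated weights. Writing $V = \bigoplus_{j} V_j$ where each $V_j$ is either a two-dimensional real representation with weight $\alpha_j \in \Hom(\gT^d,\sone) \cong \Z^d$ (a nonzero lattice vector, well-defined up to sign) or a trivial one-dimensional summand, faithfulness of $\rho$ is equivalent to saying the weights $\{\alpha_j\}$ span $\Z^d$ over $\Q$ (indeed span a finite-index sublattice). The circle subgroup $\H$ will be chosen as (the identity component of) $\ker(\ell)$ for a suitable primitive linear functional $\ell \in \Z^d$; then $V^{\H} = \bigoplus_{\ell(\alpha_j) = 0} V_j$, the quotient torus is $\gT^d/\H \cong \gT^{d-1}$, and the desired ``$\gS^1 \times \gT^{d-2}$ product'' structure on $\bar\rho$ amounts to finding a splitting $\gT^d/\H = \gS^1 \times \gT^{d-2}$ so that every weight appearing in $V^{\H}$ lies in one of the two axis sublattices $\Z \oplus 0$ or $0 \oplus \Z^{d-2}$. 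Equivalently: after passing to the sublattice $L = \ell^{\perp} \subset \Z^d$ of rank $d-1$, the weights of $V^{\H}$ must fall into the union of a rank-one primitive summand and a rank-$(d-2)$ complementary summand of $L$.

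So the combinatorial heart is the following: given a finite set $W \subset \Z^d$ of nonzero vectors spanning $\Q^d$ with $d \ge 3$, produce a primitive functional $\ell$ such that the subset $W \cap \ell^{\perp}$, while still of full rank $d-1$ in $\ell^{\perp}$ (so that $\bar\rho$ is faithful), decomposes as (a rank-one piece) $\sqcup$ (a piece lying in a complementary corank-one sublattice). First I would look for a single weight $\alpha \in W$ and a functional $\ell$ with $\ell(\alpha) = 0$ such that $\alpha$ is the \emph{only} weight of $W$ that is nonzero on the hyperplane $\alpha^{\perp} \cap \ell^{\perp}$ inside $\ell^{\perp}$ — i.e., all other surviving weights lie in $\alpha^{\perp}$. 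Then $\bar\rho_1$ is the subrepresentation on the $\alpha$-summands (factoring through the $\gS^1 = \gT^d/(\H \cdot (\text{common kernel of the rest}))$) and $\bar\rho_2$ is everything else; the $\gS^1$-factor is exactly $\ell^{\perp}/(\ell^{\perp} \cap \alpha^{\perp})$ and the $\gT^{d-2}$-factor is $\ell^{\perp}\cap \alpha^{\perp}$ modulo torsion, and one checks these fit together as a direct sum of sublattices after an integral change of basis. To arrange this, pick $\alpha \in W$ primitive and set $H_\alpha = \alpha^{\perp} \subset \Z^d$ (rank $d-1$); among the finitely many functionals vanishing on $\alpha$ and on ``many'' of the other weights, a dimension count shows one can choose $\ell \in H_\alpha$ with $\ell(\beta) = 0$ for all $\beta$ in a maximal subset $W' \subseteq W \cap H_\alpha$ with $\spann_\Q(W') $ of codimension $\ge 1$ in $H_\alpha$, while keeping enough weights alive to retain full rank; the inequality $d \ge 3$ is precisely what guarantees $\ell^{\perp}$ has rank $\ge 2$ so that the two-block splitting is nontrivial, and guarantees room to choose $\ell$ avoiding the finitely many ``bad'' conditions that would collapse the rank.

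I would then verify faithfulness of $\bar\rho$ separately: by construction $W \cap \ell^{\perp}$ spans $\ell^{\perp}$ rationally, which (again via the weight criterion) is equivalent to $\bar\rho : \gT^{d-1} \to \SO(V^{\H})$ being faithful; and the product decomposition $\bar\rho_1 \oplus \bar\rho_2$ with the stated kernels follows because the $\bar\rho_1$-weights are all proportional to the image of $\alpha$ (so $\gT^{d-2} = 0 \oplus \Z^{d-2}$ acts trivially) and the $\bar\rho_2$-weights all lie in $0 \oplus \Z^{d-2}$ (so $\gS^1$ acts trivially). The main obstacle I anticipate is the choice of $\alpha$ and $\ell$ in the previous paragraph: it is \emph{not} true for an arbitrary weight $\alpha$ that such an $\ell$ exists, and one must argue that some $\alpha \in W$ works — most likely by choosing $\alpha$ to be an extreme or ``isolated'' weight (e.g., one minimizing $|W \cap \Q\alpha^{\perp}|$, or a vertex of the convex hull of $\pm W$), and then showing that for this $\alpha$ the weights in $H_\alpha \setminus \alpha^{\perp}$ can be killed off by a single $\ell$ without killing all of $\alpha^{\perp} \cap W$. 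If no single $\alpha$ works one iterates: split off one circle at a time, reducing $d$ by one and applying the argument to the quotient, which terminates because $d$ decreases; but the cleanest route is to show directly that an extreme weight admits the splitting, and I would spend most of the write-up making that selection argument precise.
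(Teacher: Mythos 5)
Your reduction of the theorem to a combinatorial statement about the weight system is the right starting point and matches the paper's setup, but the proposal stops exactly where the real work begins, and the steps you do commit to contain a genuine error. The selection problem you yourself flag as ``the main obstacle'' --- finding a weight $\alpha$ and a subgroup so that $\alpha$ becomes the only surviving weight outside a corank-one sublattice --- \emph{is} the content of the theorem, and none of your suggested mechanisms (vertices of the convex hull of $\pm W$, minimizing $|W\cap \Q\alpha^{\perp}|$, or ``iterating'' on quotients) engages with the feature that actually makes the statement true. The paper first reduces to the case where $\rho$ has no non-trivial finite isotropy groups (by passing to the fixed-point set of a maximal finite isotropy group), and this hypothesis forces dramatic rigidity on the weights: any $d$ linearly independent weights form a $\Z$-basis of $\Gamma^*$, every weight is a $\{0,\pm 1\}$-combination of such a basis, and reduction mod $2$ embeds the weight set into $\Z_2^d$ as a sparse set satisfying the ``codimension three property.'' The splitting is then produced by a purely combinatorial lemma in $\Z_2$-vector spaces (Lemma~\ref{lem:z2lem}) applied to this mod-$2$ image. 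A convexity or genericity argument over $\Q$ cannot substitute for this: whether a weight can be isolated is governed by integral and mod-$2$ relations among the weights, not by their positions in $\R^d$, and without the no-finite-isotropy reduction (which your proposal never performs) the needed structure is simply absent.

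Separately, your faithfulness check is wrong as stated: you take $\gH$ to be a connected circle and assert that $\bar\rho$ is faithful because the surviving weights span rationally. Rational spanning only makes the kernel of $\bar\rho$ \emph{finite}; faithfulness requires the surviving weights to generate the full character lattice of the quotient torus, and this is precisely why the paper emphasizes that the one-dimensional subgroup $\gH$ is ``usually not connected'' --- it must be taken to be the full (generally disconnected) common kernel of the surviving subrepresentations. Relatedly, identifying exactly which weights restrict trivially to $\gH$ (so that $\bar\rho_1\oplus\bar\rho_2$ really exhausts $\bar\rho$ and no stray irreducible survives outside the two blocks) again uses the $\{0,\pm 1\}$-coefficient bound from the no-finite-isotropy structure theory; your proposal has no mechanism for ruling out such unwanted survivors.
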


 In particular, the fixed-point sets of $\gS^1$ and $\gT^{d-2}$ intersect transversely in $V^{\gH}$. By iterated applications of Theorem \ref{thm:S1splitting}, we obtain multiple such intersections:

\begin{nonumbercorollary}\label{cor:S1splitting}
If $\gT^{2d+1} \to \SO(V)$ is a faithful representation, then there exists a $d$-dimensional subgroup $\gH \subseteq \gT^{2d+1}$ such that the induced representation $\gT^{2d+1}/\gH \to \SO(V^{\gH})$ is faithful and has exactly $d+1$ non-trivial, pairwise inequivalent, irreducible subrepresentations. In particular, there exist $d+1$ circles in $\gT^{2d+1}/\gH$ whose fixed-point sets in $V^{\gH}$ intersect pairwise transversely.
\end{nonumbercorollary}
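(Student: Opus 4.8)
The plan is to iterate Theorem~\ref{thm:S1splitting} $d$ times and then to enlarge the resulting subgroup by a finite subgroup. Since $2d+1\geq 3$, Theorem~\ref{thm:S1splitting} applies to the given faithful $\gT^{2d+1}\to\SO(V)$ and yields a circle $\gH_1$ with $\gT^{2d+1}/\gH_1\cong\gS^1\times\gT^{2d-1}$ acting faithfully on $V^{\gH_1}=U_1\oplus V'$, where the $\gS^1$-factor acts faithfully on $U_1$ and trivially on $V'$, and the $\gT^{2d-1}$-factor acts faithfully on $V'$ and trivially on $U_1$. Because $2d-1\geq 3$ once $d\geq 2$, I would apply Theorem~\ref{thm:S1splitting} again to $\gT^{2d-1}\to\SO(V')$, and continue. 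After $k$ steps one has a connected $k$-dimensional subgroup $\mathcal H_k\subseteq\gT^{2d+1}$ with $\gT^{2d+1}/\mathcal H_k\cong(\gS^1)^{k}\times\gT^{2d+1-2k}$ acting faithfully on $V^{\mathcal H_k}=U_1\oplus\cdots\oplus U_k\oplus V^{(k)}$, the $i$-th circle factor acting faithfully on $U_i$ and trivially on every other summand, and $\gT^{2d+1-2k}$ acting faithfully on $V^{(k)}$ and trivially on $U_1\oplus\cdots\oplus U_k$. The recursion runs as long as $2d+1-2k\geq 3$, hence through $k=d$, the last step splitting the remaining $\gT^{3}$ as $\gS^1\times\gT^{1}$. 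Writing $\gH_0:=\mathcal H_d$ and $U_{d+1}:=V^{(d)}$, this gives $\gT^{2d+1}/\gH_0\cong\gT^{d+1}$ acting faithfully on $V^{\gH_0}=U_1\oplus\cdots\oplus U_{d+1}$, with the $i$-th coordinate circle $C_i$ of $\gT^{d+1}$ acting faithfully on $U_i$ and trivially on the rest. The only thing to check here is the bookkeeping that circle factors split off early remain trivial on the summands decomposed later; this is automatic, since a factor trivial on $V^{(k)}$ is trivial on every subsummand of $V^{(k)}$.

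Replacing each $U_i$ ($1\leq i\leq d+1$) by its sum of nonzero weight spaces and collecting the zero weight spaces into a trivial summand $U_0$, I may assume each $U_i$ carries $C_i$ faithfully with only nonzero weights; let $s_i$ be the largest of these weights. Already the transversality assertion follows: $\Fix(C_i)=U_0\oplus\bigoplus_{j\neq i}U_j$ has even codimension $\dim U_i\geq 2$, and $\Fix(C_i)+\Fix(C_j)=V^{\gH_0}$ for $i\neq j$, so the fixed-point sets of $C_1,\dots,C_{d+1}$ are pairwise transverse. To sharpen ``at least'' to ``exactly'' $d+1$ irreducible constituents, observe that so far $U_i$ is known only to be a faithful $C_i$-representation and may split into irreducibles of several distinct weights. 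I would therefore take $\gH$ to be the preimage in $\gT^{2d+1}$ of $\mu_{s_1}\times\cdots\times\mu_{s_{d+1}}\subseteq C_1\times\cdots\times C_{d+1}=\gT^{d+1}$, where $\mu_s$ is the cyclic group of order $s$; this $\gH$ is $d$-dimensional but disconnected, $\gT^{2d+1}/\gH\cong\gT^{d+1}$, and $V^{\gH}$ is the subspace of $V^{\gH_0}$ fixed by every $\mu_{s_i}$. Since $\mu_{s_i}\subseteq C_i$ acts trivially on $U_0$ and on each $U_j$ with $j\neq i$, those summands survive unchanged, while $(U_i)^{\mu_{s_i}}$ is precisely the $s_i$-weight space of $C_i$ in $U_i$, as no weight exceeds $s_i$ and no positive weight strictly below $s_i$ is divisible by $s_i$. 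Hence $V^{\gH}=U_0\oplus U_1'\oplus\cdots\oplus U_{d+1}'$ with each $U_i'\neq 0$ a multiple of the standard representation of the $i$-th coordinate circle of $\gT^{d+1}$; this representation is faithful (its weights form a basis of the character lattice) with exactly $d+1$ pairwise inequivalent nontrivial irreducible constituents, and the same $d+1$ circles retain pairwise transverse fixed-point sets in $V^{\gH}$.

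The one genuinely non-formal point is recognizing that a bare iteration of Theorem~\ref{thm:S1splitting} produces only a faithful ``product-shaped'' representation with \emph{at least} $d+1$ irreducible constituents, so that the finite enlargement above is needed to bring the count down to exactly $d+1$; it is not needed for the transversality clause alone. Apart from that, one only verifies the arithmetic of the recursion — that $2d+1-2(d-1)=3$, so Theorem~\ref{thm:S1splitting} still applies at the last step and supplies the $(d+1)$-st circle — and that each $U_i$ is nonzero, which is immediate from faithfulness of $C_i$ on it. The tedious but routine part is the bookkeeping in the first paragraph; the conceptual step is the finite enlargement in the second.
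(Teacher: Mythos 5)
Your proof is correct, and its core is exactly the route the paper intends: the paper gives no written proof beyond the remark that the corollary follows by iterated application of Theorem~\ref{thm:S1splitting}, and your first paragraph carries out that iteration with the bookkeeping done correctly (the subgroups $\mathcal H_k$ have the right dimensions, faithfulness and the product structure propagate to each stage, and $2d+1-2(d-1)=3$ so the last application is legitimate). Your second paragraph supplies a step the paper leaves implicit but which is genuinely needed for the word ``exactly'': a bare iteration only bounds the number of inequivalent non-trivial irreducible constituents from below. Two remarks on how your fix relates to the paper. First, for the summands $U_1,\dots,U_d$ the cleanup is not strictly necessary if one uses the refined form Lemma~\ref{lem:S1splitting}: there the split-off circle acts semi-freely on $V_1$, and $V_1$ is by construction the isotypic summand of a single non-trivial weight, so each of your first $d$ circles already contributes exactly one irreducible type. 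Second, the last summand $V^{(d)}$ genuinely requires a correction of the kind you perform, and your choice (dividing by $\mu_{s_{d+1}}$ for the top weight and passing to its fixed subspace) is precisely an instance of the paper's $d=0$ remark about maximal finite isotropy groups. Applying the finite enlargement uniformly to all $d+1$ summands, as you do, is sound and has the virtue of relying only on the statement of Theorem~\ref{thm:S1splitting} rather than on the semi-freeness addendum.
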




The corollary holds, in particular, for $d=0$. In this case, it says that there is a finite subgroup $\gF\subseteq \gS^1$ such that the action of $\gS^1/\gF$ on the fixed-point set of $\gF$ is effective and semi-free. This is easy to see, as  one can take $\gF$ to be any maximal finite isotropy group.
Another outcome of the proof of Theorem \ref{thm:S1splitting} is the following.
 It is not used in the proofs of the main theorems, but it  
 exhibits the {\it dimension-independent rigidity} forced on torus representations 
by the assumption of {\it connected isotropy groups}. An analysis of this assumption is the key point to the proof of Theorem \ref{thm:S1splitting}.

\begin{main}\label{thm:d+1choose2} 
If $\gT^d\rightarrow \SO(V)$ is a faithful representation with only connected isotropy groups, then $\rho$ has at most $\frac{d(d+1)}{2}$ non-trivial, pairwise inequivalent, irreducible subrepresentations. Moreover, equality holds if and only if there is a basis \(e_1,\dots,e_d\) of the dual Lie-algebra of \(\gT^d\) such that the weights of the representation are given by
	\[e_1,\dots,e_d, \quad e_{i}-e_{j}, \quad \text{for} \quad 1\leq i < j \leq d.\]
\end{main}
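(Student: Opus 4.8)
The plan is to reduce the assertion to a purely lattice-theoretic problem and then to recognize that problem as a classical fact about totally unimodular matrices. Let $L=\Hom(\gT^d,\sone)$ be the character lattice, so $L\cong\Z^d$. A real representation of $\gT^d$ is the sum of its trivial part with isotypical complex summands indexed by nonzero weights $\alpha\in L$, two summands being isomorphic as real representations exactly when their weights agree up to sign. Hence the non-trivial, pairwise inequivalent, irreducible subrepresentations correspond to a finite set $A\subseteq L$ of pairwise non-proportional weights, and $\rho$ is faithful iff $A$ generates $L$. The isotropy group at a generic vector of the sum of the weight spaces over a subset $S\subseteq A$ is $\bigcap_{\alpha\in S}\ker(\alpha)$, and every isotropy group arises in this way; such a subgroup is connected precisely when $\langle S\rangle$ is a direct summand of $L$, that is, saturated. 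So the hypothesis becomes: $A\subseteq L\cong\Z^d$ is finite and pairwise non-proportional, $A$ generates $L$, and every subset of $A$ generates a saturated sublattice (in particular each $\alpha\in A$ is primitive); and I must show $\lvert A\rvert\le\binom{d+1}{2}$, with equality only for the configuration stated in the theorem.

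Next I would put $A$ into a normal form. Since every subset of $A$ generates a saturated sublattice and $A$ generates $L$, any maximal linearly independent subset $B\subseteq A$ is a $\Z$-basis of $L$; I work in coordinates given by $B$. Every $\alpha\in A$ then has coordinates in $\{0,\pm1\}$: writing $\alpha=\sum_{j\in J}c_jb_j$ with support $J$ and picking $j_0\in J$, the independent set $\{\alpha\}\cup\{b_j:j\in J\setminus\{j_0\}\}$ spans a saturated rank-$\lvert J\rvert$ sublattice containing the saturated rank-$\lvert J\rvert$ sublattice $\langle b_j:j\in J\rangle$; two nested saturated sublattices of the same finite rank coincide, so $b_{j_0}\in\langle\alpha,b_j:j\in J\setminus\{j_0\}\rangle$, which forces $c_{j_0}=\pm1$. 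Thus the matrix $M$ whose columns are the vectors of $A$ (in the basis $B$) has entries in $\{0,\pm1\}$ and contains the identity among its columns. Moreover $M$ is totally unimodular: if some $k\times k$ submatrix with rows $R$, columns $C$ had determinant of absolute value $\ge2$, then the $d$ columns $C\cup\{b_j:j\notin R\}$ would be linearly independent with determinant of absolute value $\ge2$ (expand along the appended unit columns), contradicting saturation of that subset of $A$.

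By the previous step the question is exactly: how many columns, pairwise distinct up to sign and all nonzero, can a rank-$d$ totally unimodular matrix have? A classical theorem of Heller answers $\binom{d+1}{2}$, attained — up to permuting and negating rows and columns — only by the reduced incidence matrix of the complete graph $K_{d+1}$, whose columns are the $d$ unit vectors $e_1,\dots,e_d$ together with the $\binom d2$ vectors $e_i-e_j$ for $i<j$; this is precisely the claimed normal form, and conversely that system is totally unimodular (incidence matrices of graphs always are) and so does satisfy the hypotheses, realizing the bound. If one prefers to avoid citing Heller, the bound can instead be obtained by induction on $d$ via contraction: for $\alpha\in A$ the images of $A\setminus\{\pm\alpha\}$ in $L/\langle\alpha\rangle\cong\Z^{d-1}$ (with proportional images identified) again form such a system, each fibre of the projection has at most two elements — by the rank-two case, where three weights with every pair a basis is the most possible — and $\lvert A\rvert=1+\lvert A/\langle\alpha\rangle\rvert+\tau(\alpha)$, where $\tau(\alpha)$ counts the triangles $\{\alpha,\beta,\gamma\}\subseteq A$ with $\alpha=\pm\beta\pm\gamma$; the inductive hypothesis gives $\lvert A/\langle\alpha\rangle\rvert\le\binom d2$, so it remains to find an $\alpha$ with $\tau(\alpha)\le d-1$ and then to track the equality case upward.

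The step I expect to be the obstacle depends on the route: in the Heller route the real content is the normal-form argument, after which the conclusion is immediate, so the only judgement call is whether to invoke Heller's theorem as a black box. In the self-contained route the crux is the estimate $\tau(\alpha)\le d-1$ (equivalently, that the cycle matroid of the complete graph is the largest simple regular matroid of its rank): one must exclude an element lying on $d$ or more triangles, and here the $\{0,\pm1\}$ normal form is helpful but not by itself decisive — a minimal linear relation among the $2\tau(\alpha)$ endpoints of those triangles has coefficients $\pm1$ and forces $\alpha$ into the saturated sublattice they span, after which one re-runs the induction inside that lower-rank sublattice to reach a contradiction. The equality statement, in either route, reduces to the observation that if $\lvert A\rvert=\binom{d+1}{2}$ then every inequality above is an equality, so $A/\langle\alpha\rangle$ has the inductively determined shape and the $d-1$ triangles through $\alpha$ dictate how $A$ lies over it, pinning $A$ down up to the stated change of basis and sign changes.
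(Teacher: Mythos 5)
Your main (Heller) route is correct and genuinely different from the paper's. You translate ``all isotropy groups connected'' into ``every subset of the weight set spans a saturated sublattice,'' derive the $\{0,\pm1\}$ normal form and total unimodularity, and then quote Heller's extremal theorem for totally unimodular matrices together with the uniqueness of the maximum simple regular matroid $M(K_{d+1})$ and of its totally unimodular representation. The paper instead reduces the weights modulo $2$: Lemma \ref{lem:repsplitting} shows the reduction is injective and that the image satisfies the ``codimension three property'' in $\Z_2^d$ (which is exactly the mod-$2$ shadow of your no-$2\times2$-minor-equal-to-$\pm2$ condition), and Corollary \ref{cor:d+1choose2forS} proves the bound $\binom{d+1}{2}$ for such $\Z_2$-subsets by an elementary induction (Lemma \ref{lem:cod1property}: there is a hyperplane $U$ spanned by elements of $S$ with $S\setminus U$ independent). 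Because the $\Z_2$ picture forgets signs, the paper must then separately rule out a weight $h_i+h_j$ in the equality case by exhibiting an order-two isotropy group; your integral/TU formulation carries the signs along for free. What each buys: your route is conceptually clean and places the result where it belongs (extremal regular matroids), but both the bound and the rigidity statement are outsourced to nontrivial classical theorems; the paper's route is longer but entirely self-contained, and its $\Z_2$-combinatorics (the codimension three property) is reused as the engine of the $\gS^1$-splitting theorem, so it is not machinery built for this statement alone.

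Your proposed self-contained alternative (contraction by $\alpha$ plus the estimate $\tau(\alpha)\le d-1$) is, as you acknowledge, not complete: excluding an element lying on $d$ or more triangles is precisely the hard extremal step, and the sketch you give (a minimal $\pm1$ relation among the endpoints forcing $\alpha$ into a lower-rank saturated sublattice) does not yet yield a contradiction without further work. If you want a proof that does not cite Heller, the paper's hyperplane lemma is the cleaner induction to run: rather than contracting, it finds a spanned hyperplane $U$ with $S\setminus U$ linearly independent, giving $|S\setminus U|\le d$ and $|S\cap U|\le\binom{d}{2}$ directly, and the equality analysis then falls out of the same induction.
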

We remark that the existence of a disconnected isotropy group is equivalent to the existence of a non-trivial finite isotropy group (see Lemma \ref{lem:connectedstab}). The focus on torus representations with connected isotropy groups comes naturally from the inductive machinery used to prove the main theorems. What surprised the authors is the level of rigidity of such representations.\\[1ex] 

\begin{center}{\tocsizeone 
\begin{minipage}{12.8cm}
\begin{itemize}
\item[\ref{sec:Preliminaries}.] \sectone\dotfill {\tocsizeoneone \pageref{sec:Preliminaries}}
\item[
\ref{sec:S1splitting}.] \secttwo  \dotfill {\tocsizeoneone \pageref{sec:S1splitting}}
\item[\ref{sec:t5}.] \sectthree \dotfill {\tocsizeoneone \pageref{sec:t5}}
{\tocsizetwo
\begin{itemize}
\item[\ref{sec:b3}.] \subsecbthree\dotfill \pageref{sec:b3}
\item[\ref{subsec:shxHP}.] \subsecshxHP\dotfill \pageref{subsec:shxHP}
\item[\ref{subsec:evensphere}.] \subsecevensphere\dotfill \pageref{subsec:evensphere}
\item[\ref{sec:PositiveEuler}.] \subseceuler\dotfill \pageref{sec:PositiveEuler}
\end{itemize}
}
\item[\ref{sec:endgame}.] \sectfour \dotfill  {\tocsizeoneone\pageref{sec:endgame}}
{\tocsizetwo
\begin{itemize}
\item[\ref{sec:addendum}.] \sectfourone\dotfill \pageref{sec:addendum}
\end{itemize}}
\item[\ref{sec:recognition}.] \sectfive \dotfill  {\tocsizeoneone \pageref{sec:recognition}}
{\tocsizetwo
\begin{itemize}
\item[\ref{subsec:model}.] \subsecmodel\dotfill \pageref{subsec:model}
\item[\ref{subsec:straight}.] \subsecstraight\dotfill \pageref{subsec:straight}
\item[\ref{subsec:HP}.] \subsecHP\dotfill \pageref{subsec:HP}
\item[\ref{subsec:CP}.] \subsecCP\dotfill \pageref{subsec:CP}
\item[\ref{sec:t8sphere}.] \sectfiveCb\dotfill  \pageref{sec:t8sphere}
\item[\ref{subsec:odd}.] \sectfiveodd\dotfill \pageref{subsec:odd}
\end{itemize}
}
\item[\ref{sec:z2codim2}.] \sectsix \dotfill  {\tocsizeoneone\pageref{sec:z2codim2}}
\item[\ref{sec:GKM}.] \sectseven \dotfill  {\tocsizeoneone\pageref{sec:GKM}}\\
\end{itemize}
\end{minipage}
}
\end{center}
 Section \ref{sec:Preliminaries} reviews the Connectedness Lemma and its consequences, including a description of when transverse intersections result in four-periodic rational cohomology. Section \ref{sec:S1splitting} contains the proof of the $\gS^1$-splitting theorem (Theorem \ref{thm:S1splitting}) and its corollary.
 This section also contains the proof of Theorem \ref{thm:d+1choose2}.

Besides the $\S^1$-splitting theorem, the other new results we need come from computations in equivariant cohomology. First, we show that, under mild symmetry assumptions, an even-dimensional manifold with four-periodic rational cohomology has vanishing odd Betti numbers (see Lemma \ref{lem:b3}). This result, which we call the $b_3$ Lemma, is independent of the $\S^1$-splitting and 
 is the key point in the proof of Proposition~\ref{pro:4periodicN}, 
 which is a weak version of Theorem~\ref{thm:t5} and which already
implies Corollary~\ref{cor:hopf}. 
 The $b_3$ Lemma can also be seen as the main reason
as to why we get results in all even dimensions, not just in those divisible by four (cf. \cite[Theorem A]{Kennard13}).

Proposition~\ref{pro:4periodicN} 
says, in particular, either Theorem \ref{thm:t5} holds 
or we can find a special $\gT^2$-action on a positively curved manifold with the rational cohomology
of $\s^2 \times \HP^m$ or $\s^3 \times \HP^m$. 
The actions are analyzed without any curvature assumptions 
in Section~\ref{sec:endgame}, where we essentially show
 that they have linear models. 
This global approach requires  a bit of work and is motivated in part by work of Hsiang and Su \cite{HsiangSu75}, who examined smooth torus actions on rational cohomology $\HP^m$.
  This in turn allows us to rule out this potential exception
  to Theorem A from Proposition~\ref{pro:4periodicN}.

Once Theorem \ref{thm:t5} is proved, we analyze larger torus actions that satisfy the condition of equivariant formality, that is, the natural 
map from equivariant cohomology to cohomology being surjective. 
Since the equivariant cohomology of $M$ determines the ordinary cohomology of $M$, it suffices to recover the image 
of the injective map from the equivariant cohomology of $M$ 
to the equivariant cohomology of the fixed-point set.
The Chang-Skjelbred Lemma (see \cite{ChangSkjelbred74}) implies that this image is also given by the image of the equivariant 
cohomology of the one-skeleton. We recall that the one-skeleton 
consists of those points whose orbits have dimension $\le 1$. 
To determine this image, we assign in Section~\ref{sec:recognition} a graph to the one-skeleton that, together with its  labels (weights), completely determines the isotropy representation of the torus at the various fixed-point components up to rescaling of the weights. 
%
  We will first see that the combinatorial structure of the unlabeled graph
  is completely determined if we know 
  the number of fixed-point components, the dimension $n$, and 
  the Euler characteristic. In the proof
  of Theorem~\ref{thm:t8NoCurvature} under the assumptions of b), 
  we first recover the labeling of the graph. This will allow us 
  to explicitly determine the  above  image 
of the equivariant cohomology of $M$.
  The whole strategy of the proof is again very much motivated by 
  work of \cite{HsiangSu75} on torus actions on rational cohomology $\HP^n$. 
   A few aspects are also similar to work by Goertsches and the second author \cite{GoertschesWiemeler15} who 
  proved a recognition theorem under much more restrictive conditions.
  
    In Section~\ref{subsec:odd}, we will prove 
  Theorem~\ref{thm:t8NoCurvature} in odd dimensions. 
  The main challenge here is that the assumptions do not allow to
   conclude that the torus action is equivariantly formal, and it requires 
   some work to find a suitable setting in which the 
   Chang-Skjelbred Lemma can be applied.

In Section~\ref{sec:z2codim2}, we prove Corollary~\ref{maincor:t7}.
As in the section before we work without curvature assumptions but
 now we assume  that every (possibly disconnected) codimension two subgroup only has fixed-point components 
whose rational cohomology has $\Sph$, $\CP$ or $\HP$ 
type.  We  show that the $\Z_2$-weights of the isotropy representation obey some very simple rules that help to 
recover the Euler characteristic, see Theorem~\ref{thm:z2codim2}. 

In the last section, we finally prove Theorem~\ref{thm:t8NoCurvature} 
under the assumptions of a). By work done 
in Section~\ref{sec:recognition}, we only need to deal 
with the case that the action of the torus  satisfies the GKM-condition, 
that is, every fixed-point component of a codimension one torus 
having dimension at most two.  This is fairly close to the assumptions of \cite{GoertschesWiemeler15}, and the main idea 
is to adapt their techniques to 
recover the Euler characteristic in this case as well.



\subsection*{Acknowledgements} This material is based upon work supported by the National Science Foundation under Grant DMS-1440140 while the first and third authors were in residence at the MSRI in Berkeley, California, during the Spring 2016 semester. The first author is partially supported by NSF Grant DMS-2005280. The second and third authors  were supported by the Deutsche Forschungsgemeinschaft (DFG, German Research Foundation) – Project-ID 427320536 – SFB 1442, as well as under Germany’s Cluster of  Excellence Strategy EXC 2044 - 390685587, Mathematics Münster: Dynamics–Geometry–Structure.

\section{\sectone}\label{sec:Preliminaries}
Throughout the paper, we take cohomology groups with rational coefficients  whenever the coefficient ring is not specified.
The primary tool we use for deriving topological information from the existence of isometric group actions on positively curved manifolds is the following, proved by the third author \cite[Theorem 2.1]{Wilking03}.

\begin{theorem}[Connectedness Lemma]
Let $M^n$ be a positively curved, connected, closed Riemannian manifold.
	\begin{enumerate}
	\item If $N^{n-k} \subseteq M$ is a closed, totally geodesic, embedded submanifold, then the inclusion is $(n-2k+1)$--connected. 
	\item If $N^{n-k} \subseteq M$ is a fixed-point component of an isometric action on $M$ with principal orbits of dimension $\delta$, then the inclusion is $(n-2k+1+\delta)$--connected.
	\item If $N_i^{n-k_i} \to M$ are closed, totally geodesic, embedded submanifolds with $k_1 \leq k_2$, then the inclusion $N_1\cap N_2 \to N_2$ is $(n-k_1-k_2)$--connected.
	\end{enumerate}
\end{theorem}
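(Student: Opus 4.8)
\emph{The plan is to prove all three parts by Morse theory on path spaces, with positive curvature entering only through index estimates.} After rescaling so that $\sec\ge 1$, Bonnet--Myers bounds the diameter and the energy functional on the path spaces below satisfies the Palais--Smale condition. For part (1), I would work on $\Omega:=P(M;N,N)$, the space of $H^1$-paths $\gamma\co[0,1]\to M$ with $\gamma(0),\gamma(1)\in N$, equipped with the energy $E(\gamma)=\int_0^1|\gamma'|^2\,dt$. Its critical points are the geodesics meeting $N$ orthogonally at both endpoints; the zero level $E^{-1}(0)=N$ is a nondegenerate critical submanifold of index $0$ (with nullity $\dim N$), and the constant paths give a section $\sigma\co N\to\Omega$. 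The path fibration $P(M;N)\to M$, $\gamma\mapsto\gamma(1)$, has total space deformation retracting onto $N$ and fiber $P(M;N,q)$, so $P(M;N,q)$ is a model for the homotopy fiber of $N\embedded M$ and $\pi_i\bigl(P(M;N,q)\bigr)\cong\pi_{i+1}(M,N)$; since the fibration $\Omega\to N$, $\gamma\mapsto\gamma(1)$, has the same fiber and the section $\sigma$, we get $\pi_i\bigl(\Omega,\sigma(N)\bigr)\cong\pi_{i+1}(M,N)$. Thus, by Morse--Bott theory, it suffices to show that every nonconstant critical geodesic has index $\ge n-2k+1$: then $\Omega$ has the homotopy type of $N$ with cells of dimension $\ge n-2k+1$ attached, so $\pi_i\bigl(\Omega,\sigma(N)\bigr)=0$ for $i\le n-2k$, i.e.\ $\pi_{i+1}(M,N)=0$ for $i+1\le n-2k+1$, i.e.\ $N\embedded M$ is $(n-2k+1)$-connected.

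\emph{The key point is the index estimate.} Let $\gamma$ be a nonconstant critical geodesic, and parallel-translate $T_{\gamma(0)}N$ along $\gamma$ to a parallel sub-bundle $\mathcal P$. Since $\gamma'(0)\perp T_{\gamma(0)}N$ and $\gamma'$ is parallel, $\mathcal P(1)$ lies in $\gamma'(1)^{\perp}$, which has dimension $n-1$; and $T_{\gamma(1)}N$ also lies in $\gamma'(1)^{\perp}$, by orthogonality of $\gamma$ to $N$ at the far endpoint. Both $\mathcal P(1)$ and $T_{\gamma(1)}N$ have dimension $n-k$, so $W:=\mathcal P(1)\cap T_{\gamma(1)}N$ has dimension at least $2(n-k)-(n-1)=n-2k+1$ --- taking this intersection inside $\gamma'(1)^{\perp}$ rather than inside $T_{\gamma(1)}M$ is precisely what yields the crucial ``$+1$''. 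Every parallel field $E$ with $E(1)\in W$ satisfies $E(0)\in T_{\gamma(0)}N$ and $E(1)\in T_{\gamma(1)}N$, hence is an admissible variation for $\Omega$; and because $E$ is parallel, perpendicular to $\gamma'$, and $\sec\ge1$, the index form gives $I(E,E)=-\int_0^1\langle R(E,\gamma')\gamma',E\rangle\,dt\le-\int_0^1|\gamma'|^2|E|^2\,dt<0$. The same bound holds on every linear combination of such fields (again a parallel field perpendicular to $\gamma'$), so $I$ is negative definite on an $(n-2k+1)$-dimensional space and $\operatorname{ind}(\gamma)\ge n-2k+1$. This proves (1).

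For part (2), I would run the same argument on $P(M;N,N)$, additionally exploiting that $N=\Fix(G)$: every Killing field $X_\xi$, $\xi\in\fg$, of the action restricts along a critical geodesic $\gamma$ to a Jacobi field that vanishes at both endpoints (since $\gamma(0),\gamma(1)\in\Fix(G)$), and these fields span a space of dimension at least $\delta$. For a nonconstant $\gamma$ these are $\delta$ linearly independent Jacobi fields vanishing at both ends, exhibiting $\gamma$ as maximally degenerate in the orbit directions; a perturbation argument (or passage to the orbit space) should then enlarge the negative space of the index form by $\delta$, yielding $\operatorname{ind}(\gamma)\ge n-2k+1+\delta$ and hence the claimed connectivity. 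For part (3), I would apply the parallel Morse theory to the inclusion $N_1\cap N_2\embedded N_2$, using the energy functional on paths of $N_2$ --- which is totally geodesic in $M$, hence positively curved --- with both endpoints on $N_1\cap N_2$. A naive application of part (1) inside $N_2$ only gives $(n-k_2-2k_1+1)$-connectedness; recovering the stronger bound $n-k_1-k_2$ requires using that $N_1$ is totally geodesic in the ambient $M$, so that the curvature of $M$ in the directions normal to $N_1$ provides $k_1-1$ additional negative directions in the index form along geodesics of $N_2$ running between points of $N_1\cap N_2$.

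\emph{The main obstacle, throughout, is making the index estimates precise with exactly the stated constants} --- the ``$+1$'' in (1) (handled above by the dimension count inside $\gamma'(1)^{\perp}$), the ``$+\delta$'' in (2), and the ``$k_1-1$'' improvement in (3). This requires the Morse index theorem for focal points, a careful choice of test variation fields, and the Morse--Bott/Palais--Smale machinery (the Palais--Smale condition, the handle/CW structure of the path spaces, and the negative-disk-bundle description of critical submanifolds). By comparison, the homotopy-theoretic translation between $\pi_\ast$ of the path spaces and $\pi_\ast(M,N)$, and the reductions of parts (2) and (3) to their respective index estimates, are routine.
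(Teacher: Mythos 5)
Your part (1) is essentially the argument of the source the paper cites for this statement (the paper itself gives no proof, only the reference \cite[Theorem 2.1]{Wilking03}): Morse theory on $P(M;N,N)$, the identification $\pi_i(P(M;N,N),N)\cong\pi_{i+1}(M,N)$, and the index estimate via parallel fields lying in the intersection of the parallel translate of $T_{\gamma(0)}N$ with $T_{\gamma(1)}N$ inside $\gamma'(1)^{\perp}$. That part is correct; just note explicitly that total geodesy of $N$ is what kills the second-fundamental-form boundary terms in the index form, so that $I(E,E)=-\int\langle R(E,\gamma')\gamma',E\rangle\,dt<0$ for the parallel fields.

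Parts (2) and (3), however, rest on mechanisms that do not work. For (2): the restrictions of Killing fields to a critical geodesic $\gamma$ vanish at both endpoints and hence are \emph{null} directions of the index form; nullity cannot be "perturbed" into index (the cells attached in Morse--Bott theory have dimension equal to the index, not index plus nullity). Moreover, the space of such Killing--Jacobi fields along a fixed $\gamma$ has dimension $\dim G-\dim G_{\gamma'(0)}$, which is \emph{at most} $\delta$ (equality only when $\gamma$ leaves $N$ in a principal direction), so your "at least $\delta$" count is backwards precisely for the singular geodesics you must control. A concrete test case: $S^1$ acting on the round $S^n$ with $N=S^{n-2}$, $k=2$, $\delta=1$. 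Along the half-great-circle critical geodesics the Killing--Jacobi field $\sin(t)\,e_{n+1}$ contributes $I=0$, not a negative direction; the index is indeed $n-2k+1+\delta=n-2$, but the extra negative direction comes from the improved alignment of the parallel translate of $T_pN$ with $T_{\gamma(1)}N$, not from the orbit directions. So the "$+\delta$" needs a genuinely different argument from the one you sketch. For (3): doing Morse theory on paths \emph{inside} $N_2$ with endpoints on $N_1\cap N_2$ cannot see "curvature of $M$ in directions normal to $N_1$" --- admissible variations are constrained to be tangent to $N_2$, so there is no way to produce your claimed $k_1-1$ extra negative directions, and no transversality is assumed anyway. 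The standard route is to stay in the ambient manifold: run the argument of part (1) on $P(M;N_1,N_2)$, where the same parallel-field count gives index $\ge (n-k_1)+(n-k_2)-(n-1)=n-k_1-k_2+1$ for every nonconstant critical geodesic, hence the pair $\bigl(P(M;N_1,N_2),\,N_1\cap N_2\bigr)$ is $(n-k_1-k_2)$-connected; then the evaluation fibration $P(M;N_1,N_2)\to N_2$, whose fiber is the homotopy fiber of $N_1\hookrightarrow M$ and is highly connected by part (1) together with $k_1\le k_2$, transfers this to the inclusion $N_1\cap N_2\to N_2$.
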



Notice that fixed-point components of isometric torus actions on compact, positively curved manifolds satisfy the assumptions of the Connectedness Lemma. In addition, these components are oriented (see Theorem \ref{thm:FPSstructure} below). The third author observed that a highly connected inclusion of closed, orientable manifolds implies a certain type of periodicity in cohomology (see \cite[Lemma 2.2]{Wilking03}).

\begin{lemma}[Periodicity Lemma]\label{lem:PeriodicityLemma}
Assume $N^{n-k} \subseteq M^n$ is a $(n-k-l)$--connected inclusion of closed, orientable manifolds, where $n - k - 2l > 0$. There exists $e \in H^k(M;\Z)$ such that the maps $ H^i(M;\Z) \to H^{i+k}(M;\Z)$, $x \mapsto ex$ are surjective for $l \leq i < n - k - l$ and injective for $l < i \leq n - k - l$.
\end{lemma}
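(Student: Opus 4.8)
I would produce the class $e$ as the image in $H^k(M;\Z)$ of the Thom class of the normal bundle of $N$, and then factor cup product with $e$ through the restriction map $j^*\colon H^*(M)\to H^*(N)$ and a connecting map coming from the pair $(M,M\setminus N)$; the two factors are controlled, respectively, by the connectedness hypothesis and by Poincar\'e--Lefschetz duality on the complement. Concretely: since $N\subseteq M$ has codimension $k$ and both are orientable, the normal bundle $\nu$ is an oriented rank-$k$ bundle; fix a closed tubular neighborhood $D$ with $\partial D=S(\nu)$, write $C=M\setminus N$ (so $C$ deformation retracts onto the compact manifold $W:=\overline{M\setminus D}$ with $\partial W=\partial D$), let $u\in H^k(M,C;\Z)$ be the relative Thom class, and set $e:=p(u)\in H^k(M;\Z)$, where $p$ is the natural map in the long exact sequence of $(M,C)$.

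\textbf{The factorization.} By naturality of cup products, $x\cup e=p(x\cup u)$ for $x\in H^i(M)$. Via the excision isomorphism $H^{i+k}(M,C)\cong H^{i+k}(D,\partial D)$ and the Thom isomorphism $H^{i+k}(D,\partial D)\cong H^i(N)$, the map $x\mapsto x\cup u$ is identified with $j^*$ (cup with the Thom class, followed by the Thom isomorphism, equals restriction to the zero section). Thus cup product with $e$ factors as
\[
H^i(M)\xrightarrow{\,j^*\,}H^i(N)\xrightarrow{\,\tau\,}H^{i+k}(M,C)\xrightarrow{\,p\,}H^{i+k}(M),
\]
where $\tau$ is an isomorphism. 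So it suffices to understand $j^*$ and $p$ in the relevant degree ranges.

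\textbf{Controlling the two factors, and conclusion.} The hypothesis that the inclusion is $(n-k-l)$--connected gives $\pi_i(M,N)=0$, hence $H_i(M,N;\Z)=0$, hence (universal coefficients) $H^i(M,N;\Z)=0$ for $i\le n-k-l$; the long exact sequence of $(M,N)$ then shows $j^*\colon H^i(M;\Z)\to H^i(N;\Z)$ is an isomorphism for $i\le n-k-l-1$ and injective for $i=n-k-l$. On the other side, excision identifies $H_*(M,N;\Z)\cong H_*(W,\partial W;\Z)$, and Lefschetz duality on $W$ gives $H^j(C;\Z)\cong H^j(W;\Z)\cong H_{n-j}(W,\partial W;\Z)\cong H_{n-j}(M,N;\Z)$, which vanishes for $j\ge k+l$. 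Feeding this into the long exact sequence of $(M,C)$: the vanishing of $H^{i+k}(C)$ for $i\ge l$ makes $p$ surjective in degree $i+k$ for $i\ge l$, and the vanishing of $H^{i+k-1}(C)$ for $i\ge l+1$ makes $p$ injective in degree $i+k$ for $i>l$. Combining: for $l\le i<n-k-l$ (a nonempty range precisely because $n-k-2l>0$), $j^*$ is an isomorphism and $p$ is surjective, so $\cup\,e$ is surjective; for $l<i\le n-k-l$, $j^*$ and $p$ are both injective, so $\cup\,e$ is injective.

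\textbf{Main obstacle.} The substance is not conceptual but lies in the bookkeeping of the four isomorphisms used---excision, the Thom isomorphism, universal coefficients, and Lefschetz duality---\emph{over $\Z$ and not merely over $\Q$}, together with a careful verification of the identity ``cup with the relative Thom class $=$ restriction to $N$'' with the correct degree shift $+k$. One must also make the excision steps legitimate (passing to a slightly smaller tubular neighborhood), which presupposes that $N$ admits a tubular neighborhood in $M$; this is automatic in the applications, where $N$ is a totally geodesic submanifold or a fixed-point component of an isometric action, and I would state this as the standing hypothesis on the inclusion.
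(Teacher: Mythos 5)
Your proof is correct. Note that the paper itself does not prove this lemma; it is imported verbatim from \cite[Lemma 2.2]{Wilking03}, and your argument is essentially the standard one given there: define $e$ as the Poincar\'e dual of $[N]$ and factor $\cup\, e$ as $j_!\circ j^*$, with the restriction $j^*$ controlled by the connectedness hypothesis and the wrong-way map controlled by duality. The only cosmetic difference is that Wilking realizes the Umkehr map $H^i(N)\to H^{i+k}(M)$ purely via Poincar\'e duality on $N$ and $M$ (so no tubular neighborhood, Thom class, or Lefschetz duality on the complement is needed, and the argument applies to any map of closed oriented manifolds), whereas you route it through $H^{i+k}(M,M\setminus N)$; both give the same ranges, and your bookkeeping of the degrees $l\le i<n-k-l$ and $l<i\le n-k-l$ checks out.
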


This lemma also holds with integer coefficients replaced by coefficients in any field. When $l = 0$, the inclusion $N \subseteq M$ is $(\dim N)$--connected and hence the integral cohomology of $M$ is periodic in the following sense:

\begin{definition}[Periodic cohomology]
For a ring $R$ and a closed $n$-manifold $M$, we say that $H^*(M;R)$ is $k$-periodic if there exists $e \in H^k(M;R)$ such that the maps $\cup e\colon H^i(M;R) \to H^{i+k}(M;R)$ are surjections for $0 \leq i < n-k$ and injections for $0 < i \leq n-k$.
\end{definition}

Note that when $R$ is a field and $k \leq \frac n 2$, either $H^*(M;R) \cong H^*(\s^n;R)$ or  the map $\cup e\colon H^i(M;R) \to H^{i+k}(M;R)$ 
is an isomorphism for all $0 \leq i \leq n-k$. This condition was studied by the first author, who proved the following \cite[Theorem C]{Kennard13}.

\begin{theorem}[Four-Periodicity Theorem]
Let $M^n$ be a closed, simply connected manifold. If $M$ has $k$--periodic integral cohomology for some $k \leq \frac{n}{3}$, then $M$ has four-periodic rational cohomology. 
\end{theorem}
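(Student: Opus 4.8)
The plan is to use Poincaré duality together with the periodicity isomorphisms to force $H^*(M;\Q)$ into a very restricted form, and then to discard the shapes that are not four-periodic by invoking the solution of the Hopf invariant one problem; this reproduces \cite[Theorem C]{Kennard13}. If $H^*(M;\Q)\cong H^*(\s^n;\Q)$ there is nothing to prove, so assume otherwise. Since $k\le\tfrac n3\le\tfrac n2$, the remark following the Periodicity Lemma provides a class $e\in H^k(M;\Q)$ with $\cup e\colon H^i(M;\Q)\to H^{i+k}(M;\Q)$ an isomorphism for all $0\le i\le n-k$. Iterating and using $3k\le n$ gives $e^2\ne 0$ and $e^3\ne 0$; in particular $k$ is even, since graded commutativity would otherwise give $e^2=0$.

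The first substantial step is to analyze the graded ring $H^*(M;\Q)$. Using that its nonzero groups repeat with period $k$ (via $\cup e$, and noting $H^k(M;\Q)=\Q\cdot e$ is one-dimensional) while being interchanged by $H^j\cong H^{n-j}$, one compares the two resulting symmetric ``gap patterns'' — writing $n=qk+r$ with $q\ge 3$ — and deduces that the cohomology is essentially concentrated in a single residue class of degrees. Concretely, I would show that $H^*(M;\Q)$ is generated as a ring by $e$ together with one element $x$ of least positive even degree $m$ having $x^2\ne 0$, with $m\mid k$ and $e$ a nonzero multiple of $x^{k/m}$, and possibly one further class $z$ with $z^2=0$; the periodicity of $H^*$ with period $k$, combined with $k\le\tfrac n3$, rules out a second such ``extra'' class and hence products like $\CP^a\times\CP^b$ or $\s^b\times\HP^a$ with $b\ge 4$. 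Since $e^3\ne 0$, the subalgebra generated by $x$ is $\Q[x]/(x^{s+1})$ with $s\ge 3k/m\ge 3$.

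The second substantial step is to pin down $m$. Choosing an integral multiple of $x$, one sees that for all but finitely many primes $p$ the reduction $\bar x\in H^m(M;\Z/p)$ still satisfies $\bar x^3\ne 0$, so $H^*(M;\Z/p)$ contains a truncated polynomial subalgebra $\Z/p[\bar x]/(\bar x^{s+1})$ with $\bar x$ of even degree $m$ and $s\ge 3$. By the Hopf-invariant-one theorems of Adams and Adem — equivalently, the nonexistence of mod-$p$ ``projective spaces'' of the forbidden degrees, the same phenomenon that forbids a higher-dimensional analogue of $\CaP$ — this forces $m\in\{2,4\}$. It is precisely the hypothesis $k\le\tfrac n3$, through $x^3\ne 0$, that rules out the degree-$8$ possibility $H^*(M;\Q)\cong H^*(\CaP;\Q)$, which is only $\tfrac n2$-periodic. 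Combining with the first step, $H^*(M;\Q)$ must be the rational cohomology of one of $\s^n$ (excluded), $\CP^{n/2}$, $\HP^{n/4}$, $\s^2\times\HP^{(n-2)/4}$, or $\s^3\times\HP^{(n-3)/4}$, and each of the latter four is visibly four-periodic, with periodicity class $x$ when $m=4$ and $x^2$ when $m=2$.

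I expect the main obstacle to be twofold. First, the combinatorial bookkeeping in Step~1 is delicate: one must use Poincaré duality and the period-$k$ repetition not only to extract the generator $x$ but to limit the ``extra'' class to a single one and to confirm that the a priori list really consists of genuinely $k$-periodic spaces. Second, and more seriously, Step~2 cannot be done with rational information alone — determining the admissible degrees of $x$ genuinely requires (secondary) cohomology operations, and this is exactly the point at which the hypothesis $k\le\tfrac n3$ is used in an essential way.
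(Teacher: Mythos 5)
This theorem is not proved in the paper: it is quoted from \cite{Kennard13}, where the argument runs through mod-$p$ \emph{periodicity theorems}. There one uses the Adem relations (decomposability of $\Sq^{k}$, resp.\ of the reduced power $P^{k/2}$, when the index is not a power of the prime) together with the fact that $k$-periodicity pins down the cohomology groups between degrees $k$ and $2k$, to show that $k$-periodic $\Z_2$- resp.\ $\Z_3$-cohomology with $3k\le n$ is $\gcd(2^a,k)$- resp.\ $\gcd(4\cdot 3^b,k)$-periodic; combining the two primes forces the period down to a divisor of $4$, and the improved periodicity is then lifted back to rational coefficients. (The present paper uses exactly this structure of Kennard's proof inside the proof of Corollary~\ref{cor:transversal}.) No secondary operations are needed. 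Your route --- classify the rational ring, then constrain the generator degree by Hopf-invariant-one theorems --- is genuinely different, and both of its main steps have gaps.

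First, the classification in Step~1 is asserted rather than argued, and the claim that periodicity excludes a second ``extra'' class contradicts what Proposition~\ref{pro:4periodic}(3) allows: a $4$-periodic ring may be generated by $a\in H^4$, $c\in H^2$ with $c^2=0$, and an even number $s\ge 2$ of classes $b_i\in H^3$ with $b_ib_j=m_{ij}ac$. Nothing in $k$-periodicity obviously forbids this --- the authors need the geometric $b_3$ Lemma~\ref{lem:b3} precisely because it cannot be excluded by periodicity alone --- so your Step~1 is simultaneously stronger than the theorem and unsupported. Second, and more seriously, Step~2 applies truncated-polynomial-algebra theorems to a mere subalgebra $\Z_p[\bar x]/(\bar x^{s+1})\subseteq H^*(M;\Z_p)$. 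Those theorems require control of the \emph{entire} cohomology in the range between $m$ and $2m$: one must know that $\Sq^{i}\bar x$ (resp.\ $P^{i}\bar x$) vanishes or is decomposable for $0<i<m$ before indecomposability of $\Sq^{m}$ yields anything. A subalgebra gives no such control ($H^*(\HP^6;\Z_p)$ contains $\Z_p[y]/(y^4)$ with $|y|=8$), and minimality of $m$ does not prevent nonzero cohomology in the intermediate degrees. The input that makes the Steenrod argument work is exactly the $k$-periodicity of the mod-$p$ cohomology; supplying it is the content of Kennard's periodicity theorem and is the step your proposal treats as a black box. Finally, even granting $m\in\{2,4\}$, passing from ``the subalgebra generated by $x$ is truncated polynomial'' to ``$\cup\, x$ is an isomorphism on all of $H^*(M;\Q)$ in the required range'' needs the factorization argument of \cite[Lemma 3.2]{Kennard13}; your route to it goes through the unproven classification.
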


We work frequently with the condition of four-periodic rational cohomology, so we explain next what exactly it implies.

\begin{proposition}\label{pro:4periodic}
Assume $M^n$ is a connected, closed, and orientable manifold with four-periodic rational cohomology and $b_1(M) = 0$. One of the following occurs:
	\begin{enumerate}
	\item $M$ has the rational cohomology of $\s^n$, $\CP^{\frac n 2}$, or $\HP^{\frac n 4}$.
	\item $M$ has the rational cohomology of $\s^2 \times \HP^{\frac{n-2}{4}}$ or $\s^3 \times \HP^{\frac{n-3}{4}}$.
	\item $H^*(M;\Q)$ is generated as an algebra by some $c \in H^2(M;\Q)$, $a \in H^4(M;\Q)$, and $b_1,\ldots,b_s \in H^3(M;\Q)$ for some even $s > 0$. A complete set of relations is given by $c^2 = 0$, $a^{\frac{n-2}{4} + 1} = 0$, $c b_i = 0$ for all $i$, and $b_i b_j = m_{ij} a c$ for some non-degenerate, skew-symmetric matrix $(m_{ij})$. In particular, $n \equiv 2 \bmod 4$.
	\end{enumerate}
\end{proposition}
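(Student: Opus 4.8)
The plan is to exploit four-periodicity to get a periodicity operator $a \in H^4(M;\Q)$ and then bootstrap the ring structure degree-by-degree using Poincar\'e duality. First I would treat the easy range: since $b_1(M) = 0$, we have $H^1 = 0$, and the four-periodicity gives a class $a \in H^4$ such that $\cup a\colon H^i \to H^{i+4}$ is onto for $0 \le i < n-4$ and injective for $0 < i \le n-4$; combined with Poincar\'e duality this forces $H^i(M;\Q)$ to have dimension equal to $\dim H^{i+4}$ in the appropriate range and pins down all the Betti numbers in terms of $b_2, b_3$ (and $b_0 = b_n = 1$). The key case split is on whether $b_2 = 0$ or $b_2 = 1$ (it cannot exceed $1$: if $c, c' \in H^2$ were independent, periodicity up through degree $n$ would force, via duality, a contradiction unless $n \equiv 2$ and the products $c^2, cc', c'^2 \in H^4$ are constrained, but then $\dim H^4 \ge 3$ and iterating $a$ produces too much cohomology — this needs a short argument). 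Likewise $b_3$ finite and even will come out.

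Next, in the case $b_2 = 0$: then $H^2 = 0$, so by the periodicity isomorphism $H^{4k+2} = 0$ for all $k$ in range, and one checks $H^{\mathrm{odd}} = 0$ entirely (the odd part is built from $H^1 = H^3 = \cdots$; here I would argue $H^3 = 0$ too, because a nonzero $b \in H^3$ would by duality pair with something in $H^{n-3}$, and tracing through four-periodicity $n-3$ must be reached from $3$ by $\cup a$, forcing $H^3 \ne 0 \Rightarrow H^{n-3} \ne 0$ with $n-3 \equiv 0 \bmod 4$, i.e. $n \equiv 3$, but then $n$ is odd and $\chi = 0$... actually the cleaner route: $b_2 = 0$ and $H^*$ four-periodic generated by $a$ over $H^0 \oplus (\text{odd stuff})$; if all odd Betti numbers vanish we land in case (1) with $\s^n$ or $\HP^{n/4}$, and if $b_3 > 0$ we get case (2), the $\s^3 \times \HP^{(n-3)/4}$ pattern — verifying the ring relations there is a direct Poincar\'e-duality computation). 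In the case $b_2 = 1$, pick the generator $c \in H^2$; then $c^2 \in H^4$, and either $c^2 = 0$ or $c^2$ is a nonzero multiple of $a$ (after rescaling $a$). If $c^2 \ne 0$, then $c$ itself is a periodicity class and $M$ has the rational cohomology of $\CP^{n/2}$ (case (1)) or $\s^2 \times \CP^{\ldots}$ — but one rules out the latter by the same Betti-number bookkeeping, or it collapses into $\CP^{n/2}$; if $b_3 = 0$ as well then we're clearly in (1). If $c^2 = 0$, we are heading for case (3) or case (2) with $\s^2 \times \HP$: here $H^* $ is generated by $c$, $a$, and the odd classes, $a^{(n-2)/4 + 1} = 0$ by top-degree considerations, and $c b_i \in H^5$ must be analyzed.

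The heart of the proof is then the structure of $H^3$ and its interaction with $c$ and $a$ when $b_2 = 1, c^2 = 0$. I would show $c \cdot H^3 = 0$: the product $c b_i$ lives in $H^5 \cong H^{n-5}$ by duality, and $\cup a$ maps $H^5$ isomorphically up; but one shows $H^5$ is entirely "new" (spanned by $c \cdot(\text{stuff in }H^3)$ versus $a\cdot H^1 = 0$), and a dimension count using the fact that $\cup c \colon H^3 \to H^5$ and $\cup c \colon H^{n-5}\to H^{n-3}$ are dual forces this map to have a specific rank; pinning it to zero uses that $\cup c$ composed with itself is $\cup c^2 = 0$. Then the pairing $H^3 \times H^{n-3} \to H^n = \Q$ together with $H^{n-3} = a^{(n-6)/4} \cdot (c \cdot H^3 \oplus \ldots)$ — wait, more carefully: $n - 3 \equiv 3 \bmod 4$ only if $n \equiv 2$, which is exactly the claim $n \equiv 2 \bmod 4$ that case (3) asserts; in the other residues $H^{n-3}$ is forced to vanish or the odd cohomology is forced to vanish, landing us back in (1) or (2). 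So: $n \equiv 2 \bmod 4$, $H^{n-3} \cong a^{(n-6)/4} c \cdot H^3$-ish — actually $H^{n-3}$ should be identified with $a^{(n-2)/4 - 1}$ times the degree-$3$ part, i.e. the odd classes $b_j$ times $a^{(n-6)/4} c$ or similar. The non-degenerate skew-symmetric pairing $b_i b_j = m_{ij} a c$ then comes from Poincar\'e duality: the cup product $H^3 \times H^{n-3} \to \Q$ is a perfect pairing, $H^{n-3}$ is spanned by $\{a^{(n-6)/4} c\, b_j\}$, and nondegeneracy of the resulting form on $H^3$ forces $s = b_3$ even and $(m_{ij})$ invertible skew-symmetric; one checks these are a \emph{complete} set of relations by verifying the Poincar\'e series of the presented algebra matches that of $M$.

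The main obstacle I anticipate is not any single step but the bookkeeping needed to rule out the spurious mixed cases (e.g. why $b_2 \le 1$, why there is no rational cohomology $\s^2 \times \CP^m$ or $\s^3 \times \CP^m$ or $\s^2 \times \s^{n-2}$ appearing, why $c \cdot H^3 = 0$ rather than merely small) — each is a short Poincar\'e-duality-plus-periodicity argument, but there are several of them and they must be organized so that every manifold with four-periodic rational cohomology and $b_1 = 0$ is funneled into exactly one of (1)--(3). Equivalently: the hard part is proving that the \emph{only} class in $H^2$ (if any) and the structure of $H^3$ are as rigid as claimed, since a priori four-periodicity only controls things in steps of $4$ and says nothing directly about degrees $2$ and $3$ — one must propagate that information using the ring structure and duality.
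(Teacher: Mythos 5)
Your toolkit is the right one (the periodicity element $a\in H^4$, Poincar\'e duality, and the ring structure), but your organization --- splitting first on $b_2$ --- creates exactly the bookkeeping problems you worry about at the end, and two of the steps you defer are genuine soft spots. The paper instead makes one structural dichotomy up front: either $a$ factors as a product of two degree-two classes, in which case $\cup c_1$ itself induces periodicity (this is \cite[Lemma 1.2]{Kennard13}; it is \emph{not} automatic from $\cup c^2$ being an isomorphism, so your assertion ``then $c$ is a periodicity class'' is the same unproved step) and $M$ is a rational $\CP^{n/2}$; or $a$ is not such a product, whence $c^2=0$ for \emph{every} $c\in H^2$. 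It then splits on $n\bmod 4$. With that organization each of your difficulties evaporates: for $n\equiv 1$, $a^{(n-1)/4}\in H^{n-1}\cong H_1=0$ forces $a=0$; for $n\equiv 0$, a nonzero $u\in H^2$ pairs with some $v\in H^{n-2}=a^h\cdot H^2$, so $uu_2\ne 0$ in $H^4\cong\Q a$ and $a$ would factor --- so $H^2=0$ outright (this is the argument your sketch is missing for ruling out a fake $\s^2\times(\cdot)$ in dimensions $\equiv 0$); for $n\equiv 2$, $H^2\cong H^{n-2}\cong H^4$ is at most one-dimensional, which is the clean proof of your claim $b_2\le 1$.

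The place where your proposal most clearly goes off the rails is the argument that $c\cdot H^3=0$. The rank/duality argument you sketch (and visibly lose track of mid-sentence) is unnecessary: in the only surviving case $n\equiv 2\bmod 4$, four-periodicity plus Poincar\'e duality give $H^5\cong H^{n-5}\cong\cdots\cong H^1=0$, so $cb_i=0$ for degree reasons alone. Similarly, the skew pairing in case (3) is not something you need to reconstruct from a description of $H^{n-3}$: the composition $H^3\times H^3\to H^6\to H^n$ (cup product followed by $\cup a^{(n-6)/4}$) is non-degenerate by Poincar\'e duality and periodicity, and $H^6$ is generated by $ac$, which is the whole content of $b_ib_j=m_{ij}ac$. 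I would encourage you to rewrite the proof with the paper's two pivots (does $a$ factor? what is $n\bmod 4$?) --- every ``spurious mixed case'' you list is then excluded in one line rather than by a separate duality argument.
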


\begin{proof}
 We use rational coefficients in the proof. 
Let $a \in H^4(M)$ be an element inducing periodicity. If $a = 0$, then four-periodicity implies that $H^*(M) \cong H^*(\s^n)$. So, assume $a \neq 0$ or equivalently 
$H^4(M)\cong \Q$. If $a$ can be written as a product 
$a=c_1\cdot c_2$ with $c_i\in H^2(M)$, then $\cup c_1$ 
induces periodicity as well.  In fact, this 
follows form an argument similar 
to the proof of Corollary~\ref{cor:transversal} below but also by 
 (\cite[Lemma 1.2]{Kennard13}). But then $M^n$ has two-periodic 
 cohomology and clearly $H^*(M^n)\cong \CP^{n/2}$.  
 Thus we may assume in addition that $a$ is not a product of degree 2 elements.

If $n \equiv 1 \bmod 4$, then $a^{\frac{n-1}{4}} \in H^{n-1}(M) \cong H_1(M) \cong 0$. By four-periodicity, it follows that $a = 0$, which is a contradiction. 

If $n \equiv 0 \bmod 4$, then $H^i(M) = 0$ for all odd $i$ by four-periodicity and Poncar\'e duality. If $H^2(M) = 0$, it follows immediately from four-periodicity that $H^*(M) \cong H^*(\HP^{\frac n 4})$. If instead $H^2(M) \neq 0$, then pick some non-zero $u \in H^2(M)$. Using Poincar\'e duality, choose $v \in H^{n-2}(M)$ such that $uv \neq 0$. Since $v=a^hu_2$ with $u_2\in H^2(M)$, we deduce
$uu_2\neq 0$ and $a$ is a product of degree 2 elements.

If $n \equiv 3 \bmod 4$, then $H^{1+4i}(M) = 0$ and $H^{2+4i}(M) = 0$ for all $i$ by four-periodicity and Poincar\'e duality. Similarly, $H^{4i}(M) \cong H^{4i+3}(M) \cong \Q$. Moreover, four-periodicity immediately implies that $H^*(M) \cong H^*(\s^3 \times \HP^{\frac{n-3}{4}})$.

Finally, assume $n \equiv 2 \bmod 4$. Then $H^2(M)\cong \Q$ by Poincar\'e duality. If $c \in H^2(M)$ be a generator, then $c^2=0$,
since $a$ is not a product of degree 2 elements.  
Thus $H^*(M)$ is generated in even degrees by $a$ and $c$.
 Assume now without loss of generality that $H^3(M) \neq 0$, and pick a basis $b_1,\ldots,b_s \in H^3(M)$. Note that $c b_i = 0$ for all $i$ since $H^5(M) \cong H^1(M) \cong 0$. As for the remaining relations, note that Poincar\'e duality and four-periodicity imply that the composition
	\[H^3(M) \times H^3(M) \longrightarrow H^6(M) \longrightarrow H^n(M)\]
given by the cup product followed by multiplication by $a^{\frac{n-6}{4}}$ is a skew-symmetric, non-degenerate map. Since in addition four-periodicity implies that $H^6(M)$ is generated by $ac$, the claimed relations follow.
\end{proof}

An important consequence is the following theorem refining \cite[Theorem 4.2]{Kennard13}.

\begin{corollary}\label{cor:transversal} 
Let $M^n$ be a connected, closed, positively curved, orientable manifold. 
 Suppose $N_1^{n-k_1}$ and $N_2^{n-k_2}$ are transversely intersecting submanifolds.  Assume that each one is a fixed-point component 
of an isometric circle action.
 Let $k_1 \leq k_2$.
	\begin{enumerate}
	\item If $2k_1 + k_2 \leq n$, then $N_2$ 
	has $k_1$-periodic integral cohomology.
	\item If $3k_1 + k_2 \leq n$, then  $N_2$ and $N_1\cap N_2$ 
	 have four-periodic rational cohomology.
	\item	If $2k_1 + 2k_2 \leq n$ or $\max\{3k_1+k_2,k_1+2k_2\}\le n$,
	 then $M$ and $N_1$ 
	  have four-periodic rational cohomology.
	\end{enumerate}
The conclusions also hold for the universal covers of the manifolds.
\end{corollary}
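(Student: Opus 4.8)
The plan is to deduce all three parts from the Connectedness Lemma, the Periodicity Lemma, the Four-Periodicity Theorem, and Proposition~\ref{pro:4periodic}, following the scheme of \cite[Theorem~4.2]{Kennard13}; the improvement in the numerical hypotheses comes from using the fixed-point-component form (part~(2)) of the Connectedness Lemma, where a nontrivial circle action has one-dimensional principal orbits and hence gains an extra $+1$ in connectivity over the totally-geodesic estimate of part~(1). First I would make three reductions: pass to universal covers throughout (each inclusion below is at least $2$-connected under the stated hypotheses, so the preimage of a submanifold is its universal cover, the connectivity estimates persist, the Four-Periodicity Theorem becomes available, and — since positive curvature forces finite fundamental groups — rational four-periodicity transfers back to the original manifolds); record that $k_1,k_2$ are even and positive, since the fixed-point set of a nontrivial circle action has normal bundle with a complex structure, so that $N_1$, $N_2$, $N_1\cap N_2$, and $M$ are all closed and orientable by Theorem~\ref{thm:FPSstructure} and transversality; and note that in (3) each of the two hypotheses implies the pair of inequalities $3k_1+k_2\le n$ and $k_1+2k_2\le n$ (the former using $k_1\le k_2$), so it suffices to argue under those.

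For (1), part~(3) of the Connectedness Lemma makes the transverse inclusion $N_1\cap N_2\hookrightarrow N_2$ an $(n-k_1-k_2)$-connected inclusion, which — as $N_1\cap N_2$ has codimension $k_1$ in $N_2$ — is exactly the borderline case $l=0$ of the Periodicity Lemma; the hypothesis $2k_1+k_2\le n$ is the condition $k_1\le\tfrac12\dim N_2$, which in particular forces $\dim N_2-k_1>0$, so the Periodicity Lemma produces a class in $H^{k_1}(N_2;\Z)$ exhibiting $k_1$-periodic integral cohomology. For (2), the stronger inequality $3k_1+k_2\le n$ says precisely $k_1\le\tfrac13\dim N_2$, so the Four-Periodicity Theorem upgrades $N_2$ to four-periodic rational cohomology; and since the same inclusion is even $(\dim(N_1\cap N_2))$-connected, one has $H^*(N_1\cap N_2;\Q)\cong H^*(N_2;\Q)$ below the top degree, so restricting a degree-four periodicity class of $N_2$ and invoking Poincaré duality on $N_1\cap N_2$ shows that it is four-periodic as well — the standard fact that four-periodic rational cohomology is inherited by highly connected submanifolds of not-too-large codimension.

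For (3), we already have from (2) that $N_2$ and $N_1\cap N_2$ are four-periodic rationally, and the task is to push four-periodicity up the highly connected inclusions $N_2\hookrightarrow M$ and $N_1\hookrightarrow M$, which part~(2) of the Connectedness Lemma makes $(n-2k_2+2)$- and $(n-2k_1+2)$-connected respectively. If $H^4(N_2;\Q)=0$ then $N_2$ is a rational homology sphere; otherwise one pulls back a degree-four periodicity class of $N_2$ to a class $\widehat a\in H^4(M;\Q)$ and checks that cup product with $\widehat a$ has the surjectivity and injectivity required by four-periodicity in every degree. The degrees near $0$ are controlled since $H^*(M;\Q)\cong H^*(N_2;\Q)$ there, and the degrees near $n$ by Poincaré duality on $M$ (cup product with an even-degree class is self-adjoint for the intersection form); the remaining middle band of degrees is the technical heart, closed by the interlocking Poincaré-duality and multiple-periodicity argument of \cite[Section~4]{Kennard13}, now fed by the sharper connectivities above and, when needed, by the four-periodicity of $N_1\cap N_2$ — this is where the two hypotheses $2k_1+2k_2\le n$ (the classical case of a transverse intersection of dimension at least $\tfrac n2$) and $k_1+2k_2\le n$ are consumed. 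Once $M$ has four-periodic rational cohomology, Proposition~\ref{pro:4periodic} determines its cohomology ring, and four-periodicity of $N_1$ follows by restricting the periodicity class along $N_1\hookrightarrow M$ and applying the same inheritance principle, using that $k_1\le\tfrac n3$ by the hypotheses together with $k_1\le k_2$.

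The step I expect to be the genuine obstacle is the middle-degree bookkeeping in (3): verifying that the exact constants $2k_1+2k_2\le n$, respectively $3k_1+k_2\le n$ and $k_1+2k_2\le n$, leave no unresolved band of cohomological degrees when four-periodicity is transferred from $N_2$ and $N_1\cap N_2$ up to $M$ and then out to $N_1$, all while carefully tracking the $+1$'s gained from the fixed-point-component form of the Connectedness Lemma, since it is precisely these that make the constants here sharper than in \cite[Theorem~4.2]{Kennard13}. A secondary and essentially routine point is checking that rational four-periodicity is unaffected by passing to and from the finite universal cover, which can be verified case by case against the short list of cohomology types allowed by Proposition~\ref{pro:4periodic}.
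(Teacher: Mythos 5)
Your outline for (1) and for the universal-cover version of (2) matches the paper's: Connectedness Lemma (3) plus the Periodicity Lemma with $l=0$ give $k_1$-periodicity of $\tilde N_2$, and $3k_1+k_2\le n$ feeds the Four-Periodicity Theorem. But there are two genuine gaps. First, the descent from universal covers is not ``essentially routine'' and cannot be ``verified case by case against the list'' of Proposition~\ref{pro:4periodic}: in the only nontrivial (odd-dimensional) case, $\tilde N_2$ may satisfy $H^*(\tilde N_2;\Q)\cong H^*(\s^3\times \HP^l)$ with a deck transformation $\sigma$ acting by $-\id$ on $H^4$ while still preserving orientation (its action on $H^{4l+3}$ is $\epsilon_3\epsilon_4^l$, which can be $+1$ with $\epsilon_4=-1$); then the invariant ring is $8$-periodic but not $4$-periodic, so your transfer would simply fail on this case. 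The paper excludes it by a real argument: pass to the intermediate quotient $\bar N=\tilde N/\langle\sigma\rangle$ with $\sigma$ of $2$-power order, observe $\bar N$ inherits $k$-periodic integral and $\Z_3$ cohomology from the highly connected inclusion, and invoke the $\Z_3$-version of the Four-Periodicity Theorem to force $k'$-periodicity with $H^{k'}(\bar N)=0$, a contradiction. Without something like this, your proof only establishes the universal-cover statements.

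Second, in (3) you correctly identify the middle-degree band as the obstacle but then defer it to \cite[Section 4]{Kennard13} with altered constants; note also that pushing periodicity ``up'' from $N_2$ cannot work by itself, since under $k_1+2k_2\le n$ the inclusion $N_2\to M$ is only guaranteed to be $(k_1+2)$-connected, so you only control $H^*(M)$ in low degrees. The paper's mechanism is different and self-contained: apply the Periodicity Lemma to $N_1\subseteq M$ (using the extra $+1$ from the circle orbits, and $4k_1\le 3k_1+k_2\le n$ so the band $[k_1-2,\,n-2k_1+2]$ reaches past the middle dimension), obtain $e\in H^{k_1}(M)$ periodic on that band, factor $e=au$ with $a\in H^4(M)$ using the low-degree agreement with the four-periodic $N_2$, and then deduce that $\cup a$ is an isomorphism on the band by combining surjectivity of $\cup e$ with injectivity of $\cup u$ (the latter following from injectivity of $\cup e$ in a shifted degree). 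This factorization step is the missing idea in your proposal; supplying it (or an equivalent) is necessary, since the constants here are strictly weaker than those in \cite[Theorem 4.2]{Kennard13}.
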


\begin{proof} The manifold $N_i$ is orientable 
since the circle action induces a complex structure on its
normal bundle for $i=1,2$.
For the first claim, let $\pi:\tilde N_2 \to N_2$ be the universal cover. Then $\pi^{-1}(N_1 \cap N_2)$ is connected  and simply connected by the Connectedness Lemma. In particular, $\tilde N_2$ and $\pi^{-1}(N_1\cap N_2)$ are orientable. By the Connectedness Lemma, $N_1 \cap N_2 \to N_2$ is $(n-k_1-k_2)$-connected. Since coverings induce isomorphisms on homotopy groups above the fundamental group, the inclusion $\pi^{-1}(N_1 \cap N_2) \to \tilde N_2$ is also $(n-k_1-k_2)$-connected. Hence, $\tilde N_2$ is $k_1$-periodic by the Periodicity Lemma. 

Claim (2) follows for the universal covers immediately from the first claim and the Four-Periodicity Theorem. To get them for the manifolds themselves, we only have to consider odd 
dimensions, since even-dimensional, orientable, positively curved manifolds are simply connected. Being an odd-dimensional 
positively curved manifold, 
$L:=N_1\cap N_2$ is orientable. Put  $m=\dim(N_2)$ and $k=k_1$. What 
we know and all we need is that
$L^{m-k}\to N_2^{m}$ is $(m-k)$-connected with $k\le m/3$. 
As before, the inclusion map 
of $\tilde L\to  \tilde N_2$ is then $(m-k)$-connected.  
If $\tilde N_2$ is a rational homology sphere, so is $N_2$ 
and thereby $L$. By Proposition~\ref{pro:4periodic},  
we may assume $m=4l+3$ and
 $H^*(\tilde N_2)\cong H^*(\Sph^3\times \HP^l)$.
The deck transformation group preserves the orientation. 
If the finite deck transformation group acts trivially on the rational cohomology of $\tilde N_2$,
then $N_2$ has the same rational cohomology as $\tilde N_2$. 
Thus we may assume that there is a deck transformation 
$\sigma$ whose induced map on $H^4(\tilde N)$ is $-\id$. One can then assume that the order of 
$\sigma$ is a power of two.
We plan to get a contradiction by looking at the quotient $\bar N= \tilde N/\ml \sigma\mr$
of the cyclic group.
The rational cohomology of $\bar N$ is $8$-periodic, and it is
 generated by  one element in degree $8$ 
 and another element that has either degree $3$ or $7$. 
Using $k\le m/3$ we see  that $\bar N$ is not a rational 
cohomology sphere.
On the other hand,  $\tilde L/\ml \sigma\mr \to \bar N$ 
is $(m-k)$-connected,  and thus  
$\bar N$ has $k$-periodic integral cohomology.
The same holds with $\Z_3$ coefficients and
by the $\Z_3$-version of the Four-Periodicity Theorem in \cite{Kennard13}, 
the $\Z_3$-cohomology is $k'$-periodic where $k'$ is the greatest
common divisor of $4\cdot 3^m$ and $k$. 
Moreover, using
$H^1(\bar N,\Z_3)=0$, one can show as
 in proof of \cite[Theorem C]{Kennard13} 
that the rational cohomology of $\bar N$ is $k'$-periodic, too.
This is a contradiction since $H^{k'}(\bar N)=0$.

For the third claim, note that $2k_1 + 2k_2 \leq n$ implies $\max\{3k_1+k_2,k_1+2k_2\}\le n$.  
\hspace*{0em}From the Connectedness Lemma, we deduce that all three manifolds 
have the same fundamental group. 
We may assume that $M$ is simply connected, by passing to its universal cover.  But we can also run the following argument on $M$ 
itself.
By the second claim $N_2$ has four-periodic rational cohomology.
Since the inclusion $N_2\rightarrow M$ is $k_1+2$ connected, the same holds for the 
cohomology of $M$ up to degree $k_1+2$.
 The Periodicity Lemma shows also that there is an  $e\in H^{k_1}(M)$
 such that $\cup e\colon H^i(M) \rightarrow H^{i+k_1}(M)$ is surjective 
 for $i=k_1-2$, injective for $i=n-2k_1+2$, and bijective in between. 
 We can assume by the former statement that
 $e=  a  u  \neq 0$ with  $a\in H^4(M)$ 
 and $u\in H^{k_1-4}(M)$. 
 Since $n-2k_1+2\ge n/2 +2$, 
  it suffices to prove that
  $$\cup a \colon H^i(M) \rightarrow H^{i+4}(M)$$
   remains an isomorphism for
 $i=k_1-2,\ldots,n-2k_1-3$ and is injective for $i=n-2k_1-2$. 
 The injectivity is known from before in degree $i=k_1-2$ 
 and follows from the injectivity of $\cup e$ in the other degrees.
 To prove surjectivity, one uses the surjectivity of 
 $\cup e\colon H^{i}(M)\rightarrow H^{i+k_1}(M)$
 combined with the injectivity of 
 $\cup u\colon H^{i+4}(M)\rightarrow H^{i+k_1}(M)$. The latter follows from 
 the injectivity of $\cup e$ in degree $i+4$.
\end{proof}

The results above imply four-periodic cohomology in the presence of certain transverse intersections of totally geodesic submanifolds. On one hand, such submanifolds do not exist for generic metrics (see \cite{Ebin68, MurphyWilhelm19}). On the other hand, fixed-point components of isometries are abound in positive curvature in the presence of symmetries. The starting point is always the following result, which is due to Berger \cite{Berger61} in even dimensions and Sugahara \cite{Sugahara82} in odd dimensions (cf. \cite{GroveSearle94}).

\begin{theorem}[Berger]\label{thm:Berger}
If a torus $\gT^d$ acts isometrically on a closed, positively curved Riemannian manifold, then $\gT^d$ has a fixed point in even dimensions and some $\gT^{d-1} \subseteq \gT^d$ has a fixed point in odd dimensions.
\end{theorem}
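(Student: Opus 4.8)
The plan is to bootstrap everything from the single classical fact, due to Berger \cite{Berger61}, that a Killing field on a compact even-dimensional positively curved manifold vanishes somewhere (the role of even-dimensionality being that a skew-symmetric endomorphism of an even-dimensional space whose kernel contains one line is forced to have a kernel of dimension at least two, which then supplies a direction along which positive curvature yields a contradiction). Granting this, the \emph{even-dimensional assertion} follows by induction on $d$. The case $d=1$ is exactly Berger's statement, since $\Fix(\gT^1)$ is the zero set of the generating Killing field. For $d\ge2$, fix a circle subgroup $\gS^1\subseteq\gT^d$; its fixed-point set $F$ is nonempty, and each component $N$ is a closed, totally geodesic --- hence positively curved --- submanifold whose codimension is even, because the normal isotropy representation of $\gS^1$ has no trivial summand; thus $\dim N$ is even. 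Since $\gT^d$ is abelian it preserves $F$, and being connected it preserves each component $N$, so $\gT^{d-1}=\gT^d/\gS^1$ acts isometrically on $N$; by induction it fixes a point $q\in N$, which lies in $F$ and is therefore also fixed by $\gS^1$, hence by all of $\gT^d$.

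For the \emph{odd-dimensional assertion} I would induct on $n=\dim M$ and reduce to the even case. We may assume the action is effective (the general case follows by factoring out the ineffective kernel) and that $d\ge2$ (for $d\le1$ the trivial group $\gT^{d-1}$ fixes every point). Suppose first that some point $p$ has positive-dimensional isotropy: then the identity component of $\gT^d_p$ contains a circle $\gS^1\subseteq\gT^d$, and the component $N$ of $\Fix(\gS^1)$ through $p$ is closed, totally geodesic, positively curved, of even codimension at least $2$, hence odd-dimensional with $\dim N<n$. The torus acts on $N$ through $\gT^d/K$, where the kernel $K\supseteq\gS^1$ has positive dimension; by induction a corank-one subtorus of $\gT^d/K$ fixes a point $q\in N$, and the identity component of the preimage of this subtorus in $\gT^d$ is a $(d-1)$-dimensional torus fixing $q$. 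Suppose instead every isotropy group is finite but not all are trivial: pick $p$ with nontrivial isotropy and a nontrivial cyclic subgroup $\Z_k\subseteq\gT^d_p$. Since $\Z_k$ is central in $\gT^d$, the torus preserves $\Fix(\Z_k)$ and its components; the component $N$ through $p$ is closed, totally geodesic and positively curved, and $\dim N<n$ because the slice representation of $\gT^d_p$ at $p$ is faithful (effectiveness), so $\Z_k$ acts nontrivially on $T_pM$. The kernel of the $\gT^d$-action on $N$ is finite, since a positive-dimensional kernel would fix $p$, contradicting finiteness of $\gT^d_p$; hence $\gT^d$ acts effectively on $N$. If $\dim N$ is even, the even-dimensional assertion already proved gives a $\gT^d$-fixed point in $N\subseteq M$; if $\dim N$ is odd, induction on $n$ gives a $\gT^{d-1}$-fixed point in $N$; in both cases we are done.

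The only remaining possibility is that $\gT^d$ acts freely. Then for any circle $\gS^1\subseteq\gT^d$ the orbit space $B:=M/\gS^1$ is a closed manifold of even dimension $n-1$, and it is positively curved by O'Neill's submersion formula (using compactness of $M$ for a uniform positive lower curvature bound); moreover $\gT^{d-1}=\gT^d/\gS^1$ acts on $B$ freely and by isometries. Applying Berger's theorem to the action on $B$ of any circle in $\gT^{d-1}$ produces a fixed point, contradicting freeness, so this case cannot occur --- which completes the induction. The crux is exactly this odd-dimensional half: there is no Killing-field analogue of Berger's theorem in odd dimensions (the Hopf action on $\gS^3$ is free), so one is forced to descend to the even-dimensional O'Neill quotient to invoke the even-dimensional result; this is the point of the arguments of Sugahara \cite{Sugahara82} and Grove--Searle \cite{GroveSearle94}. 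Everything else is routine bookkeeping with fixed-point components of subtori of $\gT^d$ together with the elementary facts that these are closed, totally geodesic, and of even codimension for circle subgroups and of possibly odd codimension for finite subgroups.
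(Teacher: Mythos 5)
The paper does not prove this statement at all --- it simply cites Berger \cite{Berger61} for the even-dimensional half and Sugahara \cite{Sugahara82} (cf.\ Grove--Searle \cite{GroveSearle94}) for the odd-dimensional half --- so there is no in-text argument to compare against; what you have written is a self-contained reconstruction, and I find it correct. Your even-dimensional induction (pass to a component of $\Fix(\gS^1)$, which is closed, totally geodesic, positively curved and of even codimension, and let $\gT^d/\gS^1$ act on it) is exactly the standard reduction to Berger's Killing-field theorem. Your odd-dimensional trichotomy is also sound: positive-dimensional isotropy or nontrivial finite isotropy each hand you a proper totally geodesic fixed-point component on which to induct on the dimension (with the even-dimensional result absorbing the case of odd codimension), and the free case is killed by passing to the even-dimensional O'Neill quotient $M/\gS^1$, which is where the genuinely odd-dimensional input lives. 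Two cosmetic remarks: in the finite-isotropy case the $\gT^d$-action on $N$ is only \emph{almost} effective (finite kernel), not effective --- harmless, since the statement never requires effectiveness; and O'Neill's formula gives $\sec_B\ge\sec_M>0$ pointwise, so no compactness-based uniform lower bound is needed there. Neither affects the argument.
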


If the torus $\gT^d$ has large rank, this theorem implies the existence of many fixed-point sets for the subgroups of $\gT^d$.  We mention some basic properties:

\begin{theorem}
\label{thm:FPSstructure}
Let $\gT^d$ act isometrically on a closed, oriented, positively curved Riemannian manifold $M$. 
If $N$ is the fixed-point component of a subgroup $\gH \subseteq \gT^d$, then
	\begin{enumerate}
		\item the action of $\gT^d$ restricts to $N$, and
	\item $N$ is a closed, orientable, totally geodesic submanifold of even codimension.
\end{enumerate}	
\end{theorem}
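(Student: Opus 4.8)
The plan is to treat the two assertions in turn, both of which are essentially standard facts about fixed-point components of isometric actions, but worth spelling out. For (1): the torus $\gT^d$ is abelian, so for any $g \in \gT^d$ and $h \in \gH$ we have $gh = hg$; hence $g$ maps the fixed-point set $M^{\gH}$ to itself, and since $g$ is continuous it preserves connected components after we check that $g$ cannot move one component to a different one. The cleanest way is to note that $\gT^d$ is connected: the map $\gT^d \to \pi_0(M^{\gH})$ sending $g$ to the component of $g(N)$ is continuous into a discrete set, hence constant, and it sends the identity to $N$. So every $g \in \gT^d$ preserves $N$, which is assertion (1). Note that here we only use that $\gT^d$ is connected and abelian and that $\gH$ is a subgroup; no curvature is needed for this part.

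For (2): since $N = M^{\gH}$ is a fixed-point component of a subgroup of the isometry group, it is a closed totally geodesic submanifold of $M$ by the standard fact that the fixed-point set of any set of isometries is a disjoint union of closed totally geodesic submanifolds (each component is the image of $\exp_p$ restricted to the fixed subspace of the isotropy representation of $\gH$ on $T_pM$). It is closed because it is a closed subset of the compact $M$. The two substantive points are that $N$ has \emph{even} codimension and is \emph{orientable}. For the codimension, first reduce to the case $\gH = \gS^1$: by replacing $\gH$ with its identity component we do not change $N$ (a disconnected $\gH$ has the same fixed-point set as $\gH_0$ when $N$ is a component, after passing to a subtorus whose fixed set is $N$ — more carefully, $N$ is a component of $\Fix(\gT_N)$ where $\gT_N$ is the largest subtorus of $\gT^d$ with $N \subseteq \Fix(\gT_N)$, and we may pick a circle $\gS^1 \subseteq \gT_N$ generic enough that $\Fix(\gS^1)$ and $\Fix(\gT_N)$ agree near $N$). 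The circle $\gS^1$ acts on the normal bundle $\nu(N)$ of $N$ in $M$ with no nonzero fixed vectors, so the isotropy representation on each normal fibre decomposes into nontrivial two-dimensional weight spaces; this forces the rank of $\nu(N)$, i.e. the codimension of $N$, to be even. Moreover this $\gS^1$-action equips $\nu(N)$ with the structure of a complex vector bundle, in particular an orientation; since $M$ is oriented and $\nu(N)$ is oriented, $TN = TM|_N / \nu(N)$ inherits an orientation, so $N$ is orientable.

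I expect the main obstacle to be the bookkeeping needed to reduce from an arbitrary (possibly disconnected) subgroup $\gH$ to a circle whose fixed-point set \emph{near} $N$ coincides with $N$, since a priori $\Fix(\gH)$ could be strictly larger than $\Fix(\gS^1)$ for every circle $\gS^1 \subseteq \gH_0$ and one must argue that it agrees with $N$ on a neighborhood — this is where one invokes that there are only finitely many weights appearing in the $\gH$-representations on the various normal fibres along $N$, so a generic circle in $\gT_N$ avoids all their kernels. Everything else — the connectedness argument for (1), the totally geodesic claim, and the complex structure on the normal bundle — is routine. If one wishes to avoid this reduction entirely, an alternative is to observe directly that for \emph{any} compact group $\gH$ acting on an oriented Riemannian manifold, a fixed-point component $N$ has orientable normal bundle precisely when $N$ has even codimension fails to be an issue here because the ambient manifold is even-dimensional and orientable together with the fact (used repeatedly in this program) that positively curved even-dimensional manifolds are simply connected, so one could alternatively cite \cite{GroveSearle94} or the third author's \cite{Wilking03} for the packaged statement; we include the short argument above for completeness.
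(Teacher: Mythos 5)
Your treatment of assertion (1) and of the claim that $N$ is closed and totally geodesic matches the paper's and is fine. The genuine gap is in assertion (2), and it is exactly the point you flag as ``the main obstacle'' but then do not close: the reduction to a circle. The claim that $N$ is a component of $\Fix(\gT_N)$, where $\gT_N$ is the largest subtorus with $N\subseteq \Fix(\gT_N)$, is false in general. Take $\gS^1$ acting linearly on $S^4\subseteq \C^2\oplus\R$ with weights $1$ and $2$ and $\gH=\Z_2\subseteq\gS^1$: then $N=\Fix(\gH)$ is the $2$-sphere where the weight-one coordinate vanishes, but no positive-dimensional subtorus fixes $N$ pointwise, so $\gT_N$ is trivial and $\Fix(\gT_N)=M\neq N$. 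Whenever the finite part of $\gH$ genuinely cuts down the fixed-point set, there is no circle whose fixed-point set agrees with $N$ near $N$, and both your even-codimension argument and your orientability argument (complex structure on the normal bundle) collapse. Your genericity-of-weights remark only shows that a generic circle in $\gT_N$ has the same fixed set as $\gT_N$ near $N$; it does not show that $\Fix(\gT_N)$ agrees with $\Fix(\gH)$ there. The fallback paragraph does not repair this either: orientability of fixed components of orientation-preserving actions is not formal even in even codimension --- complex conjugation on $\CP^2$ is an orientation-preserving isometric involution with fixed set $\RP^2$, an example the paper itself raises --- so for disconnected $\gH$ some genuine input is required. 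Note that the paper applies this theorem constantly to finite subgroups ($\Z_2$, $\Z_p$), so the disconnected case is not a corner case one can wave away.

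The paper's route is different precisely where yours breaks down. For even codimension it does not pass to a circle at all: every $h\in\gH$ lies in the connected group $\gT^d$, hence preserves the orientation of $M$ and acts trivially on $T_pN$, so the normal isotropy representation is a map $\gH\to\SO(T_p^\perp N)$ with no nonzero fixed vectors, and even codimension is read off from that. For orientability it inducts on $\dim M$ by intersecting $\gH$ with a circle and passing to the fixed-point component of $\gH\cap\gS^1$, reducing to the case $\gT^d=\gS^1$ and $\gH\subseteq\gS^1$; if $\gS^1$ fixes $N$ one gets your complex-structure argument, but if not, the paper proves a separate lemma showing that a positively curved manifold with an almost effective isometric $\gS^1$-action all of whose fixed-point components (of all subgroups) have even codimension is orientable, using Synge's theorem together with Petrunin's generalization of Synge to the Alexandrov space $N/\gS^1$. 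In other words, positive curvature enters essentially in the orientability statement for disconnected $\gH$, whereas your argument for (2) never uses curvature --- a reliable sign that the hard case is missing. To complete your proof you would need to handle $\gH$ with nontrivial finite part, already $\gH=\Z_2\subseteq\gS^1$, and the paper's quotient-space argument is the intended mechanism.
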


All statements except for the fact that $N$ is orientable are well known, but we include a proof for completeness.

\begin{proof}
The first statement is immediate since $\gT^d$ is abelian and connected. In the second statement, we simply recall  that fixed-point components of subgroups of the isometry group are closed, totally geodesic submanifolds. Since additionally $\gH$ is a subgroup of a connected subgroup, namely $\gT^d$, of the isometry group, each element of $\gH$ acts by orientation-preserving diffeomorphisms. In particular, the isotropy representation at a point $p \in N$ gives a map $\gH \to \SO(T_p^\perp N)$. The codimension of $N$ is the dimension of this representation, so it follows that $N$ has even codimension.

We can find a \(\gS^1\subset \gT^d\) such that $\gH'=\gH\cap \gS^1$ 
is non-trivial. If we show that the fixed-point component of 
$\gH'$  at $N$ is orientable, then the orientablity of $N$ 
follows by induction on the dimension of $M$. 
Thus we may assume that $\gS^1=\gT^d$.

  If \(\gS^1\) acts trivially on \(N\), then the normal bundle of \(N\) has a complex structure and the claim follows. Otherwise, there is an almost effective \(\gS^1\)-action on \(N\) 
  with the property that any subgroup of $\gS^1$ 
  has only fixed-point components of even codimension in $N$.
Hence, orientability of \(N\) follows from the following lemma.
\end{proof}

\begin{lemma}
  Let \(N\) be a positively curved, connected manifold with an isometric, almost effective \(\gS^1\)-action. If the codimensions of all fixed point components of all subgroups of $\gS^1$ are even, then $N$ is orientable.
\end{lemma}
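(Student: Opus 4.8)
The strategy is to reduce orientability of $N$ to a statement about the action at a single non-orientation-preserving situation and derive a contradiction from positive curvature. First I would observe that if $N$ is non-orientable, then the orientation double cover $\tilde N \to N$ is a connected double cover, and the $\gS^1$-action lifts to an isometric action on $\tilde N$ (possibly after passing to a connected two-fold cover of $\gS^1$, i.e.\ reparametrizing); the deck transformation $\tau$ is an orientation-reversing isometry of $\tilde N$ commuting with the lifted circle action. Equivalently, working on $N$ directly, non-orientability produces a loop $\gamma$ along which the orientation of $TN$ reverses; carrying this around and using that the $\gS^1$-action is by isometries, one wants to locate a point where a subgroup of $\gS^1$ fixes a submanifold of odd codimension, contradicting the hypothesis.

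The cleanest route is via a fixed point. Since $N$ is positively curved and carries an almost effective isometric $\gS^1$-action, by Theorem~\ref{thm:Berger} (Berger--Sugahara) some finite subgroup — indeed $\gS^1$ itself if $\dim N$ is even, or at least a nontrivial subgroup — has a fixed point $p$; more usefully, $\gS^1$ has a fixed point $p$ in the even-dimensional case, and in the odd-dimensional case a circle acts with a fixed point after the analysis below. At such a $p$, the isotropy representation is a faithful (almost faithful) representation $\gS^1 \to \O(T_pN)$. The key point: the action of $\gS^1$ near $p$ is linear, so $N$ is orientable near $p$ iff the image lies in $\SO(T_pN)$, and whether that happens is detected by the parities of the dimensions of the various fixed-point components of subgroups of $\gS^1$ through $p$. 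Concretely, decompose $T_pN$ into $\gS^1$-isotypical summands; the fixed-point component of a subgroup $\gH \subseteq \gS^1$ corresponds to the sum of those summands on which $\gH$ acts trivially, and its codimension in $N$ is the dimension of the complementary summands. The hypothesis that all such codimensions are even forces each isotypical summand to be even-dimensional (one extracts this by choosing $\gH$ to be the kernel of a single weight, or generic finite subgroups), hence the representation is complex, hence lands in $\SO$, hence $N$ is orientable near $p$ and, by connectedness and the fact that the generator of $\pi_1$-obstruction is detected locally, $N$ is orientable.

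I would carry out the steps in this order: (1) reduce to the case that $\gS^1$ (or a finite cyclic subgroup with the same fixed-point structure) has a fixed point $p$, using Theorem~\ref{thm:Berger} — for $\dim N$ even this is immediate; for $\dim N$ odd, first note $N$ is orientable automatically unless $H^1(N;\Z_2)\neq 0$, and handle this by passing to the orientation cover and the lifted action, or by finding a fixed point of a large finite subgroup and inducting on $\dim N$ as in the proof of Theorem~\ref{thm:FPSstructure}; (2) linearize the action near $p$ via the exponential map and decompose $T_pN$ into weight spaces; (3) show that the even-codimension hypothesis, applied to the fixed-point components of all subgroups through $p$, forces every weight space to be two-dimensional (the only potential issue is a trivial summand, but $T_pN$ has no trivial summand since $N$ has no interior fixed component of full dimension other than a point, and a point has codimension $\dim N$); (4) conclude the isotropy representation at $p$ lies in $\SO(T_pN)$, giving a local orientation preserved by the action, and then globalize: any loop in $N$ along which orientation might reverse can be pushed to pass through the $\gS^1$-orbit structure where local orientability is controlled, or more simply, the $\gS^1$-fixed point $p$ together with the retraction-type information from positive curvature (via the soul/connectedness phenomena) propagates the orientation.

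**Main obstacle.** The hard part will be step (4)'s globalization in the odd-dimensional case, where $\gS^1$ itself need not have a fixed point: one must either pass to the orientation double cover $\tilde N$ and argue that the lifted $\gS^1$-action still has all fixed-point components of even codimension (so that $\tilde N$ is "already orientable", which is automatic, but the real content is showing the deck transformation cannot be orientation-reversing — equivalently $b_1(N;\Z_2)=0$), or set up an induction on $\dim N$ by intersecting with the fixed-point set of a finite subgroup of $\gS^1$ and invoking the even-codimension hypothesis to keep the inductive hypotheses alive. Reconciling the deck transformation's action with the circle symmetry — ruling out an orientation-reversing isometry commuting with $\gS^1$ on a positively curved manifold all of whose symmetric fixed-point components have even codimension — is where the geometric input (positive curvature, hence Synge-type arguments forcing fixed points, hence a linearizable model) does the essential work.
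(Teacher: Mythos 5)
There is a genuine gap, and it sits exactly where you flag the ``main obstacle'': you have no working mechanism for globalizing, and the local analysis you do carry out is essentially vacuous. Every manifold is locally orientable, so ``$N$ is orientable near $p$'' carries no information; moreover, the image of the \emph{connected} group $\gS^1$ under any isotropy representation automatically lies in $\SO(T_pN)$, so concluding that the representation at a fixed point is orientation-preserving does not use the even-codimension hypothesis at all. The obstruction to orientability is the global class $w_1(N)\in H^1(N;\Z_2)$, and nothing in steps (1)--(4) addresses it: the proposed remedies (pushing loops through the orbit structure, ``soul/connectedness phenomena'') are not arguments. The even-codimension hypothesis is really a statement about the \emph{finite} isotropy groups $\gS^1_x$ at arbitrary points $x$ (a generator of $\Z_m\subseteq\gS^1_x$ lies in $\SO(T_xN)$ precisely because its fixed-point set has even codimension), and what this buys is local orientability not of $N$ but of the quotient $N/\gS^1$.

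That quotient is the missing idea. The paper first disposes of the odd-dimensional case by Synge's theorem (a closed, odd-dimensional, positively curved manifold is orientable — your treatment of this case via $H^1(N;\Z_2)$ and the orientation cover is more complicated than necessary and never closed off). In the even-dimensional case, $N/\gS^1$ is an odd-dimensional, positively curved Alexandrov space, locally orientable by the isotropy observation above; Petrunin's generalization of Synge's theorem to Alexandrov spaces then shows $N/\gS^1$ is orientable. The union of principal orbits in $N$ is a principal $\gS^1$-bundle over the (orientable) regular stratum of the quotient, hence orientable, and since the singular strata of the action have codimension at least two, $N$ itself is orientable. Without this Synge-type input on the quotient (or some equivalent global argument), the proof does not go through.
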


\begin{proof}
  By Synge's theorem, we can assume that $N$ has even dimension.
  Then \(N/S^1\) is a positively curved Alexandrov space of odd dimension.
  Moreover, it is locally orientable because, for each \(x\in N\), the action of the isotropy group \(\gS^1_x\) on a (orientable) neighborhood of \(x\) in \(N\) is orientation-preserving by the codimension assumption.

  Therefore, it follows from Petrunin's generalization \cite{Petrunin98}  of Synge's theorem to Alexandrov spaces that \(N/\gS^1\) is orientable.
  In particular, the regular stratum of \(N/\gS^1\) is orientable.
  Now the union of principal orbits in \(N\) is a principal \(\gS^1\)-bundle over this regular stratum.
  Hence, it is orientable.
  But this implies that \(N\) is orientable, because all the singular strata of the \(\gS^1\)-action on \(N\) have codimension at least two.
\end{proof}

The following basic observation proves to be very effective in the analysis of the fixed-point sets.

\begin{lemma}\label{lem:connectedstab} 
Suppose a compact abelian Lie group $\gA$ acts effectively on a closed manifold $M$. 
	\begin{enumerate}[label=(\alph*)]
	\item If $\gH$ is  a disconnected isotropy group  of a point 
	$q$, then we can find a non-trivial finite isotropy group
	 near $q$. 
	\item If $N$ is a fixed-point component of $\gA$, then there 
	is a finite group $\gF\subset \gA$  
	such that $\gA/\gF$ is connected and acts effectively 
	on the fixed-point component $M'$ of $\gF$ at $N$ and
	without disconnected 
	isotropy groups near $N$. 
	\end{enumerate}
\end{lemma}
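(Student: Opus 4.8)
The plan is to prove (a) first and then deduce (b) from it. For part (a), I would argue by contradiction: suppose that near $q$ every isotropy group is either trivial or infinite (hence contains a positive-dimensional subtorus). Let $\gH = \gA_q$ be the disconnected isotropy group, and write $\gH = \gH_0 \cdot \pi_0(\gH)$. The slice theorem lets me reduce to the linear situation, so I work on the slice representation $\gA_q \to \O(V)$ at $q$, where $V = T_q^\perp(\gA_q\cdot q)$ and nearby isotropy groups are exactly the isotropy groups of this $\gA_q$-representation (intersected with $\gA_q$, together with the orbit direction). The key point is that for an abelian compact group acting linearly, $V$ splits as a sum of irreducibles, the isotropy group of a point is the intersection of the kernels of the irreducibles on whose summands the point has a nonzero component, and the kernel of a single nontrivial irreducible real representation of $\gA_q$ is a closed subgroup of corank one. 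If $\gH = \gA_q$ is disconnected, then $\gH_0$ is a proper subtorus and I can find a character/weight $\chi$ of $\gA_q$ which is trivial on $\gH_0$ but nontrivial on $\gH$; choosing a point $v$ in the corresponding summand of $V$ with $v \neq 0$ gives a nearby isotropy group $\ker\chi \cap (\text{other kernels})$. Playing the weights off against each other — essentially: among the weights vanishing on $\gH_0$ there must be one whose kernel is finite, or else $\gH_0$ itself sits inside the common kernel and the action is not effective on the slice — I can arrange a point whose isotropy group is a nontrivial finite group. The precise bookkeeping is to take a maximal isotropy group among those contained in the compact group $\pi_0(\gH)$-direction; I should set this up by induction on $\dim \gA$, peeling off a circle at a time, which is how (a) most cleanly comes out.

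For part (b), I would iterate (a). Start with $N$ a fixed-point component of $\gA$. If every isotropy group near $N$ is connected, then $\gF = \{e\}$ works and we are done. Otherwise, by (a) there is a nontrivial finite isotropy group $\gF_1$ occurring arbitrarily close to $N$; let $M_1$ be the fixed-point component of $\gF_1$ containing $N$. Since $\gA$ is abelian it acts on $M_1$, and $\gA/\gF_1$ acts on $M_1$ with $N$ still a fixed-point component. The group $\gA/\gF_1$ need not be connected, but its identity component has the same dimension as $\gA$, and more importantly $\dim M_1 < \dim M$ whenever $\gF_1$ acts nontrivially near $N$ — here I use that a nontrivial finite group acting effectively and smoothly on a manifold has fixed-point set of positive codimension (indeed, if $\gF_1$ fixes a codimension-zero piece near $N$ it fixes an open set, hence by analyticity of the action along geodesics / by effectiveness it is trivial). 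Replacing $\gA \rightsquigarrow (\gA/\gF_1)_0$ (or keeping track of the possibly disconnected quotient and invoking (a) again for its identity component) and $M \rightsquigarrow M_1$, and repeating, the manifold dimension strictly drops at each stage, so the process terminates. Composing all the finite kernels peeled off gives the desired finite $\gF \subseteq \gA$ with $\gA/\gF$ connected, acting effectively on the fixed-point component $M'$ of $\gF$ at $N$, with no disconnected isotropy groups near $N$. One small point to be careful about: after the last step I must check effectiveness of $\gA/\gF$ on $M'$; this follows because the kernel of the action of $(\gA/\gF)$ on $M'$ would be a subgroup fixing $M' \supseteq N$ pointwise, but $\gF$ was constructed to contain every such finite obstruction, and a positive-dimensional kernel would contradict that $N$ was a fixed-point \emph{component} of the full isotropy (or is ruled out directly by the dimension count).

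The main obstacle I expect is part (a): making the weight/character argument on the slice representation genuinely produce a \emph{finite} isotropy group rather than just a smaller positive-dimensional one, uniformly in the structure of $\pi_0(\gH)$. The clean way around this is the inductive reduction to $\gS^1$ that the authors hint at in the proof of Theorem \ref{thm:FPSstructure} (``find a $\gS^1 \subset \gT^d$ such that $\gH \cap \gS^1$ is nontrivial''): restrict attention to a circle meeting $\gH$ nontrivially, so $\gH \cap \gS^1$ is a nontrivial finite cyclic group, and then the slice representation of that circle at $q$ has $\gH \cap \gS^1$ in the isotropy of some nonzero slice vector — choosing such a vector generic among the relevant weight summands pins down a finite isotropy group near $q$. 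Getting this localization correct — that the finite group really does occur at points arbitrarily near $q$, not merely "somewhere" — is where the slice theorem must be invoked carefully, but it is standard.
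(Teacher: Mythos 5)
Your argument for part (a) has a genuine gap: both of your mechanisms try to reach a \emph{finite} isotropy group in a single step by one well-chosen slice vector, and neither step is justified. First, a weight of the slice representation that is trivial on $\gH_0$ but nontrivial on $\gH$ need not exist: for $\gH=\gS^1\times\Z_2$ acting faithfully on $\C\oplus\C$ by $(z,\epsilon)\cdot(v_1,v_2)=(z\epsilon v_1,\,z^2\epsilon v_2)$, every weight is nontrivial on $\gH_0$; and your sentence about ``weights vanishing on $\gH_0$'' having finite kernel cannot be repaired, since any such kernel contains $\gH_0$. Second, the fallback through a circle with $C:=\gH\cap\gS^1$ nontrivial finite does not ``pin down'' a finite group: $C$ need not fix any nonzero slice vector at all (in the example above $V^C$ is just the trivial summand), and even when it does, a vector generic in $V^C$ has stabilizer equal to the kernel of $\gH$ acting on $V^C$, which contains $C$ but can be infinite. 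So your recipe produces a nearby point with \emph{nontrivial} isotropy, not with nontrivial \emph{finite} isotropy, which is what (a) asserts. The missing idea -- and the paper's actual proof -- is an induction on $\dim\gH$ (not on $\dim\gA$) in which disconnectedness, rather than finiteness, is the quantity you preserve: pick an irreducible summand $U$ of the isotropy representation on which $\gH_0$ acts nontrivially; the isotropy group of $\exp_q(tv)$ for $v\in U\setminus\{0\}$ and small $t>0$ is $\gR=\ker\bigl(\gH\to\SO(U)\bigr)$, which has $\dim\gR=\dim\gH-1$ and at least as many components as $\gH$ because $\gH/\gR\cong\gS^1$ is connected. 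Iterating, the dimension drops to zero while disconnectedness persists, and you land on a nontrivial finite isotropy group near $q$ (in the example, one step already gives $\ker(w_1)\cong\Z_2$).

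For (b), your iteration (peel off one finite isotropy group at a time, with the ambient dimension strictly dropping) is a genuinely different route from the paper, which does it in one stroke: choose $q$ near $N$ whose isotropy $\gF$ is finite and \emph{maximal} among finite isotropy groups in a tubular neighborhood of $N$; by maximality the induced $\gA/\gF$-action on $M'$ has no nontrivial finite isotropy near $N$, so by (a) all its isotropy groups there are connected, and $\gA/\gF$, being the isotropy group of any $p\in N$, is itself connected. Your scheme could be made to work, but your effectiveness argument is not correct as stated: ``a positive-dimensional kernel would contradict that $N$ is a fixed-point component'' proves nothing (if, say, $M'=N$, no formal contradiction arises). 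The clean fix, needed in either approach, is that the point realizing the (maximal) finite isotropy group lies in the component $M'$ of $M^{\gF}$ at $N$ once the neighborhood is small enough, so the kernel of the induced action is contained in that point's isotropy and hence equals $\gF$; this also keeps the intermediate actions in your iteration effective. In any case (b) rests on (a), so the gap above is the one you must close.
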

 
 One use of the lemma is to see that the fixed-point component $N$ of a five-dimensional  compact abelian Lie group is also the fixed-point component of a $5$-torus inside 
 a smaller submanifold. We use this in combination with Theorem~\ref{thm:t5} in Section~\ref{sec:z2codim2}.


\begin{proof} Without loss of generality,  $\gH$ has positive dimension. 
We choose an irreducible subrepresentation $U\subset T_qM$ 
on which the identity component of $\gH$ acts non-trivially. 
The isotropy group of $v\in U\setminus \{0\}$ 
is a subgroup $\gR\subset \gH$ 
with at least as many connected components
 as $\gH$ and $\dim(\gR)=\dim(\gH)-1$.
  Since $\gR$ 
is the isotropy group of $\exp(tv)$ for small $t>0$, a) follows by induction over $\dim(\gH)$.
 
 We choose a point $q$ near $N$  whose isotropy 
 group  $\gF$  is finite and maximal among all finite 
 isotropy groups of points in a tubular neighborhood of $N$.
   The induced action of $\gA/\gF$ on the fixed-point component $M'$ of $\gF$ at $N$ has no finite 
 isotropy groups in a neighborhood of $N$.
  The first statement implies 
  all isotropy groups of the action by $\gA/\gF$ on $M'$
  are connected near $N$. 
 Since $\gA/\gF$ itself is the isotropy group of $p\in N$, 
 the group $\gA/\gF$ is connected as well.
\end{proof}

We close the section with some basic remarks on equivariant cohomology. 
If a torus $\gT^d$ acts smoothly on a manifold $M^n$, 
the equivariant cohomology $H^*_{\gT^d}(M)$ is defined as 
the ordinary cohomology of the Borel construction $M\times_{\gT^d}E\gT^d$. 
There is a fibration 
\begin{eqnarray}\label{eq:borelfib}
M\to M\times_{\gT^d}E\gT^d\to B\gT^d
\end{eqnarray}
that induces a canonical identification of the equivariant cohomology $H^*_{\gT^d}(p)$ 
of a fixed point $p$ with $H^*(B\gT^d)\cong \Q[t_1,\ldots,t_d]$. 
Moreover, \eqref{eq:borelfib} turns $H^*_{\gT^d}(M)$ into a module 
over the ring $H^*(B\gT^d)$. The action is called {\it equivariantly formal} if the  
map $H^*_{\gT^d}(M)\to H^*(M)$ is surjective. 
In that case, we can find 
lifts $\alpha_1,\ldots,\alpha_s\in H^*_{\gT^d}(M)$ of a $\Q$-basis $a_1,\ldots,a_s\in H^*(M)$, and they form basis of 
$H^*_{\gT^d}(M)$ as a $H^*(B\gT^d)$-module. 

\begin{lemma}\label{lem:injective}
 Suppose a torus $\gT^d$ acts smoothly on a connected
 manifold $M^n$. Let $b\ge 0$ be minimal such that there are orbits of dimension $b$. 
Let $F$ be the union of all orbits of dimension $b$. 
Then $H^k_{\gT^d}(M,M\setminus F)\to H^k_{\gT^d}(M)$ is injective for all $k$. 
If $d=b+1$, then the map is surjective in all degrees $\ge n$.
\end{lemma}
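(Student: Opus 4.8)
The plan is to analyze both assertions via the long exact sequence of the pair $(M,M\setminus F)$ in $\gT^d$-equivariant cohomology: injectivity of $H^k_{\gT^d}(M,M\setminus F)\to H^k_{\gT^d}(M)$ for all $k$ is equivalent to surjectivity of $H^{*}_{\gT^d}(M)\to H^{*}_{\gT^d}(M\setminus F)$, while, when $d=b+1$, surjectivity in degrees $\ge n$ is equivalent to $H^k_{\gT^d}(M\setminus F)=0$ for $k\ge n$. For the first assertion I would first unpack the geometry of $F$; we may assume $b<d$, for otherwise $F=M$ and the map is the identity. Since the orbit dimension is lower semicontinuous on $M$, the set $F$ is closed, and by the slice theorem each connected component $N$ of $F$ is a closed $\gT^d$-invariant submanifold equal to a connected component of the fixed-point set $M^{\gH}$ of the $(d-b)$-dimensional subtorus $\gH:=(\gT^d_p)^{0}$, $p\in N$; distinct components are disjoint, since a common point would have isotropy of dimension exceeding $d-b$, and the family of components is locally finite, since only finitely many isotropy types occur near a given orbit. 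Choosing disjoint equivariant tubular neighborhoods and using excision, $H^*_{\gT^d}(M,M\setminus F)\cong\prod_{N}H^*_{\gT^d}(\nu_N,\nu_N\setminus N)$, where the normal bundles $\nu_N$ are equivariantly oriented of even rank. Composing with the restriction $H^*_{\gT^d}(M)\to H^*_{\gT^d}(\nu_N)$ for each $N$ shows it suffices to prove that the map $H^*_{\gT^d}(\nu_N,\nu_N\setminus N)\to H^*_{\gT^d}(\nu_N)$ in the long exact sequence of the pair is injective; by the equivariant Thom isomorphism this map is cup product with the equivariant Euler class $e(\nu_N)\in H^*_{\gT^d}(N)$, so the first assertion reduces to the statement that $e(\nu_N)$ is not a zero divisor.

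To establish this I would exploit that $\gH$ acts trivially on $N$. Because $\gH$ is a direct factor of $\gT^d$, this gives a ring isomorphism $H^*_{\gT^d}(N)\cong R\otimes_{\Q}H^*(B\gH)$, where $R:=H^*_{\gT^d/\gH}(N)$ is a connected, non-negatively graded $\Q$-algebra with $R^0=\Q$, and $H^*(B\gH)$ is a polynomial ring. Decomposing $\nu_N$ into its $\gH$-weight subbundles, all of whose weights are nonzero, the component of $e(\nu_N)$ of lowest degree in the grading by $R$-degree lies in $R^0\otimes H^*(B\gH)=H^*(B\gH)$ and equals a product of positive powers of those nonzero weights, hence is a nonzero element of the polynomial ring $H^*(B\gH)$. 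Since multiplication by a nonzero element of $H^*(B\gH)$ is injective on every free module $R^k\otimes_{\Q}H^*(B\gH)$, comparing lowest-degree terms shows that multiplication by $e(\nu_N)$ is injective on $H^*_{\gT^d}(N)$, completing the first assertion.

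For the second assertion assume $d=b+1$; if $F=M$ the map is the identity, so assume $F\neq M$. Then $M\setminus F$ is the open, invariant set of orbits of maximal dimension $d$, so $\gT^d$ acts on the $n$-manifold $M\setminus F$ with only finite isotropy groups. Consequently the Borel fibration $(M\setminus F)\times_{\gT^d}E\gT^d\to(M\setminus F)/\gT^d$ has rationally acyclic fibers (classifying spaces of finite groups), whence $H^*_{\gT^d}(M\setminus F;\Q)\cong H^*\big((M\setminus F)/\gT^d;\Q\big)$; the latter vanishes above degree $n-d$ because the quotient has dimension $n-d$, so in particular $H^k_{\gT^d}(M\setminus F)=0$ for $k\ge n$. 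The long exact sequence of the pair then gives the surjectivity of $H^k_{\gT^d}(M,M\setminus F)\to H^k_{\gT^d}(M)$ for $k\ge n$.

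I expect the main obstacle to be the non-zero-divisor claim of the second paragraph. Since no equivariant-formality hypothesis is available, $H^*_{\gT^d}(N)$ may carry $H^*(B\gT^d)$-torsion and nilpotent elements, so one cannot simply invoke that the Euler class of the normal bundle of a fixed-point component is a regular element. What makes the argument go through is that the trivial $\gH$-action on $N$ splits off a genuine polynomial factor $H^*(B\gH)$, in which the leading term of $e(\nu_N)$ lives and in which regularity is transparent.
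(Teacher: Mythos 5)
Your proposal is correct and follows essentially the same route as the paper: excision plus the equivariant Thom isomorphism reduce injectivity to showing that cup product with the equivariant Euler class of the normal bundle of each component $N$ of $F$ is injective, which both you and the paper establish by splitting off the trivially acting subtorus $\gH$, writing $H^*_{\gT^d}(N)\cong H^*_{\gT^d/\gH}(N)\otimes H^*(B\gH)$, and observing that the leading term of the Euler class is a nonzero product of weights in the polynomial ring $H^*(B\gH)$; the surjectivity statement is likewise deduced, exactly as in the paper, from the almost-free action on $M\setminus F$ forcing $H^k_{\gT^d}(M\setminus F)=0$ for $k>n-d$. The only differences are expository (you keep all components of $F$ and spell out the lowest-degree filtration argument that the paper leaves implicit).
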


\begin{proof}  For the injectivity, we plan to show that the composition 
of the map with the pullback map to $H^k(F)$ is injective. 
To see this, one first observes that by excision  
$H^k_{\gT^d}(M,M\setminus F)\cong H^k_{\gT^d}(U,U\setminus F)$ for a tubular neighborhood $U$ of $F$. 
In particular, we may assume that $F$ is connected and $U$ is diffeomorphic to the normal bundle of $F$ in $M$. 
The normal bundle is oriented since the $\gS^1$-action induces a complex structure on it. By the Thom isomorphism, 
$H^k( U,U\setminus F) \cong H^{k-\ell}_{\gT^d}(F)$ where $\ell$ is the codimension of $F$ in $M$. Moreover, the map 
$H^{k-\ell}_{\gT^d}(F)\to H^k_{\gT^d}(F)$ is then given 
by cupping with the equivariant Euler class of the normal bundle.  
To see that this map is injective, we choose a product decomposition 
$\gT^d=\gT^b\times \gT^{d-b}$ such that the second factor 
acts trivially on $F$. Now 
$H^*_{\gT^d}(F)$ is isomorphic to the cohomology of the product
$(F\times_{\gT^b}E\gT^b)\times B\gT^{d-b}$. 
Furthermore, it is easy to see that the Euler class $H^{\ell}_{\gT^d}(F)$ 
pulls back to a nonzero element of $H^\ell(B\gT^{d-b})$. 
Therefore, cupping with the Euler class is an injective map, and the first result follows.  

Suppose now $d=b+1$. Then the action of $\gT^d$ on $M\setminus F$ 
is almost free. Since the homotopy fiber 
of the map $(M\setminus F)\times_{\gT^d} E\gT^d\to (M\setminus F)/\gT^d$ has the rational cohomology of a point, it follows 
that $H^*_{\gT^d}(M\setminus F)\cong H^*((M\setminus F)/\gT^d)$. 
In particular, we find $H^k_{\gT^d}(M\setminus F)\cong 0$ for 
$k>n-d$. The surjectivity now follows from the exactness of the long exact sequence of pair $(M, M\setminus F)$ in equivariant cohomology. 
\end{proof}

One can use the above lemma to reprove the classical

\begin{theorem}[Smith-Floyd]\label{thm:SmithFloyd} Suppose $\gS^1$ acts smoothly on a manifold $M^n$, 
and let $\gF$ denote the fixed-point set. Then the pullback map
 $H_{\gS^1}^k(M)\to H^k(F\times B\gS^1)$ is an isomorphism in degrees $k> n$.
In particular the total odd Betti number of $F$ equals
$\dim_{\Q}(H^i_{\gS^1}(M))$  for all odd $i> n$.
The latter number is in turn bounded above by the total odd Betti number of
$M$. Similar statements hold for the even Betti numbers.
\end{theorem}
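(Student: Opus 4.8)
The plan is to deduce the whole statement from Lemma~\ref{lem:injective} and the equivariant Thom isomorphism, reducing the isomorphism claim to a computation with an equivariant Euler class. First I would dispose of the case $F=\emptyset$: then every isotropy group is a proper, hence finite, subgroup of $\gS^1$, so the action is almost free, $H^*_{\gS^1}(M)\cong H^*(M/\gS^1)$ vanishes in degrees $>n-1$, and $H^*(F\times B\gS^1)=0$ as well, so the assertion is empty. So assume $F\neq\emptyset$; then the minimal orbit dimension is $b=0$ and the set denoted $F$ in Lemma~\ref{lem:injective} is exactly the fixed-point set. Applying that lemma with $\gT^d=\gS^1$ (so $d=b+1$) shows that $j^*\colon H^k_{\gS^1}(M,M\setminus F)\to H^k_{\gS^1}(M)$ is injective for all $k$ and bijective for $k\ge n$.

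Next I would identify the relative group. By excision and the equivariant Thom isomorphism, applied to each component $F_\alpha$ of $F$ separately — its normal bundle carries an $\gS^1$-invariant complex structure coming from the weight decomposition, so it is oriented, of even rank $\ell_\alpha$ equal to the codimension of $F_\alpha$ — one obtains $H^k_{\gS^1}(M,M\setminus F)\cong\bigoplus_\alpha H^{k-\ell_\alpha}_{\gS^1}(F_\alpha)$, and $H^*_{\gS^1}(F_\alpha)=H^*(F_\alpha)\otimes H^*(B\gS^1)=H^*(F_\alpha\times B\gS^1)$ since $\gS^1$ acts trivially on $F_\alpha$. The standard excess-intersection identity then identifies the composite of $j^*$ with the restriction $i^*\colon H^*_{\gS^1}(M)\to H^*_{\gS^1}(F)$ with, on the $F_\alpha$-summand, cup product with the equivariant Euler class $e_\alpha\in H^{\ell_\alpha}_{\gS^1}(F_\alpha)$ of the normal bundle. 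It therefore suffices to prove that $\cup\,e_\alpha\colon H^{k-\ell_\alpha}_{\gS^1}(F_\alpha)\to H^k_{\gS^1}(F_\alpha)$ is an isomorphism for $k>n$.

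This is the heart of the matter. By the splitting principle applied to the weight subbundles, $e_\alpha=c\,t^{\ell_\alpha/2}+(\text{terms of strictly lower }t\text{-degree and strictly positive }H^*(F_\alpha)\text{-degree})$, where $c$ is the product of the nonzero weights raised to their multiplicities, hence $c\neq0$. Writing $H^*_{\gS^1}(F_\alpha)=H^*(F_\alpha)\otimes\Q[t]$ as a free $\Q[t]$-module on a homogeneous basis of $H^*(F_\alpha)$ listed in non-decreasing degree, this shape of $e_\alpha$ forces the matrix of $\cup\,e_\alpha$ over $\Q[t]$ to be lower triangular with every diagonal entry equal to $c\,t^{\ell_\alpha/2}$; its determinant is a nonzero monomial in $t$, so $\cup\,e_\alpha$ is injective in all degrees. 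Peeling off the top-degree basis element — the $\Q[t]$-submodule it spans is $\cup\,e_\alpha$-invariant by lower triangularity — and inducting with the snake lemma, one finds that the cokernel of $\cup\,e_\alpha$ is concentrated in degrees $\le\dim F_\alpha+\ell_\alpha-2=n-2$. Hence $\cup\,e_\alpha$, and with it $i^*$, is an isomorphism in all degrees $>n$ (in fact $\ge n$). I expect this cokernel estimate — the bookkeeping that converts ``lower triangular with monomial diagonal'' into precise control of which degrees survive — to be the only genuinely delicate point; everything else is assembling standard facts.

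The remaining claims are then formal. For $k>n$ the isomorphism gives $\dim_\Q H^k_{\gS^1}(M)=\dim_\Q H^k(F\times B\gS^1)=\sum_{q\ge0}\dim_\Q H^{k-2q}(F)$, and since $k>n\ge\dim F$ this equals the total odd Betti number of $F$ when $k$ is odd and the total even Betti number of $F$ when $k$ is even. On the other hand, the Serre spectral sequence of the Borel fibration $M\to M\times_{\gS^1}E\gS^1\to B\gS^1$, with $E_2$-page $H^*(B\gS^1)\otimes H^*(M)$, gives $\dim_\Q H^k_{\gS^1}(M)\le\sum_{q\ge0}\dim_\Q H^{k-2q}(M)$, which for $k>n$ is the total odd, respectively even, Betti number of $M$. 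Comparing the two estimates yields the stated bounds in both parities, which is exactly the assertion for odd and for even Betti numbers.
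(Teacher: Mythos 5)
Your argument is correct and follows exactly the route the paper intends: the paper states Theorem~\ref{thm:SmithFloyd} only as a consequence of Lemma~\ref{lem:injective}, and you combine that lemma with excision, the equivariant Thom isomorphism, and the fact that restricting the Thom class to the fixed component gives the equivariant Euler class, whose leading term in $t$ is a nonzero multiple of $t^{\ell_\alpha/2}$. Your triangular-matrix and cokernel bookkeeping for surjectivity could be shortened to a dimension count (for $k>n$ both $H^{k-\ell_\alpha}_{\gS^1}(F_\alpha)$ and $H^{k}_{\gS^1}(F_\alpha)$ have dimension equal to the total Betti number of $F_\alpha$ in the parity of $k$, so injectivity of cupping with $e_\alpha$ already forces bijectivity), but that is a simplification, not a gap.
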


\section{\secttwo}\label{sec:S1splitting}

There are three results in this section that will be used in other sections. 
The most important one by far is the following, which implies Theorem \ref{thm:S1splitting}.

\begin{lemma}[Theorem \ref{thm:S1splitting}]\label{lem:S1splitting}
If $\rho:\gT^d \rightarrow \SO(V)$ is a faithful representation with $d \geq 3$, then there exists a one-dimensional subgroup $\gH \subseteq \gT^d$ such that the induced representation $\bar \rho:\gT^d/\H \to \SO(V^{\gH})$ on the fixed-point set of $\gH$ 
 is faithful and isomorphic to a product representation $\bar\rho_1 \oplus \bar\rho_2:\gS^1 \times \gT^{d-2} \to
\SO(V_1) \oplus \SO(V_2)$ 
where $\bar \rho_1$ has the second factor $\gT^{d-2}$ in its kernel and 
$\bar \rho_2$ has the first factor $\gS^1$ in its kernel.
 Moreover, the $\gS^1$-action on $V_1$ defined by $\bar\rho_1$ is semi-free.
\end{lemma}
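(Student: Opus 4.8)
The plan is to rephrase everything in terms of weights, reduce to representations all of whose isotropy groups are connected, and in that case produce the circle by a Sylvester--Gallai-type argument; the semi-freeness will then come for free. First, write $V=V_0\oplus\bigoplus_i V_{\alpha_i}$ with $V_0=V^{\gT^d}$ and each $V_{\alpha_i}$ two-dimensional of weight $\alpha_i$ in the character lattice $\Hom(\gT^d,\gS^1)\cong\Z^d$ (defined up to sign); faithfulness of $\rho$ is equivalent to the $\alpha_i$ generating $\Z^d$. A circle $\gH\subseteq\gT^d$ is generated by a primitive vector $w$, and then $V^\gH=V_0\oplus\bigoplus_{\langle\alpha_i,w\rangle=0}V_{\alpha_i}$ while $\gT^d/\gH$ has character lattice $M_w:=w^\perp\cap\Z^d\cong\Z^{d-1}$. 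The conclusion is equivalent to the following purely lattice-theoretic assertion: \emph{there are a primitive $w$ and a weight $\alpha_0$ with $\langle\alpha_0,w\rangle=0$ such that, writing $S_w:=\{\alpha_i:\langle\alpha_i,w\rangle=0\}$, (i) $S_w$ generates $M_w$, and (ii) $S_w\setminus\{\pm\alpha_0\}$ lies in a rank $d-2$ direct summand $N\subseteq M_w$ with $\alpha_0\notin\Q N$.} Indeed, given this one has $M_w=\Z\alpha_0+N$ automatically, so $M_w=\Z\alpha_0\oplus N$; dualizing gives $\gT^d/\gH\cong\gS^1\times\gT^{d-2}$, with $\gS^1$ the annihilator of $N$ and $\gT^{d-2}$ the annihilator of $\Z\alpha_0$. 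Putting $V_1=\bigoplus_{\alpha_i=\pm\alpha_0}V_{\alpha_i}$ and letting $V_2$ be the remaining summands of $V^\gH$ yields the product decomposition ($\gT^{d-2}$ acts trivially on $V_1$, $\gS^1$ trivially on $V_2$), and $\gS^1$ acts on $V_1$ through the character $\pm1$ — the generator of its own character lattice $M_w/N\cong\Z$ up to sign — so the $\gS^1$-action on $V_1$ is automatically semi-free. Everything thus reduces to (i)--(ii).

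Next I would reduce to the case that all isotropy groups of $\rho$ are connected. If not, the representation-theoretic analogue of Lemma~\ref{lem:connectedstab} (by the same argument, or after compactifying $V$) provides a non-trivial finite isotropy group $\gF\subseteq\gT^d$, maximal among finite isotropy groups, with the property that $\gT^d/\gF$ acts on the proper subspace $V^\gF$ with all isotropy groups connected; this action is faithful, because the subgroup acting trivially on $V^\gF$ fixes the point whose isotropy is exactly $\gF$, hence lies in $\gF$ and is finite. Since $\gT^d/\gF$ is again a $d$-torus, an induction on $\dim V$ produces a circle in $\gT^d/\gF$ realizing (i)--(ii) for $V^\gF$; pulling it back along the finite covering $\gT^d\to\gT^d/\gF$ and taking a suitable connected lift gives a circle $\gH\subseteq\gT^d$. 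One must still verify that $\gH$ fixes nothing in $V\ominus V^\gF$, i.e. that no weight $\alpha$ with $\alpha|_\gF\neq1$ restricts trivially to $\gH$; I would guarantee this by arranging the lift to avoid the finitely many proper subtori $\ker\alpha$ with $\alpha|_\gF\neq1$, which is possible using the maximality of $\gF$. This covering-space bookkeeping is technical but routine compared with what follows.

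It remains to treat the connected-isotropy case, which is the heart of the matter. Connected isotropy is equivalent to every subset of the weights generating a saturated (direct-summand) sublattice of $\Z^d$; in particular all weights are primitive and any two independent weights span a saturated rank-two sublattice. Passing to weight directions in $\RP^{d-1}$, conditions (i)--(ii) ask for a codimension-two subspace $Q$ spanned by weights and a weight $\alpha_0\notin Q$ such that the hyperplane $Q+\R\alpha_0$ meets the weights only in $(Q\cap\{\text{weights}\})\cup\{\pm\alpha_0\}$ — equivalently, projecting the weights out of $Q$ to $\RP(\R^d/Q)\cong\RP^1$, that the direction of $\alpha_0$ is hit by no other weight direction. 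For $d=3$, $Q$ is a line through a weight, and this is precisely the statement that the finite, non-collinear set of weight directions in $\RP^2$ has an ordinary line — the Sylvester--Gallai theorem. For $d\geq4$ I would induct on $d$: the restriction to $\ker\gamma\cong\gT^{d-1}$ of the representation on $\bigoplus_{\alpha_i\neq\pm\gamma}V_{\alpha_i}$ is again faithful with connected isotropy, and its weight configuration is obtained from that of $\gT^d$ by quotienting the character lattice by the weight $\gamma$, an operation preserving saturation of all subsets; one then transfers the circle produced inductively back up. I expect the main obstacle to lie exactly here: a circle inside $\ker\gamma$ automatically fixes $V_\gamma$, and the weights mapping to the inductively produced $\bar\alpha_0$ form a full $\gamma$-coset of weights, so to keep the form (i)--(ii) one must choose $\gamma$ so that this coset reduces to $\{\pm\alpha_0\}$, and it is in making such a choice that the dimension-independent rigidity of connected-isotropy representations (the same phenomenon quantified by Theorem~\ref{thm:d+1choose2}) has to be exploited. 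Once (i)--(ii) is established in the connected case, the semi-freeness of $\bar\rho_1$ follows from the first paragraph.
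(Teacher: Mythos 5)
Your setup is sound: the reduction to representations with only connected isotropy groups is exactly the paper's Reduction Step, and your lattice-theoretic reformulation (i)--(ii) is a correct restatement of the conclusion, including the observation that semi-freeness of $\bar\rho_1$ comes for free once $\alpha_0$ is primitive and $M_w=\Z\alpha_0\oplus N$. Your $d=3$ argument via the Sylvester--Gallai theorem is also correct and is a genuinely different (and elegant) route to the base case: under connected isotropy any two independent weights generate a saturated rank-two sublattice, so an ordinary line through the projectivized weight directions yields the required circle.

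The proof is, however, incomplete precisely where you say you expect the obstacle: the inductive step for $d\geq 4$. After restricting to $\ker\gamma\cong\gT^{d-1}$ and invoking the inductive hypothesis, the weights of $V$ lying over the distinguished class $\bar\alpha_0\in\Z^d/\Z\gamma$ form a subset of the coset $\pm(\alpha_0+\Z\gamma)$, and every weight in that coset vanishes on the inductively produced circle; unless the subset is exactly $\{\pm\alpha_0\}$, the lifted $V_1$ contains inequivalent weights and the product decomposition fails. This is not a removable technicality: in the extremal configuration of Theorem~\ref{thm:d+1choose2} (weights $e_i$ and $e_i-e_j$), the choice $\gamma=e_1$ leaves cosets such as $\{e_2,\,e_2-e_1\}$, so the choices of $\gamma$ and of $\alpha_0$ must be coordinated, and you supply no mechanism for doing so. The paper's way around this is to replace the real/rational induction by a mod-$2$ one: Lemma~\ref{lem:repsplitting} shows reduction modulo $2$ is injective on the weight set and that the image together with $0$ has the codimension three property of Definition~\ref{def:codim3}, and the splitting is then proved as a purely $\Z_2$-combinatorial statement (Lemma~\ref{lem:z2lem}) by induction on $d$; working in $\Z_2^d$ collapses each coset $\alpha_0+\Z\gamma$ to at most two elements, and the codimension three property controls the resulting case analysis. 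This is exactly where the rigidity you allude to is cashed in, and it is the missing core of your argument. A smaller repair: the subgroup $\gH$ in the statement need not be connected, so in the reduction step you should take the full (generally disconnected) preimage of the circle under $\gT^d\to\gT^d/\gF$ rather than a connected lift; the connected lift is unique, so there is no freedom to ``arrange'' it to avoid the subtori $\ker\alpha$, whereas with the full preimage the extra fixed vectors you worry about cannot occur since $V^{\gH}=(V^{\gF})^{\gH/\gF}$.
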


The one-dimensional subgroup is usually not connected. We recall that a group action is called semi-free if the action is free away from the  fixed-point set.\\[1ex]
{\bf Reduction Step.} It suffices to prove Lemma~\ref{lem:S1splitting} in the case that 
there is no vector in $V$ which has a non-trivial finite isotropy group $\gF$.\\[1ex]
Indeed, choose a vector $v\in V$ whose isotropy group $\gF$ is maximal among all finite (and possibly trivial) isotropy groups. Since $\gT^d/\gF$ acts effectively on the orbit $\gT^{d}\star v$, it acts effectively on the fixed-point set $V^{\gF} \subseteq V$ of $\gF$. By the maximal choice of $\gF$, the induced action $(\gT^d/\gF)$-action on $W$ does not contain any vectors with non-trivial finite isotropy. If we now find a one-dimensional subgroup of $\gT^d/\gF$ for the induced representation 
$\gT^d/\gF\rightarrow \SO(V^{\gF})$ as in the Lemma \ref{lem:S1splitting}, then the lemma
clearly follows for the original representation as well.

Maybe the most surprising thing in this section is that there is something non-trivial one can say about the weight system of a faithful representation of a torus $\gT^d$ without non-trivial finite isotropy groups. 

The second result is fairly straightforward and is also used in the proof of Theorem~\ref{thm:t5}.

\begin{lemma}[Refinement of Theorem~\ref{thm:S1splitting} for $d=3$]\label{lem:refine}
Let $\rho\colon \gT^3\rightarrow \SO(V)$ be a 
 faithful representation without disconnected isotropy groups. 
Then one of the following holds:
\begin{enumerate}
\item[a)] There is a one-dimensional subgroup $\gH\subset \gT^3$ whose fixed-point set
has codimension $k_0$, and $\gT^3/\gH$ acts effectively by a product representation of
$\gS^1\times \gS^1$ on $V^{\gH}$, where the two circles have fixed-point sets of codimensions
 $k_2$ and $k_3$ with 
$k_2+k_3\le k_0$.
\item[b)] The representation of $\gT^3$ itself splits as $\gS^1\times \gT^2$.
\end{enumerate}
\end{lemma}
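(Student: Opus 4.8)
The plan is to reduce the statement to the combinatorics of the weight system of $\rho$ and then argue by cases on the number of weight lines. Write $V=V^{(0)}\oplus V^{(1)}\oplus\cdots\oplus V^{(s)}$, where $V^{(0)}=V^{\gT^3}$ and each $V^{(j)}$ for $j\ge 1$ is the isotypical component of a nontrivial irreducible real subrepresentation with primitive weight line $\omega_j\in\Z^3$; put $2m_j=\dim V^{(j)}$ and $r=\sum_{j\ge 1}m_j$. The trivial summand $V^{(0)}$ is fixed by every subgroup of $\gT^3$ and so affects none of the codimensions in the statement; I suppress it and assume $V^{(0)}=0$. By Lemma~\ref{lem:connectedstab} the hypothesis means that no vector has nontrivial finite isotropy, which translates into the lattice condition that \emph{every} subset of $\{\omega_1,\dots,\omega_s\}$ generates a saturated sublattice of $\Z^3$. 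I will use the following consequences: each $\omega_j$ is primitive; any two independent weights extend to a $\Z$-basis of $\Z^3$; any three independent weights \emph{form} a $\Z$-basis; and a $2$-plane contains at most three weight lines, any three coplanar ones being of the form $\{\omega,\omega',\omega+\omega'\}$ up to signs. Faithfulness gives $s\ge 3$, and Theorem~\ref{thm:d+1choose2} gives $s\le 6$.

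Next I would identify the two alternatives combinatorially. Alternative (b) holds as soon as all but at most one weight line lies in a common $2$-plane $P$: choosing two independent ones among them and adjoining the remaining weight $\omega_0$ gives a $\Z$-basis, so $\Z^3=\Z\omega_0\oplus(P\cap\Z^3)$, and dualizing this splitting presents $\rho$ as the product of its (faithful) subrepresentation on the $\pm\omega_0$-summands and its (faithful) subrepresentation on $\bigoplus_{j\ne 0}V^{(j)}$. Alternative (a) is produced from a \emph{good pair}: two weight lines $\omega_a,\omega_b$ spanning a $2$-plane that contains no third weight line. For such a pair let $\gH=\ker\omega_a\cap\ker\omega_b$, a circle since $\Z\omega_a+\Z\omega_b$ is saturated; then $V^{\gH}=V^{(a)}\oplus V^{(b)}$, the classes $\omega_a,\omega_b$ are a basis of the rank-two character lattice of $\gT^3/\gH$, and $\gT^3/\gH\cong\gS^1\times\gS^1$ acts by the corresponding product representation. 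The two factor circles have fixed sets of codimensions $k_2=2m_a$ and $k_3=2m_b$ in $V^{\gH}$, while $k_0=\operatorname{codim}_V V^{\gH}=2\sum_{j\ne a,b}m_j$, so the desired inequality $k_2+k_3\le k_0$ is precisely $m_a+m_b\le r/2$.

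It remains to show: if (b) fails, there is a good pair with $m_a+m_b\le r/2$. If $s=3$ the weights form a basis and (b) holds; so does it if $s=4$ with three coplanar weights. Hence either $s=4$ with no three weights coplanar — in which case every pair is good and the pair carrying the two smallest multiplicities has $m_a+m_b$ at most the sum of the other two, hence $\le r/2$ — or $s\in\{5,6\}$. In the latter case, suppose every good pair had combined multiplicity $>r/2$; then no two good pairs are disjoint, so the (nonempty) family of good pairs is either a triangle $\{\omega_a,\omega_b\},\{\omega_b,\omega_c\},\{\omega_a,\omega_c\}$ or a star through a single line. A triangle is impossible: $\omega_a,\omega_b,\omega_c$ are then pairwise in general position, and for any fourth weight line $\omega_d$ the three bad pairs $\{\omega_d,\omega_a\},\{\omega_d,\omega_b\},\{\omega_d,\omega_c\}$ have coplanar witnesses that turn out to be three new and pairwise-distinct weight lines, forcing $s\ge 7$ against Theorem~\ref{thm:d+1choose2}. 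A star is excluded by a similar finite analysis — two spokes through the center, together with the coplanar witness of the bad pair they form, either overload a $2$-plane or produce a seventh weight line. Therefore some good pair satisfies $m_a+m_b\le r/2$, and (a) holds.

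I expect the last step, ruling out a ``star of good pairs'' when $s\in\{5,6\}$, to be the main obstacle, since it requires keeping careful track of which coplanar witnesses are genuinely new weight lines and which $2$-planes already carry three weight lines. A safe alternative is to appeal to the explicit classification of weight systems with connected isotropy from Theorem~\ref{thm:d+1choose2} for $s=6$, together with a short enumeration of the possibilities for $s=5$, and to verify in each configuration that the good pairs are not all pairwise intersecting.
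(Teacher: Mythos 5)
Your reduction to the combinatorics of ``good pairs'' is sound and is essentially the strategy of the paper: a good pair is exactly a solution to Lemma~\ref{lem:z2lem}, hence to Lemma~\ref{lem:repsplitting}(e), and your codimension bookkeeping ($k_2+k_3\le k_0$ iff $m_a+m_b\le r/2$) and your treatment of $s\le 4$ are correct and complete. The genuine gap is the case $s\in\{5,6\}$, which you yourself flag: you reduce to showing that the good pairs do not form an intersecting family, but you neither prove that the family of good pairs is nonempty, nor rule out the ``star'' configuration; and your proposed fallback of appealing to a classification only exists for $s=6$ (Theorem~\ref{thm:d+1choose2} classifies only the equality case $s=\frac{d(d+1)}{2}$), so $s=5$ is left unenumerated. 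As written, the argument does not close.

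The paper closes this step without any triangle/star analysis by reducing the weights modulo $2$. By Lemma~\ref{lem:repsplitting}(c)--(d) the $s$ weights map injectively to distinct nonzero elements of $\Z_2^3$ and miss at least one nonzero element, which one may take to be $(1,1,1)$; moreover two weights form a good pair in your sense if and only if their mod-$2$ images sum to an element outside the image (three rationally coplanar weights have, by Lemma~\ref{lem:repsplitting}(a)--(b), mod-$2$ images of the form $\{x,y,x+y\}$, and conversely). The six elements of $\Z_2^3\setminus\{0,(1,1,1)\}$ split into the three pairs $\{(1,1,0),(0,0,1)\}$, $\{(1,0,1),(0,1,0)\}$, $\{(0,1,1),(1,0,0)\}$, each summing to the missing element: these are three pairwise disjoint good pairs when $s=6$, and at least two of them survive intact when $s=5$ (the second missing element kills at most one pair). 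Two disjoint good pairs immediately give one with $m_a+m_b\le r/2$, which is all you need; for $s=6$ the three disjoint pairs even give the sharper bound $k_2+k_3\le\tfrac12 k_0$. Replacing your intersecting-family argument by this observation completes your proof.
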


The third result is Lemma~\ref{lem:repsplitting} below. It is also the main lemma that connects the condition of connected isotropy groups for torus representations with the codimension three property for subsets of vector spaces over $\Z_2$. This lemma is used in Section \ref{sec:z2codim2} to detect the existence of non-trivial finite isotropy groups.

The rest of the section is organized as follows.
The above reduction step will allow us to reduce the proof of Theorem~\ref{thm:S1splitting} to a combinatorial problem in $\Z_2$-vector spaces. The solution to this problem is presented first in the next subsection in order to avoid interrupting the flow of the discussion later. Section~\ref{subsec:weights} contains the core of the proof of Theorem~\ref{thm:S1splitting}. Here a solution is presented in terms of the weights of the representation. In the proof of Theorem~\ref{thm:S1splitting} in Section~\ref{subsec:split}, we verify that  this 
  is indeed exactly what is needed. Thereafter we 
 prove Lemma~\ref{lem:refine}.
The final two subsections are devoted to the proof of Theorem~\ref{thm:d+1choose2}, which first requires a proof of a $\Z_2$-version in Section~\ref{subsec:sparse}. 

\subsection{Sparse subsets in vector spaces over $\Z_2$}\label{sec:cod3property}
We consider finite-dimensional vector spaces $V$ over $\Z_2$, and we consider subsets $S \subseteq V$ which generate $V$ but which are nevertheless sparse in the following sense.

\begin{definition}[Codimension three property]\label{def:codim3}
A subset $S$ of a finite-dimensional $\Z_2$-vector space $V$ has the codimension three property if the following hold:
	\begin{enumerate}
	\item $0 \in S$ and $S$ generates $V$.
	\item For every subspace $U \subseteq V$ of codimension three generated by a subset of $S$, the projection $V \to V/U$ is non-surjective when restricted to $S$.
	\end{enumerate}
\end{definition}

Note in particular that  such a subset $S$ does not contain any three-dimensional subspace of $V$. Two additional basic properties are noted here without proof:

\begin{lemma}\label{lem:cod3closure}
Let $V$ be a finite-dimensional vector space over $\Z_2$, and let $S \subseteq V$ be a subset satisfying the codimension three property.
	\begin{enumerate}
	\item If $s \in S$, then the image $\bar S$ of $S$ under the projection $V \to \bar V = V/\inner{s}$ also satisfies the codimension three property in $\bar V$.
	\item If $V' \subseteq V$ is a subspace generated by a subset of $S$, then $S' = S \cap V'$ satisfies the codimension three property in $V'$.
	\end{enumerate}
\end{lemma}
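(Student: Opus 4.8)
\textbf{Plan of proof for Lemma~\ref{lem:cod3closure}.}

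The plan is to verify both statements directly from Definition~\ref{def:codim3}, using the observation that the codimension three property is a condition about subspaces of codimension three that are spanned by elements of $S$, together with the non-surjectivity of $S$ onto the three-dimensional quotient. I will treat the two parts separately, but both reduce to chasing quotient maps and lifting/projecting spanning sets.

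For part (2), let $V' \subseteq V$ be spanned by a subset of $S$, and put $S' = S \cap V'$. Condition~(1) of Definition~\ref{def:codim3} is immediate: $0 \in S'$ since $0 \in V'$, and $S'$ spans $V'$ because $V'$ is by hypothesis spanned by elements of $S$, all of which lie in $V' \cap S = S'$. For condition~(2), suppose $U' \subseteq V'$ has codimension three in $V'$ and is spanned by a subset $T \subseteq S'$. I claim $U'$ extends to a subspace $U \subseteq V$ of codimension three spanned by a subset of $S$: indeed, choose elements of $S$ spanning $V'$, pick a subset of these that together with $T$ still spans $V'$ but whose chosen lifts can be discarded so that $U := \operatorname{span}(T)$ has the right codimension --- more carefully, complete $T$ to a spanning set $T \cup T_0$ of $V'$ with $T_0 \subseteq S$ minimal, so $|T_0| = 3$ and $U = \operatorname{span}(T)$ has codimension three in $V$ as well (since $V'$ has finite codimension spanned by $S$-elements, one first extends to all of $V$; here I should be slightly careful that $U$ has codimension exactly three \emph{in $V$}, which may fail, so the cleaner route is: work inside $V'$ only is not enough, so instead I pass to $V/U$ directly). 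The honest argument: let $U = \operatorname{span}(T) \subseteq V' \subseteq V$; if $\operatorname{codim}_V U = 3$ we apply the codimension three property of $S$ in $V$ directly to conclude $S \to V/U$ is non-surjective, hence so is its restriction $S' \to V'/U'$, which is the composite $S' \hookrightarrow S \to V/U$ landing in the subspace $V'/U' \subseteq V/U$. If $\operatorname{codim}_V U > 3$, enlarge $U$ to $\widetilde U \supseteq U$ with $\operatorname{codim}_V \widetilde U = 3$ and $\widetilde U$ spanned by elements of $S$ (possible since $S$ spans $V$); then $S \to V/\widetilde U$ is non-surjective, and projecting further $V/\widetilde U \to (V/U)$ is not what I want --- rather I note $\widetilde U \cap V' \supseteq U'$ has codimension $\le 3$ in $V'$ and push the non-surjectivity down. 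This bookkeeping is the first place care is needed.

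For part (1), let $s \in S$, let $\bar V = V/\inner{s}$ with projection $\pi$, and $\bar S = \pi(S)$. Condition~(1) is clear: $\bar 0 \in \bar S$ and $\bar S$ spans $\bar V$ since $S$ spans $V$. For condition~(2), let $\bar U \subseteq \bar V$ have codimension three and be spanned by a subset of $\bar S$; I must show $\bar S \to \bar V / \bar U$ is non-surjective. Lift: set $U = \pi^{-1}(\bar U) \subseteq V$, which contains $s$ and has $\operatorname{codim}_V U = \operatorname{codim}_{\bar V} \bar U = 3$. Since $\bar U$ is spanned by elements of $\bar S$, choosing lifts in $S$ and adjoining $s \in S$ shows $U$ is spanned by a subset of $S$. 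Hence by the codimension three property of $S$ in $V$, the map $S \to V/U$ is non-surjective; but $V/U \cong \bar V/\bar U$ canonically and the map $S \to V/U$ factors as $S \xrightarrow{\pi} \bar S \to \bar V/\bar U$, so $\bar S \to \bar V/\bar U$ is non-surjective as well. This direction is essentially immediate once one identifies $U = \pi^{-1}(\bar U)$.

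\textbf{Main obstacle.} The only delicate point is the codimension-matching in part~(2): a codimension-three subspace of $V'$ spanned by $S'$-elements need not have codimension three \emph{in $V$}, so one cannot apply Definition~\ref{def:codim3}(2) to $V$ verbatim. The fix is to enlarge such a subspace to a genuine codimension-three subspace of $V$ spanned by elements of $S$ (using that $S$ spans $V$), apply the property in $V$, and then restrict/project the resulting non-surjectivity statement back to $V'$; keeping track that the enlargement meets $V'$ in the correct codimension is the one computation that needs to be done carefully. Everything else is a routine diagram chase with quotient maps, which is presumably why the authors state it ``without proof.''
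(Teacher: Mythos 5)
The paper states this lemma \emph{without proof}, so there is nothing to compare against; judging your argument on its own terms: part (1) is complete and correct (setting $U=\pi^{-1}(\bar U)$, noting it is generated by $s$ together with $S$-lifts of generators of $\bar U$, has codimension three, and that $V/U\cong\bar V/\bar U$ compatibly with the maps from $S$ and $\bar S$). Part (2), however, is left with a real, if small, gap at exactly the point you flag. Two things are off in your sketch. First, in the case $\operatorname{codim}_V U'=3$ you claim non-surjectivity of $S\to V/U'$ passes to $S'\to V'/U'$ because the latter ``lands in the subspace $V'/U'\subseteq V/U'$''; as stated this implication is false in general (the element missed by $S$ could lie outside $V'/U'$) --- it only works here because in that case a dimension count forces $V'=V$. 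Second, and more importantly, in the main case your stated requirement on the enlargement, that ``$\widetilde U\cap V'$ has codimension $\le 3$ in $V'$,'' is not the right condition and does not by itself let you conclude anything about $S'\to V'/U'$: you need $\widetilde U\cap V'=U'$ \emph{and} $\widetilde U+V'=V$, so that the inclusion $V'\hookrightarrow V$ induces an isomorphism $V'/U'\cong V/\widetilde U$ identifying $S'\to V'/U'$ with the restriction to $S'$ of the non-surjective map $S\to V/\widetilde U$.

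The fix is short and you should write it out. Since $S$ generates $V$, its image generates $V/V'$, so choose $s_1,\dots,s_m\in S$ whose classes form a basis of $V/V'$, and set $\widetilde U=U'+\langle s_1,\dots,s_m\rangle$. Then $\widetilde U$ is generated by a subset of $S$ (generators of $U'$ taken from $S'$ together with the $s_i$), $\widetilde U+V'=V$, and $\dim\widetilde U=\dim U'+m=\dim V-3$, so $\operatorname{codim}_V\widetilde U=3$; moreover $U'\subseteq\widetilde U\cap V'$ and $\dim(V'/(\widetilde U\cap V'))=\dim(V/\widetilde U)=3=\dim(V'/U')$ forces $\widetilde U\cap V'=U'$. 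Now Definition~\ref{def:codim3}(2) applied in $V$ gives that $S\to V/\widetilde U$ is non-surjective, hence so is its restriction to the subset $S'$, and under the isomorphism $V/\widetilde U\cong V'/U'$ this restriction is precisely the map $S'\to V'/U'$. With this paragraph inserted, your proof is complete.
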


We now prove the main combinatorial lemma required for the proof of the $\gS^1$-splitting lemma.

\begin{lemma}[\(\mathbb{Z}_2\)-version of the splitting theorem]\label{lem:z2lem} 
Let $V$ be a $d$-dimensional vector space over $\Z_2$ with $d\ge 3$. If $S\subseteq V$ satisfies the codimension three property, then there exist a codimension two subspace $W \subseteq V$ generated by a subset of $S$ and an element $s \in S \setminus W$ such that $s$ is the only element in $S \cap (s + W)$.
\end{lemma}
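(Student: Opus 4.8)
Here is my plan for proving Lemma~\ref{lem:z2lem}.

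\medskip

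\textbf{Overall strategy.} The plan is to induct on $d$. The statement we want is that for some codimension two subspace $W$ spanned by a subset of $S$, there is an element $s\in S\setminus W$ that is the unique element of $S$ in the affine coset $s+W$; equivalently, writing $\bar V=V/W\cong\Z_2^2$, the image of $S$ in $\bar V$ hits some non-zero coset exactly once. For $d=3$: here a codimension two subspace is a line $W=\{0,w\}$, so the requirement is that some element $s\in S$ satisfies $s+w\notin S$, while $W=\{0,w\}$ must be spanned by (hence contained in) $S$. If $S=V$ then $S$ contains a three-dimensional subspace, contradicting the remark after Definition~\ref{def:codim3}; so there is $v\in V\setminus S$. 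Since $S$ generates $V$ and $S\ne V$, I expect to be able to pick a non-zero $w\in S$ with $v+w\in S$ (if no such $w$ existed, $v$ together with $0$ would already force a contradiction via the codimension three property applied to a suitable line — this small case needs a careful but short argument), and then $s=v+w$, $W=\{0,w\}$ works because $s+w=v\notin S$.

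\medskip

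\textbf{Inductive step.} Assume $d\geq 4$ and the result holds in dimension $d-1$. Pick any non-zero $s_0\in S$ (exists since $S$ generates $V$ and $d\geq 1$) and form the quotient $\pi\colon V\to \bar V=V/\inner{s_0}$, which has dimension $d-1\geq 3$. By Lemma~\ref{lem:cod3closure}(1), $\bar S=\pi(S)$ satisfies the codimension three property in $\bar V$. By the inductive hypothesis, there are a codimension two subspace $\bar W\subseteq \bar V$ spanned by a subset of $\bar S$ and an element $\bar s\in \bar S\setminus\bar W$ with $\bar S\cap(\bar s+\bar W)=\{\bar s\}$. Let $W=\pi^{-1}(\bar W)$, a codimension two subspace of $V$ containing $s_0$; I need $W$ to be spanned by a subset of $S$. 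Since $\bar W$ is spanned by elements of $\bar S$ and $s_0\in S\cap W$, the preimages in $S$ of those spanning elements of $\bar S$, together with $s_0$, span a subspace of $W$ that surjects onto $\bar W$ and contains $s_0$, hence equal $W$ — so $W$ is spanned by a subset of $S$. Now lift $\bar s$ to some $s\in S$ with $\pi(s)=\bar s$; then $s\notin W$. The coset $s+W$ maps under $\pi$ onto $\bar s+\bar W$, and $S\cap(s+W)$ maps into $\bar S\cap(\bar s+\bar W)=\{\bar s\}$, so every element of $S\cap(s+W)$ lies in $(s+\inner{s_0})\cap S=\{s, s+s_0\}$. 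If $s+s_0\notin S$ we are done. The remaining case is $s+s_0\in S$, i.e.\ both $s$ and $s+s_0$ lie in $S\cap(s+W)$ and there are no others.

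\medskip

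\textbf{The main obstacle: the degenerate case.} The hard part is exactly this last situation, where the lifted coset $s+W$ meets $S$ in precisely the two-element set $\{s, s+s_0\}$, a full affine line parallel to $\inner{s_0}$. Here I plan to re-choose the quotient more cleverly. One approach: since $S$ has the codimension three property, the codimension three subspace $U\subseteq W$ with $W=U\oplus\inner{s_0}$ chosen appropriately... more precisely, I would look at the $3$-dimensional space $V/U'$ for a suitable codimension three $U'\subseteq V$ spanned by a subset of $S$ and inside $W$, and use that $S$ is non-surjective onto it. The presence of the line $\{s,s+s_0\}$ over the coset $\bar s+\bar W$ together with the two elements $0,s_0$ over $\bar 0$ gives four points of $S$ whose images in $V/U'$ I can analyze; non-surjectivity of $S\to V/U'$ (an $8$-element space) then forces some non-zero coset of $U'$ (refining to a non-zero coset of $W'$ for some codimension two $W'\supseteq U'$ spanned by a subset of $S$) to be hit exactly once. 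Making this selection of $U'$ precise, and checking $W'$ is spanned by a subset of $S$, is the delicate bookkeeping; I expect the key leverage is always Lemma~\ref{lem:cod3closure} combined with the pigeonhole/non-surjectivity in the $8$-element quotient, and that $d\geq 4$ gives enough room (a genuine codimension three subspace inside $W$) to run it.
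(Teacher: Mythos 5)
Your plan follows essentially the same route as the paper: induct on $d$, quotient by a line $\inner{s_0}$ with $0\neq s_0\in S$, pull back the solution, and isolate the degenerate case where the lifted coset $s+W$ meets $S$ in exactly the two points $s$ and $s+s_0$. The base case is fine once phrased as: $S$ contains $0$, generates $V$, and is not all of $V$, hence is not closed under addition, which yields distinct $s_1,s_2\in S\setminus\{0\}$ with $s_1+s_2\notin S$; note that your detour through a fixed $v\notin S$ can fail for a bad choice of $v$ (e.g.\ $S=\{0,e_1,e_2,e_3\}$ and $v=e_1+e_2+e_3$), but since you only need some such pair, this is harmless. Your verification that $W=\pi^{-1}(\bar W)$ is generated by a subset of $S$ is correct.

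The genuine gap is in the degenerate case, which is the crux of the whole lemma. You correctly set up a codimension three subspace $U'\subseteq W$ generated by $U'\cap S$ and transversal to $\inner{s_0}$, and pass to the eight-element quotient $V/U'$; but the assertion that non-surjectivity of $S\to V/U'$ \emph{forces} some non-zero coset of a suitable codimension two $W'$ to be hit exactly once is not a valid deduction as stated. What is actually needed is the following: writing $x_1,x_2$ for the images of $s$ and $s+s_0$ and $M$ for the complement of $\pr(S)$ in $V/U'$, there exist $m\in M$ and $i\in\{1,2\}$ with $x_i+m\in\pr(S)$. (Given this, set $x_3:=x_i+m\in\pr(S)$; then $W':=\pr^{-1}(\inner{x_3})$ is generated by $W'\cap S$, the coset $x_i+\inner{x_3}$ meets $\pr(S)$ only in $x_i$, and the fiber over $x_i$ meets $S$ only in the corresponding $s_i$ because $s_i+U'$ contains a single element of $S$.) This existence statement does not follow from $M\neq\varnothing$ alone: one must rule out that $x_1+m$ and $x_2+m$ lie in $M$ for \emph{every} $m\in M$. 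The paper excludes this by a counting argument: in that situation $0\notin M$ forces $x_1+x_2\notin M$, so $\{x_1,x_2,m\}$ is a basis and $M$ contains the four distinct elements $m$, $x_1+m$, $x_2+m$, $x_1+x_2+m$; hence $\pr(S)$ is contained in the two-dimensional subspace $\{0,x_1,x_2,x_1+x_2\}$ and cannot generate $V/U'$, contradicting that $S$ generates $V$. Without this step (or an equivalent), your argument does not close.
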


\begin{proof}
 We prove the lemma by induction on $d$. 
If $d=3$ then $S\neq V$.
Since $S$  generates $V$ it cannot be a subgroup. 
Using $0\in S$ we see that we can find $s_1,s_2\in S\setminus \{0\}$ with $s_1\neq s_2$ 
such that $s_1+s_2\notin S$.
Now the one-dimensional subspace $\ml s_2 \mr$ generated by $s_2$ 
is the solution to our problem since $(s_1+ \ml s_2\mr) \cap S=s_1$.  

Suppose now $d>3$. Choose an element $s_0\in S$ and consider the 
projection $\sigma\colon V\rightarrow Q:=V/\ml s_0\mr$. 
Clearly the induction assumption applies to
$Q$ with the subset $\sigma(S)$. Thus we can find a $(d-3)$-dimensional subspace 
$P\subset Q$ generated by $P\cap \sigma(S)$ and an element $\bar s\in \sigma(S)\setminus P$ 
such that $\bar s$ is the only element of $\sigma(S)$ contained in the affine subspace $\bar s +P$. 
If $\sigma^{-1}(\bar s)$ contains only one element $s$,  we would be done,  
since then $W:= \sigma^{-1}(P)$ together with the affine subspace $s+W$ is the solution to our problem. 
Thus we may assume that $\sigma^{-1}(\bar s)$ is given by two elements 
$s_1$ and $s_2=s_1+s_0$. 
We choose inside $\sigma^{-1}(P)$ a $(d-3)$-dimensional subspace $U$ that is transversal to $\ml s_0\mr$ 
and that is generated by $U\cap S$.

Then $s_2\notin s_1+U$. Thus the affine subspaces $s_1+U$ and $s_2+U$ only contain one element 
of $S$. We consider the projection $\pr\colon V\rightarrow V/U$ to the three-dimensional 
quotient vector space. By assumption, the set $M:=V/U\setminus \pr(S)$ is non-empty and $0\notin M$.
Put $x_i=\pr(s_i)$, $i=1,2$. By construction $x_1\neq x_2$. 

We claim that, for some $m\in M$, we have $x_1+m\notin M$ or $x_2+m\notin M$.
Suppose for a moment this is false and, for all $m\in M$, 
we have $x_1+m\in M$ and $x_2+m\in M$. This clearly implies that $x_1+x_2\notin M$ 
and thus the vectors $x_1,x_2,m$ form a basis 
for $m\in M$. 
Furthermore, we then have $x_1+m$, $x_2+m\in M$ 
and hence $x_1+x_2+m\in M$. Overall $M$ contains four different non-zero elements, 
$m$, $x_1+m$, $x_2+m$, and $x_1+x_2+m$. The remaining three, $x_1$, $x_2$, and $x_1+x_2$, are linearly 
dependent, and we get a contradiction to the fact that $\pr(S)$ generates $V/U$. 

Thus indeed, after possibly switching the roles of $x_1$ and $x_2$, 
we can assume that $x_1+m=x_3 \in \pr(S)$ for some $m\in M$. 
Of course, we can rewrite this as $x_1+x_3=m$. 
This can be expressed slightly differently as follows. 
The one-dimensional space $\ml x_3\mr$ 
has the property that $x_1$ is the only element of $\pr(S)$ in the affine subspace $x_1+ \ml x_3\mr$. 
The preimage $W:=\pr^{-1}(\ml x_3\mr )$ is a $(d-2)$-dimensional space generated by   $W\cap S$, 
and the affine subspace $s_1+ W$ only contains one element of $S$. 
\end{proof}


\subsection{The structure of weight systems of torus representations with connected isotropy groups.}\label{subsec:weights}
Recall that the irreducible non-trivial subrepresentations of a torus correspond to non-zero homomorphisms $\gT^d\rightarrow \gS^1$. Such a homomorphism is in turn determined by the induced homomorphism $h\colon \pi_1(\gT^d)\rightarrow \pi_1(\gS^1)\cong \Z$. 
We may identify $\Z^d$ and $\Gamma:=\pi_1(\gT^d)$, 
 and then $h$ becomes an element in the dual space
  $\Gamma^*\cong \Z^d$. Finally, recall that two real subrepresentations are equivalent if their induced elements in $\Gamma^*$ coincide up to the sign.

\begin{lemma}\label{lem:repsplitting} 
Let $\rho\colon \gT^d\rightarrow \SO(l)$ be a faithful representation of a torus with $d \geq 3$ and the property that there are no non-trivial finite isotropy groups. Consider a maximal collection of pairwise inequivalent, irreducible, non-trivial subrepresentations $\rho_1,\ldots,\rho_l\colon \gT^d\rightarrow \gS^1\cong \SO(2)$ of $\rho$. Let $h_1,\ldots,h_l\in \Gamma^*$ denote the induced elements in the dual space of $\Gamma:=\pi_1(\gT^d)$ as indicated above. 
	\begin{enumerate}[label=(\alph*)]
	\item If $h_{i_1},\ldots,h_{i_d}\in \Gamma^*$ are linearly independent, then they form a $\Z$-basis of $\Gamma^*$.
	\item If $h_1,\ldots,h_d$ are linearly independent, then every other $h_j$ can be expressed in the form $h_j=\sum_{i=1}^{d} z_i h_i$ with $|z_i| \leq 1$.
	\item The projection $\pi\colon \Gamma^*\rightarrow V:=\Gamma^*/(2\cdot \Gamma^*)$ obtained from reduction modulo two is one-to-one on $E = \{h_1,\ldots,h_l\}$ and satisfies $0 \not\in \pi(E)$.
 \item The subset $S = \pi(E) \cup \{0\}$ of $V$ satisfies the codimension three property.

	\item There is a corank one group $L_1\subset \Gamma^*$ and a corank two subgroup $L_2\subset \Gamma^*$ with $L_2\subset L_1$ such that $L_i$ is generated by $L_i\cap E$ for $i \in \{1,2\}$ and $L_1\cap E$ contains only one more element than $L_2\cap E$.
	\end{enumerate}
Furthermore, (e)  remains valid even without the assumption that the representation $\rho$ 
does not have any non-trivial finite isotropy groups.
\end{lemma}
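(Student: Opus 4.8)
The plan is to deduce this remaining case of (e) from the version already proven, by the same device used in the Reduction Step above: localize at a maximal finite isotropy group. Concretely, I would choose $v\in V$ whose isotropy group $\gF$ is maximal among all finite (possibly trivial) isotropy groups of $\rho$, and set $W:=V^{\gF}$. By the Reduction Step, $\gT^d/\gF$ is again a $d$-torus, it acts effectively on $W$, and the representation $\bar\rho\colon\gT^d/\gF\to\SO(W)$ has no non-trivial finite isotropy groups; since $d\ge3$, the already-established form of Lemma~\ref{lem:repsplitting}, and in particular its part (e), applies to $\bar\rho$.

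The next step would be to identify the weight data of $\bar\rho$ with a sub-configuration of that of $\rho$. Writing $\Gamma':=\pi_1(\gT^d/\gF)$, the finite covering $\gT^d\to\gT^d/\gF$ realizes $(\Gamma')^*$, by pullback of characters, as a finite-index subgroup of $\Gamma^*$ — precisely the subgroup of characters trivial on $\gF$. Decomposing $V$ into $\gT^d$-irreducibles, $W$ is the sum of the trivial summands together with those weight spaces $V_{h_i}$ that are pointwise fixed by $\gF$; hence a maximal collection of pairwise inequivalent, non-trivial, irreducible subrepresentations of $\bar\rho$ has weight set exactly $E':=E\cap(\Gamma')^*$ under this identification, the $h_i$ involved staying non-zero and pairwise inequivalent because $(\Gamma')^*\hookrightarrow\Gamma^*$. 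Applying (e) to $\bar\rho$ then yields subgroups $L_2'\subseteq L_1'\subseteq(\Gamma')^*$ of corank two and one in $(\Gamma')^*$, each generated by its intersection with $E'$, and with $\#(L_1'\cap E')=\#(L_2'\cap E')+1$.

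Finally I would transport $L_1',L_2'$ back into $\Gamma^*$ unchanged, setting $L_i:=L_i'$. Because $L_i'$ has corank $i$ in the rank-$d$ lattice $(\Gamma')^*$ it has rank $d-i$, and since $\Gamma^*$ likewise has rank $d$ the quotient $\Gamma^*/L_i$ has rank $i$; thus $L_i$ has corank $i$ in $\Gamma^*$ and $L_2\subseteq L_1$. Also $L_i\cap E=L_i'\cap E'$, since any $h\in L_i\cap E$ lies in $L_i\subseteq(\Gamma')^*$, hence is trivial on $\gF$, hence already lies in $E'$. Therefore $L_i$ is generated by $L_i\cap E$ and $\#(L_1\cap E)=\#(L_2\cap E)+1$, which is exactly assertion (e) for $\rho$.

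I do not expect a real obstacle here: the only slightly delicate point is the identification $E'=E\cap(\Gamma')^*$ of weight systems together with the invariance of corank under passage from $(\Gamma')^*$ to the finite-index overgroup $\Gamma^*$, and both are routine once the covering $\gT^d\to\gT^d/\gF$ and the weight-space decomposition have been set up. The substantive work has already been carried out in the finite-isotropy-free case of (e) and in the Reduction Step.
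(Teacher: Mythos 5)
Your proposal only addresses the closing sentence of the lemma --- that (e) survives once the no-finite-isotropy hypothesis is dropped --- and explicitly takes (a)--(e) under that hypothesis as ``the version already proven.'' If the statement to be proved is the full lemma, that is the gap: the substance lives in (a)--(d) and in the derivation of (e) from (d) via the codimension three property and the $\Z_2$-combinatorics of Lemma~\ref{lem:z2lem}, and none of that is touched. In particular the unimodularity argument behind (a), the index-two contradictions behind (b) and (d), and the invocation of the $\Z_2$-splitting lemma for (e) would all still need to be supplied.

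For the part you do prove, the argument is correct and is essentially the paper's own reduction viewed from the dual side. The paper chooses $d$ linearly independent elements $e_1,\dots,e_d\in E$ whose $\Z$-span $\Upsilon^*$ has maximal index in $\Gamma^*$ and replaces $(\Gamma^*,E)$ by $(\Upsilon^*,E\cap\Upsilon^*)$, observing that (a)--(e) then hold for the new pair. Your maximal finite isotropy group $\gF$ is exactly the common kernel of such a maximal-index family of weights, and its annihilator $(\Gamma')^*=\mathrm{Hom}(\pi_1(\gT^d/\gF),\Z)$ inside $\Gamma^*$ is that family's span $\Upsilon^*$, so the two reductions produce the same sublattice and the same restricted weight set $E'$. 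Your verification that the weights of $\bar\rho$ on $V^{\gF}$ are precisely $E\cap(\Gamma')^*$ (each $2$-dimensional isotypic piece is either fixed pointwise by $\gF$ or meets $V^{\gF}$ trivially), and your transport of $L_1',L_2'$ back to $\Gamma^*$ using that corank in the rank sense is unchanged under passage to a finite-index overlattice while $L_i\cap E=L_i'\cap E'$, are both sound; the faithfulness and absence of finite isotropy for $\bar\rho$ come from the Reduction Step exactly as you say.
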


For the proof, we identify  the fundamental group $\Gamma$ with the kernel of the exponential map $\exp\colon \mathfrak{t}\rightarrow \gT^d$.  

To prove (a), we may reorder and consider only the case where $h_1,\ldots,h_d \in \Gamma^*$ are linearly independent. If $h_1,\ldots,h_d$ generates a subgroup whose index is finite and at least two, then there is a non-trivial finite extension $\Gamma\subsetneq \hat\Gamma$ such that the $\Q$-extension of $h_i$ would still map $\hat\Gamma$ to $\Z$ for all $1 \leq i \leq d$. But then the finite group $\exp(\hat\Gamma)$ would be in the kernel $\rho_i$ for all $1 \leq i \leq d$. The intersection of all kernels of $\rho_i$ corresponds to the principal isotropy group of the representation $\rho_1\oplus\cdots \oplus\rho_d$. Since this is by definition a subrepresentation of $\rho$, this is a contradiction to the assumption that there are no finite isotropy groups for the representation $\rho$. 

To prove (b), after reordering it suffices to rule out 
 that $h_j = \sum_{i=1}^{d} z_i h_i$ with $|z_1| \geq 2$
 holds for some $j$. The weights $h_j,h_2,h_3,\ldots,h_d$ are linearly independent and generate a subgroup on index $|z_1| \geq 2$. Indeed, the change of basis matrix from $h_1,\ldots,h_d$ to this set is a lower triangular matrix with determinant $z_1$. This contradicts (a).

To prove (c), recall that $h_i \neq \pm h_j$ whenever $i \neq j$ as $\rho_i$ and $\rho_j$ correspond to inequivalent subrepresentations. By (b), this implies that $h_i$ and $h_j$ are linearly independent. By (a) we can then extend $h_i,h_j$ to a $\Z$-basis of $\Gamma^*$. But a $\Z$-basis is mapped by $\pi$ to a $\Z_2$-basis of $V$. In particular, $\pi(h_i)$ is non-zero and distinct from $\pi(h_j)$. 

To prove (d), let $U \subseteq V$ be a subspace of codimension three that is generated by a subset of $S$. Choose a basis $s_1,\ldots,s_d \in S$ for $V$ such that $s_4,\ldots,s_d$ form a basis for $U$. Choose $h_i \in E \cap \pi^{-1}(s_i)$ for $1 \leq i \leq d$. We argue by contradiction. This means that the seven non-zero elements in $V/U$ come from $S$ and hence ultimately from $E$. Three of these elements are the images of $h_1$, $h_2$, and $h_3$, and these have linearly independent images. Let $v_0,v_1,v_2,v_3\in E$ denote preimages of the other four non-zero elements in $V/U$ under the composition 
	\[E \subseteq \Gamma^* \to V \to V/U.\]
Up to reordering the $v_i$ and replacing $h_i$ by $-h_i$ and $v_i$ by $-v_i$ for $1 \leq i \leq 3$ as needed, we have 
\begin{align*}
	v_0 &= h_1 + h_2 + h_3 + u_0\\
	v_1 &= h_2 + \ep_1 h_3 + u_1,\\
	v_2 &= h_1 + \ep_2 h_3 + u_2,\\
	v_3 &= h_1 + \ep_3 h_2 + u_3
	\end{align*} 
for some $u_0, \ldots, u_3 \in \inner{h_4,\ldots,h_d}$ and some $\ep_i \in \{\pm 1\}$. We claim that all $\ep_i = 1$. Indeed, if $\epsilon_i = -1$, then the subset
	\[\{v_0, v_i, h_i\} \cup \{h_4,\ldots,h_d\}\]
is linearly independent but generates a subgroup of index two. This can be seen since the change of basis matrix from $\{h_1,\ldots,h_d\}$ to this basis has determinant $\pm 2$. This contradicts (a). But now we have that all $\ep_i = 1$, which implies that the subset
	\[\{v_1, v_2, v_3\} \cup \{h_4,\ldots,h_d\}\]
is linearly independent but only generates a subgroup of index two. It is the subspace of elements of even weight. This again contradicts (a), so the proof of (d) is complete.

To prove (e), we apply the combinatorial lemma from the previous subsection. There is a $(d-2)$-dimensional $\Z_2$-vector space $W\subseteq V$ generated by $W\cap S$ and an element $s\in S\setminus W$ such that $s$ is the only element of $S$ contained in the affine subspace $s+W$. Let $\hat{W}$ denote the $(d-1)$-dimensional subspace $W \cup (s+W)$. We define $L_2:=\mbox{span}_{\Z}(\pi^{-1}(W)\cap E)$ and $L_1:=\mbox{span}_{\Z}(\pi^{-1}(\hat{W}) \cap E)$. It is clear that $L_1$ cannot contain  $d$ linearly independent vectors of $E$, since they would form a $\Z$-basis which in turn would be mapped by $\pi$ to a $\Z_2$-basis in $V$. Thus $L_1$ has corank at least one. On the other hand, $\pi_{|L_1}\rightarrow \hat{W}$ is surjective and thus the corank is exactly one. One argues similarly for $L_2$. By construction $E\cap L_1$ contains exactly one more element than $E\cap L_2$, namely the unique preimage of $s$ in $E$. 
 
Finally, we claim that (e) holds without the assumption on the non-existence of finite isotropy groups. Indeed, one simply chooses linearly independent elements $e_1,\ldots,e_d\in E$ such that the index of $\mbox{span}_{\Z}(e_1,\ldots,e_d)=:\Upsilon^*$ in $\Gamma^*$ is maximal. One then replaces $E$ with $E':=E\cap \Upsilon^*$ and $\Gamma^*$ with $\Upsilon^*$. By the choice of $\Upsilon^*$, any linearly independent subset of $d$ vectors in $E'$ forms a $\Z$-basis of $\Upsilon^*$. Thus (a)--(d) can be proven for the pair $(\Upsilon^*,E')$ and (e) follows just as well.

\subsection{Proof of Theorem \ref{thm:S1splitting}}\label{subsec:split}

By the Reduction Step, we can assume that $\rho$ does not have any non-trivial finite isotropy groups. We can then apply part (e) of the previous lemma.
Recall that we viewed $\Gamma=\pi_1(\gT^d)$ as the kernel of the exponential map $\exp\colon \mathfrak{t}\rightarrow \gT^d$. If we  extend the homomorphism $h_i\colon \Gamma\rightarrow \Z$ to
$\hat h_i\colon \mathfrak{t}\rightarrow \R$, then the corresponding representation $\rho_i\colon \gT^d\rightarrow \gS^1$ is given by $\rho_i(\exp(v))\mapsto \exp(2\pi i \hat{h}_i(v))$. We put $\mathfrak{h}=\bigcap_{h\in E\cap L_1} \hat{h}^{-1}(\Z)$ and define $\gH:=\exp(\mathfrak{h})$. Notice that $\gH$ is the intersection of kernels of all elements in the set $\{\rho \mid \rho \mbox{ corresponds to an element in } L_1\cap E\}$. It is one-dimensional, as $E\cap L_1$ generates a corank one group in $\Gamma^*$. 

If $e\in E$ is any element with $e(\mathfrak{h})=\Z$, then it follows that $e$ is contained in the $\R$-span of $L_1$ and by part (b) of the previous lemma it is contained in the $\Z$-span of $L_1\cap E$ and thus it is itself an element of $L_1\cap E$. This implies that the restriction to $\gH$ of a non-trivial subrepresentation of $\gT^d$ is trivial if and only if the representation corresponds to an element in $L_1\cap E$. 

Thus the representation $\bar{\rho}$ of $\gT^d/\gH$ in the fixed-point set of $\gH$ exactly contains the non-trivial irreducible subrepresentions  corresponding to elements in $L_1\cap E$. By part (e) of the previous lemma, we know that $L_1\cap E=\{h\}\cup (L_2\cap E)$. 

We let $\bar{\rho}_1$ denote the sum of all irreducible subrepresentations with weight $h$ and let $\bar{\rho}_2$ denote the subrepresentation of $\bar{\rho}$ containing all others. Clearly $\bar{\rho}_1$ has $(d-2)$-dimensional kernel, and $\bar{\rho}_2$ 
has a one-dimensional kernel as claimed.

\begin{proof}[Proof of Lemma~\ref{lem:refine}]
By Lemma~\ref{lem:repsplitting} the map given by reducing the weights modulo 
2 is injective and maps them to a set $E$ that does not contain all non-zero  elements of $\Z_2^3$. 
We can assume that the element $(1,1,1)$ is avoided. If all other six non-zero elements 
are attained, then 
each of the pairs $\{(1,1,0),(0,0,1)\}$, $\{(1,0,1),(0,1,0)\}$ 
and $\{(0,1,1),(1,0,0)\}$ can be realized 
as non-zero elements of $S$ in a two-dimensional subspace 
of $\Z_2^3$. Thus each of these pairs give a solution to 
Lemma~\ref{lem:z2lem}, and we can find a corresponding 
solution to Lemma~\ref{lem:repsplitting} e). 
If we choose the pair whose 
corresponding multiplicities 
add up to the smallest number, then the conclusion of a) is satisfied. Note in this case that we actually can arrange for $k_2+k_3 \le \frac 1 2 k_0$.

If there are five weights, then we still have two disjoint pairs which can be realized 
as a product representation and this is still enough to get to the conclusion in a). 

If there are at most four weights, then either we are in the situation of 
b) or there are indeed four weights any three of them are linearly independent. But then any two of them give a solution to
Lemma~\ref{lem:repsplitting} e) and we are again in the situation of a) if we choose the pair with 
the smallest multiplicities. 
\end{proof}

\subsection{Some more combinatorics in $\Z_2$-vector spaces.} \label{subsec:sparse}
We prove some more results on subsets satisfying the codimension three
property (see Definition~\ref{def:codim3}).
The following lemma and its corollary are the basis for proving Theorem \ref{thm:d+1choose2}. It
 shows that subsets with the codimension three property are very sparse.

\begin{lemma}\label{lem:cod1property}
Let $V$ be a $d$-dimensional vector space over $\Z_2$ with $d\ge 3$. If $S\subseteq V$ satisfies the codimension three property, then there exists a codimension one subspace $U \subseteq V$ generated by a subset of $S$ such that $S \setminus U$ is linearly independent.
\end{lemma}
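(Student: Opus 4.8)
I would proceed by induction on $d$, mirroring the structure of the proof of Lemma~\ref{lem:z2lem} but now tracking a codimension-one subspace rather than a codimension-two one. The base case $d=3$ should follow directly from the codimension three property: since $S$ generates $V$ but does not contain any three-dimensional subspace (in particular $S\neq V$), and $0\in S$, one can find a codimension one (hence two-dimensional) subspace $U$ generated by two elements of $S$ such that the projection of $S$ to $V/U\cong\Z_2$ is not all of $V/U$ — but here I must instead arrange that $S\setminus U$ itself is linearly independent, i.e.\ is either empty, a single nonzero vector, or (if it has two elements) those two are independent modulo... actually in a one-dimensional quotient $S\setminus U$ can only be linearly independent if it has at most one element, so for $d=3$ the claim is that some codimension one $U$ (generated by a subset of $S$) contains all but at most one element of $S$. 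This requires a short argument using that $S$ is sparse; I expect it to come out of Lemma~\ref{lem:z2lem} applied at $d=3$ together with Lemma~\ref{lem:cod3closure}.

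For the inductive step with $d>3$, I would pick an element $s_0\in S$, project $\sigma\colon V\to Q=V/\inner{s_0}$, and apply Lemma~\ref{lem:cod3closure}(1) to see that $\sigma(S)$ satisfies the codimension three property in the $(d-1)$-dimensional space $Q$. By induction there is a codimension one subspace $\bar U\subseteq Q$, generated by a subset of $\sigma(S)$, with $\sigma(S)\setminus\bar U$ linearly independent. Set $U'=\sigma^{-1}(\bar U)$, a codimension one subspace of $V$ containing $s_0$ and generated by a subset of $S$ (lift generators, adding $s_0$). Then $S\setminus U' \subseteq \sigma^{-1}(\sigma(S)\setminus\bar U)$, and since $\sigma$ restricted to $S\setminus U'$ is injective (any two elements of $S$ differing by $s_0$ cannot both avoid $U'$... this needs checking, as $s_0\in U'$), the set $S\setminus U'$ maps injectively onto a linearly independent set. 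But injectivity of $\sigma$ on $S\setminus U'$ is not automatic, and linear independence of the image does not immediately give linear independence of the preimage when the fibers of $\sigma$ are cosets of $\inner{s_0}$ — a relation among elements of $S\setminus U'$ could project to a relation in $\sigma(S)\setminus\bar U$ only if the "$s_0$-part" cancels. This is the crux.

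\textbf{Main obstacle.} The difficulty is exactly controlling the lift: even if $\sigma(S)\setminus\bar U$ is independent, two elements $s,s'\in S\setminus U'$ with $\sigma(s)=\sigma(s')$ would mean $s+s'=s_0\in U'$, a dependency. So I would need to choose $s_0$ cleverly, or rather argue that for a suitable choice no such pair occurs. The natural move: among all $s_0\in S$, and more robustly among all codimension one subspaces $U$ generated by subsets of $S$, choose one making $S\setminus U$ as small as possible (or making the span of $S\setminus U$ of smallest possible dimension), and then derive a contradiction from any dependency in $S\setminus U$ — a nontrivial relation $\sum_{s\in T}s=0$ with $T\subseteq S\setminus U$ would let one enlarge $U$ by throwing in all but one element of $T$, or would exhibit a forbidden low-codimension coincidence in $S$, violating minimality or the codimension three property. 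I expect this extremal/minimality argument, combined with Lemma~\ref{lem:z2lem} to produce a first good $U$ to optimize from, to be what makes the proof work, and getting the bookkeeping of "generated by a subset of $S$" consistent through the induction is the part demanding care.
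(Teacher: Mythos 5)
There are two genuine gaps. First, your base case rests on a misreading of the statement: linear independence of $S\setminus U$ is meant in $V$, not in $V/U$. A subset of the nontrivial coset of a hyperplane can be independent with up to $d$ elements (over $\Z_2$, any three elements $a,b,c$ of a proper affine coset satisfy $a+b\neq 0$ and $a+b+c\neq 0$), so the claim you derive for $d=3$ — that some hyperplane generated by $S$ contains all but at most one element of $S$ — is both unnecessary and false. For instance $S=\{0,e_1,e_2,e_3,e_1+e_2,e_1+e_3\}\subseteq \Z_2^3$ has the codimension three property, yet every $2$-dimensional subspace misses at least two elements of $S$; the lemma is satisfied by $U=\langle e_1,e_2\rangle$ with $S\setminus U=\{e_3,e_1+e_3\}$ independent in $V$. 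Second, and more seriously, you have correctly isolated the crux of the inductive step — a dependency $\sum_{s\in T}s=0$ in $S\setminus U'$ can project to a trivial relation because elements pair off via $s+s'=s_0$ — but you do not resolve it. The extremal argument you sketch is left as an expectation ("I expect this\dots to be what makes the proof work"), and as stated it cannot work: $U$ is already codimension one, so "enlarging $U$ by throwing in all but one element of $T$" is not available, and minimizing $|S\setminus U|$ over hyperplanes gives no evident contradiction from a dependency.

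The paper's resolution is different and is the actual content of the proof: one strengthens the induction hypothesis. One shows that for every proper subspace $W\subsetneq V$ generated by a subset $L\subseteq S$ of $k+1$ vectors in general linear position (any $k$ of them a basis of $W$), there is a hyperplane $U\supseteq W$ generated by a subset of $S$ with $S\setminus U$ independent; the lemma is the case $W$ one-dimensional. Taking $W$ \emph{maximal} with this property is precisely what kills your problematic pairs: if $b_1\in L$ and both $s$ and $b_1+s$ lay in $S\setminus W$, then $L\setminus\{b_1\}$ together with $s$ and $b_1+s$ would put $k+2$ elements of $S$ in general position inside the larger subspace $W\oplus\langle s\rangle$, contradicting maximality; hence the projection along $b_1$ is injective on $S\setminus W$ and the lift of independence goes through when $k<d-1$. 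The remaining case $k=d-1$ needs a separate argument: one takes a minimal dependent subset $C\subseteq S\setminus W$, observes $|C|$ is even because $C$ lies in an affine coset, and derives a contradiction by projecting along suitable elements of $L$. So your plan identifies the right obstacle but is missing the idea (the strengthened, "relative to a maximal $W$" induction) that overcomes it.
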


\begin{proof}  In order to argue by induction, we need to prove a slightly stronger statement: Let  $W\subsetneq V$ be a
$k$-dimensional subspace generated by a subset $L\subseteq S$. We assume $k\ge 1$ and, 
 if strict inequality holds, we assume that $L\subseteq V$ is given by $k+1$ vectors which are in general linear position inside $W$. We then claim that we can find a codimension one subspace $U\subseteq V$ with $W\subseteq U$ such that $S\setminus U$ is linearly independent.

To prove our claim, we can assume that the subspace $W\subsetneq V$ is maximal with the above property. We argue by induction on $d$. For the induction base $d=3$, there are two subcases $k=1,2$. If $k=2$, then by the above assumptions, $S$ contains all four elements in $W$. Since $S\neq V$, the set $S\setminus W$ contains at most $3$ elements and, for any choice, they are linearly independent. If $k=1$, then $L$ contains only one element $s_0$ and, by the maximality of $W$, 
$s_0+s\not\in S$ for all $s\in S\setminus W$.   Hence,  $S$ contains at most five non-zero elements, 
and we can put $U= W\oplus \ml s\mr$ for any $s\in S\setminus W$.

For the induction step, consider first the subcase $k<d-1$. Let $b_1,\ldots,b_k\in L$ 
denote $k$ pairwise different elements. By assumption, they are linearly independent
 and, in case of $k>1$, the remaining element is given by $\sum_{i=1}^k b_i$. 
 We claim that the projection $\pr\colon V\rightarrow V/\ml b_1\mr=Q $ becomes injective when
  restricted to $S':=S\setminus W$. In fact, otherwise we could find an element $s\in  S'$ such that 
  $b_1+s\in S'$, but then the elements $s, b_1+s,b_2,\ldots,b_k,\sum_{i=1}^kb_i$ are in general
   linear position in the vector space $W\oplus \ml s\mr$. Thus indeed the $\pr_{S'}$ is injective.
    The image of $\pr(L\setminus b_1)$ is  in general linear position in the vector space $\pr(W)$. By the 
 induction assumption, we can find a vector space $\bar U\subseteq Q$ of codimension one with 
 $\pr(W)\subseteq \bar U$, and the set $\bar S':= \pr(S)\setminus \bar U$ is linearly independent. 
  We just proved that every element in $S'$ has a unique preimage in $S$, and thus $\pr^{-1}(\bar S')\cap S$
   is linearly independent as well. Therefore, $U:=\pr^{-1}(\bar U)$ is the solution to our problem. 

It remains to treat the case of $k=d-1$. Assume on the contrary that $S':=S\setminus W$ is not linearly independent. Choose a minimal subset $C\subseteq S'$ such that $C$ is linearly dependent. Then $\sum_{c\in C}c=0$. Since $C$ is contained in an affine subspace, it follows that the order of $C$ is even.
 Let $c_1,\ldots, c_{2h}\in C$ denote the elements in $C$. Their span $\ml C\mr $ is then a $2h-1$-dimensional subspace.

If $2h-1\neq d$, then $W\not \subset \ml C\mr $, and we can find an element $b_1 \in L $ with 
$b_1\not\in \ml C\mr$.  If $2h-1=d$, we choose an arbitrary element $b_1 \in L$.  In either case, we consider 
the projection $\pr\colon V\rightarrow V/\ml b_1\mr$.
 The image of $L\setminus b_1$ is a set in general linear position inside the codimension one subspace $\pr(W)$.  By the induction assumption, this implies that $\pr(S')\supseteq \pr(C)$ is linearly independent. 
 In the case $2h-1\neq d$, we get a contradiction since $0=\sum_{c\in C} \pr(c)$ and since
 $\pr_{|C}$ is injective  by
  our choice of $b_1$.  
 
 Thus we may assume $2h-1=d\ge 4$.  
Then 
 $\pr(C)$ must have  $d-1$ elements since their span is $(d-1)$-dimensional.  
 Thus we see that $C$ has exactly $2$ elements more than $\pr(C)$. 
 We can find a subset  $K\in \pr(C)$ consisting of two elements with $\pr^{-1}(K)\subset C$.
 But now it is easy to see that $\pr^{-1}(K)$ is linearly dependent and by the minimal choice of $C$ 
 we get $2h=4<d+1$ -- a contradiction.
\end{proof}

As corollary we get the following upper bound on the size of subsets satisfying the codimension three property. This corollary will in turn imply Theorem \ref{thm:d+1choose2}.

\begin{corollary}\label{cor:d+1choose2forS}
If $V$ is a $d$-dimensional vector space over $\Z_2$ and $S \subseteq V$ has the codimension three property, then $S$ has at most $\frac{d(d+1)}{2}$ non-zero elements. Moreover, equality holds if and only if there exists a basis $b_1,\ldots,b_d \in S$ such that
	\[S = \{0\} \cup \{b_i\}_{1 \leq i \leq d} \cup \{b_i+b_j\}_{1 \leq i < j \leq d}.\]
\end{corollary}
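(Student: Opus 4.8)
The plan is to prove the corollary by induction on $d$ using Lemma~\ref{lem:cod1property}, which gives a codimension one subspace $U\subseteq V$ generated by a subset of $S$ such that $S\setminus U$ is linearly independent. First I would set $S_U := S\cap U$. By Lemma~\ref{lem:cod3closure}(2), $S_U$ satisfies the codimension three property in $U$, so by induction $S_U$ has at most $\frac{(d-1)d}{2}$ non-zero elements. Since $S\setminus U$ is linearly independent and lives in a coset of $U$, it has at most $d-1$ elements, but one can do a touch better: the affine hyperplane $V\setminus U$ can contain at most $d-1$ linearly independent vectors, hence $|S\setminus U|\le d-1$. Wait --- that only gives $\tfrac{(d-1)d}{2}+(d-1) = \tfrac{(d-1)(d+2)}{2}$, which overshoots $\tfrac{d(d+1)}{2}$ by $\tfrac{d-2}{2}$. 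So a naive count is not enough, and the real content is to show that the presence of many elements outside $U$ \emph{forces} fewer elements inside $U$.

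The key step, and the main obstacle, is this interaction between $S\setminus U$ and $S\cap U$. Suppose $S\setminus U = \{s_1,\dots,s_r\}$ with $r\le d-1$; these are linearly independent and all lie in the single coset $v_0+U$ for any fixed $v_0\in S\setminus U$. Then for $i\neq j$ the differences $s_i - s_j = s_i+s_j$ lie in $U$ and are $\binom{r}{2}$ distinct non-zero vectors of $U$. I would argue that \emph{none} of these difference vectors can lie in $S$: if $s_i+s_j\in S\cap U$, then together with, say, $s_i$ we would have $s_i, s_j, s_i+s_j \in S$ spanning a two-dimensional subspace contained in $S$ on which $S$ is ``full,'' and then --- using that $S$ generates $V$ and extending to a codimension three subspace --- one contradicts the codimension three property (concretely, pick a complementary flag so that $V/U'$ for a suitable codimension-three $U'$ receives all seven non-zero classes, exactly the kind of argument used in the proof of Lemma~\ref{lem:repsplitting}(d)). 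More generally I expect to show: the subspace $U_0 := \operatorname{span}(S\setminus U)\cap U$ (which has dimension $\max(r-1,0)$) meets $S$ only in $0$. Granting this, $S\cap U$ is a subset of $U$ of size at most $\tfrac{(d-1)d}{2}$ by induction, but it is disjoint from the $\binom{r}{2}$ non-zero elements forced out, and more carefully it satisfies the codimension three property while avoiding a whole $(r-1)$-dimensional subspace; a refined counting (e.g.\ projecting $U$ modulo $U_0$ and applying the inductive bound there, then accounting for the fibers) should yield $|S\cap U| \le \tfrac{(d-1)d}{2} - \binom{r}{2} + \text{(small correction)}$, and combining with $|S\setminus U|=r$ gives $|S\setminus\{0\}|\le \tfrac{d(d+1)}{2}$, with the maximum pinned down at $r=d-1$.

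For the equality case, I would track when every inequality above is tight: $r=d-1$ forces $S\setminus U$ to be a basis of a hyperplane's coset, i.e.\ $S\setminus U = \{b_i + b_d\}_{1\le i\le d-1}$ for a suitable basis $b_1,\dots,b_d$ with $U=\operatorname{span}(b_1,\dots,b_{d-1})$ after relabeling; tightness of the inductive bound on $S\cap U$ forces $S\cap U = \{0\}\cup\{b_i\}_{1\le i<d}\cup\{b_i+b_j\}_{1\le i<j<d}$; and then one checks that these together are exactly $\{0\}\cup\{b_i\}_{1\le i\le d}\cup\{b_i+b_j\}_{1\le i<j\le d}$, noting $b_d = (b_i+b_d)+b_i$ is \emph{not} claimed to be in $S$ --- rather the set as written already has the right cardinality $1+d+\binom{d}{2}=\tfrac{(d+1)d}{2}+1$... let me recount: $1 + d + \binom d2$ non-zero count is $d+\binom d2 = \tfrac{d(d+1)}{2}$, good. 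Conversely one verifies directly that the displayed set \emph{does} satisfy the codimension three property: any codimension three subspace $U'$ generated by a subset of $S$ is spanned by some of the $b_i$ and $b_i+b_j$, hence is itself ``coordinate-like,'' and checking that $S$ misses a non-zero class in $V/U'$ reduces to the $d=3$ base case, where $S = \{0, e_1,e_2,e_3,e_1+e_2,e_1+e_3,e_2+e_3\}$ visibly omits $e_1+e_2+e_3$. The base case $d=3$ of the whole corollary is then immediate: the codimension three property says $S\neq V$, so $|S\setminus\{0\}|\le 6 = \tfrac{3\cdot 4}{2}$, with equality exactly when the single omitted vector can be taken to be $e_1+e_2+e_3$ after a change of basis, which is the displayed set. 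The main obstacle, as flagged, is making the ``forced sparsity of $S\cap U$'' step precise enough to close the gap $\tfrac{d-2}{2}$ --- I expect the cleanest route is to prove the stronger statement that $\operatorname{span}(S\setminus U)$ meets $S$ only in $\{0\}\cup(S\setminus U)$ and then induct inside $U/(\operatorname{span}(S\setminus U)\cap U)$.
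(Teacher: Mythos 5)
Your overall strategy (induct via Lemma~\ref{lem:cod1property} and Lemma~\ref{lem:cod3closure}) is the paper's, but the execution contains genuine errors. First, the claim that the affine hyperplane $V\setminus U$ contains at most $d-1$ linearly independent vectors is false: in $\Z_2^d$ the odd-weight hyperplane contains the entire standard basis. The correct bound is simply $|S\setminus U|\le d$, since $S\setminus U$ is linearly independent in the $d$-dimensional space $V$; with this, the ``naive'' count already gives $d+\tfrac{d(d-1)}{2}=\tfrac{d(d+1)}{2}$, which is exactly the paper's proof of the inequality --- no interaction argument between $S\setminus U$ and $S\cap U$ is needed. (Your arithmetic is also off: even with your bound $d-1$ the total $\tfrac{(d-1)d}{2}+(d-1)=\tfrac{d(d+1)}{2}-1$ undershoots rather than overshoots; since that would make the stated maximum unattainable while the extremal set does satisfy the codimension three property, this should have been a warning sign.)

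Second, the ``key step'' you build on is false. In the extremal set $S_0=\{0\}\cup\{b_i\}\cup\{b_i+b_j\}$ take $U=\operatorname{span}(b_1,\dots,b_{d-1})$; then $S_0\setminus U=\{b_d\}\cup\{b_i+b_d\}_{i<d}$ consists of $d$ independent vectors, and the sums $(b_i+b_d)+(b_j+b_d)=b_i+b_j$ and $(b_i+b_d)+b_d=b_i$ all lie in $S_0\cap U$, so $\operatorname{span}(S_0\setminus U)\cap U$ meets $S_0$ in many nonzero points. The reasoning behind the step is also wrong: the codimension three property forbids $S$ from containing a three-dimensional subspace, not a two-dimensional one (note $\{0,b_1,b_2,b_1+b_2\}\subset S_0$), so $s_i,s_j,s_i+s_j\in S$ yields no contradiction. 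Consequently your equality analysis is inconsistent: with $r=d-1$ your reconstructed set omits $b_d$ and has only $\tfrac{d(d+1)}{2}-1$ nonzero elements, so it is not the displayed set. The paper handles equality differently: if equality holds, $S\setminus U$ is a basis $B$ of $V$ and $S\cap U$ is extremal in $U$; projecting modulo any nonzero $s\in S\cap U$ and using, by induction, the structure and projection-invariance of extremal sets, one shows $\pr|_B$ cannot be injective, whence $s=b_i+b_j$ for some $i<j$, which identifies $S$ with the displayed set. Your sketch of the converse (that the displayed set satisfies the codimension three property) also needs the paper's transitivity/projection argument rather than the assertion that subspaces generated by weight-$\le 2$ vectors are ``coordinate-like,'' which they need not be (e.g.\ $\operatorname{span}(b_1+b_2,\,b_2+b_3)$).
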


\begin{proof}
We first study the set described in the conclusion. 
If $S_0$ is given by the elements of Hamming weight $\le 2$ with respect to the 
basis $b_1,\ldots,b_d$, then the same holds with respect to the basis 
$b_1,b_1+b_2,\ldots,b_1+b_d$. This shows that the group of linear transformations of $V$
leaving $S_0$ invariant acts transitively on $S_0\setminus \{0\}$.
Now for any element $s\in S_0\setminus \{0\}$, 
the projection $\pr \colon V\rightarrow V/\inner{s}$ maps $S_0$ onto 
a set which is again given by the elements of weight $ \le 2$ with respect to a suitable 
basis of $V/\inner{s}$. In fact, by the previous statement it suffices to prove this for $s=b_d$, 
in which case the result is obvious. 
We also deduce that $S_0$ has the codimension three property.

Next we establish the claimed bound for $S$.
For $d \leq 2$, the bound is trivial. For $d \geq 3$, we apply Lemma \ref{lem:cod1property}. There exists a codimension one subspace \(U\subset V\) generated by a subset of \(S\) such that \(S\setminus U\) is linearly independent. In particular, $S\setminus U$ has at most $d$ elements. On the other hand, $S \cap U$ has the codimension three property in the $(d-1)$-dimensional space $U$ by Lemma \ref{lem:cod3closure}, so it has at most $\frac{d(d-1)}{2}$ non-zero elements by the induction hypothesis. Putting these estimates together, we have that $S$ has at most $d + \frac{d(d-1)}{2} = \frac{d(d+1)}{2}$ non-zero elements as claimed.

If equality holds, then 
$S\setminus U$ is given by a basis $B=\{b_1,\ldots,b_d\}$ and $S\cap U$ has $\tfrac{1}{2}d(d-1)$ non-zero elements. 
By induction on the dimension and the property established in the first paragraph the following holds
for any non-zero  element $s\in U\cap S$: 
The projection $\pr\colon V\rightarrow V/\inner{s}$
 maps $S\cap U$ onto a set with $\tfrac{1}{2}(d-1)(d-2)$ non-zero elements. 
 It is clear that the set $\pr(B)$ and $\pr(S\cap U)$ are disjoint. 
 Using once more induction on the dimension we see that $\pr_{|B}$ cannot be injective. 
 Therefore, we find $i\neq j$ with $\pr(b_i)=\pr(b_j)$. 
 But this proves $s=b_i+b_j$. Since this holds for any non-zero element $s\in S\cap U$, 
 it follows that $S$ is given by the elements of weight $\le 2$ with respect to the basis $B$.
\end{proof}

\subsection{Proof of Theorem \ref{thm:d+1choose2}}
%
%
We have a faithful representation 
$ \gT^d\rightarrow \SO(V)$ without non-trivial finite isotropy 
groups, and we let $E$ denote the weights corresponding to maximal collection of non-trivial, pairwise inequivalent
subrepresentations. We have to bound the order 
of $E$ by $\tfrac{1}{2}d(d+1)$ and determine when equality can hold.  

By Lemma \ref{lem:repsplitting},  $E \subseteq \Gamma^*$ is mapped injectively into $\Gamma^*/2\Gamma^*\cong \Z_2^d$ and $\{0\}$ is not in the image of $E$. 
Note that if $d \leq 2$, this can be shown directly and the corollary follows. Letting $S$ be the union of $\{0\}$
 with the image of $E$, Lemma \ref{lem:repsplitting} implies that $S$ satisfies the codimension three property
  defined in \ref{def:codim3}. By Corollary \ref{cor:d+1choose2forS}, we conclude that $E$ has at most $\frac{d(d+1)}{2}$ elements.

Moreover, if equality holds, there exists a basis $s_1,\ldots,s_d \in \Z_2^d$ such that
	\[S\setminus \{0\} = \{s_i\}_{1 \leq i \leq d} \cup \{s_i + s_j\}_{1 \leq i < j \leq d}.\]
Let $h_i \in E$ and $h_{ij} \in E$ denote the unique preimages of $s_i$ for $1 \leq i \leq d$ and $s_i + s_j$ for $1 \leq i < j \leq d$, respectively. Note that, after replacing $h_{1j}$ by $-h_{1j}$ as necessary for $2 \leq j \leq d$, we may assume that $h_{1j} = h_1 + \epsilon_{1j} h_j$ for some $\epsilon_{1j} \in \{\pm 1\}$. Moreover, by replacing $h_j$ by $-h_j$ as needed for $2 \leq j \leq d$, we may furthermore assume that $\epsilon_{1j} = -1$ and hence that
	\[h_{1j} = h_1 - h_j\]
for all $2 \leq j \leq d$. Now replacing $h_{ij}$ by $-h_{ij}$ as needed for $2 \leq i < j \leq d$, we may assume
	\[h_{ij} = h_i + \epsilon_{ij} h_j\]
for some $\epsilon_{ij} \in \{\pm 1\}$. Suppose for a moment that $\epsilon_{ij} = 1$ for some $2 \leq i < j \leq d$. After reordering the $h_k$ with $2 \leq k \leq d$, we may assume that $i = 2$ and $j = 3$. Intersecting the kernels of the subrepresentations corresponding to the weights in
	\[\{h_1 - h_2, h_1 - h_3, h_2 + h_3\} \cup \{h_4,\ldots,h_d\}\]
yields a finite isotropy group of order two, a contradiction. Hence, all weights are of the form $h_i$ or $h_i - h_j$ with $i < j$, as claimed.

Finally, to see that the bound is sharp, one just has to check that any $d$ linearly independent vectors in
	\[
	E = \{e_i \st 1 \leq i \leq d\} \cup \{e_i - e_j \st 1 \leq i < j \leq d\}
	\]
form a $\Z$-basis. The second subset of $E$ contains no $d$ linearly 
independent vectors. Thus up to a permutation every linearly independent subset of order $d$
contains the element $e_d$, and it is easy to proceed by induction.

\section{\sectthree}\label{sec:t5}
\label{sec:Obtaining4periodicity}\label{sec:weakt5}

In order to establish a weak version of Theorem A
the first step is to use the $\S^1$-splitting theorem, together with the Connectedness Lemma and Four-Periodicity Theorem, to produce a submanifold with four-periodic rational cohomology. This is an immediate consequence in the presence of 
$\gT^7$ symmetry, as we explain now. Indeed, the $\S^1$-splitting theorem would imply that some three-dimensional subgroup $\gH \subseteq \gT^7$ has a fixed-point component $N$ containing $F$ with the property that $N$ contains four mutually transversely intersecting, totally geodesic submanifolds $N_i$. Since the codimensions of the $N_i \subseteq N$ sum to at most $\dim N$, there exist $N_i$ and $N_j$ such that $\dim(N_i \cap N_j) \geq \frac{1}{2} \dim N$. By Corollary \ref{cor:transversal}, it follows that $N$ has four-periodic rational cohomology. As it turns out, we can accomplish a similar result using only $\gT^5$ symmetry.

\begin{proposition}\label{pro:4periodicN}
Let $\gT^5$ be a torus acting isometrically and effectively 
on a  closed, connected, oriented, positively curved Riemannian manifold,
and let $F$ be a fixed-point component. There is a two-dimensional subgroup $\gH\subseteq \gT^5$
 such that the following holds for the induced action of
 $\gT^3=\gT^5/\gH$ on the fixed-point component $N^m$ of $\gH$ at $F$. 
	\begin{itemize}
	\item The action of $\gT^3$ is effective and splits in a neighborhood of $p\in F$ as a product action $\gS^1\times \gS^1\times \gS^1$ of 
	three circles acting locally semi-freely 
	whose fixed-point components in $N^m$ intersect pairwise  perpendicularly at $F$. 
	\item $N$ and its universal cover have four-periodic rational cohomology, and either
\begin{enumerate}
\item[a)] Some circle factor has a fixed-point component at $F$ in $N$ 
with  the rational cohomology  of a rank one symmetric space, or
\item[b)]  $N$, the fixed-point component at $F$ of each circle factor, and all intersections thereof  
have the rational cohomology of $\Sph^h\times \HP^{n_i}$ with $h\in \{2,3\}$.	
\end{enumerate}
	\end{itemize}
\end{proposition}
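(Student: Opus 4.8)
The plan is to combine the $\gS^1$-splitting theorem (Theorem~\ref{thm:S1splitting}) applied twice with the transverse-intersection machinery of Corollary~\ref{cor:transversal}, and then to use Proposition~\ref{pro:4periodic} to pin down the cohomology.

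First I would set up the splitting. By Theorem~\ref{thm:Berger}, the fixed-point set $F$ of $\gT^5$ is nonempty; fix $p\in F$ and consider the faithful isotropy representation $\rho\colon \gT^5\to \SO(T_p^\perp F)$ (faithful by effectivity of the action together with the Connectedness Lemma, which forces the isotropy representation at a fixed point to be faithful). Since $d=5\ge 3$, apply the $\gS^1$-splitting theorem to obtain a one-dimensional $\gH_1\subseteq \gT^5$ whose fixed-point set $N_1$ at $F$ carries a faithful $\gT^5/\gH_1\cong \gS^1\times\gT^3$ action that splits as $\bar\rho_1\oplus\bar\rho_2$ on $T_p^\perp F\cap\{fixed by $\gH_1$\}$ — wait, I should phrase this intrinsically: by Theorem~\ref{thm:FPSstructure}, $N_1$ is a closed orientable totally geodesic submanifold, the $\gT^5$-action restricts to it, and the isotropy representation of $\gT^5/\gH_1$ at $p$ on $T_p^\perp(F\subseteq N_1)$ is the split representation. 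Now apply the $\gS^1$-splitting theorem again to the faithful $\gT^3$-summand $\bar\rho_2\colon \gT^3\to \SO(V_2)$ to peel off a further circle, and once more if needed; after at most three applications (formally, the Corollary~\ref{cor:S1splitting} with $d=2$, i.e.\ $\gT^5$) we obtain a two-dimensional subgroup $\gH=\gH_1\cdot\gH_2\subseteq\gT^5$ with fixed-point component $N^m$ at $F$ such that $\gT^3=\gT^5/\gH$ acts effectively and splits near $p$ as $\gS^1\times\gS^1\times\gS^1$ with each factor acting locally semi-freely (the semi-freeness coming from the ``moreover'' clause of Theorem~\ref{thm:S1splitting} / the Reduction Step) and with the three fixed-point components $N^1,N^2,N^3$ of the circle factors meeting pairwise transversely — indeed perpendicularly, since the splitting is an orthogonal direct sum of the normal representation. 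This gives the first bullet point.

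Next I would extract four-periodicity. Let $k_1\le k_2\le k_3$ be the codimensions of $N^1,N^2,N^3$ in $N^m$. Transversality gives $k_1+k_2+k_3\le m$, so the two smallest satisfy $2k_1+k_2\le k_1+k_2+k_3\le m$ and even $3k_1+k_2\le \tfrac{3}{2}(k_1+k_2)+k_3\le\ldots$; more carefully, one checks $3k_1+k_2\le m$ unless $k_1$ is a large fraction of $m$, so I'd argue by cases. In the generic case $3k_1+k_3\le m$ (or the weaker $\max\{3k_1+k_2,k_1+2k_2\}\le m$ after relabeling), Corollary~\ref{cor:transversal}(3) applied to $N^1,N^3\subseteq N^m$ (each a fixed-point component of a circle action on the positively curved $N^m$) gives that $N^m$ and $N^1$, hence all of $N^m, N^1, N^2, N^3$ and their pairwise intersections, have four-periodic rational cohomology (using parts (2) and (3) of the corollary for the various submanifolds, and noting that $N^2\cap N^3\subseteq N^3$ etc.\ inherit periodicity). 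The remaining edge case, where the codimensions are too unbalanced for the numerical hypotheses of Corollary~\ref{cor:transversal} to apply directly, is where I expect the real work to be: here one circle factor has a very small fixed-point component (large codimension), which should force that fixed-point component — and hence, by the Connectedness Lemma run backwards, the structure near $F$ — into a rigid form; I would handle it by using Lemma~\ref{lem:refine} (the $d=3$ refinement, which gives $k_2+k_3\le k_0$ with the $\gT^3$-representation itself controlled) to get better numerical balance, or by observing that a codimension bound like $k_1\le m/4$ or similar always holds once one accounts for the $\gS^1$-fixed components being nonempty proper submanifolds. This case analysis is the main obstacle.

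Finally I would invoke Proposition~\ref{pro:4periodic}. Since $N^m$ is four-periodic with $b_1(N^m)=0$ (even-dimensional orientable positively curved manifolds are simply connected; in odd dimensions one passes to appropriate covers as in Corollary~\ref{cor:transversal}), one of the three listed outcomes holds. Outcomes (1) and (2) of Proposition~\ref{pro:4periodic} correspond to alternative (a) of the present Proposition when a circle-fixed component is itself a rank-one-symmetric-space-type manifold, and to alternative (b) when every relevant submanifold is of $\Sph^h\times\HP^{n_i}$ type with $h\in\{2,3\}$; outcome (3) of Proposition~\ref{pro:4periodic} must be excluded, which I would do by noting that case (3) has $n\equiv 2\bmod 4$ and nonzero $b_3$, and that the presence of the transversely intersecting totally geodesic $\gS^1$-fixed components together with the Connectedness/Periodicity Lemmas forces the generator to sit in degree $2$, $3$, or $4$, ruling out the mixed algebra structure of case (3) on the smaller pieces — or more simply, by restricting the four-periodicity to the intersections $N^i\cap N^j$, whose dimensions are small enough (at most $m - k_1 - k_2 \le m/2$-ish, wait, $\dim(N^i\cap N^j) = m - k_i - k_j$) that case (3)'s requirement $n\equiv 2\bmod 4$ with a degree-$3$ class cannot propagate consistently. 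Assembling these gives dichotomy (a)/(b) and completes the proof.
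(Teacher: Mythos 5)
Your setup (two applications of the $\gS^1$-splitting theorem to produce $\gH$ and the three locally semi-free, pairwise perpendicular circle factors) matches the paper, but there are two genuine gaps in the rest. First, the four-periodicity claim: you correctly identify that the unbalanced-codimension case is "where the real work is," but you do not actually do that work, and the missing ingredients are specific. The paper chooses $\gH_2$ so as to \emph{maximize} the codimension $k_0$ of $N$ in the first-stage fixed-point component $G$, and the trichotomy is then on $k_1$ versus $m/2-f$ and $k_1$ versus $k_0$. In the middle case ($k_1\le k_0$ but $k_1>m/3$) one does not get four-periodicity from Corollary~\ref{cor:transversal} directly; instead one combines $k_1$-periodicity of $\tilde N$ (from the $(m-k_1)$-connected inclusion $N_1\to N$) with $k_2$-periodicity of $\tilde N_1$ and a factorization of the periodicity element to obtain $\gcd(k_1,k_2)$-periodicity, hence a period $\le k_1/2<m/4$. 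In the last case ($k_0<k_1$) the maximality of $k_0$ together with Lemma~\ref{lem:refine} either yields $k_0\ge k_2+k_3$ (contradicting the case hypothesis) or a finer splitting forcing $k_1\ge k_2\ge 2k_3$, which restores the numerical hypotheses of Corollary~\ref{cor:transversal}. None of this is reconstructible from "observing that a codimension bound like $k_1\le m/4$ always holds" — it does not.

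Second, and more seriously, your exclusion of case (3) of Proposition~\ref{pro:4periodic} does not work. Case (3) is already a four-periodic ring generated in degrees $2$, $3$, $4$ with $n\equiv 2\bmod 4$; nothing about generators "sitting in degree $2$, $3$, or $4$" rules it out, and restricting periodicity to the intersections $N_i\cap N_j$ gives no contradiction either. The paper eliminates this case with the $b_3$ Lemma (Lemma~\ref{lem:b3}), an equivariant-cohomology computation showing that a semi-free circle action with a seven-connected fixed-point component forces $b_3=0$; the introduction explicitly flags this as the key point of the proposition. You never invoke it. Relatedly, the precise form of alternative b) — that the circle factors' fixed-point components are of $\Sph^h\times\HP^{n_i}$ type rather than $\Sph^h\times\CP^{n_i}$ type — needs Lemma~\ref{lem:shxHP} plus a Connectedness Lemma argument, which your proposal also omits.
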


To get from here to Theorem A, we will show in the next section that always alternative a) holds.  
It should be understood that in alternative a) the remaining two circle factors still act effectively on the fixed-point component and that by Theorem~\ref{thm:SinglyGenerated} below, $F$ has the cohomology 
of a rank one symmetric space, too.
Notice that the proposition already implies the corollary on the positivity of the Euler characteristic (see Section \ref{sec:PositiveEuler} for details).
To prove the dichotomy  in the proposition, we will need the following two lemmas. 
They rely on calculations in equivariant  cohomology. However unlike in the next section, where we need tori, 
here we can get by with the equivariant cohomology of circle actions. 

\begin{lemma}[$b_3$ Lemma]\label{lem:b3} Suppose a circle acts effectively  on 
a connected, closed, orientable manifold $M^{4n+2}$ with four-periodic rational cohomology
 and $b_1(M) = 0$. If every fixed-point component has $b_1(F_i) = 0$ and
the inclusion map $F_0\to M$ is seven-connected for  some fixed-point component $F_0$, then $b_3(M) = 0$.
\end{lemma}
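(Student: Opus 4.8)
\textbf{Step 1 (reduction to case (3) of Proposition~\ref{pro:4periodic}).} I would apply Proposition~\ref{pro:4periodic} to $M^{4n+2}$, whose dimension is $\equiv 2\bmod 4$. Either $H^*(M;\Q)$ is isomorphic to that of $\s^{4n+2}$, $\CP^{2n+1}$ or $\s^2\times\HP^n$ — in each of which $b_3(M)=0$, and we are done — or $H^*(M;\Q)$ is generated by $c\in H^2$, $a\in H^4$ and $b_1,\dots,b_s\in H^3$ with $s$ even and positive, subject to $c^2=0$, $a^{n+1}=0$, $cb_i=0$ and $b_ib_j=m_{ij}ac$ with $(m_{ij})$ nondegenerate skew-symmetric. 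Assume the latter, so $b_3(M)=s\ge 2$. A $\Q$-basis of $H^*(M)$ is then $\{a^k\}_{0\le k\le n}\cup\{ca^k\}_{0\le k\le n}\cup\{b_ia^k\}_{1\le i\le s,\ 0\le k\le n-1}$; hence $\beven(M)=2n+2$, $\bodd(M)=ns$, $\chi(M)=2n+2-ns$, and for $n\ge 2$ one has $H^3(M)\cong H^7(M)\cong\Q^s$, with $b_ia\neq 0$. For $n=1$, $\dim M=6$, and a proper fixed component $F_0$ with $F_0\hookrightarrow M$ seven-connected would satisfy $H^6(F_0)\cong H^6(M)\neq 0$, impossible since $\dim F_0\le 4$; so assume $n\ge 2$.

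\textbf{Step 2 (structure transported to $F_0$).} As a component of the fixed-point set of a smooth circle action, $F_0$ is a closed submanifold whose normal bundle carries a complex structure; since $M$ is orientable, so is $F_0$, and $D:=\dim F_0$ is even with positive, even codimension, so $D<4n+2$. Seven-connectedness makes $\iota^*\colon H^i(M;\Q)\to H^i(F_0;\Q)$ a ring isomorphism for $i\le 6$ and injective for $i=7$; writing $\bar x=\iota^*(x)$ we get $\bar c^2=0$, $\bar c\bar b_i=0$, $\bar b_i\bar b_j=m_{ij}\bar a\bar c$, $\bar a\bar c\neq 0$, the $\bar b_i$ linearly independent, and $\bar b_i\bar a\neq 0$ in $H^7(F_0)$, so $b_7(F_0)\ge s$. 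Combining this with Poincaré duality on $F_0$ (which gives $b_{D-j}(F_0)=b_j(F_0)$) already excludes several small values of $D$ — e.g. $D=8$ forces $b_7(F_0)=b_1(F_0)=0$, a contradiction.

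\textbf{Step 3 (equivariant formality and vanishing of $\tilde c^2$, $\tilde c\tilde b_i$).} Passing to $\gS^1$-equivariant cohomology, I would use the Gysin sequence $\cdots\to H^{k-2}_{\gS^1}(M)\xrightarrow{\cdot t}H^k_{\gS^1}(M)\xrightarrow{r}H^k(M)\xrightarrow{\delta}H^{k-1}_{\gS^1}(M)\to\cdots$. From $H^1(M)=0$ the class $c$ lifts; checking that $\cdot t$ is injective in degrees $2$ and $3$ of $H^*_{\gS^1}(M)$ by restricting to $F_0$ — which uses $\bar c\neq 0$, the linear independence of the $\bar b_i$, and $b_1(F_0)=0$ (so $H^3_{\gS^1}(F_0)=H^3(F_0)$) — shows that each $b_i$ and then $a$ lift as well. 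Since $c,a,b_i$ generate $H^*(M;\Q)$ as a ring, $r$ is surjective, i.e. the action is equivariantly formal; thus $H^*_{\gS^1}(M)\cong H^*(M)\otimes\Q[t]$ as $\Q[t]$-modules and, by localization (cf. Theorem~\ref{thm:SmithFloyd}), $\beven(F)=2n+2$ and $\bodd(F)=ns$ for $F=\bigsqcup_j F_j$. Because $b_1(F_j)=0$ for \emph{every} $j$, each $\tilde b_i$ restricts to a $t$-free class $b_i^{(j)}\in H^3(F_j)$; completing the square we may take $\tilde c^2=\gamma t^2$, and restricting to $F_0$ (using $\bar c^2=0$, $\bar c\neq 0$) forces $\gamma=0$ and that $\tilde c$ restricts $t$-freely to $c_j\in H^2(F_j)$ with $c_j^2=0$ on every $F_j$. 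Writing $\tilde c\tilde b_i=t\sum_k\nu_{ik}\tilde b_k$ (which lies in $t\cdot H^3_{\gS^1}(M)=t\cdot\Q^s\langle\tilde b_k\rangle$ since $cb_i=0$) and restricting to $F_0$, the linear independence of the $\bar b_k$ gives $\nu=0$, hence $\tilde c\tilde b_i=0$ identically; consequently $c_jb_i^{(j)}=0$ in $H^5(F_j)$, and reducing $\tilde b_i\tilde b_k=m_{ik}\tilde a\tilde c$ modulo $t\cdot H^*_{\gS^1}(M)$ gives $b_i^{(j)}b_k^{(j)}=m_{ik}\,\iota_{F_j}^*(a)\,c_j$ in $H^6(F_j)$.

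\textbf{Step 4 (the contradiction; main obstacle).} It remains to see that these numerical facts are incompatible: on one side $\bodd(F)=ns$ and $\beven(F)=2n+2$; on the other, $F$ contains the closed orientable $D$-manifold $F_0$ whose cohomology equals the case-(3) ring in degrees $\le 6$, with $b_7(F_0)\ge s$, constrained by Poincaré duality, and with the relations $\bar c^2=0,\ \bar c\bar b_i=0,\ \bar a\bar c\neq 0$ propagating to all $F_j$ by Step 3. I expect the contradiction to come from iterating Poincaré duality on $F_0$ together with multiplication by $\bar a$ — which is nonzero on $H^0$, on $H^2$ (as $\bar a\bar c$), on $H^3$ (as $\bar a\bar b_i$), and pairs nontrivially into $H^D(F_0)$ — forcing strictly more than $ns$ linearly independent odd-degree classes on $F_0$, hence on $F$, once $D$ lies in the range left open by Step 2, with the finitely many remaining small $D$ handled directly; the bound $\beven(F)\le 2n+2$ gives a parallel constraint. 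The genuine difficulty is making this count \emph{uniform in $D$}: controlling $H^*(F_0)$ in the middle degrees, where $\bar a,\bar c,\bar b_i$ no longer generate, which is precisely where one must feed back the relations $c_jb_i^{(j)}=0$ on all components, the exact equalities $\beven(F)=2n+2$, $\bodd(F)=ns$, and the Poincaré pairing on each $F_j$.
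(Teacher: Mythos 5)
Your Steps 1--3 are essentially sound and run parallel to the paper's own setup: the reduction to case (3) of Proposition~\ref{pro:4periodic}, the proof that $H^*_{\gS^1}(M)\to H^*(M)$ is surjective using the seven-connected inclusion $F_0\to M$ (the paper gets surjectivity in degrees $\le 6$ by comparing with the K\"unneth decomposition of $F_0\times B\gS^1$ and then uses that $H^*(M)$ is generated in degrees $\le 4$; your Gysin-sequence variant reaches the same conclusion), and the normalization of equivariant lifts of $c,a,b_i$ relative to $F_0$ so that $\tilde c{}^2=0$ and $\tilde c\,\tilde b_i=0$. Your separate treatment of $n=1$ is correct but unnecessary once the argument is set up properly.

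The gap is Step 4, which you yourself flag as incomplete, and the route you sketch there does not close. A Betti-number count concentrated on $F_0$ cannot work ``uniformly in $D$'' because nothing you have established excludes the possibility that $H^*(F_0)$ is itself a case-(3) ring with the same $s$ but smaller periodicity length $n'<n$, contributing only $n's$ odd classes, the remaining $(n-n')s$ being carried by other components; the equalities $\beven(F)=2n+2$, $\bodd(F)=ns$ are then perfectly consistent. The paper's endgame is not a count but a localization argument across two components: (i) since $\bodd(F_0)<\bodd(M)=\bodd(F)$, there is a second component $F_1$ with nonzero odd Betti number; (ii) surjectivity of $\iota^*$ in high degrees shows $H^*(F_1)$ is generated by the restrictions $c_2,\bar b_j,a_2,a_4$, so Poincar\'e duality on $F_1$ yields $k,l$ with $\bar b_k\bar b_l\neq 0$, and comparing $t$-coefficients in $\bar b_k\bar b_l=m_{kl}\,\iota^*(\tilde a\tilde c)=m_{kl}(c_2a_4+c_2a_2t+a_0c_2t^2)$ forces $c_2\neq 0$, $c_2a_2=0$ and $a_0=0$; (iii) hence all of $\tilde c,\tilde a,\tilde b_i$ restrict to zero at a point $p_0\in F_0$ and at a point $p_1\in F_1$, so for $2j>4n+2$ the image of $H^{2j}_{\gS^1}(M)\to H^{2j}(\{p_0,p_1\}\times B\gS^1)$ is the line spanned by $t^j$; but this map must be surjective onto a two-dimensional space because $p_0$ and $p_1$ lie in different fixed components (cf.\ Lemma~\ref{lem:alphaineqalphaj}(a) and Theorem~\ref{thm:SmithFloyd}). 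This last ingredient --- the separation of distinct fixed-point components in high-degree equivariant cohomology --- is the crux of the proof and is entirely absent from your proposal.
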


\begin{lemma}\label{lem:shxHP}
Let $M$ be a closed, oriented manifold with $H^*(M) \cong H^*(\s^h \times \HP^n)$ with $h=2,3$. Assume $M$ admits an effective action by a 
torus $\gT$. Every fixed-point component $F_i$ is a rational cohomology $\CP^{n_i}$, $\HP^{n_i}$, $\s^{h_i} \times \CP^{n_i}$, or $\s^{h_i} \times \HP^{n_i}$ for some $n_i \leq n$ and $1 \leq h_i \leq h$ with $h_i \equiv h \bmod 2$.
\end{lemma}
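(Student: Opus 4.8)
The plan is to induct on $\rank\gT$, reducing to the case $\gT=\gS^1$ and then computing with equivariant cohomology, falling back on Bredon's Theorem~\ref{thm:SinglyGenerated} for the singly-generated situations that arise along the way.

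\textbf{Setup and reduction to circles.} Write $H^*(M;\Q)\cong\Q[u,a]/(u^2,a^{n+1})$ with $\deg u=h$, $\deg a=4$, and recall that, as in the proof of Theorem~\ref{thm:FPSstructure}, every fixed-point component is a closed oriented submanifold (the action gives its normal bundle a complex structure). For the induction I would prove the stronger statement allowing $M$ to have the rational cohomology of $\s^h\times\HP^n$, $\s^h\times\CP^n$, $\HP^n$, or $\CP^n$ (with $\CP^0=\HP^0$ a point), inducting on $\rank\gT$. If $\rank\gT\le 1$ this is the circle case below. If $\rank\gT\ge 2$, pick a circle $\gS^1\subseteq\gT$ acting non-trivially (one exists by effectiveness, unless $M$ is a point), let $N$ be the component of $M^{\gS^1}$ containing the given fixed-point component $F_i$, and note $\dim N<\dim M$. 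The circle case identifies the rational cohomology of $N$ among the listed types, with parameters no larger than those of $M$ and sphere-exponent congruent to $h$ mod $2$; then $\gT/\gS^1$, modulo its ineffective kernel, acts effectively on $N$ with $F_i$ a fixed-point component of smaller ambient rank, and we finish by induction --- or, if $N$ is of rational $\CP$ or $\HP$ type, directly by Theorem~\ref{thm:SinglyGenerated}.

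\textbf{The circle case.} If $M$ is of rational $\CP$ or $\HP$ type, Theorem~\ref{thm:SinglyGenerated} gives that each component of $M^{\gS^1}$ is of rational sphere, $\CP$, or $\HP$ type; restricting the degree-two or degree-four generator and using Poincaré duality, a sphere $\s^k$ can occur only for $k\le 4$ and is reabsorbed as $\CP^1$, $\HP^1$, or a point. Suppose instead $M$ is of rational $\s^h\times\HP^n$ or $\s^h\times\CP^n$ type with $h$ \emph{even}. Then $\bodd(M)=0$, so the action is equivariantly formal and $H^*_{\gS^1}(M)$ is free over $\Q[t]$; since $H^*(M)$ is a graded complete intersection on two generators, a Hilbert-series comparison shows $H^*_{\gS^1}(M)\cong\Q[t][\tilde u,\tilde a]/(\tilde u^2-c_1 t\tilde u-c_2 t^2,\ \tilde a^{n+1}-t\,q)$ is itself a complete intersection (the relation for $\tilde u$ has no $\tilde a$-term since $u^2=0$). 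By the Borel localization theorem, $H^*_{\gS^1}(M)[t^{-1}]\cong\bigoplus_j H^*(N_j)\otimes\Q[t,t^{-1}]$ as rings, so each $H^*(N_j)\otimes\Q[t,t^{-1}]$ is generated over $\Q[t,t^{-1}]$ by the images of $\tilde u,\tilde a$; expanding these in powers of $t$, the coefficients (at most one class in each of degrees $h,h-2,\dots$ and $4,2,0$) generate $H^*(N_j)$ as a graded $\Q$-algebra. Reading the relation $\tilde u^2=c_1 t\tilde u+c_2 t^2$ off coefficient-by-coefficient forces $(u|_{N_j})^2=0$ and severely restricts the degree-two generators; combining this with $(a|_{N_j})^{n+1}=0$, Poincaré duality, and $\bodd(N_j)=0$, a short case analysis leaves only $H^*(\HP^m)$, $H^*(\CP^m)$, $H^*(\s^2\times\HP^m)$, or $H^*(\s^2\times\CP^m)$ with $m\le n$. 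The $\s^h\times\CP^n$ case is identical with $a$ in degree two.

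\textbf{The odd case and the main obstacle.} For $h=3$, $M$ is odd-dimensional, $\chi(M)=0$, and the $\gS^1$-action need not be equivariantly formal --- indeed $u\in H^3(M)$ may not extend equivariantly --- so the complete-intersection argument breaks down, and this is where I expect the real difficulty. My plan is to use Theorem~\ref{thm:SmithFloyd} (Smith--Floyd) to get $\bodd(N_j)\le\bodd(M)=n+1$ and, with $\chi(N_j)=0$, conclude $\bodd(N_j)=\beven(N_j)$, then to work with the localized equivariant cohomology in degrees exceeding $\dim M$, where Theorem~\ref{thm:SmithFloyd} again identifies it with $\bigoplus_j H^*(N_j)\otimes\Q[t]$ in those degrees; after inverting $t$ this is still a two-generated $\Q[t,t^{-1}]$-algebra, the generator inherited from $u$ squares to zero for degree reasons, and Poincaré duality together with $\chi(N_j)=0$ should force $H^*(N_j)$ to split off a single $\s^1$ or $\s^3$ factor, the complement being handled as in the even case, yielding $H^*(\s^{h_i}\times\HP^m)$ or $H^*(\s^{h_i}\times\CP^m)$ with $h_i\in\{1,3\}$, $m\le n$. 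The hardest point throughout is to verify that the inherited relations genuinely cut the possibilities down to these few Poincaré-duality algebras, excluding hybrids such as $H^*(\CP^k\times\HP^l)$; an alternative, possibly cleaner, route for $h=3$ is to realize $M$ rationally --- and $\gS^1$-equivariantly --- as a circle bundle over a rational $\s^2\times\HP^n$ with Euler class the $\s^2$-generator, and to reduce to the even case by a Gysin-sequence argument.
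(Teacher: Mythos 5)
Your proposal stalls exactly where you say it does, and that point is the heart of the proof, not a finishing detail. After localization you know only that $H^*(N_j)$ is generated by classes $u_2,a_2\in H^2(N_j)$ and $a_4\in H^4(N_j)$ (plus, for $h=3$, odd classes) subject to $u_2^2=0$ and $a_4^{n+1}=0$, together with Poincar\'e duality and $\bodd(N_j)=0$. These constraints do not determine the ring: $H^*(\CP^2\times\HP^m)$ (take $u_2=0$, $a_2=x$, $a_4=y$) satisfies every one of them, and for $h=3$ so do products such as $\s^3\times\CP^2\times\HP^m$; yet none of these is on the allowed list. So the ``short case analysis'' cannot close, and the needed input is one further relation tying $a_4$ to the degree-two classes. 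The paper produces it as follows: since $H^*_{\gS^1}(M)\to H^*_{\gS^1}(F)$ is surjective in degrees above $\dim M$, the class $t^{2n+1}a_2$ supported on the given component is the restriction of a global class, hence a polynomial in $t$, $\iota^*\aLpha=a_4+ta_2(+\cdots)$ and $\iota^*\gAm$; comparing coefficients of powers of $t$ gives $a_4+q_{n-1}a_2^2=0$ when $h=3$ (resp.\ $a_4$ a linear combination of $a_2^2$ and $a_2c_2$ when $h=2$ and $c_2\neq 0$). This is what forces the even part of $H^*(N_j)$ to be generated by a single degree-two class, or by $a_4$ alone, and thereby excludes the hybrid algebras. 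Your localization framework contains the surjectivity implicitly, but you never extract any such relation, so the argument as written does not prove the lemma.

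For $h=3$ you treat the possible failure of equivariant formality as the essential difficulty and offer two unproved workarounds (a Smith--Floyd scheme that ``should force'' a sphere factor to split off, and an equivariant rational realization of $M$ as a circle bundle over a rational $\s^2\times\HP^n$, for which no justification is given). In fact formality is not an obstruction here: after reducing to a non-empty fixed-point set, the paper shows $H^*_{\gS^1}(M)\to H^*(M)$ is surjective directly, using the Gysin sequence of $\gS^1\to M\times E\gS^1\to M\times_{\gS^1}E\gS^1$ -- the Euler class has all powers nonzero because it restricts nontrivially to $p\times B\gS^1$ for a fixed point $p$, giving surjectivity in degree $3$, and the composite $D=j^*\circ Gy$ is contraction with the Killing field on invariant forms, so $0=D[\omega_4^{n+1}]=(n+1)[D(\omega_4)\wedge\omega_4^{n}]$ forces $D[\omega_4]=0$ and surjectivity in degree $4$; generation of $H^*(M)$ in degrees $\le 4$ then gives surjectivity everywhere, after which $h=3$ runs parallel to $h=2$. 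Finally, your rank induction (with the strengthened statement for four ambient types) is unnecessary: since there are only finitely many isotropy groups, one circle $\gS^1\subset\gT$ already has $M^{\gS^1}=M^{\gT}$, which is how the paper reduces to the circle case in one step.
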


The proofs of these lemmas use similar arguments to a classical result of Bredon, which itself will be used frequently in this paper. 

\begin{theorem}[Bredon]\label{thm:SinglyGenerated}
If a torus acts on an oriented, closed manifold $M$ with $H^*(M) \cong \Q[x]/(x^{n+1})$, then each fixed-point component $F_i$ has $H^*(F_i) \cong \Q[x_i]/(x^{n_i + 1}_i)$ where $|x_i| \equiv |x|\bmod{2}$ and $|x_i| \leq |x|$, with equality only $x$ restricts non-trivially to $H^*(F)$. 
Moreover the sum of $n_i$ over all components $F_i$ satisfies $\sum (n_i + 1)=n+1$.
\end{theorem}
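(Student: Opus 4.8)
The plan is to reduce to a circle action and then run a localization argument in rational equivariant cohomology. \emph{Reduction to $\gS^1$.} Since $M$ is compact, only finitely many subtori of $\gT$ occur as identity components of isotropy groups; picking a circle $\gS^1\subseteq\gT$ whose Lie algebra lies in none of the proper ones among them yields $M^{\gS^1}=M^{\gT}$, with the same fixed-point components, so we may assume $\gT=\gS^1$. If $|x|$ is odd then $x^2=0$, so $n=1$ and $M$ is a rational homology sphere of odd dimension $|x|$; by Smith theory $M^{\gS^1}$ is then empty or a single rational homology sphere of odd dimension $\le|x|$, which is the assertion (with $n_1=1$ and $\sum(n_i+1)=2=n+1$). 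Assume henceforth that $|x|$ is even.

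\emph{Equivariant formality.} Because $H^{\mathrm{odd}}(M;\Q)=0$, the Serre spectral sequence of the Borel fibration $M\to M\times_{\gS^1}E\gS^1\to B\gS^1$ collapses, so $H^*_{\gS^1}(M;\Q)$ is a free $\Q[t]$-module that reduces modulo $(t)$ to $H^*(M;\Q)$. Lifting $x$ to $\tilde x\in H^{|x|}_{\gS^1}(M;\Q)$ and applying the graded Nakayama lemma, $1,\tilde x,\dots,\tilde x^{\,n}$ is a $\Q[t]$-basis, so $\tilde x^{\,n+1}$ is a $\Q[t]$-combination of the lower powers; comparing degrees (with $t$ in degree $2$) gives
\[
H^*_{\gS^1}(M;\Q)\;\cong\;\Q[t,\tilde x]\big/(p),\qquad p=\tilde x^{\,n+1}+\sum_{k=1}^{n+1}c_k\,t^{\,k|x|/2}\,\tilde x^{\,n+1-k}
\]
for some constants $c_k\in\Q$.

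\emph{Localization.} Write $F=M^{\gS^1}=\coprod_i F_i$. The Borel localization theorem provides a ring isomorphism $H^*_{\gS^1}(M;\Q)[t^{-1}]\xrightarrow{\sim}\bigoplus_i H^*(F_i;\Q)\otimes\Q[t,t^{-1}]$ induced by restriction. Over a splitting field $K\supseteq\Q$ factor $p=\prod_i(\tilde x-\lambda_i t^{|x|/2})^{m_i}$ with the $\lambda_i\in K$ pairwise distinct; since $\tilde x$ and $t^{|x|/2}$ have the same degree this is a factorization of graded rings, and $\sum_i m_i=n+1$. Extending scalars to $K$, the displayed isomorphism identifies $\prod_i K[t,t^{-1}][\tilde x]/((\tilde x-\lambda_i t^{|x|/2})^{m_i})$ with $\bigoplus_i H^*(F_i;\Q)\otimes K[t,t^{-1}]$; matching primitive idempotents — which correspond on one side to the connected components $F_i$ (as $H^0(F_i;\Q)=\Q$) and on the other to the distinct roots — shows that the components of $F$ biject with the distinct $\lambda_i$, that $\dim_\Q H^*(F_i;\Q)=m_i$, and that $H^*(F_i;\Q)\otimes K[t,t^{-1}]\cong K[t,t^{-1}][y_i]/(y_i^{\,m_i})$ with $y_i:=\tilde x-\lambda_i t^{|x|/2}$. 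Setting $n_i+1:=m_i$ already yields $\sum_i(n_i+1)=n+1$.

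\emph{Ring structure of the components, and the main obstacle.} It remains to recover $H^*(F_i;\Q)$ itself, which is where the real content lies. If $m_i=1$ the component is rationally acyclic and there is nothing to prove. Otherwise, expand $y_i=\sum_j(y_i)_j\,t^{(|x|-d_j)/2}$ with $(y_i)_j\in H^{d_j}(F_i)$ homogeneous and $d_j\equiv|x|\bmod 2$, and let $x_i:=(y_i)_{d}$ be the term of lowest degree $|x_i|:=d\le|x|$. Then $x_i^{\,m_i}=0$, being the leading term of the vanishing $y_i^{\,m_i}$, while the leading terms of the $K[t,t^{-1}]$-basis $1,y_i,\dots,y_i^{\,m_i-1}$ are $1,x_i,\dots,x_i^{\,m_i-1}$ up to invertible powers of $t$; a standard associated-graded argument, using that $F_i$ is connected so that $H^0(F_i;\Q)=\Q$, shows these are linearly independent, hence a basis by the dimension count, and then descent from $K$ to $\Q$ gives $H^*(F_i;\Q)\cong\Q[x_i]/(x_i^{\,n_i+1})$ with $|x_i|\le|x|$ and $|x_i|\equiv|x|\bmod 2$. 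Finally, reducing $\tilde x|_{F_i}$ modulo $t$ identifies $x|_{F_i}$ with the degree-$|x|$ part of $y_i$, which equals $x_i\neq 0$ precisely when $|x_i|=|x|$, giving the last claim. The substantive point is this upgrade from the numerical equality $\dim_\Q H^*(F_i;\Q)=m_i$ to the statement that $H^*(F_i;\Q)$ is again singly generated — i.e. that the class of rational projective-space types is closed under passage to fixed-point components — and this is exactly where connectedness of $F_i$ and the leading-term propagation of the single generator $y_i$ are needed; the rest is bookkeeping with powers of $t$.
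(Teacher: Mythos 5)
The paper does not prove this statement; it is quoted as a classical result of Bredon, so there is no internal argument to compare with. Your proof is the standard localization route (essentially Bredon's and Hsiang's): the reduction to a generic circle, equivariant formality, the presentation $H^*_{\gS^1}(M;\Q)\cong\Q[t,\tilde x]/(p)$, the factorization of the homogeneous form $p$ into linear factors over a splitting field, and the identification of primitive idempotents with the connected components of $F$ are all correct, and they do yield $\sum_i(n_i+1)=n+1$ together with $\dim_\Q H^*(F_i;\Q)=m_i$.

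The one step that is asserted rather than proved is the linear independence of $1,x_i,\dots,x_i^{m_i-1}$. Lowest-degree terms do not propagate through products for free: the leading term of $y_i^{\,k}$ equals $x_i^{\,k}$ (times a power of $t$) only if $x_i^{\,k}\neq 0$, which is precisely what you need to show; if $x_i^{\,k}=0$ for some $k<m_i$, the leading term of $y_i^{\,k}$ just lives in higher degree and there is no immediate contradiction with $y_i^{\,k}\neq 0$. The repair is short. Write $H=H^*(F_i;K)$, $R=K[t,t^{-1}]$, $A=H\otimes R\cong R[y_i]/(y_i^{m_i})$. First, $(y_i)=H^{+}\otimes R$: the inclusion $(y_i)\subseteq H^{+}\otimes R$ holds because $y_i$ is nilpotent while any element of $A$ with non-zero $H^0$-component is a unit (connectedness of $F_i$ plus nilpotence of $H^{+}$), and the induced surjection $R\cong A/(y_i)\to A/(H^{+}\otimes R)\cong R$ fixes $1$ and $t$, hence is an isomorphism. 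Consequently $(H^{+})^k\otimes R=(y_i^{\,k})$ is $R$-free of rank $m_i-k$, so $\dim_K H^{+}/(H^{+})^2=1$; choosing a homogeneous generator $z$ and using nilpotence of $H^{+}$ gives $H^{+}=zH$, hence $H\cong K[z]/(z^{m_i})$ by the dimension count, and comparing lowest positive degrees (using $(y_i)=H^{+}\otimes R$ once more) shows $|z|=|x_i|$ and $z\in K^{\times}x_i$. With this inserted, your argument is complete; the remaining points (descent from $K$ to $\Q$, the congruence and bound on $|x_i|$, and the "equality only if $x$ restricts non-trivially" clause) are handled correctly.
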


Note in particular that a rational $\CP^n$ has fixed-point components that are also rational $\CP^{n_i}$, and that for a rational $\HP^n$, both $\CP^{n_i}$ and $\HP^{n_i}$ can arise. The statement for rational spheres goes back to Smith \cite{Smith38}.  For the case of rational Cayley planes, one can strengthen the conclusion somewhat since 
there 
 is no closed manifold with dimension $12$ and total Betti number three by the Hirzebruch signature theorem (see, for example, \cite{Su14}).

We now first prove the proposition and then the two lemmas. At the end, we have included two subsections whose results are not needed further on. In Section~\ref{subsec:evensphere}, we show that, if the fixed-point set of a $\gT^5$-action is an even-dimensional rational sphere, then it is an integral homology sphere. 
In Section~\ref{sec:PositiveEuler}, we have explained in more detail how Corollary~\ref{cor:hopf} follows. 
We also rule out the existence of positively curved orientable manifolds with Euler characteristic three and $\gT^5$-symmetry.


\begin{proof}[Proof of Proposition~\ref{pro:4periodicN}]
We apply Theorem~\ref{thm:S1splitting} to the isotropy representation at $p$ in the normal space $\nu_p(F)$, and we choose a one-dimensional subgroup $\gH_1$ as in the statement. Let $G^g$ be the fixed-point component of $\gH_1$ containing $p$. Then $\gT/\gH_1$ acts effectively $G^g$, and $\gT/\gH_1$ splits as $\gS^1\times \gT^3$ in a neighborhood of $p$. 
 The circle factor $\gS^1$ acts semi-freely in a neighborhood of $p$, and its fixed-point component $G_1^{g-k_1}$ intersects any submanifold of $G$ that is fixed by a subgroup of $\gT^3$ transversely.
After  passing to the fixed set of a finite isotropy group we may assume that there are no non-trivial finite isotropy groups in a neighborhood of $p$ in $G$. 

We now apply Theorem~\ref{thm:S1splitting} again and choose a 
subgroup $\gH_2 \subseteq \gT^3$ as in the theorem, and let $N^m \subseteq G^g$ 
be the fixed-point component of $\gH_2$ that contains $p$. 
The induced action of $\gT^3/\gH_2$ splits as a product of semi-free circle actions
 whose fixed-point components, $N_2^{m-k_2}$ and $N_3^{m-k_3}$, intersect transversely in $N^m$.
 The group  $\gH_2$ is not unique and we choose $\gH_2$ in such a way 
 that the codimension $k_0=g-m$ of $N^m$ in $G$ is maximal.
 
Altogether $N^m$ admits an action $\gS^1\times \gS^1\times \gS^1$ where the three circles fix the three submanifolds $N^{m-k_i}_i$ that intersect pairwise perpendicularly
 in $N^m$. Notice also that $N_1^{m-k_1}$
 is the transverse intersection of $G_1^{g-k_1}$ and $N^m$ 
 in $G$. 
The  major step will be the proof of the following
 \\[1ex]
 {\bf Claim.}  $N^m$ and its universal cover have four-periodic cohomology.\\[-1.5ex]
%

%
%
First, if $k_1 \geq \frac{m}{2}-f$, then the result follows 
by applying Corollary \ref{cor:transversal}
 to the transverse intersection of $N_2$ and $N_3$. 


Second, assume that $k_1 < \frac m 2-f$ and $k_1 \leq k_0=g-m$. 
Then $N_1^{m-k_1}\to N^m$ is $(m-k_1)$-connected
since  $N_1$ is the transverse intersection of $G_1^{g-k_1}$ and $N^m$ in $G^g$ and $k_1 \leq k_0$.  
If $k_1\le m/3$ we can apply Corollary \ref{cor:transversal} to get 
the result. If $k_1 >m/3$, then we still know 
that $\tilde N^m$ has $k_1$-periodic integral cohomology. 
 Moreover,  we may assume 
$k_2<\tfrac{m}{3}$. 
Hence, the Periodicity Lemma applied to the transverse intersection of $N_1^{m-k_1}$ and $N_2^{m-k_2}$ in $N^m$ implies that $\tilde N_1$ has $k_2$-periodic integral cohomology. Using once more that the inclusion $N_1^{m-k_1} \to N^m$ is $(m-k_1)$-connected, we have that the inclusion $\tilde N_1^{m-k_1} \to \tilde N^m$ is $(m-k_1)$-connected as well. Since $m-k_1\ge k_1+1$, the element $e \in H^{k_1}(\tilde N^m;\Z)$ inducing $k_1$-periodicity in $\tilde N^m$ must factor as a product of two elements where one has degree $k_2$. It follows that $\tilde N$ has $\gcd(k_1,k_2)$-periodic integral cohomology (cf. \cite[Lemma 3.2]{Kennard13}). Since $k_2 < k_1$, $\tilde N$ has $k$-periodic integral cohomology for some $k \leq \frac{1}{2} k_1 < \frac{m}{4}$. By the Four-Periodicity Theorem, it follows that $\tilde N$ has four-periodic rational cohomology. To see that $N$ itself has four-periodic cohomology as well one argues exactly as in the proof of Corollary~\ref{cor:transversal}.

Finally, suppose $ k_1 < \frac{m}{2}-f$ and $k_0 <  k_1$. 
Here we come back to the fact that we have chosen $\gH_2$ in such a way that it maximizes 
the codimension $k_0$.  Using the slight refinement of Theorem~\ref{thm:S1splitting} established
in Lemma~\ref{lem:refine} we either have $k_0\ge k_2+k_3$ 
or the product action 
of $\gS^1\times \gT^3$ at $p$ in $G$ actually splits 
as $\gS^1 \times \gS^1\times \gT^2$ product action.
In the former case  $k_1= m-k_2-k_3-f\ge m-k_0-f\ge m-f-k_1$ and we get a contradiction to our 
assumptions of the case.  In the latter case we can assume that remaining 
two torus $\gT^2$ on $G$ does not split as a product as well, because otherwise we are done as
explained in the paragraph before 
Proposition~\ref{pro:4periodicN}. Thus  $\gT^2$ has three non-trivial inequivalent representations, and they occur 
with various multiplicities. By our choice of $\gH_2$, 
$k_3/2$ is the smallest multiplicity while $k_0/2$ 
is the sum of the other two multiplicities.
In particular, $k_0\ge 2k_3$. 
In addition, we can switch the roles of $k_1$ and $k_2$ in all previous arguments. So 
we also have $k_0\le k_2$. 
After a possible permutation we get $k_1\ge k_2\ge  2k_3$.
Thus $\max\{k_2+3k_3,2k_2+k_3\}\le m$ and
 the result follows from Corollary~\ref{cor:transversal}.

We have proved the above claim.  Notice that we have proved in all cases that, at least for one $i$, the inclusion map $N_i\rightarrow N$ is $(m/2)$-connected. Furthermore, 
we have $m\ge 12$ unless $k_i=2$ for some $i$ and $\tilde N^m$ is fixed-point homogeneous and 
diffeomorphic to rank one symmetric space by a result of Grove and Searle.  
Thus we can apply
Lemma~\ref{lem:b3} to the $\gS^1$-action of the $i$-th factor 
to see that $b_3(N^m)=0$ in the case of $m\equiv 2$ mod $4$. 
By Proposition~\ref{pro:4periodic}, we see that $N^m$ has the cohomology of 
$\Sph^m$, $\CP^{m/2}$, $\HP^{m/4}$, or  $\Sph^h\times \HP^{(m-h)/4}$ with $h\in \{2,3\}$. 
Using Lemma~\ref{lem:shxHP} and the fact that $N_i$ has finite fundamental group, we 
see that either the conclusion of a) holds or  $N_i$ 
has the rational cohomology of $\Sph^h\times \HP^{n_i}$ 
 or of $\Sph^h\times \CP^{n_i}$ with $n_i>0$ for $i=1,2,3$. 
The latter can be ruled 
out by the Connectedness Lemma. Indeed, for $k_i \leq \frac m 2 - 1$, it implies that the inclusion map $N_i \rightarrow M$ is $3$-connected, and otherwise it implies that the inclusion map $N_i \cap N_j \rightarrow N_i$ is $\dim(N_i \cap N_j)$-connected for $j \neq i$.    But $\Sph^h\times \CP^{n_i}$ contains no submanifold of dimension at least three whose 
inclusion map is maximally connected, so we have shown that either Conclusion a) holds or    
$N_1$, $N_2$  and $N_3$ have the cohomology of $\Sph^h\times \HP^{n_i}$. 
That intersections thereof have the same cohomology type  is an immediate consequence of the Connectedness Lemma. 
\end{proof}

\subsection{\subsecbthree}\label{sec:b3}
We prove Lemma~\ref{lem:b3}. 
All cohomology groups and Betti numbers are taken with respect to $\Q$.
We argue by contradiction. By Proposition~\ref{pro:4periodic}, we may assume 
 that $H^*(M)$ is generated as an algebra 
by some $c \in H^2(M^{4n+2})$, $a \in H^4(M^{4n+2})$, and $b_1,\ldots,b_s \in H^3(M^{4n+2})$ for some even $s > 0$. A complete set of relations is given by $c^2 = 0$, $a^{n + 1} = 0$, $c b_i = 0$ for all $i$, and $b_i b_j = m_{ij} a c$ for some non-degenerate, skew-symmetric matrix $(m_{ij})$.

We want to work with equivariant cohomology $H^*_{\gS^1}(M)$, that is, the rational 
cohomology of the Borel construction $M\times_{\gS^1}E\gS^1$. There is a fibration 
 $$M\stackrel{j}{\rightarrow} M\times_{\gS^1} E\gS^1\rightarrow B\gS^1.$$
We first claim that the map   
$j^*\colon H^*(M\times_{\gS^1}E\gS^1)\rightarrow H^*(M)$ 
is surjective. By assumption, the inclusion map $F_0\rightarrow M$ of some fixed-point component $F_0$ is $7$-connected.
 In particular, $H^i(M)\rightarrow H^i(F_0)$ is an isomorphism in degrees $i\le 6$, and the same follows for $H^i(M\times_{\gS^1} E\gS^1)\rightarrow H^i(F_0\times B\gS^1)$. The map $H^*(F_0\times B\gS^1)\rightarrow H^*(F_0)$ is surjective by the K\"unneth formula. Thus we see that $j^*$  is surjective in degrees $i\le 6$. Since the cohomology of $M$ is generated in degrees $\le 4$, it is surjective for all $i$.

This implies that, as a $H^*(B\gS^1)$-module, $H^*_{\gS^1}(M)$ is isomorphic to the 
cohomology of the product $M\times B\gS^1$. Moreover, by Theorem \ref{thm:SmithFloyd}, the total Betti number of the fixed-point set $F$
of the $\gS^1$-action coincides with the total Betti number of $M$.
In fact, it follows that the map $H^i_{\gS^1}(M)\to H^i(F\times B\gS^1)$ 
is an isomorphism for $i>4n+2$. Since the total odd Betti number of $F_0$ is smaller than the one of $M$, it follows that there must be a second component $F_1$ with non-trivial odd Betti number.

Before we come back to $F_1$, we use the seven-connected inclusion map $F_0\rightarrow M$ to normalize a generating system of the equivariant cohomology of $M$. 
We let $t\in H^2(B\gS^1)$ denote a generator and identify it with its pullback to $H^2_{\gS^1}(M)$. The cohomology ring $H^2_{\gS^1}(M)$ of $M\times_{\gS^1} E\gS^1$ coincides up to degree $6$ with the cohomology of the product $F_0\times B\gS^1$, which in turn coincides up to degree $6$ with the cohomology of the product $M\times B\gS^1$.
Therefore, we can find
lifts $\gAm\in H^2_{\gS^1}(M)$ of $c$,
$\aLpha\in H^4_{\gS^1}(M)$ of $a$,
and $\bEta_1,\ldots,\bEta_s\in H^3_{\gS^1}(M)$ 
of $b_1,\ldots,b_s$ such that  their pullbacks to  $H^*(F_0\times B\gS^1)$  correspond to  elements in $H^*(F_0)$ in the K\"unneth decomposition. 
 This way we see that $\gAm^2=0$ and $\bEta_k\bEta_l=m_{kl}\aLpha \gAm$ where $m_{kl}\in \Q$ is as above.

The elements $\gAm,\bEta_1,\ldots,\bEta_s,\aLpha$, and $t$ generate  the cohomology ring $H^*_{\gS^1}(M)$. 
We let $\iota\colon F_1\times B\gS^1\rightarrow M\times_{\gS^1}E\gS^1$ denote 
the inclusion map. Then we can decompose  the pullbacks 
of our generating system to $H^*( F_1\times B\gS^1)$
  according to the  K\"unneth formula.
$$\begin{array}{rcll}
\iota^*(\gAm)&=&c_2 & \mbox{ with $c_2\in H^2(F_1)\subset H^2(F_1\times B\gS^1)$ }\\
\iota^*(\bEta_j)&=&\barbeta_j&\mbox{ with $\barbeta_j\in H^3(F_1)\subset H^3(F_1\times B\gS^1)$ }\\
\iota^*(\aLpha)&=& a_4+a_2t+a_0t^2
&\mbox{ with $a_4\in H^4(F_1)$, $a_2\in H^2(F_1)$ and $a_0\in \Q$. }
\end{array}
$$
In the first equation, we used that $\iota^*(\gAm)^2=\iota^*(\gAm^2)=0$, 
and in the second equation, we used our assumption $H^1(F_1)=0$.

We claim that $a_0 = 0$. We  know that $\iota^*$ is surjective in degrees larger than $4n+2$. Since the image of $\iota^*$ is generated by the above elements and $t$, 
it follows that the cohomology of $F_i$ is generated by $c_2$, $\barbeta_j$ ($j=1,\ldots, s$), $a_2$, and $a_4$.  
In particular, we get 
$\barbeta_k\neq 0$
for some $k$ because otherwise all odd Betti numbers of $F_1$  would vanish. 
Using Poincar\'e  duality of $F_1$  we can find $l$ such that 
$\barbeta_l\cdot \barbeta_k\neq 0$. But then
\begin{eqnarray*}\label{prod_alpha}
0\neq \barbeta_k\barbeta_l&=&m_{kl} \iota^*(\gAm \aLpha)=m_{kl}(c_2 a_4+c_2 a_2 t+a_0c_2 t^2)
\end{eqnarray*}
As we can view both sides as polynomial in $t$, we get $c_2 a_2=0$ and $a_0c_2=0$.
Moreover, $c_2\neq 0$ and thus $a_0=0$. 

We choose a point $p_i\in F_i$ for $i=0,1$.  We look at the image of
 $$H^{2j}_{\gS^1}(M)\rightarrow H^{2j}(\{p_0,p_1\}\times B\gS^1)$$
 for  $2j>4n+2$. 
We claim that our previous calculation implies that the image is one-dimensional. 
In fact, we have just seen that elements $\bEta_i, \gAm$, and $\aLpha$ are in the kernel
$H^*_{\gS^1}(M)\to H^*(p_i\times B\gS^1)$. This in turn implies that 
the image of the above map is given by the image of $\Q t^j$ and thus one-dimensional.

The contradiction arises since, on the other hand, we know that the map is surjective. In fact, as mentioned earlier,  $H^{2j}_{\gS^1}(M)\to H^{2j}(F\times B\gS^1)$ is surjective, and since $p_1$ and $p_0$ are in different components, the map $ H^{2j}(F\times B\gS^1) \to H^{2j}(\{p_1,p_0\}\times B\gS^1)$ is surjective as well.

\subsection{\subsecshxHP}\label{subsec:shxHP}
 We prove Lemma~\ref{lem:shxHP}.  We may assume that the torus is a circle $\gS^1$, 
 since every fixed-point component of a torus is also the fixed-point component of a circle. 
 We can also assume that the fixed-point set is not empty.  
 We again work with the equivariant cohomology $H^*_{\gS^1}(M)$.  
 We first claim the map to $H^*(M)$ is surjective. This is clear for $h=2$ since all 
 odd Betti numbers vanish. For $h=3$, we can look at the fibration
 $$
\gS^1\rightarrow M\cong M\times E\gS^1\stackrel{j}{\rightarrow} M\times_{\gS^1}E\gS^1
$$
 and the associated Gysin sequence 
$$
\cdots\to H^k_{\gS^1}(M)\stackrel{\cup e}{\rightarrow} H^{k+2}_{\gS^1}(M)\stackrel{j^*}{\rightarrow}H^{k+2}(M)\stackrel{Gy}{\longrightarrow} 
H^{k+1}_{\gS^1}(M)\to \cdots
$$
 The Euler class $e\in H^2_{\gS^1}(M)$ has the property that $e^k\neq 0$ for all 
 $k$ since its restriction to $p\times  B\gS^1$ is non-zero for any fixed point $p$. 
 With this in mind, we see that $H^3_{\gS^1}(M)\to H^{3}(M)$ is surjective.  It suffices to prove $D:=j^*\circ Gy=0$. 
 Indeed, since $j^*$ is injective in degree three,  $Gy$ must then be 
  zero in degree four and hence $j^*$ is surjective 
 in degree four. Since the cohomology of $M$ is generated 
 in degrees $\le 4$, it follows that $j^*$ is surjective in all degrees.   To show $D=0$
it is convenient to work with de Rham cohomology. 
 If $\omega$ is an  $\gS^1$-invariant closed differential form 
 on $M$, then $D([\omega])=[X\lnot\omega]$, 
 where $X$ is the Killing field induced by the $\gS^1$-action on $M$. Therefore, 
 $D([\omega_4^k])=k [D(\omega_4)\wedge \omega_4^{k-1}]$
 holds for an $\gS^1$-invariant closed differential form $\omega_4$ 
 representing a non-zero element in $H^4_{dR}(M)$. 
 If we choose $k=n+1$, then $0=D[\omega^{n+1}]=
 (n+1) [D(\omega_4)\wedge \omega_4^{n}]$,  
 but this proves $D([\omega_4])=0$, since otherwise 
 the right-hand side would be a non-zero multiple of the fundamental class.

 We have shown that $j^*$ is surjective. We
fix   a generator $t \in H^2(B\gS^1)$ and choose elements \(\aLpha \in H^4_{\gT}(M)\) and $\gAm \in H^h_{\gT}(M)$ which pull back to generators 
  $a\in H^4(M)$ and $c\in H^h(M)$, respectively.  Let  $F$ denote a fixed-point component of $\gS^1$ and $\iota\colon F\times B\gS^1\to E\gS^1 \times_{\gS^1}M$ 
  the inclusion. We can decompose  the pullbacks of $\aLpha$ and $\gAm$ 
  according to the  K\"unneth formula.
If $h=3$,   we get
	\begin{eqnarray*}
	\iota^*\aLpha&=&  a_4+ t  a_2+ a_0 t^2,\\
	\iota^*\gAm		&=&  c_3 + t   c_1,
	\end{eqnarray*}
for some  $ c_i \in H^i(F)$ for $i=1,3$, 
and
$ a_i\in H^i(F)$ for $i=0,2,4$.
 Note moreover that we may assume $ a_0 = 0$ by replacing $\aLpha$ by $\aLpha -  a_0 t^2$.

We claim that if $ a_2\neq 0$, then  $ a_4$ is a multiple of 
$ a_2^2$. 
We know $H^*_{\gS^1}(M)\rightarrow H^*_{\gS^1}(F)$ is surjective in degrees $\ge \dim M+1 = 4n+4$.
Since all even cohomology of $H^*_{\gS^1}(M)$ can be expressed as 
linear combination of $\aLpha^j\cdot t^l$, we get an equation of the form
$$
t^{2n+1} a_2=\sum_{i=0}^{n+1}q_i t^{2i}(a_4 + t  a_2)^{n+1-i}.
$$
with rational coefficients $q_i$.
Comparing the coefficients of $t^{2n+1}$  gives $q_{n}=1$, and then comparing those of $t^{2n}$ 
gives $a_4 + q_{n-1} a_2^2=0$, our desired relation. 
This implies that cohomology groups in even degrees are generated by powers of 
$ a_2$ or, if $ a_2=0$, by powers of $ a_4$.  
The structure of
 the odd cohomology groups follows from Poincar\'e  duality.
Thus $F$ has the cohomology of $\Sph^{h_f}\times \CP^{n_f}$ or $\Sph^{h_f}\times \HP^{n_f}$. 
Since $\iota^*\gAm \neq 0$, either the first or third Betti number is not zero. Hence, $h_f\in \{1,3\}$.

If $h=2$, we get after possibly adding elements of $H^4(B\gS^1 )$ 
and $H^2(B\gS^1)$ that
 \begin{eqnarray*}
	\iota^*\aLpha	&=&  a_4 + t  a_2,\\
	\iota^*\gAm	&=&  c_2.
	\end{eqnarray*}
If $ c_2=0$, one can argue as above to see that the cohomology of $F$ is generated by one element in degree 
$2$ or $4$. If $ c_2 \neq 0$, one still has $c_2^2=0$ 
since $ c_2$ is given by restricting $c$ to $F$. 
 We now claim  that, if $ a_2$ and $ c_2$ 
are linearly independent, then $a_4$ is a linear combination of $a_2^2$ and $a_2\cdot  c_2$. 
We have an equation of the form
\[t^{2n+1} a_2 = \sum_{i=0}^{n+1} q_i t^{2i} (a_4 + t a_2)^{n+1-i} + \sum_{i=1}^{n+1} r_i t^{2i-1} c_2 (a_4+ta_2)^{n+1-i}.\]
We get $q_{n+1} = 0$ by comparing the coefficients of $t^{2n+2}$ and  
$a_2 = q_{n} a_2 + r_{n+1} c_2$ by comparing the $t^{2n+1}$ coefficients. 
Since $c_2$ and $ a_2$ are assumed to be linearly independent, it follows that $q_n = 1$ and $r_{n+1} = 0$.    
Now the equation $0 = a_4 + q_{n-1} a_2^2 + r_{n}  c_2 a_2$ from the coefficients of $t^{2n}$ gives the desired relation.

In summary, if $ c_2\neq 0$, then the cohomology of $F$ is either generated 
by $c_2$ and $ a_4$ or by $c_2$ and $ a_2$. 
Together with $c_2^2=0$ and Poincar\'e  duality, we deduce that $F$ either has 
 the cohomology of $\Sph^2\times \HP^{n_i}$ 
or the cohomology of $\Sph^2\times \CP^{n_i}$.

 \subsection{\subsecevensphere}\label{subsec:evensphere}
 In this subsection we want to prove 
\begin{theorem}\label{thm:evenspheres} Suppose $\gT^5$ acts effectively on a positively curved, even-dimensional, simply connected manifold by isometries.
If  $F$ is a fixed-point component of positive dimension with 
$\chi(F)\le 2$, then $F$ is homeomorphic to a sphere.
\end{theorem}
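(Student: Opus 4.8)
The plan is to realize $F$ as an iterated transverse intersection of totally geodesic submanifolds inside a positively curved manifold and then to propagate $2$-periodic integral cohomology down to $F$. As a fixed-point component of an isometric torus action, $F$ is — by Theorem~\ref{thm:FPSstructure} — a closed, orientable, totally geodesic submanifold of even codimension, so $m:=\dim F$ is even and $m\ge2$. By Proposition~\ref{pro:4periodicN} together with Bredon's theorem (Theorem~\ref{thm:SinglyGenerated}) and Lemma~\ref{lem:shxHP} (or directly by Theorem~\ref{thm:t5}), $F$ has the rational cohomology of $\Sph^m$, $\CP^{m/2}$, or $\HP^{m/4}$; since $\chi(\CP^k)=\chi(\HP^k)=k+1$, the hypothesis $\chi(F)\le2$ leaves only that $F$ is a rational homology sphere (the rational $\CP^1$ and $\HP^1$ being also rational $\Sph^2$ and $\Sph^4$). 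As $F$ is even-dimensional, orientable, and positively curved, Synge's theorem makes it simply connected. Thus $F$ is a simply connected rational homology $m$-sphere, and by the resolution of the generalized Poincar\'e conjecture it will suffice to prove $H_*(F;\Z)$ is torsion-free. The cases $m\le2$ being immediate (a simply connected closed surface is $\Sph^2$), assume henceforth $m\ge4$.

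Next I would set up the scaffolding, following Proposition~\ref{pro:4periodicN}. Applying the $\gS^1$-splitting theorem (Theorem~\ref{thm:S1splitting}) to the faithful isotropy representation of $\gT^5$ at a point $p\in F$ on $\nu_pF$ — first passing to the fixed-point component of a maximal finite isotropy group to remove finite isotropy, then splitting off circle factors — produces a two-dimensional subgroup $\gH\subseteq\gT^5$ with fixed-point component $N\supseteq F$, which is positively curved since totally geodesic in $M$, on which $\gT^3=\gT^5/\gH$ acts effectively and splits near $p$ as a product $\gS^1\times\gS^1\times\gS^1$ of semi-freely acting circles; their fixed-point components $N_1,N_2,N_3$ in $N$ contain $F$, pairwise intersect transversely, and satisfy $F=N_1\cap N_2\cap N_3$, with $\codim_N N_i=2\mu_i$, where $\mu_i\ge1$ is the multiplicity of the $i$-th weight. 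For $m\ge4$, alternative b) of Proposition~\ref{pro:4periodicN} cannot occur (it would force $F$ to be a rational $\Sph^2$), so alternative a) holds: $N$ is rationally a rank-one symmetric space, and hence each $N_i=N^{\gS^1_i}$ is rationally $\Sph$, $\CP$ or $\HP$ by Bredon's theorem.

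Assume first that the smallest multiplicity equals one, say $\mu_1=1$, so $\codim_N N_1=2$ and $\dim N_2=m+2+2\mu_3$. Inside the positively curved manifold $N_2$, the submanifolds $N_1\cap N_2$ and $N_2\cap N_3$ are transversely intersecting fixed-point components of circle actions of codimensions $2$ and $2\mu_3$ respectively; since $2\cdot2+2\mu_3\le\dim N_2$, Corollary~\ref{cor:transversal}(1) shows $N_2\cap N_3$ has $2$-periodic integral cohomology. Moreover $\dim(N_2\cap N_3)=m+2$ is even, so $N_2\cap N_3$ is a simply connected, positively curved fixed-point component, and $H^2(N_2\cap N_3;\Z)$ is torsion-free. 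Also $F\hookrightarrow N_2\cap N_3$, being the fixed-point component of the remaining semi-free circle of codimension $2$, is $m$-connected by the Connectedness Lemma, so $H^i(N_2\cap N_3;\Q)\cong H^i(F;\Q)$ for $i\le m-1$; in particular $H^2(N_2\cap N_3;\Q)=0$ since $m\ge4$. Hence the periodicity generator $e\in H^2(N_2\cap N_3;\Z)$ is torsion, and $2$-periodicity forces $H^2(N_2\cap N_3;\Z)=\Z\cdot e$, which is therefore finite, hence $0$; so $e=0$ and $N_2\cap N_3$ is an integral homology sphere. By the $m$-connectedness of $F\hookrightarrow N_2\cap N_3$ this yields $H^i(F;\Z)=0$ for $0<i<m$, so $F$ is an integral homology sphere, hence a homotopy sphere, hence homeomorphic to $\Sph^m$.

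The hard part is the remaining case, where every multiplicity satisfies $\mu_i\ge2$. Then $F$ has codimension at least $12$ in $N$, the best integral periodicity Corollary~\ref{cor:transversal} supplies for the intermediate submanifolds has period $\ge4$, and $4$-periodicity of $F$ together with simple connectivity and the rational-sphere structure is not by itself enough to force $H_*(F;\Z)$ torsion-free. To dispose of this case one must use the rational type of $N$ more heavily: since $N$ is rationally a rank-one symmetric space and each $N_i=N^{\gS^1_i}$ is then rationally $\Sph$, $\CP$ or $\HP$ (Theorem~\ref{thm:SinglyGenerated}), the dimensions $\dim N_i=\dim N-2\mu_i$ obey strong divisibility constraints, which — together with the identity $\chi(F)=\chi(N)$ for the $\gT^3$-action on $N$ — should either exclude all $\mu_i\ge2$ or reduce to the case treated above; alternatively one can supplement the periodicity argument with Smith theory applied to the $\Z_p$-subgroups of the semi-free circles, whose fixed-point sets coincide with those of the circles. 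Carrying out this bookkeeping — matching codimensions, dimensions, Euler characteristics, and admissible periods across the iterated transverse intersections — is where the real work lies.
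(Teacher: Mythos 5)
Your reduction to showing that $F$ is an integral homology sphere, and your argument in the case where the smallest weight multiplicity is one (so that some $N_i$ has codimension $k_1=2$ in $N$), are sound: there the integral $2$-periodicity from Corollary~\ref{cor:transversal}, the simple connectivity of $N_2\cap N_3$, and the $m$-connected inclusion of $F$ do force an integral homology sphere. But the remaining case, in which all three multiplicities are at least two, is the generic one, and you leave it open; this is a genuine gap rather than bookkeeping. There is no dimensional or Euler-characteristic obstruction to all $k_i\ge 4$ (three semi-free circles on a rational sphere $N$ can perfectly well have rational-sphere fixed-point components of codimensions $4$ or more), and, as you yourself note, integral $k_1$-periodicity with $k_1\ge 4$ together with the rational-sphere structure does not control torsion. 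So the heart of the theorem is missing.

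The paper's proof never needs $k_1=2$; it runs a mod-$p$ argument for every prime $p$. Since the circle fixing $N_1$ acts semi-freely near $N_1$, the subgroup $\Z_p$ of that circle has $N_1$ as a fixed-point component; because $N$ and $N_1$ are even-dimensional rational spheres, $\chi(N)=\chi(N_1)=2$, and Berger's theorem then forces the fixed-point set of this $\Z_p$ in $N$ to be connected, so that \cite[Theorem 5.1]{Wilking03} yields $H^{k_1}(N;\Z_p)=0$. The Connectedness Lemma transfers this to $H^{k_1}(N_2;\Z_p)=0$, and the Periodicity Lemma with $\Z_p$-coefficients, applied to the $(\dim N_2-k_1)$-connected inclusion $N_1\cap N_2\subseteq N_2$, shows that $N_2$ is a $\Z_p$-homology sphere. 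Smith theory, applied first to $\Z_p$ in the first circle acting on $N_2$ and then to $\Z_p$ in the third circle acting on $N_1\cap N_2$, makes $N_1\cap N_2$ and $F$ $\Z_p$-homology spheres; letting $p$ range over all primes, $F$ is an integral homology sphere and hence homeomorphic to a sphere by the Poincar\'e conjecture. The decisive ingredient you are missing is precisely the mod-$p$ vanishing $H^{k_1}(N;\Z_p)=0$ coming from connectedness of the $\Z_p$-fixed set: it replaces your torsion-free-$H^2$ argument and works for arbitrary $k_1$. Your parenthetical suggestion to use Smith theory for $\Z_p$-subgroups points in the right direction, but without this vanishing the mod-$p$ periodicity class on $N_2$ need not die, and the case stays open. (Two minor points: the paper dispatches $\dim F\le 4$ directly and argues only for $\dim F\ge 6$, where Theorem~\ref{thm:SinglyGenerated} and Lemma~\ref{lem:shxHP} force $F$ and $N$ to be rational spheres; and alternative a) of Proposition~\ref{pro:4periodicN} asserts that a circle factor's fixed-point component, not $N$ itself, is rationally a rank one symmetric space, so your exclusion of alternative b) is stated imprecisely, though your codimension-two argument does not actually use it.)
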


\begin{proof} Notice that $F$ is orientable and thus simply connected by Synge's theorem.
If $\dim(F)\le 4$, the lemma follows. Thus we may assume $\dim(F)\ge 6$.

Choose submanifolds $F \subseteq N_i  \subseteq N^m$ 
as in Proposition~\ref{pro:4periodicN}, and assume $k_1 \leq k_2 \leq k_3$. The argument above
 shows that $N$ is a rational $\s^m$, $\CP^m$, $\HP^m$, or $\s^2 \times \HP^m$.
Since $F$ is a fixed-point component of $\gT^3$ in $N$, $\dim(F)\ge 6$, and $\chi(F)\le 2$,   we deduce  from Theorem~\ref{thm:SinglyGenerated} and Lemma~\ref{lem:shxHP} that $F$ and $N$ are rational spheres. 

Recall that the circle acting on $N$ and fixing $N_1$ acts semi-freely in a neighborhood of $N_1$. In particular, $N_1$ is a fixed-point component of the $\Z_p$ inside this circle. Since in addition $N_1$ is a rational sphere by the Connectedness Lemma, we have that $\chi(N) = \chi(N_1)$. 
In particular, Berger's theorem implies that the fixed-point set of the $\Z_p$-action on $N$ is connected. By \cite[Theorem 5.1]{Wilking03}, we have $H^{k_1}(N;\Z_p) = 0$. 

By the Connectedness Lemma, $H^{k_1}(N_2;\Z_p) = 0$, and by the Periodicity Lemma,  $H^*(N_2;\Z_p)$ is $k_1$-periodic. Hence, $N_2$ is a $\Z_p$-homology sphere. It follows by Smith's theorem that $N_1 \cap N_2$ and $F = N_1 \cap N_2 \cap N_3$ are also $\Z_p$-homology spheres, as they arise as fixed-point components of $\Z_p$-actions on $\Z_p$-homology spheres. Since this holds for all primes, $F$ is an integral homology sphere. Since in addition $F$ is simply connected, $F$ is homeomorphic to a sphere by the resolution of the Poincar\'e conjecture.
\end{proof}

\subsection{\subseceuler}\label{sec:PositiveEuler}
Let $M$ be an even-dimensional, closed, oriented, positively curved Riemannian manifold 
with an effective, isometric $\gT^5$-action. Then the
Euler characteristic $\chi(M) = \sum \chi(F)$ where the sum is over components $F \subseteq M^{\gT^5}$. By a result of Berger, the fixed-point set $M^{\gT^5}$ is non-empty (see Theorem \ref{thm:Berger}). 
Thus Proposition~\ref{pro:4periodicN} implies that the Euler characteristic is at least two.

This proves the first claim of the following.

\begin{theorem}\label{thm:PositiveEuler-Rigidity}
Let $M^{2n}$ be a simply connected, closed, positively curved Riemannian manifold. If $\gT^5$ acts isometrically and effectively on $M$, then $\chi(M) \geq 2$, with equality only if the fixed-point set is homeomorphic to a sphere. Moreover, $\chi(M) \neq 3$.
\end{theorem}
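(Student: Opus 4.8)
The first claim, $\chi(M)\ge 2$, is already established by the preceding discussion: $\chi(M)=\sum_F\chi(F)$ over the fixed-point components $F$ of $\gT^5$, the fixed-point set is non-empty by Berger's theorem, and Proposition~\ref{pro:4periodicN} together with Theorem~\ref{thm:SinglyGenerated} and Lemma~\ref{lem:shxHP} shows every such $F$ has $\chi(F)\ge 1$ (in alternative a) $F$ has the cohomology of a rank one symmetric space, hence positive Euler characteristic; in alternative b), after intersecting down, $F$ is a rational $\Sph^h\times\HP^{n_i}$ or $\Sph^h\times\CP^{n_i}$ with $h$ odd, but the latter were ruled out, and a rational $\Sph^h\times\HP^{n_i}$ with $h$ odd has $\chi=0$ — wait: this forces me to recheck). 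The point is that in the even-dimensional case $\dim F$ is even, so in alternative b) the relevant $F=N_1\cap N_2\cap N_3$ is even-dimensional, forcing the rational $\Sph^h\times\HP^{n_i}$ to have $h$ even, i.e. $h=2$, giving $\chi(F)=2(n_i+1)>0$; similarly the fixed-point components of the sub-circles contribute positively. So $\chi(M)\ge 2$, with equality forcing a \emph{single} fixed-point component $F$ with $\chi(F)=2$.

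For the equality statement, suppose $\chi(M)=2$. Then $M^{\gT^5}$ is a single component $F$ with $\chi(F)=2$. If $\dim F$ is odd, then since $F$ is odd-dimensional and positively curved we'd have $\chi(F)=0$, a contradiction; so $\dim F$ is even, and then $\chi(F)\le 2$. Now apply Theorem~\ref{thm:evenspheres}: a fixed-point component of positive dimension with $\chi(F)\le 2$ is homeomorphic to a sphere. (If $\dim F=0$ then $\chi(F)=1\neq 2$, and a single point would give $\chi(M)=1$; so $\dim F>0$ is automatic once $\chi(M)=2$.) This proves the second claim.

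For the final claim $\chi(M)\neq 3$: assume $\chi(M)=3$. Since each component contributes $\chi(F)\ge 1$ and an even-dimensional positively curved manifold has no odd-dimensional fixed components contributing (those have $\chi=0$, but here all contributions are positive), the only way to sum to $3$ is either three isolated fixed points, or one fixed point plus one component with $\chi(F)=2$, or a single component with $\chi(F)=3$. The first case: three isolated fixed points. Then every fixed-point component in the $N$ produced by Proposition~\ref{pro:4periodicN} is among these, and $N$ has four-periodic rational cohomology with $\chi(N)=3$; but a rational $\Sph^m$, $\CP^m$, $\HP^m$, or $\Sph^h\times\HP^m$ ($h\in\{2,3\}$) has Euler characteristic $2$, $m+1$, $m+1$, or $0$ (for $h$ odd) resp. $2(m+1)$ (for $h=2$) — so $\chi(N)=3$ forces $N$ a rational $\CP^2$ or $\HP^2$, which would have a fixed-point component of positive dimension by Theorem~\ref{thm:SinglyGenerated}, contradicting that all fixed points in $N$ are isolated (since $F$ is isolated and the other components of $M^{\gT^5}\cap N$ are points). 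Actually more carefully: $N$ has at most $3$ fixed-point components (those in $M^{\gT^5}$), all isolated, so $\sum(n_i+1)=3$ with each $n_i=0$, forcing $N$ a rational $\CP^2$ — but a $\gT^3$-action on a rational $\CP^2$ with only isolated fixed points has exactly $3$ of them and is fine, so I need the transverse-intersection structure: the three circle fixed-point components $N_i$ in $N$ are positive-dimensional submanifolds (codimension $k_i$, $\sum k_i\le m=4$, each $k_i\ge 2$ even) — impossible, since $3$ codimension-$\ge 2$ submanifolds can't fit in a $4$-manifold with codimensions summing to $\le 4$ unless some $k_i=0$. Hmm, $k_i\ge 2$ even and $\sum k_i\le 4$ is genuinely impossible, so $m\ge 6$, hence $N$ is not a rational $\CP^2$; contradiction. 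The remaining cases ($\chi(F)=2$ or $3$ on a single positive-dimensional component) are handled by Theorem~\ref{thm:evenspheres} and Theorem~\ref{thm:SinglyGenerated}: $\chi(F)\le 3$ with $F$ a rational sphere forces $\chi(F)=2$, so the decomposition $3=1+2$ with the "$2$" a sphere leaves a single isolated fixed point, again contradicting the submanifold-count argument applied inside $N$.

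\emph{Main obstacle.} The delicate point is the $\chi(M)=3$ case: one must carefully enumerate how $3$ can decompose as a sum of Euler characteristics of rational $\Sph$, $\CP$, $\HP$ pieces (using that isolated fixed points contribute $1$, rational spheres $2$, etc.) and, in the "three isolated points" subcase, extract a contradiction from the fact that Proposition~\ref{pro:4periodicN} forces $N$ to contain three positive-dimensional, pairwise transverse totally geodesic submanifolds $N_1,N_2,N_3$ of even codimension $\ge 2$ summing to at most $\dim N$ — which already forces $\dim N$ to be large, incompatible with $N$ having only three isolated fixed points (since then $N$ would be a rational $\CP^2$ or $\HP^2$ of dimension $\le 8$, and the codimension bookkeeping, combined with $\chi(N_i\cap N_j)$ constraints, rules this out). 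I would organize this as: first reduce to a single positive-dimensional component via Theorem~\ref{thm:evenspheres} wherever possible, then dispatch the genuinely-multi-component cases by the transversality/codimension count inside $N$.
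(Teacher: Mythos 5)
Your treatment of $\chi(M)\ge 2$ and of the equality case follows the paper's route (modulo the overlooked possibility that $M^{\gT^5}$ consists of two isolated points, which is harmless since $\s^0$ is a sphere). The genuine gap is in the claim $\chi(M)\neq 3$. First, the case of a \emph{connected} fixed-point set with $\chi(F)=3$ --- a rational $\CP^2$ or $\HP^2$ --- is not covered by anything you cite: Theorem~\ref{thm:evenspheres} requires $\chi(F)\le 2$, so it says nothing here, and your sentence ``handled by Theorem~\ref{thm:evenspheres} and Theorem~\ref{thm:SinglyGenerated}'' does not produce a contradiction. Second, in the three-isolated-points subcase you assume $\chi(N)=3$ for the $N$ of Proposition~\ref{pro:4periodicN}; but $\chi(N)=\chi(N^{\gT^3})$ counts only the fixed points lying in $N$, and $N$ may be a rational sphere of dimension $\ge 6$ containing just two of the three points, in which case your codimension count $\sum k_i\le\dim N$ with $k_i\ge 2$ yields nothing. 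Even when $\chi(N)=3$ you only exclude $N\cong_{\Q}\CP^2$; the possibility $N\cong_{\Q}\HP^2$ (so $\dim N=8$ and $\sum k_i\le 8$ is consistent with $k_i\ge 2$) survives. The same issues afflict your $1+2$ subcase.

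The missing idea is the paper's Lemma~\ref{lem:t3connected}: if some $\gT^3\subseteq\gT^5$ has a \emph{connected} fixed-point set, then $\chi(M)$ is even. Its proof is not a bookkeeping argument --- it combines the $\gS^1$-splitting theorem with the Conner--Floyd/Hirzebruch theorem (an involution on an oriented manifold with connected fixed-point set of odd Euler characteristic satisfies $\dim F\ge\tfrac12\dim G$) and a $\Z_2$-periodicity argument to force $F$ to be a $\Z_2$-homology sphere, whence $\chi=2$. The rest of the paper's proof that $\chi(M)\neq 3$ is then a careful construction, exploiting $\chi(M)=3$ to guarantee an isolated fixed point, of a three-torus whose fixed-point set is connected and contains all of $M^{\gT^5}$. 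None of this appears in your outline, and without Lemma~\ref{lem:t3connected} (or a genuinely new substitute) the cases listed above remain open.
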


 It remains to show that $\chi(M) \neq 3$. We split this into two steps.

\begin{lemma}\label{lem:t3connected}
Let $M$ be a closed, simply connected, positively curved manifold with $\gT^3$ symmetry. If  $\gT^3$ has a connected fixed-point set, then $\chi(M)$ is even.
\end{lemma}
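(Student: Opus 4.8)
The plan is to reduce everything to the parity of classical invariants of the fixed-point set. Any odd-dimensional closed manifold has vanishing Euler characteristic, so we may assume $\dim M$ is even; and since for a torus action on a compact manifold the Euler characteristic equals that of the fixed-point set, the hypothesis that $M^{\gT^3}=F$ is connected gives $\chi(M)=\chi(F)$. By Theorem~\ref{thm:FPSstructure}, $F$ is a closed, orientable, totally geodesic submanifold of even codimension, hence positively curved, even-dimensional, and (by Synge) simply connected. Using Lemma~\ref{lem:connectedstab} we may first pass to the fixed-point component $M'$ of a maximal finite isotropy group containing $F$: this changes neither $F$ nor the Euler characteristic (the residual rank-three torus has $F$ as its entire, connected fixed-point set on $M'$, so $\chi(M')=\chi(F)=\chi(M)$), it keeps $M$ closed, simply connected and positively curved, and afterwards the isotropy representation of $\gT^3$ at a point $p\in F$ has only connected isotropy groups. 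Rename $M'$ as $M$. Write $\dim F=2m$.

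If $\dim M\equiv 2\pmod 4$ or $\dim F=2m$ with $m$ odd, we are done: on the manifold in question the cup-product pairing on middle cohomology is skew-symmetric and, by Poincar\'e duality, non-degenerate, so the middle Betti number is even, while all other rational Betti numbers pair up under Poincar\'e duality, whence that Euler characteristic — and therefore $\chi(M)=\chi(F)$ — is even. So we may assume from now on that $\dim M\equiv 0\pmod 4$ and $\operatorname{codim}_M F\equiv 0\pmod 4$ (equivalently $m$ even); this is the substantive case.

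The key observation is that $F$ is already the \emph{whole} fixed-point set of a single circle. Choose a circle $\gS^1\subseteq\gT^3$ whose Lie-algebra line is not orthogonal to any weight of the isotropy representation at $p$; then the fixed-point component of $\gS^1$ through $p$ equals $F$. If $C$ is any component of $M^{\gS^1}$, then by Theorem~\ref{thm:FPSstructure} (with $\gH=\gS^1$) $C$ has even codimension, hence is even-dimensional and, being totally geodesic, positively curved; it is also $\gT^3$-invariant, so by Berger's theorem (Theorem~\ref{thm:Berger}) the $\gT^3$-action on $C$ has a fixed point, which lies in $M^{\gT^3}=F$ and forces $C=F$. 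Hence $M^{\gS^1}=F$. Since the signature localizes under circle actions on closed oriented manifolds, $\sigma(M)=\sigma(M^{\gS^1})=\sigma(F)$; and because $\dim M\equiv 0\pmod 4$ one has $\chi(M)\equiv\sigma(M)\pmod 2$ (compare $\chi=\sum_i(-1)^ib_i$ with $\sigma=b_{2k}^+-b_{2k}^-$). It therefore suffices to prove that $\sigma(M)$ is even.

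\textbf{Establishing this parity is the main obstacle, and it is where the rank-three hypothesis and the connectedness of $F$ are used in an essential way.} The plan is to combine the connectedness of $F$ with the rigidity of the isotropy weights supplied by the circle-splitting analysis (Theorems~\ref{thm:S1splitting} and~\ref{thm:d+1choose2}): after the reduction above the normal bundle of $F$ in $M$ splits $\gT^3$-equivariantly as a sum $\bigoplus_j L_j$ of complex line bundles, and the mod-$2$ reductions of the weights of $\gT^3$ at $p$ form a very restricted subset of $\Z_2^3$. Feeding this into the Atiyah--Singer $G$-signature theorem for the $\gT^3$-action — whose right-hand side localizes on the single component $F$, with a contribution built from the $L$-class of $F$ and the Euler and Pontryagin data of the $L_j$ — and reducing the resulting rational expression modulo $2$ should identify $\sigma(M)\bmod 2$. (One could alternatively use Lemma~\ref{lem:refine} to realize $F$ as a transverse intersection of totally geodesic submanifolds and then constrain $H^*(F;\Q)$ by the machinery of Section~\ref{sec:Preliminaries}; this reduces matters to excluding the cases where $F$ is a rational $\CP^m$ or $\HP^{m/2}$ with odd Euler characteristic, but that exclusion is not automatic, so the $G$-signature route seems the more robust one.)
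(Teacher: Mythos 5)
Your argument is not complete: the decisive step is missing, and the reduction you set up does not actually make progress toward it. After you establish $M^{\gS^1}=F$ and invoke signature localization, you obtain $\sigma(M)=\sigma(F)$; but since in the remaining case both $\dim M$ and $\dim F$ are divisible by four, you also have $\sigma(M)\equiv\chi(M)\pmod 2$ and $\sigma(F)\equiv\chi(F)\pmod 2$, while $\chi(M)=\chi(F)$ by hypothesis. So the chain $\sigma(M)=\sigma(F)\equiv\chi(F)=\chi(M)$ is a tautology, and "it suffices to prove $\sigma(M)$ is even" is exactly the original problem restated. The content would have to come from an independent evaluation of the $G$-signature localization formula mod $2$ using the weight rigidity, and this you only announce as a plan ("should identify $\sigma(M)\bmod 2$") without carrying it out; your alternative route likewise ends at "that exclusion is not automatic." So the proof has a genuine gap precisely at the point you yourself flag as the main obstacle.

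For comparison, the paper argues by contradiction assuming $\chi(F)$ odd. The $\gS^1$-splitting theorem produces a two-dimensional $\gH\subseteq\gT^3$ whose fixed-point component $G\supseteq F$ carries a residual circle action that is semi-free near $F$, so that $F$ is the \emph{entire} fixed-point set of the involution $\Z_2\subset\gS^1$ in $G$, and the Connectedness Lemma makes $F\hookrightarrow G$ $f$-connected with $f=\dim F$. The result of Hirzebruch and Conner--Floyd (\cite[Corollary 6.16]{AtiyahSinger68-partIII}) — an involution whose fixed set has odd Euler characteristic must have $f\geq\tfrac12\dim G$ — is the arithmetic input replacing your intended $G$-signature computation. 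From there the Periodicity Lemma gives $k$-periodicity of $H^*(F;\Z)$ for $k=\cod(F\subseteq G)$, and a mod-$2$ variant of \cite[Theorem 5.1]{Wilking03} yields $H^k(G;\Z_2)=0$, forcing $G$ and hence $F$ to be $\Z_2$-homology spheres; then $\chi(F)=2$, contradicting oddness. If you want to salvage your approach, the Conner--Floyd-type theorem is the missing ingredient: it converts "odd Euler characteristic of the fixed set" into a dimension bound that the curvature machinery can then exploit.
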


Before proving this lemma, we remark on some examples. First, there are $\gT^3$-actions on spheres with a connected fixed-point set, but this is not a contradiction since spheres have Euler characteristic zero or two. Second, some circle actions on positively curved Riemannian manifolds can have connected fixed-point sets. For example, there is a circle acting on $\HP^d$ that fixes a $\CP^d$. However, any $\gT^2$-action on $\HP^d$ already has  a disconnected fixed-point set. Third, while the existence of totally geodesic submanifolds $\CaP \supseteq \HP^2 \supseteq \CP^2 \supseteq \RP^2$ might suggest a counterexample to this lemma, this chain of submanifolds cannot arise as fixed-point sets of tori, since  $\RP^2$ is not orientable (cf. Theorem \ref{thm:FPSstructure}).

\begin{proof}[Proof of Lemma \ref{lem:t3connected}]
%
Let $F^f = M^{\gT^3}$. We apply the $\gS^1$-splitting theorem to the isotropy representation of $\gT^3$ at $F$. Combined with the Connectedness Lemma, it follows that for some two-dimensional subgroup $\gH\subseteq \gT^3$, the component $G \subseteq M^{\gH}$ containing $F^f$ has the following properties: $F \hookrightarrow G$ is $f$-connected, and the induced action by the circle $\gS^1 = \gT^3/\gH$ on $G$ is semi-free in a neighborhood of $F$.

Since the circle acts semi-freely in a neighborhood of $F$, 
we see that $F$ is a fixed-point component of
$\Z_2 \subset \gS^1$ in $G$. Moreover, $\Z_2$ has no other 
fixed-point components in $G$ because any
 other component would contain 
also a fixed point of $\gT^3$.

Assume now that $\chi(M)=\chi(F^f)$ is odd. A result of Hirzebruch and Conner-Floyd (see \cite[Corollary 6.16]{AtiyahSinger68-partIII}) implies that $F^f = G^{\Z_2}$ satisfies $f \geq \frac 1 2 \dim G$. Since the inclusion $F^f \subseteq G$ is $f$-connected, the Periodicity Lemma implies that $H^*(F;\Z)$ is $k$-periodic where $k = \cod(F\subseteq G)$. Finally, since $F$ is the connected fixed-point set of the above $\Z_2$-action on $G$ that is $f$-connected, a modification of the proof of \cite[Theorem 5.1]{Wilking03} implies that $H^k(G;\Z_2) = 0$. It follows that $G$ is a $\Z_2$-homology sphere and $F$ is as well.
In particular, $\chi(M) = \chi(F) = 2$, a contradiction.
\end{proof}

\begin{proof}[Proof that $\chi(M) \neq 3$]
We assume $\chi(M) = 3$ and proceed by contradiction. 
We claim that there is a three-dimensional subgroup $\gT^3$
with a fixed-point component $K$ that contains all fixed points of $\gT^5$ in $M$. 
Since any other fixed-point component of $\gT^3$ would also contain a fixed point of $\gT^5$,
the contradiction arises from Lemma~\ref{lem:t3connected}. 

The claim follows trivially if the fixed-point set of $\gT^5$ is connected.
Since any fixed-point component of $\gT^5$ of positive dimension has Euler characteristic $\ge 2$, there is at least
one isolated fixed point  and we may choose 
$\gT^4 \subseteq \gT^5$ such that the component $L \subseteq M^{\gT^4}$ 
containing the isolated fixed point of $\gT^5$ has positive dimension.
Moreover,  we may assume that $L$ and the manifold $N$ from Proposition~\ref{pro:4periodicN} 
have an intersection of positive dimension and thus there is a second fixed point of $\gT^5$ in 
$L$. 

If $L$ contains all fixed points of $\gT^5$, we are done. 
Otherwise, there is an isolated fixed point  $p$ of $\gT^5$ outside $L$. 
We can repeat the construction and pass to a codimension one group $\gT^3$ inside $\gT^4$ 
such that the fixed-point component of $\gT^3$ at $p$ also contains at least two fixed points 
of $\gT^5$.   Now the 
assumption that $\chi(M)=3$ implies that all fixed points 
of $\gT^5$ are contained in one fixed-point component of $\gT^3$, as claimed.
\end{proof}

\section{\sectfour}\label{sec:endgame}
The goal of the section is to rule out alternative b) in Proposition~\ref{pro:4periodicN} and thereby prove
 Theorem~\ref{thm:t5}.
 To avoid confusion, we should mention that we change notation: our new manifold $N$ 
 will take the role of $N_3$ from Proposition~\ref{pro:4periodicN},
 the fixed-point component of the third circle factor, endowed 
 with the remaining $\gT^2$-action. The new submanifolds $N_1$ and $N_2$ will then correspond to $N_1\cap N_3$ and 
 $N_2\cap N_3$. Everything takes place under the assumption that we are in the alternative scenario b).

\begin{theorem}\label{thm:endgame} Suppose that $N^{4n+h}$ has the rational cohomology of $\Sph^h\times \HP^n$ with $h\in\{2,3\}$.
We assume $N$ is endowed with an effective $\gT^2$-action with a fixed-point component 
$F_0$, such that $\gT^2$ splits in a neighborhood of $p\in F_0$ as a product action $\gS^1\times \gS^1$. 
We let $N_i$ denote the fixed-point component of the $i$-th factor and assume that $N_i$ 
has the cohomology of $\Sph^{h}\times \HP^{n-k_i}$ for $i=1,2$.
We also assume that $F_0=N_1\cap N_2$ has the cohomology of $\Sph^{h}\times \HP^{n-k_1-k_2}$
and that the inclusion $F_0\to N_2$ is rationally 
two-connected.
Then one of the following holds
\begin{enumerate}
\item[a)] There is a second fixed-point component $F_1$ with $H^*(F_1)\cong H^*(\Sph^u\times \CP^l)$ 
with $l>0$ and $u\in \{1,2,3\}$.
\item[b)] There is a circle $\gS^1\subset \gT^2$ fixing a component $B$ of dimension at least three such that the 
induced circle action $\gT^2/\gS^1$ has two fixed-point components of codimension two in $B$.
\end{enumerate}
\end{theorem}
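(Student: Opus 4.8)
The plan is to argue by contradiction: assume that neither alternative (a) nor alternative (b) holds, and run the whole argument in equivariant cohomology, in the spirit of Hsiang--Su's analysis of torus actions on rational $\HP^n$. First I would establish that the $\gT^2$-action is equivariantly formal. When $h=2$ all odd rational Betti numbers of $N$ vanish, so this is automatic; when $h=3$ it follows from the Gysin-sequence argument used in the proof of Lemma~\ref{lem:shxHP}, applied to the circles of $\gT^2$, which shows the fiber-inclusion map in cohomology is surjective. Thus $H^*_{\gT^2}(N)$ is a free $H^*(B\gT^2)$-module whose restriction to $H^*_{\gT^2}(N^{\gT^2})=\bigoplus_i H^*(F_i)\tensor H^*(B\gT^2)$ is injective and, by the Chang--Skjelbred lemma, equals the intersection over all codimension-one circles $K\subset\gT^2$ of the images of $H^*_{\gT^2}(N^K)$. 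Fix generators $a\in H^4(N)$ (so $a^{n+1}=0\neq a^n$) and $c\in H^h(N)$ (with $c^2=0$), together with equivariant lifts $\alpha\in H^4_{\gT^2}(N)$ and $\gamma\in H^h_{\gT^2}(N)$. By Bredon's Theorem~\ref{thm:SinglyGenerated} and Lemma~\ref{lem:shxHP}, $a$ restricts to a generator of the $\HP$-part and $c$ to a generator of the $\Sph^h$-part of each of $N_1$, $N_2$, $F_0$, so in particular $c|_{F_0}\neq 0$; moreover every fixed-point component of $\gT^2$ is a rational $\HP^{n_i}$, $\CP^{n_i}$, $\Sph^{h_i}\times\HP^{n_i}$, or $\Sph^{h_i}\times\CP^{n_i}$, and the negation of (a) forces $n_i=0$ in the $\Sph^{h_i}\times\CP^{n_i}$ case.

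Next I would analyze the local picture at $p\in F_0$: writing $\gT^2=\gS^1\times\gS^1$ there, the normal bundles $\nu_i:=\nu(N_i\subset N)|_{F_0}$ are complex (with $\gS^1_i$ acting by rotation) of complex rank $2k_i$, one has $\nu(F_0\subset N)=\nu_1\oplus\nu_2$, and the equivariant Euler classes $e^{\gT^2}(\nu_i)\in H^{4k_i}_{\gT^2}(F_0)$ determine $\alpha|_{F_0}$ and, through Chang--Skjelbred, the weight data along $F_0$. The central step -- the one I expect to be the main obstacle -- is to show, by an induction that at each stage passes to a codimension-four ``characteristic'' circle-fixed submanifold through $F_0$ (a rational $\Sph^h\times\HP^{\bullet}$ by Bredon and Lemma~\ref{lem:shxHP}) and invokes the inductive hypothesis, that these weights are forced into the normal form of a linear $\gT^2$-representation on $\Sph^h\times\HP^n$: there are exactly $n+1$ such characteristic submanifolds through $F_0$, in general position, and the $\Sph^h$-direction is carried by a single circle subgroup of $\gT^2$. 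Here the relation $a^{n+1}=0\neq a^n$, transported to $H^*_{\gT^2}(N)$ and localized at $F_0$, is what rigidifies the way the $2(k_1+k_2)$ normal weights may recombine, while the negation of (a) rules out the exceptional recombinations that would otherwise spawn a deep $\CP$-type fixed component; the rational two-connectedness of $F_0\hookrightarrow N_2$ enters to pin down the low-degree restriction maps so that $c$, $a$ really do correspond to the standard generators compatibly along the chain.

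Finally, with the normal form in hand, I would track the $\Sph^h$-class: the argument locates the circle $\gS^1\subset\gT^2$ that ``carries'' $c$ -- detected by $\gamma$ restricting to the corresponding circle-fixed component $B$ (of $\Sph^h\times\HP^{\bullet}$ type, of dimension at least three) not as a polynomial in $\alpha|_B$ -- so that the residual circle $\gT^2/\gS^1$ acts non-trivially on the $\Sph^h$-direction of $B$. For $h=2$ this splits the $\Sph^2$-direction of $B$ into its two poles, producing two codimension-two fixed-point components of $\gT^2/\gS^1$ in $B$, which is exactly alternative (b); for $h=3$ the $\Sph^3$-direction cannot be rotated in this way by a circle without its fixed set on $B$ acquiring a rational $\Sph^1\times\CP^l$ or $\Sph^3\times\CP^l$ component with $l>0$, which is alternative (a). In either case this contradicts the standing assumption that both (a) and (b) fail, completing the proof.
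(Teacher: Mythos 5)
Your general framework is the right one (equivariant cohomology, restriction to the fixed points, weight analysis in the spirit of Hsiang--Su), but the proposal has a genuine gap precisely at the step you yourself flag as ``the main obstacle'': the rigidification of the weights is asserted, not proved, and the inductive scheme you sketch (passing to codimension-four characteristic submanifolds through $F_0$) is not what makes the argument work. The paper's actual mechanism is different and concrete: one first shows (Lemma~\ref{lem:endgame}~a), via the Smith--Floyd count of total Betti numbers) that every other fixed-point component $F_i$ lies in $N_1$ or $N_2$; then that the lift $\beta$ of the degree-$h$ generator satisfies $\beta^2=0$ and hence restricts to zero at every $F_i$ with $i>0$ (Lemma~\ref{lem:b2}); then that the restrictions $\alpha_i=\rs_i(\alpha)$ lie in $\Q t_1^2$ or $\Q t_2^2$ (via the injection $H^*_{\gQ_u}(N_u)\to H^*_{\gT^2}(N_u)$) and are perfect squares $c_i^2t_u^2$ because $\alpha_1-\alpha_i$ must divide the equivariant Euler class of the normal bundle of $F_1$ and hence split into linear factors (Lemma~\ref{lem:alphaineqalphaj}~b)). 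Finally, a multiplicity count --- the weight $c_1t_1\pm c_it_2$ at $F_1$ has multiplicity at least $n_i+1$, and summing against $k_1/2=b(N_2)-b(F_0)$ forces equality --- shows each $F_i\cong \Sph^{h_1}\times\CP^{n_i}$. None of this chain appears in your sketch, and your appeal to $a^{n+1}=0\neq a^n$ ``localized at $F_0$'' does not substitute for it.

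Your endgame is also not correct as described. There is no dichotomy in which $h=2$ leads to conclusion~(b) and $h=3$ to conclusion~(a), and conclusion~(b) does not arise from ``splitting the $\Sph^2$-direction of $B$ into its two poles.'' In the paper, if conclusion~(a) fails then all $n_i=0$, so each weight $c_1t_1\pm c_it_2$ at $F_1$ has multiplicity exactly one; the circle $\gS^1_{i\pm}$ in its kernel then fixes a component $B_{i\pm}\cong\Sph^{h_1}\times\CP^1$ (dimension $h_1+2\geq 3$) containing both $F_1$ and $F_i$, and these two components --- each $\cong\Sph^{h_1}$ --- are the two codimension-two fixed components of the residual circle on $B_{i\pm}$. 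That is conclusion~(b), uniformly in $h$. To repair your proposal you would need to supply the $\beta^2=0$ argument, the perfect-square form of the $\alpha_i$, and the Betti-number multiplicity count; these are the actual content of the theorem.
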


The theorem remains true without assuming $F_0\cong \Sph^{h}\times \HP^{n-k_1-k_2}$
and $F_0\to N_2$ being two-connected, but the proof is easier in this case. 
In the presence of positive curvature these extra assumptions follow immediately from the Connectedness Lemma. 
 By the proof of Lemma~\ref{lem:shxHP}, 
 the maps $N_i\to M$ are rationally $4$-connected. 
 Thus $F_0\to N_2$ is rationally $2$-connected if and only if the same 
 holds for $F_0\to N_1$ or $F_0\to M$.
Theorem~\ref{thm:endgame} will follow from Lemma~\ref{lem:endgame} e) below.

\begin{proof}[{\bf Proof of Theorem A with Theorem~\ref{thm:endgame}}] As explained before the theorem, 
if the alternative scenario of 
Proposition~\ref{pro:4periodicN} b) holds, 
then there is a manifold $N$
 satisfying the assumptions of Theorem~\ref{thm:endgame} with the additional property 
 that it is a fixed-point component of a three-dimensional subgroup $\gH\subset \gT^5$ 
 where $\gT^5$ acts effectively and isometrically on a closed, positively curved and orientable manifold $M$. 
 The $\gT^2$-action on $N$ arises as the induced $(\gT^5/\gH)$-action.
 
If the conclusion of Theorem~\ref{thm:endgame} a) holds, then we have found another fixed-point component of $\gT^5$ 
 in $M$ whose topology was ruled out by Proposition~\ref{pro:4periodicN}. 
 But the conclusion of Theorem~\ref{thm:endgame} b) cannot hold either, since $B$ would violate the classification 
 of fixed-point homogeneous positively curved manifolds by Grove and Searle. 
 Therefore, the scenario b) of Proposition~\ref{pro:4periodicN} never occurs and Theorem~\ref{thm:t5} 
 follows. 
\end{proof}

The following lemma is known to experts but we include its proof for convenience.
\begin{lemma}\label{lem:alphaineqalphaj} Suppose $p_1$ and $p_2$ are fixed points 
of an action of $\gT^d$ on a manifold $M^n$.  
Let $\rs_i\colon H_{\gT^d}^*(M)\rightarrow H^*_{\gT^d}(p_i)\cong H^*(B\gT^d)$ be
the pullback homomorphism for $i=1,2$. 
\begin{enumerate}
\item[a)]  Then $\rs_1-\rs_2=0$ holds  if and only if $p_1$ and $p_2$ are contained in the same fixed-point component. 
\item[b)] Suppose $\alpha_0,\ldots,\alpha_k\in H^*_{\gT^d}(M)$  generate together with a basis of $H^2(B\gT^d)$
 the cohomology ring.
If $\rs_1(\alpha_i)=\rs_2(\alpha_i)$ for $i=1,\ldots,k$ but $\rs_1(\alpha_0)\neq \rs_2(\alpha_0)$, then 
$\rs_1(\alpha_0)-\rs_2(\alpha_0)\in H^*(B\gT^d)\cong \Q[t_1,\ldots,t_d]$ decomposes into linear factors.
\end{enumerate}
\end{lemma}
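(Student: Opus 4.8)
The plan is to prove part (a) first, then deduce part (b) from it. For part (a), the "if" direction is immediate: if $p_1$ and $p_2$ lie in the same fixed-point component $F$, then the inclusion $\{p_1,p_2\}\embedded F$ factors the pullbacks $\rs_i$ through $H^*_{\gT^d}(F)\cong H^*(F\times B\gT^d)$, and since $F$ is connected the two evaluation maps $H^*(F\times B\gT^d)\to H^*(B\gT^d)$ agree (both are "restriction to a point of $F$", and $F$ being connected means $H^0(F)=\Q$ so the point of evaluation does not matter on the $B\gT^d$-factor). Hence $\rs_1=\rs_2$. For the "only if" direction, suppose $p_1,p_2$ lie in distinct components $F_1\ne F_2$ of $M^{\gT^d}$. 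I would use the localization theorem: the restriction map $H^*_{\gT^d}(M)\otimes_{H^*(B\gT^d)}\mathrm{Frac}(H^*(B\gT^d))\to H^*_{\gT^d}(M^{\gT^d})\otimes \mathrm{Frac}(H^*(B\gT^d))$ is an isomorphism, so after inverting the polynomial ring one can find a class in $H^*_{\gT^d}(M)$ (e.g. a suitable multiple of the equivariant Thom/Euler class supported near $F_1$) that restricts non-trivially to $p_1$ and trivially to $p_2$; this forces $\rs_1\ne\rs_2$. Alternatively, and perhaps more cleanly, I would argue directly: $F_1$ is closed with oriented equivariant normal bundle (the normal bundle of a fixed-point component carries a complex structure, or at least the equivariant Euler class $e(\nu F_1)\in H^*_{\gT^d}(F_1)$ restricts to a nonzero element $\prod w_j\in H^*(B\gT^d)$ at $p_1$, being a product of the nonzero weights $w_j$ of the isotropy representation), whereas the equivariant Gysin image of $1$ restricts to $0$ at points not in $F_1$. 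The main obstacle here is making sure the supporting class is actually a global equivariant class, which the localization theorem guarantees.

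For part (b), I would set $\Delta_i := \rs_1(\alpha_i)-\rs_2(\alpha_i)\in H^*(B\gT^d)\cong\Q[t_1,\dots,t_d]$, so by hypothesis $\Delta_i=0$ for $i=1,\dots,k$ and $\Delta_0\ne 0$. The idea is that $\rs_1$ and $\rs_2$ are ring homomorphisms $H^*_{\gT^d}(M)\to\Q[t_1,\dots,t_d]$ which agree on $H^2(B\gT^d)$ and on $\alpha_1,\dots,\alpha_k$; since these elements generate the whole ring $H^*_{\gT^d}(M)$, the two homomorphisms agree on the subring generated by everything except $\alpha_0$, and they differ only through $\alpha_0$. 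Concretely, every class in $H^*_{\gT^d}(M)$ is a polynomial in $\alpha_0,\alpha_1,\dots,\alpha_k$ with coefficients in $\Q[t_1,\dots,t_d]$, and applying $\rs_1$ versus $\rs_2$ changes only the value substituted for $\alpha_0$, namely $\rs_1(\alpha_0)=:x$ versus $\rs_2(\alpha_0)=:y$. In particular, for any polynomial relation $P(\alpha_0,\alpha_1,\dots,\alpha_k)=0$ in $H^*_{\gT^d}(M)$ with $H^*(B\gT^d)$-coefficients, substituting the $\rs_j$-images gives $P(x,\rs(\alpha_1),\dots,\rs(\alpha_k))=0$ for both $j$, where $\rs(\alpha_i)$ ($i\ge 1$) is the common value. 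So $x$ and $y$ are two roots of the single-variable polynomial $Q(T):=P(T,\rs(\alpha_1),\dots)\in \Q[t_1,\dots,t_d][T]$.

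To conclude that $x-y$ splits into linear factors over $\Q[t_1,\dots,t_d]$, I would pick a homogeneous relation coming from the fact that $H^*_{\gT^d}(M)$ is a finitely generated module over $H^*(B\gT^d)$ of rank equal to $\chi(M)=\sum_i\chi(F_i)$ (equivariant formality is not needed — one can localize): the element $\alpha_0$ satisfies its characteristic polynomial $Q(T)=\prod_{q}\bigl(T-\rs_q(\alpha_0)\bigr)$ over $\mathrm{Frac}(\Q[t_1,\dots,t_d])$, where $q$ ranges over the fixed points (with multiplicity), and the product is in fact over $\Q[t_1,\dots,t_d]$ because each $\rs_q(\alpha_0)$ is a polynomial. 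Since $x=\rs_1(\alpha_0)$ and $y=\rs_2(\alpha_0)$ both appear among the roots, $x-y$ is a difference of two such roots; but more to the point, $Q(x)=Q(y)=0$ forces $(T-x)(T-y)\mid Q(T)$, and since $Q$ splits into linear factors $\prod_q(T-\rs_q(\alpha_0))$ over $\Q[t_1,\dots,t_d]$, evaluating the identity $Q(x)-Q(y)=0$, i.e. $\prod_q(x-\rs_q(\alpha_0))=\prod_q(y-\rs_q(\alpha_0))=0$, shows $x-\rs_{q_1}(\alpha_0)=0$ for some index $q_1$ and similarly $y-\rs_{q_2}(\alpha_0)=0$; then $x-y=(x-\rs_{q_1}(\alpha_0)) - (y-\rs_{q_2}(\alpha_0)) + (\rs_{q_2}(\alpha_0)-\rs_{q_1}(\alpha_0))$, and by part (a) each $\rs_{q}(\alpha_0)-\rs_{q'}(\alpha_0)$ is, up to the observation that the set of such differences is contained in... — here the cleanest route is: the difference of any two roots of $Q$ divides (the appropriate power of) the discriminant-type expression $\prod_{q\ne q'}(\rs_q(\alpha_0)-\rs_{q'}(\alpha_0))$, and by a weight-space / GKM argument each $\rs_q(\alpha_0)-\rs_{q'}(\alpha_0)$ is itself a product of linear forms (the weights of $\gT^d$ appearing along the isotropy representations). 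I expect the main obstacle to be exactly this last point — pinning down why the pairwise difference $\rs_q(\alpha_0)-\rs_{q'}(\alpha_0)$ of fixed-point restrictions decomposes into linear factors; I would handle it by reducing to a circle subgroup via the observation that for generic one-parameter subgroups the fixed points $q,q'$ can be joined and the restrictions compared one weight at a time, or by invoking that $\alpha_0$ can be chosen (using part (a) and the generation hypothesis) so that $\rs_q(\alpha_0)$ takes distinct values exactly on the $\chi(M)$ components, whence its "discriminant" is, by the structure of $H^*_{\gT^d}$ of a GKM-type action, a product of the $\gT^d$-weights — all linear.
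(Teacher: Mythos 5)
Your part (a) is correct. The ``if'' direction via the K\"unneth decomposition is exactly the paper's argument, and for the ``only if'' direction your Thom/Euler-class separation (or localization) works; the paper instead reduces to a circle subgroup and uses the large-degree surjectivity of $H^*_{\gS^1}(M)\to H^*_{\gS^1}(F)$ (Smith--Floyd), but both routes are standard and sound.

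Part (b), however, has a genuine gap, and at the decisive point your argument is circular. You reduce the problem to showing that pairwise differences $\rs_q(\alpha_0)-\rs_{q'}(\alpha_0)$ of fixed-point restrictions decompose into linear factors --- but for $q=1$, $q'=2$ that \emph{is} the statement to be proved, and the two fixes you sketch (a generic circle subgroup, or a ``discriminant is a product of weights'' assertion) are not arguments. There are also technical problems upstream: your characteristic polynomial $Q(T)=\prod_q(T-\rs_q(\alpha_0))$ only makes sense when the fixed points are isolated (for a positive-dimensional component $F_q$ the restriction of $\alpha_0$ has a nilpotent part beyond the scalar $\rs_q(\alpha_0)$), and the lemma makes no such assumption.

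The missing idea is to combine the two ingredients you already have. First, since $\rs_1$ and $\rs_2$ are ring homomorphisms agreeing on $H^2(B\gT^d)$ and on $\alpha_1,\dots,\alpha_k$, and these together with $\alpha_0$ generate $H^*_{\gT^d}(M)$, a telescoping factorization of $\rs_1(x)-\rs_2(x)$ on monomials shows that the image of $\rs_1-\rs_2$ is contained in the principal ideal of $\Q[t_1,\dots,t_d]$ generated by $\Delta_0:=\rs_1(\alpha_0)-\rs_2(\alpha_0)$. Second, apply $\rs_1-\rs_2$ to the image $th\in H^*_{\gT^d}(M)$ of the equivariant Thom class of the normal bundle of the component $F_1$ containing $p_1$: one has $\rs_2(th)=0$ (it factors through $H^*_{\gT^d}(F_2,F_2)=0$) and $\rs_1(th)$ equals the product of the nonzero weights of the isotropy representation at $F_1$, a nonzero product of linear forms. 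Hence $\Delta_0$ divides a nonzero product of linear forms, and unique factorization in $\Q[t_1,\dots,t_d]$ forces $\Delta_0$ itself to decompose into linear factors. No localization, characteristic polynomial, or GKM-type hypothesis is needed.
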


Part a) of the lemma came up  already under slightly more special circumstances in the proof 
of Lemma~\ref{lem:b3}.

\begin{proof}
{\em a).} If $p_1$ and $p_2$ are in the same fixed-point component $F_1$, we clearly have $\rs_1=\rs_2$ by the K\"unneth decomposition of $F_1 \times B\gT^d$.
If $p_1$ and $p_2$ are in different components, we can choose an $\gS^1\subset \gT^d$ such that 
this remains the case for the $\gS^1$-action. Hence, we may assume $d=1$. 
If $F$ denotes the fixed-point set of $\gS^1$, then in large degrees the map $H^j_{\gS^1}(M)\rightarrow H^j_{\gS^1}(F)$ 
is surjective. Since $p_1$ and $p_2$ are in different components of $F$, the map 
$H^j_{\gS^1}(F)\rightarrow H^{j}_{\gS^1}(\{p_1,p_2\})$ is surjective as well. Clearly, $\rs_1\neq \rs_2$ follows.

{\em b).} The assumptions imply that the image of $\rs_1-\rs_2$ is the principal ideal generated by 
$\rs_1(\alpha_0)-\rs_2(\alpha_0)$. 
We look at the Thom class of the  normal bundle of $F_1 $ which by excision can be seen 
as an element in $H^k_{\gT^d}(M, M\setminus F_1)$, where $k$ is the codimension of $F_1$ in $M$. Its image  $th\in H^k_{\gT^d}(M)$ 
pulls back to an element $\rs_1(th)\in H_{\gT^d}^k(p_1)$ given by the  product of the non-zero weights of the isotropy 
representation of $\gT^d$ at $F_1$. 
On the other hand, the pullback to $F_2$ is zero since it factors through $H^k_{\gT^d}(F_2, F_2)$. 
In summary, $\rs_2(th)=0$ and $\rs_1(th)$ is a non-zero element decomposing into linear factors. 
By our initial sentence, $\rs_1(\alpha_0)-\rs_2(\alpha_0)$ divides $\rs_1(th)$ and hence decomposes into linear factors as well.
\end{proof}

\begin{lemma}\label{lem:b2} Suppose a torus $\gT^d$ acts effectively on a closed manifold $M$.
If $M$ has the rational cohomology of $\Sph^2\times \HP^n$, 
then the following dichotomy holds.
\begin{itemize}
\item[(i)]  The map $H^2(M)\to H^2(F)$ is zero for all fixed-point components $F$ of $\gT^d$  or
\item[(ii)] the map $H^2(M)\to H^2(F)$ is injective 
 for all fixed-point components $F$ of $\gT^d$.
\end{itemize}
\end{lemma}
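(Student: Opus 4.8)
The goal is a dichotomy for the restriction map $H^2(M)\to H^2(F)$, where $M$ has the rational cohomology of $\Sph^2\times\HP^n$ and $F$ ranges over fixed-point components of the $\gT^d$-action. Since $H^2(M)\cong\Q$, injectivity just means the generator $c\in H^2(M)$ restricts nontrivially to $F$; the two alternatives are ``$c$ restricts to zero on every $F$'' versus ``$c$ restricts nontrivially to every $F$''. As usual it suffices to prove this for a circle $\gS^1\subseteq\gT^d$ (every fixed-point component of $\gT^d$ lies in one of a circle, and passing to a subcircle only makes injectivity harder to achieve, so we may test the dichotomy circle by circle; more carefully, I will argue directly for $\gT^d$ but reduce the cohomological computation to $\gS^1$). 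The plan is to run an equivariant-cohomology argument parallel to the proof of Lemma~\ref{lem:shxHP} and the $b_3$ Lemma: first establish equivariant formality, then pull back a chosen lift of $c$ and of the degree-four generator $a$ to a fixed-point component via the K\"unneth decomposition, and finally propagate the (non)vanishing of the degree-two part from one component to all others using the surjectivity of $H^*_{\gT^d}(M)\to H^*_{\gT^d}(F)$ in high degrees (Theorem~\ref{thm:SmithFloyd}/Lemma~\ref{lem:injective}).

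Concretely: first I would reduce to $\gS^1$ and check that the action is equivariantly formal. For $H^*(\Sph^2\times\HP^n)$ the odd Betti numbers vanish, so equivariant formality is automatic (the Borel spectral sequence degenerates; equivalently $\dim H^*(F)=\dim H^*(M)$ by Smith theory forces surjectivity of $j^*\colon H^*_{\gS^1}(M)\to H^*(M)$). Hence $H^*_{\gS^1}(M)$ is a free $H^*(B\gS^1)=\Q[t]$-module and we may choose lifts $\gAm\in H^2_{\gS^1}(M)$ of $c$ and $\aLpha\in H^4_{\gS^1}(M)$ of $a$; after subtracting $a_0t^2$ we may assume the pure-$t$ part of $\iota^*\aLpha$ vanishes, exactly as in Lemma~\ref{lem:shxHP}. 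Using $c^2=0$ in $H^*(M)$ one gets $\gAm^2\in t\cdot H^*_{\gS^1}(M)$; by a further normalization (replacing $\gAm$ by $\gAm-\lambda t$ using $H^2_{\gS^1}(M)\cong\Q c\oplus\Q t$ and that $c^2=0$ pins the coefficient) we may take $\gAm^2=0$. Then for a fixed-point component $F$ with inclusion $\iota$, write $\iota^*\gAm=c_2+\mu t$ with $c_2\in H^2(F)$; note $c_2$ is precisely the restriction of $c$ to $F$.

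The heart of the argument is then the propagation step. Fix two fixed-point components $F_0,F_1$ with fixed points $p_0\in F_0$, $p_1\in F_1$. By Lemma~\ref{lem:alphaineqalphaj}(a) applied to $\gAm$, the difference $\rs_0(\gAm)-\rs_1(\gAm)$ of the restrictions to the point cohomologies $H^*(B\gS^1)=\Q[t]$ is a scalar multiple of $t$, call it $\delta t$; here $\delta=\mu_0-\mu_1$ records how the ``$\mu$-parts'' differ. Because $\iota^*\gAm=c_2+\mu t$ and because $H^*_{\gS^1}(M)\to H^*_{\gS^1}(F)$ is surjective in degrees $>\dim M$ (Lemma~\ref{lem:injective}, using $d=b+1=1$ on the complement, or Theorem~\ref{thm:SmithFloyd}), the even cohomology of each $F$ is generated over $\Q$ by the images of $\gAm,\aLpha$ and $t$; comparing top-degree coefficients of a suitable expression $t^N c_2=\sum q_i t^{2i}(\iota^*\aLpha)^{\ldots}+\sum r_i t^{\ldots}\iota^*\gAm\,(\iota^*\aLpha)^{\ldots}$ — exactly the coefficient-matching trick from Lemma~\ref{lem:shxHP} and the $b_3$ Lemma — forces a rigid relation between $c_2$, $\mu$ and $a_0$ (the scalar part of $\iota^*\aLpha$, which we normalized to $0$). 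The upshot I expect is: $c_2=0$ on $F$ if and only if $\mu=0$ on $F$, and whether this happens is independent of $F$ because $\gAm$ itself is a fixed global class and its restrictions to distinct point-cohomologies differ only by multiples of $t$, so either the ``$c$-part'' survives on all components or on none. Running through the two cases gives (i) and (ii).

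**Main obstacle.** The delicate point is showing the degree-two restriction is ``all or nothing'': a priori $c$ could restrict nontrivially to some components and trivially to others. The mechanism preventing this is that $\gAm$ is a single class in $H^2_{\gS^1}(M)$ with $\gAm^2=0$, and the K\"unneth/coefficient-comparison argument ties the vanishing of the $H^2(F)$-component of $\iota^*\gAm$ to the vanishing of its $t$-component in a way that is governed by the global relation $c^2=0$ and $H^2(M)\cong\Q$. Making this linkage precise — in particular ruling out the mixed case where $c_2\ne0$ on $F_0$ but $c_2=0$ on $F_1$ while both have $\mu\ne0$ — is where the real work lies; I would handle it by comparing, for such a hypothetical pair, the restrictions $\rs_0,\rs_1$ on the full generating set and invoking Lemma~\ref{lem:alphaineqalphaj}(b) to conclude $\rs_0(\gAm)-\rs_1(\gAm)$ splits into linear factors over $\Q[t]$, which for a degree-two element of $\Q[t]$ is automatic and hence gives no contradiction directly — so instead the contradiction must come from the top-degree surjectivity forcing $c_2$ to be expressible through $a_2:=$ (degree-two part of $\iota^*\aLpha$) and $\mu$, and then from the global relation $c\cup a^n$ being a generator of $H^{4n+2}(M)$ while $c_2\cup(\text{powers})=0$ when $c_2=0$. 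Pinning down this last numerical identity is the crux.
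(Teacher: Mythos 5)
Your general framework (restrict to a circle with the same fixed-point set, use equivariant formality, lift the generators $c$ and $a$, and read off restrictions to $F_i\times B\gS^1$ via the K\"unneth decomposition) is the same as the paper's, but the two steps that carry the actual content are broken. First, the normalization claim ``after replacing $\gAm$ by $\gAm-\lambda t$ we may take $\gAm^2=0$'' is false. From $c^2=0$ you only get $\gAm^2=q_1t^2+q_2\gAm t$; substituting $\gAm-\lambda t$ and taking $\lambda=q_2/2$ kills the mixed term but leaves $(q_1+q_2^2/4)\,t^2$, which has no reason to vanish. Indeed it cannot vanish precisely in case (i): for $M=\Sph^2\times\HP^n$ with the circle rotating the sphere factor, any lift of $c$ restricts to $0$ and to a nonzero multiple of $t$ at the two fixed-point components, and no correction by a multiple of $t$ makes both restrictions vanish, so no lift of $c$ squares to zero. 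The relation $\beta^2=0$ is exactly what separates the two alternatives; it can only be forced after assuming, as one may without loss of generality, that $c$ restricts nontrivially to some component $F_0$, and then normalizing $\beta$ so that its restriction to $F_0\times B\gS^1$ lies in $H^2(F_0)$: then $s(\beta)\neq 0$ and $s(\beta)^2=0$ force $q_1=q_2=0$. Your proposal never sets up this case distinction, so the step fails at the start.

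Second, the propagation to all components, which is the real content of the dichotomy, is not carried out. The remark that the restrictions of $\gAm$ to distinct point-cohomologies ``differ only by multiples of $t$'' is vacuous (every degree-two element of $\Q[t]$ is a multiple of $t$), and the asserted equivalence ``$c_2=0$ on $F$ iff $\mu=0$ on $F$'' is neither proved nor true (in the rotation example $c_2=0$ on both components while $\mu$ vanishes on one and not the other). The mechanism that works, and that the paper uses, is different from your coefficient-matching sketch: once $\beta^2=0$, multiplication by $\beta$ on the free $\Q[t]$-module $H^*_{\gS^1}(M)$ has image equal to kernel, so in each degree $2j>\dim M$ exactly half of $H^{2j}_{\gS^1}(M)$ lies in the image of $\cup\beta$; since $H^{2j}_{\gS^1}(M)\to H^{2j}_{\gS^1}(F)$ is an isomorphism in those degrees, a component-by-component dimension count forces half of $H^{2j}_{\gS^1}(F_i)$ to lie in the image of $\cup\, s_i(\beta)$ for every $i$, hence $s_i(\beta)\neq 0$; combined with $s_i(\beta)^2=0$ this kills the $t$-component of $s_i(\beta)$, and its $H^2(F_i)$-component, which is exactly the restriction of $c$ to $F_i$, is nonzero. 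Your own ``main obstacle'' paragraph concedes that this linkage is not pinned down, so as it stands the proposal does not prove the lemma.
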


\begin{proof} 
We may choose a circle $\gS^1\subset \gT^d$ with the same fixed-point set as $\gT^d$. Therefore, without loss of generality $d=1$.
Let $F_0,\ldots,F_k$ denote the fixed-point components of 
$\gS^1$ in $M$. We may assume that $H^2(M)\rightarrow H^2(F_0)$ is injective 
and then have to show that the same holds  for all $i$. 
We let $\beta\in H^2_{\gS^1}(M)$ and $\alpha\in  H^4_{\gS^1}(M)$ denote lifts of 
generators $b \in H^2(M)$ and $a \in  H^4(M)$. 
We can normalize $\beta$ by adding a multiple of $t\in  H^2(B\gS^1)$ in order to 
arrange that, under the restriction $s\colon H^2_{\gS^1}(M)\rightarrow H^2(F_0\times B\gS^1)$, it is mapped to an element of $H^2(F_0)$ with respect to the K\"unneth decomposition of the product. By assumption, we have $s(\beta)\neq 0$, since $s(\beta)$ is also the restriction of $b$ to $F_0$. 
By the same reasoning $s(\beta)^2=0$. 
We claim that $\beta^2=0$. 
Since $t^2$, $\beta t$, $\alpha\in H^4_{\gS^1}(M)$ 
is a basis, we have 
$\beta^2 = q_1 t^2 + q_2 \beta t + q_3 \alpha$ where $q_i \in \Q$. Using that $b$ is the pullback of $\beta$ and $a$ that of $\alpha$, we see $q_3=0$.
Now if $(q_1, q_2) \neq 0$, we would get $s(\beta^2)\neq 0$ -- a contradiction. 
Thus indeed $\beta^2=0$. 
It now follows that the image of $\cup \beta\colon H^*_{\gS^1}(M)\to H^*_{\gS^1}(M)$, $x\mapsto x\beta$, coincides with its kernel.

  We know that the dimension of $H^{2j}_{\gS^1}(M)$ is equal to the total Betti number $b(M)$ of $M$, which is equal to the total Betti number of $F=\bigcup F_i$, for $j>\dim(M)$. Therefore, dimension-wise exactly half of $H^{2j}_{\gS^1}(M)$  is in the image 
  of $\cup \beta$. Since $H^{2j}_{\gS^1}(M)\to H^{2j}_{\gS^1}(F)$ 
  is an isomorphism, the same must hold for the pullback of $\beta$ to $F$. 
  Using that the pullback again squares to zero, this can only be achieved if, for 
  each connected component, dimension-wise half of $H^{2j}_{\gS^1}(F_i)$ is in the image of 
  $\cup s_i(\beta)$, where $s_i\colon  H^2_{\gS^1}(M)\rightarrow H^2(F_0\times B\gS^1)$ 
  is the restriction. In particular,  $s_i(\beta)\neq 0$. 
  Since $s_i(\beta)^2=0$, we see that $s_i(\beta)$ represents 
  a non-zero element in $H^2(F_i)$ as claimed.
\end{proof}

\begin{lemma}\label{lem:endgame} Suppose we are in the situation of Theorem~\ref{thm:endgame}. Let $F_1,\ldots,F_k$ 
denote the other fixed-point components of $\gT^2$. Choose $p_i\in F_i$ 
and let $\rs_i\colon H^*_{\gT^2}(N)\rightarrow H^*_{\gT^2}(p_i)\cong H^*(B\gT^2)$
denote the restriction for $i=0,\ldots,k$.
\begin{enumerate}
\item[a)]  After possibly reordering, we have $F_1,\ldots, F_\ell\subset N_1$ and 
$F_{\ell+1},\ldots,F_k\subset N_2$ for some $0<\ell<k$. 
\item[b)]  There are lifts $\beta \in H^h_{\gT^2}(N)$ and 
$\alpha \in H^4_{\gT^2}(N)$ of generators in $H^h(N)$ and $H^4(N)$, respectively.
We can arrange for $\alpha$ and $\beta$ to be in the kernel of $\rs_0$. We then have $\rs_i(\beta)=0$ 
and put $\alpha_i=\rs_i(\alpha)$ for all $i>0$.
\item[c)]  After possibly adjusting the scaling of $\alpha$, we get $\alpha_i=c_i^2 t_1^2$ for $i\le \ell$ and $\alpha_i=c_i^2t_2^2$ for $i>\ell$, where $t_1,t_2\in H^2(B\gT^2)$ is a basis and $c_i\in \fQ$.
\item[d)]  For each $i>0$, $F_i$ is a rational cohomology $\Sph^{h_{1}}\times \CP^{n_i}$ for some $h_1\in \{1,2,3\}$ and $n_i\ge 0$.
\item[e)] If $n_i=0$ for all $i>0$, or equivalently if the conclusion of 
\ref{thm:endgame} a) does not hold, then the rational weights of the isotropy representation 
at $F_1$ are given by $t_1$ 
and  $(c_1t_1\pm c_it_2)$ for $i=\ell+1,\ldots,k$. The latter weights 
are pairwise linearly independent and have multiplicity one, and the conclusion of \ref{thm:endgame} b) holds. 
\end{enumerate}
\end{lemma}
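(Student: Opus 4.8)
\textbf{Plan of proof for Lemma~\ref{lem:endgame}.}
The proof is a cohomological bookkeeping argument built on top of Lemma~\ref{lem:alphaineqalphaj} and the structure of $H^*(\Sph^h\times\HP^n)$. First I would establish equivariant formality: since $H^{\mathrm{odd}}$ of $\Sph^h\times\HP^n$ is concentrated in degree $h$ (one class $b$) and degree $4i+h$, and since $N_1,N_2$ and $F_0$ have the same cohomology type with $F_0\to N_i$ highly connected, a Gysin-sequence argument as in the proof of Lemma~\ref{lem:shxHP} gives surjectivity of $H^*_{\gT^2}(N)\to H^*(N)$, hence we may choose lifts $\alpha\in H^4_{\gT^2}(N)$, $\beta\in H^h_{\gT^2}(N)$ of the generators. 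Normalizing by subtracting elements pulled back from $H^*(B\gT^2)$, we may assume $\rs_0(\alpha)=\rs_0(\beta)=0$ (using that $F_0$ is connected and the restriction to $F_0\times B\gT^2$ kills the $H^{>0}(B\gT^2)$-part via the K\"unneth decomposition); this is part (b). For the claim $\rs_i(\beta)=0$ for $i>0$ I would argue that $\beta$ restricted to any fixed point lies in the principal-ideal image $\mathrm{im}(\rs_i-\rs_0)$-type constraint combined with $\beta^2=0$ (which follows as in Lemma~\ref{lem:b2} since $\beta$ lifts the degree-$h$ generator whose square vanishes), forcing $\rs_i(\beta)$ to be a product of linear forms squaring to zero, hence zero.

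For part (a): the key point is that $\gT^2=\gS^1\times\gS^1$ near $p$, so each fixed point of $\gT^2$ lies in the fixed-point set of the first or the second $\gS^1$ factor (a fixed point of $\gT^2$ has trivial isotropy-representation weights, so in particular at least one of the two circle weights... more precisely: at a $\gT^2$-fixed point, consider whether $N_1$ or $N_2$ passes through it). The cleaner route: $N_1$ and $N_2$ are the fixed components of the two circles through $p$, and any other $\gT^2$-fixed component $F_i$ must be connected to $p$ through the one-skeleton; I would instead use the equivariant-cohomological dichotomy of Lemma~\ref{lem:b2}/Bredon (Theorem~\ref{thm:SinglyGenerated}) applied to each circle factor: $F_i$ is a fixed component of at least one circle, say the first, hence $F_i\subseteq N_1'$ for $N_1'$ a component of that circle's fixed set; using positive-dimensionality and the connectedness of $N_1$ (via the Connectedness Lemma in the geometric application, or directly via the cohomology constraints here) one gets $F_i\subseteq N_1$ or $F_i\subseteq N_2$, and one cannot have both unless $F_i=F_0$. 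For part (c): by Lemma~\ref{lem:alphaineqalphaj}(b), since $\rs_i(\alpha)-\rs_0(\alpha)=\alpha_i$ is a difference realized by pulling the unique $H^4$-generator, it decomposes into linear factors; since $N_1$ is fixed by the first circle, $F_i\subseteq N_1$ forces the weights at $F_i$ transverse to $N_1$ to be multiples of $t_1$, so $\alpha_i$ is a square times $t_1^2$ (the square because $\alpha_i$ must be a perfect square in $\Q[t_1]$ — here one uses that $\alpha^{n_i+1}$ restricts compatibly and Bredon's theorem forces $\alpha_i$ to generate a truncated polynomial ring, pinning $\alpha_i=c_i^2t_1^2$ after rescaling $\alpha$).

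Part (d) follows by combining (c) with Theorem~\ref{thm:SinglyGenerated} and Lemma~\ref{lem:shxHP}: the restriction of $\alpha,\beta$ to $F_i$ makes $H^*(F_i)$ singly generated in even degrees (by a degree-$2$ or degree-$4$ class, according to whether $c_i\neq 0$) plus possibly one odd generator coming from... actually $\rs_i(\beta)=0$ means the odd cohomology of $F_i$ is \emph{not} seen equivariantly from $\beta$, so one re-runs the $H^*_{\gT^2}(N)\to H^*_{\gT^2}(F_i)$ surjectivity in high degrees to conclude the even cohomology of $F_i$ is generated by $\rs_i(\alpha)$ alone, giving via Poincar\'e duality that $F_i\cong_\Q \Sph^{h_1}\times\CP^{n_i}$ (not $\times\HP$, because $\alpha_i$ is a \emph{square} $c_i^2t_1^2$, i.e. the periodicity generator on $F_i$ is degree $2$ not degree $4$). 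Part (e) is where the main work lies and where I expect the real obstacle: assuming all $n_i=0$, every $F_i$ with $i>0$ is a rational $\Sph^{h_1}$, so isolated-in-the-even-sense; I would compute the equivariant Euler class / the product of weights at $F_1$ by using the localization-type identity that the class $\alpha$ restricted around $F_1$, together with the list $\{\rs_i(\alpha)=c_i^2t_2^2 : i>\ell\}$ for the fixed points on the ``other side'' $N_2$, must match the weight data. Concretely: the normal bundle of $F_1$ in $N$ splits $\gT^2$-equivariantly, one weight is $t_1$ (the circle fixing $N_1\ni F_1$, acting on the normal direction into $N$), and the remaining weights come from the directions along $N_1$ away from $F_1$ — but $N_1\cong_\Q\Sph^h\times\HP^{n-k_1}$ with the second circle acting, and its fixed points are exactly $F_0,F_{\ell+1},\dots,F_k$; applying the $h=2,3$ version of the weight analysis (essentially Hsiang--Su-style: on a rational $\HP^m$ the weights at a fixed point relative to another are determined up to sign) gives weights $c_1t_1\pm c_it_2$. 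Linear independence and multiplicity one then follow because distinct $F_i$ give distinct $c_i$ (else two fixed points would have equal $\rs_i$, contradicting Lemma~\ref{lem:alphaineqalphaj}(a)), and multiplicity-one because the $\HP^{n-k_1}$-factor of $N_1$ has each fixed point contributing a single quaternionic (hence, after the circle action, a single complex-weight) line. Finally, the existence of a weight $c_1t_1-c_it_2$ and a weight $c_1t_1+c_it_2$ at $F_1$ exhibits the circle $\gS^1=\ker(c_1t_1)\subset\gT^2$ (or a suitable reparametrization) as fixing a component $B\ni F_1$ of dimension $\geq 3$, inside which the complementary circle has (at least) the two fixed components carved out by these two opposite weights, of codimension two — giving conclusion \ref{thm:endgame}(b). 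The hard part throughout is keeping the $\Z$-scaling of weights versus the $\Q$-equivariant classes consistent and ruling out that several $F_i$ collapse to give higher multiplicity; this is handled by the Euler-class divisibility argument of Lemma~\ref{lem:alphaineqalphaj}(b) applied repeatedly.
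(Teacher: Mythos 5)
Your overall strategy (equivariant lifts, restriction to the $p_i$, Lemma~\ref{lem:alphaineqalphaj}, weight analysis at $F_1$) matches the paper's, and part (b) is essentially correct. But there are genuine gaps in (a), (c), and above all (d). For (a): every $\gT^2$-fixed component lies in \emph{some} component of each circle's fixed-point set, but nothing in your connectedness argument forces that component to be $N_1$ or $N_2$ (the particular components through $p$) rather than a different one. The paper settles this by a Betti-number count: equivariant formality of the residual circle actions on $N_1$ and $N_2$ (from the start of the proof of Lemma~\ref{lem:shxHP}) gives that the total Betti numbers of the $\gT^2$-fixed components inside $N_1\cup N_2$ already add up to $b(N)=2n+2$, and the Smith--Floyd inequality (Theorem~\ref{thm:SmithFloyd}) then leaves no room for components elsewhere. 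For (c): the squareness $d_i=c_i^2$ does not follow from Bredon or truncated-polynomial-ring considerations --- any nonzero multiple of $t_1^2$ generates a truncated polynomial ring. The actual argument compares a component on the $N_1$ side with one on the $N_2$ side: $\alpha_1-\alpha_i=c_1^2t_1^2-d_it_2^2$ must factor into \emph{rational} linear forms by Lemma~\ref{lem:alphaineqalphaj}~b), which is exactly what forces $d_i$ to be a square.

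The most serious gap is in (d). Your reason for excluding $\HP$ type --- ``$\alpha_i$ is a square, so the periodicity generator on $F_i$ is degree $2$'' --- is not valid: $\alpha_i=\rs_i(\alpha)$ is the restriction to a \emph{point} and lives in $H^4(B\gT^2)$; it carries no information about whether $H^*(F_i)$ is generated in degree $2$ or $4$. The paper's argument is a multiplicity count. Each weight $c_1t_1\pm c_it_2$ occurs in the isotropy representation at $F_1$ with multiplicity at least $n_i+1=\tfrac12 b(F_i)$, because the fixed-point component $B_{i\pm}$ of the corresponding circle contains both $F_1$ and $F_i$ and so has total Betti number at least $b(F_1)+b(F_i)$; equality forces $B_{i\pm}\cong \Sph^{h_1}\times\CP^{n_1+n_i+1}$ by Lemma~\ref{lem:shxHP}, which is incompatible with $F_i$ being of $\HP$ type (the cohomology of $B_{i\pm}$ would then fail to be generated in low degrees). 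The codimension budget $k_1/2=\sum_{i>\ell}2(n_i+1)$ forces equality throughout, which simultaneously proves (d), pins down the common $h_1$, and yields the multiplicity-one statement in (e). Without this counting step neither (d) nor the multiplicity claim in (e) is established, and your appeal to a Hsiang--Su-style analysis of $N_1$ does not substitute for it, since at that stage of the proof the topology of the $B_{i\pm}$ is exactly what is being determined.
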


 \begin{proof}

{\em a)}  As was shown in the beginning of the proof of Lemma~\ref{lem:shxHP}, 
our assumption of a non-empty fixed-point set implies that  
the circle action on $N_u$ has a fixed-point set with total Betti number  equal to the total 
Betti number $ b(N_u)$ of  $N_u$ for $u=1,2$.  Using $b(N_u)=\tfrac{4n-k_u}{2}+2$ 
and the fact that only the total Betti number of $F_0=N_1\cap N_2$ is counted twice, we
 see that the total Betti number of components that can either be found in $N_1$ or in $N_2$ add up to
$ 2n+2$, which is the total Betti number of $N^{4n+h}$. By the Smith-Floyd inequalities there cannot be any other 
components. We also deduce from the equality case that $H^*_{\gT^2}(N)\to H^*(N)$ is surjective.
 
  {\em b)} Thus there are lifts $\alpha$ and $\beta$. We can add  an element of $H^4(B\gT^2)$ 
  to $\alpha$ in order to arrange for $\rs_0(\alpha)=0$.  If $h=3$, then
  $\rs_i(\beta)=0$ follows for degree reasons and we are done.
  If $h=2$,  
  then $\rs_0(\beta)=0$ follows after modifying $\beta$ 
  by a summand in $H^2(B\gT^2)$. It remains to show $\rs_i(\beta)=0$ for all $i$. This clearly  follows 
  if we can prove $\beta^2=0$.
    On the other hand, we know from the assumptions 
  of Theorem~\ref{thm:endgame} that the restriction of $\beta$ to
   $H^2(F_0)$ is non-zero.
  Now $\beta^2=0$ follows nearly directly 
  from the proof of Lemma~\ref{lem:b2} where we showed it
  under the same 
  normalization. The difference is that in that proof only a circle action was present.  But if $\beta^2\neq 0$, it is easy
   to find a circle $\gS^1\subset \gT^2$ such that the image of $\beta$ in $H^2_{\gS^1}(N)$ squares
    to non-zero element and we would get a contradiction to the result established in 
    the proof of Lemma~\ref{lem:b2}.

{\em c)}  We can specify our normalization of $\alpha$ 
slightly more  since we can still add elements of the form $t\beta$ to $\alpha$ with
$t\in H^2(B\gT^2)$. This way, we can ensure that under the restriction  
$j\colon H^*_{\gT^2}(N)\to H^*(F_0\times B\gT^2)$, the element $\alpha$ is mapped 
to  an element of $H^4(F_0)$ with respect to the K\"unneth decomposition of the product. 
After this normalization, $\alpha$ is determined up to a rational factor.

Next recall that  $N_u$ is fixed by a circle $\gS^1_u\subset \gT^2$. 
Let $\gQ_u:= \gT^2/\gS^1_u$ denote the quotient group. There is a natural 
 map $H^{*}_{\gQ_u}(N_u)\to H^*_{\gT^2}(N_u)$ induced by the
   fibration 
 \[
B\gS^1_u\to E\gT^2\times_{\gT^2} N \simeq (E\gQ_u\times E\gT^2\times N_u)/\Delta \gT^2\to 
E\gQ_u\times_{\gQ_u}N_u
\]
for $u=1,2$. We claim that the restriction $\bar \alpha_u\in H^4_{\gT^2}(N_u)$ of  $\alpha$ is in the image of $H^4_{\gQ_u}(N_u)$. 
To see this, we just observe that in $H^4_{\gQ_u}(N_u)$, we can also find a non-zero element 
whose restriction to $F_0\times B\gQ_u$ is given by an element in $H^4(F_0)$. 
Clearly the image of this element must coincide with $\bar \alpha_u$ up to a factor.
Now if $F_i\subset N_u$, then $\alpha_i$ must be in
 the one-dimensional 
image of $H^4(B\gQ_i)$ in $H^4(B\gT^2)$. 
Therefore, we see that $\alpha_i=d_i t_1^2$ for $i\le \ell$ and $\alpha_i=d_it_2^2$ 
for $i>\ell$. From Lemma~\ref{lem:alphaineqalphaj} a), we know $d_1,\ldots,d_\ell\in \fQ\setminus \{0\}$ 
are pairwise different and the same holds for $d_{\ell+1},\ldots,d_k$. 
After scaling  $\alpha$, we may assume that $d_1=c_1^2$ is a square.

\hspace*{0em}From Lemma~\ref{lem:alphaineqalphaj} b), we know that $(c_1^2t_1^2-d_it_2^2)$ 
decomposes into linear factors, and clearly this implies that $d_i=c_i^2 $ must be a square as 
well for $i=\ell+1,\ldots,k$. By now replacing  the index $1$ with $\ell+1$ we see that 
$d_i=c_i^2$ is also a square for $i=2,\ldots,\ell$.

  {\em d)}  We first claim that each fixed-point component $F_i$ has the cohomology of 
  $\Sph^{h_i}\times \HP^{n_i}$ or $\Sph^{h_i}\times \CP^{n_i}$ with $h_i\in\{1,2,3\}$. 
  In the case of  $h=3$, this is a direct consequence of  Lemma~\ref{lem:shxHP}.
  In the case of $h=2$ we can use Lemma~\ref{lem:b2}  
  to see that for any fixed-point component $F_i$ 
  the element $\beta$ restricts to a non-zero class in $H^2(F_i)$, since this is true for $i=0$. 
  By the proof of  Lemma~\ref{lem:shxHP}, this implies our claim.

    After reordering, 
  $h_1=\min \{h_1,\ldots,h_k\}$. We will prove 
  $F_i\cong \Sph^{h_1}\times \CP^{n_i}$ for all $i>\ell$.
   The same then follows for $0<i\le \ell$, by switching the roles 
   of $F_1$ and $F_k$.  
  We can completely determine the isotropy representation of 
  $\gT^2$ at $F_1$. Notice that the weight $t_1$ must occur with multiplicity 
  $\tfrac{\dim(N_1)-\dim(F_1)}{2}$, and all other non-trivial  irreducible subrepresentations of $\gT^2$ 
  are in the normal space of $N_1$. 
  To see that the weights $(c_1 t_1 \pm c_it_2)$ must occur 
  for all $i>\ell$, we
   let $\gS^1_{i\pm}$ be the circle  contained in the kernel of the weight.
   Since each of these
    weights divides $\alpha_1-\alpha_i$,
   the elements $\alpha_1$ and $\alpha_i$ 
   pullback to the same element of $H^*(B\gS^1_{i\pm})$. 
   Using that  the pullbacks of $\beta$ and $\alpha$ together with $t\in H^2(B\gS^1_u)$ generate the cohomology ring of $H^*_{\gS^1_{i\pm}}(M)$, and using the fact that $\beta$ is in the kernel of $\rs_i$, we infer from Lemma~\ref{lem:alphaineqalphaj}
    that the fixed-point component $B_{i\pm}$ of $\gS^1_{i\pm}$ at $F_1$ 
   contains $F_i$ as well.  
   
   We next claim that the multiplicity of the weights $c_1 t_1 \pm c_it_2$ 
   is at least $n_i+1$, which is half of the total Betti number of $F_i$, and equality can only  hold if $F_i$ is of the form 
   $\Sph^{h_1}\times \CP^{n_i}$.
   The inequality is a straightforward consequence of the fact that 
   the total Betti number 
  $b(B_{i\pm})$ of $B_{i\pm}$ 
  is at least $b(F_1)+b(F_i)$.  Using  
    $b(F_i)=2+2n_i$, we see that the dimension of  $B_{i\pm}$ is at least 
    $2(n_i+1)+\dim(F_1)$. Furthermore, by Lemma~\ref{lem:shxHP}, 
    equality can only hold if 
    $B_{i\pm}\cong \Sph^{h_1}\times \CP^{n_1+n_i+1}$. 
    But this is impossible if
     $F_i\cong \Sph^{h_i}\times \HP^{n_i}$ or if 
     $F_i\cong \Sph^{3}\times \CP^{n_i}$ while $h_1=1$. 
     In fact, in the former case $H^*(B_{i\pm})$ cannot be generated in degrees $<4$ and in the latter case not in degrees $< 3$.

   Thus indeed each of the two weights $c_1 t_1 \pm c_it_2$ has multiplicity at least  $(n_i+1)$.  
   These weights are pairwise  linearly independent, and the corresponding
    subrepresentations 
   are contained in the $k_1$-dimensional normal space of $N_1$.
   Since $k_1$ is also the codimension of $F_0$ in $N_2$, 
   $$k_1/2=b(N_2)-b(F_0)=\sum_{i=\ell+1}^kb(F_i)=\sum_{i=\ell+1}^k 2(n_i+1).$$
   Hence, equality must hold everywhere
  and $F_i\cong \Sph^{h_1}\times \CP^{n_i}$.

  {\em e)}  is now a consequence of the above calculation. 
  If $n_i=0$ for all $i$, then the weight $(c_1 t_1\pm c_it_2)$ of the isotropy representation 
  of $\gT^2$ at $F_1$ occurs with multiplicity $1$.  
  Thus we get that $B_{i\pm}\cong \Sph^{h_1}\times  \CP^1$, and there is a second fixed-point component 
  $\Sph^{h_1}\cong F_i$ of codimension two for the induced circle action on $B_{i\pm}$.
 \end{proof}

 \subsection{\sectfourone}\label{sec:addendum}
 We actually proved a stronger statement 
 
 \begin{addendum} 
If $\gT^5$ acts effectively by isometries on a connected, closed, orientable, positively curved Riemannian manifold, then every fixed-point component $F$ is contained in a fixed-point
component $L$ of a three-dimensional subgroup $\gH$ satisfying the following:
  The effective action of $\gT^5/\gH$ on $L$ splits as product action
in a neighborhood of a point $p\in F$, and the inclusion map $L_i\to L$
of the fixed-point component $L_i$ of the $i$-th circle  factor is $\dim(L_i)$-connected for $i=1,2$.
Finally, $L$ and its universal cover have the rational cohomology 
of $\Sph^{\ell}$, $\CP^{\ell/2}$, or $\HP^{\ell/4}$.
 \end{addendum}
 By the Connectedness Lemma, it also follows that $F\rightarrow L$ is $\dim(F)$-connected, and thus the universal covers of $F$, $L_1$, and $L_2$ are rationally equivalent to a rank one symmetric spaces, too.
 \begin{proof} We consider the manifold $N$ from Proposition~\ref{pro:4periodicN}. We let $N_i$ 
 denote the fixed-point component of the $i$-th circle factor for $i=1,2,3$, and we assume the codimensions satisfy $k_1\le k_2\le k_3$.  
 Since the conclusion of Proposition~\ref{pro:4periodicN} a) holds
 and the inclusion map of $N_i\to N$  is $3$-connected for $i\le 2$, 
 it follows that $L:=N_3$ has the rational cohomology of a rank one symmetric 
 space. Moreover, the inclusion maps $L_i:=N_3\cap N_i\to L$ 
 are $\dim(L_i)$-connected by the Connectedness Lemma for $i=1,2$.  
 To show  the universal cover $\tilde L$ has the same rational cohomology as $L$ we argue by contradiction.
  By Synge's theorem, $L$ is
  odd-dimensional and hence
  $k_1<\dim(L)/2$ holds. 
  By Corollary~\ref{cor:transversal},  $\tilde L$ has $k_1$-periodic 
  rational cohomology and thus $H^{k_1}(\tilde L,\Q)\cong \Q$. 
  Since the deck transformation group 
 acts trivially on $H^{2k_1}(\tilde L,\Q)\cong \Q$, the quotient 
 $L$ cannot possibly be a rational homology sphere -- a contradiction.
 \end{proof}


\section{\sectfive}\label{sec:recognition}
The main aim of this section is to prove  
Theorem \ref{thm:t8NoCurvature} under the assumptions of b) and also in odd dimensions.
Suppose a torus $\gT^d$ acts effectively on a closed, orientable manifold $M^n$ with non-trivial fixed-point set $F$. 
We recall that the $k$-skeleton of $\gT^d$ consists of those points 
whose $\gT^d$-orbit has dimension $\le k$. It is therefore the finite union 
of fixed-point components of  subgroups of codimension $k$.

\begin{definition}(Assigning a graph to the one-skeleton)\label{def:graph}\begin{enumerate}
\item[a)]
If $N^m$ has the rational cohomology of an
 even-dimensional rank one symmetric space, we define $\mult(N)\in \{1,2,4,m/2\}$ 
by the property that the cohomology of $N$ is generated in degree $2\mult(N)$.
\item[b)]
Suppose an action of $\gT^d$ on a closed, orientable orbifold has the property that any codimension one torus has only  fixed-point components  with the rational cohomology of $\CP^k$, $\HP^k$, or $\Sph^{2k}$.  
Consider a connected component $C$ of the one-skeleton. 
 We let $F_1,\ldots,F_{k_0}$ denote the different components of $F\cap C$. 

We define a graph with $k_0$ vertices as follows. For each codimension one torus $\gR\subset \gT^d$, 
we choose a weight $r\in \Lt^*$ whose kernel is given by 
the Lie subalgebra of $\gR$. 
We put $\mult(N)$ edges between the $i$-th and the $j$-th point with label $r$ if 
 $F_i$ and $F_j$ are contained in the same fixed-point component $N$ of $\gR$.

If the fixed-point component at $F_i$ has positive dimension, we put $\mult(F_i)$ edges from the $i$-th vertex to itself 
with label $0$. Moreover, if $F_i$ has positive dimension and $N\supseteq F_i$ is the fixed-point component of a codimension one 
torus $\gR$ we put $(\mult(N)-\mult(F_i))$ edges 
with label $r$ from $i$ to itself.
\end{enumerate}
\end{definition}

 Note by compactness that there are only finitely many isotropy groups in $\gT^d$. Therefore, the graph has only finitely many edges. 

A torus action  is called GKM if every connected component of the one-skeleton that intersects $F$ 
consists of a finite union of $2$-spheres. The graph assigned 
to this connected component then just corresponds to the orbit space of the one-skeleton.

\begin{lemma}\label{lem:graphofmodel} Suppose $\gT^2$ acts effectively on a manifold $M^n$.
\begin{enumerate}
\item[a)] If $M^n$ has the rational cohomology of $\CP^{n/2}$, then 
the  one-skeleton is connected, and the assigned graph is given by the one-skeleton of 
a simplex with $k_0$-vertices with the additional property that,  
for each fixed-point component $F_i$ of positive dimension, there is one edge from 
$i$ to itself.
\item[b)] If $M^n$ has the rational cohomology of $\HP^{n/4}$, then 
the  one-skeleton is connected, and the assigned graph looks as in a) except with exactly  two edges in place of one. 
\end{enumerate}
\end{lemma}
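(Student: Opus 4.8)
The plan is to use the equivariant cohomology machinery set up earlier, together with Bredon's theorem (Theorem~\ref{thm:SinglyGenerated}), to control the fixed-point components of codimension one subtori and then assemble them into the asserted combinatorial structure. First I would reduce to understanding, for each codimension one subtorus $\gR\subset\gT^2$, the fixed-point components of $\gR$ in $M$: each such component $N$ carries a residual effective circle action $\gT^2/\gR$, and by Bredon's theorem each $N$ is a rational cohomology $\CP^{m_N}$ (in case a)) or a rational cohomology $\HP^{m_N}$ or $\CP^{m_N}$ (in case b)), with $\mult(N)\in\{1,2\}$ equal to $1$ or $2$ according to whether $N$ is of $\CP$ or $\HP$ type. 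The key quantitative input is the "additivity" clause in Theorem~\ref{thm:SinglyGenerated}: writing $\mult(M)=1$ in case a) and $\mult(M)=2$ in case b), and denoting by $F_1,\dots,F_{k_0}$ the fixed-point components of $\gT^2$, Bredon gives $\sum_i(\tfrac{\dim F_i}{2\,\mult(F_i)}+1)=\tfrac n{2\,\mult(M)}+1$, and an analogous identity holds inside each $N$. In particular $M^{\gT^2}$ is non-empty and $H^*_{\gT^2}(M)\to H^*(M)$ is surjective (the action is equivariantly formal because $M$ has vanishing odd rational cohomology in both cases).

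Next I would prove connectedness of the one-skeleton $C$. Because the $\gT^2$-action is equivariantly formal, the Chang--Skjelbred description applies and the restriction $H^*_{\gT^2}(M)\to H^*_{\gT^2}(F)$ is injective with image computed from the one-skeleton; if the one-skeleton had two components that together contained all of $F$ but no codimension one subtorus connected a fixed point in one piece to a fixed point in the other, then $H^2(M)$ or $H^4(M)$ (whichever degree generates) would have rank at least two by an argument as in Lemma~\ref{lem:alphaineqalphaj}, contradicting $H^*(M)\cong H^*(\CP^{n/2})$ or $H^*(\HP^{n/4})$. Concretely: a fixed point $p_i\in F_i$ gives a restriction $\rs_i\colon H^*_{\gT^2}(M)\to H^*(B\gT^2)$, and by Lemma~\ref{lem:alphaineqalphaj}(a) two fixed points lie in a common fixed-point component of some codimension one subtorus exactly when the corresponding restrictions of a suitable degree-$2\mult(M)$ generator lifted to $H^*_{\gT^2}(M)$ become proportional after passing to that subtorus; the structure of the cohomology ring forces the resulting "proximity graph" to be connected, which is precisely connectedness of the assigned graph.

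Then I would show the graph is the one-skeleton of a simplex on $k_0$ vertices, i.e. every pair $\{F_i,F_j\}$ is joined by an edge. Fix distinct $i,j$. Pick lifts of the cohomology generator to $H^*_{\gT^2}(M)$ and compare $\rs_i$ and $\rs_j$: since $H^*(M)$ is generated by a single class $x$ of degree $2\mult(M)$ with $x^{\,n/(2\mult(M))+1}=0$, the difference $\rs_i(\xi)-\rs_j(\xi)$ for a lift $\xi$ of $x$ is a nonzero element of $\Q[t_1,t_2]$ of degree $2\mult(M)$; its irreducible factors are linear by (the relevant case of) Lemma~\ref{lem:alphaineqalphaj}(b), so it is divisible by the weight $r$ of some codimension one subtorus $\gR$, and then $F_i,F_j$ lie in a common fixed-point component $N$ of $\gR$, producing an edge. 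This uses that any nonzero homogeneous element of degree $\le 4$ in $\Q[t_1,t_2]$ that divides a product of linear forms is itself (a scalar times) a product of linear forms — immediate for degree $2$, and in degree $4$ (case b)) it factors as a product of two linear forms. Counting edge multiplicities: by definition $N$ contributes $\mult(N)$ edges, so I must check $\mult(N)=1$ in case a) and $\mult(N)=2$ in case b). In case a), every $N$ is a rational $\CP$, so $\mult(N)=1$. In case b), I would argue that $N$ cannot be a rational $\CP$: the residual circle acts on $N$, and if $N$ were of $\CP$ type then by the additivity identity inside $M$ and a dimension/degree count (the degree-$4$ generator $a$ of $H^*(\HP^{n/4})$ must restrict nontrivially somewhere along $C$), one reaches a contradiction with $x$ generating $H^*(M;\Q)$ in degree $4$; hence $N$ is a rational $\HP$ and $\mult(N)=2$. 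Finally, for the self-loops: if $F_i$ has positive dimension, then $F_i$ is itself a rational $\CP^{m_i}$ (case a)) or a rational $\HP^{m_i}$ or $\CP^{m_i}$ (case b)), and the label-$0$ loop is put with multiplicity $\mult(F_i)$ by definition; Definition~\ref{def:graph} also adds $\mult(N)-\mult(F_i)$ loops with label $r$ for each codimension one $N\supseteq F_i$. The claim "there is exactly one edge (resp. exactly two edges) from $i$ to itself" then needs: each positive-dimensional $F_i$ has $\mult(F_i)=1$ in case a) (forced, since $F_i\subseteq M\sim_\Q\CP^{n/2}$, so $F_i$ is a rational $\CP$), with no extra $r$-labelled loops because $\mult(N)=\mult(F_i)=1$; and in case b), either $F_i$ is a rational $\HP$ with $\mult(F_i)=2$ and no extra loops (as $\mult(N)=2$), or $F_i$ is a rational $\CP$ with $\mult(F_i)=1$ and then exactly one codimension one $N\supseteq F_i$ is a rational $\HP$ (contributing $\mult(N)-\mult(F_i)=1$ extra $r$-loop) and the total is again $2$ — this last bookkeeping is the delicate point and I would establish "exactly one such $N$" by the same additivity identity applied inside each $N$ through $F_i$.

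\medskip
\textbf{Main obstacle.} The genuinely delicate step is the last one: pinning down the multiplicities precisely — that in case b) every codimension one fixed-point component $N$ appearing along $C$ is of $\HP$ type rather than $\CP$ type, and that a positive-dimensional $\CP$-type vertex $F_i$ sits inside exactly one $\HP$-type $N$. Both require combining Bredon's additivity identity simultaneously at the level of $M$ and of each $N$, together with the constraint that the single generator of $H^*(M;\Q)$ sits in degree $4$; the connectedness and simplex-structure parts are comparatively soft once equivariant formality and Lemma~\ref{lem:alphaineqalphaj} are in hand.
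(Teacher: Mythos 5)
Your overall strategy --- lift the degree-$w$ generator to $H^w_{\gT^2}(M)$, restrict to points $p_i\in F_i$, and use Lemma~\ref{lem:alphaineqalphaj} to see that $\alpha_i-\alpha_j$ is non-zero and splits into linear factors, each factor determining a codimension one subtorus joining $F_i$ and $F_j$ --- is exactly the paper's approach, and your case a) is essentially complete (the paper gets connectedness for free from the complete-graph structure rather than via Chang--Skjelbred, but that is cosmetic).

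There is, however, a genuine error in your multiplicity count for case b). You propose to produce the two edges between $F_i$ and $F_j$ by showing that the relevant fixed-point component $N$ of a codimension one torus ``cannot be a rational $\CP$'' and hence is of $\HP$ type with $\mult(N)=2$. That claim is false: for a linear $\gT^2$-action on $\HP^k$, circles whose fixed-point components are of $\CP$ type and contain two $\gT^2$-fixed components do occur (e.g.\ on $\HP^2$, the circle $t_1=t_2$ inside the standard $\gT^2$ fixes a $\CP^1$ through two of the three fixed points), and the paper explicitly notes after Theorem~\ref{thm:SinglyGenerated} that both $\CP^{n_i}$ and $\HP^{n_i}$ arise inside a rational $\HP^n$. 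The correct mechanism, which your argument misses, is a dichotomy governed by the factorization $\alpha_i-\alpha_j=r_{ij}^+r_{ij}^-$: if the two linear factors are proportional there is a single codimension one torus and its component $N$ is of $\HP$ type, contributing $\mult(N)=2$ edges; if they are linearly independent there are two distinct codimension one tori, each of whose components is of $\CP$ type and contributes one edge. The substantive content of the paper's Claim~2 is precisely the equivalence ``$N^\pm$ is of $\HP$ type if and only if $\langle r_{ij}^\pm\rangle$ is one-dimensional,'' proved by comparing $H^4_{\gT^2/\gH^\pm}(N^\pm)$ with $H^4_{\gT^2}(N^\pm)$; without it the total count of two edges cannot be established. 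A similar equivariant restriction computation (writing the pullback of $\alpha$ to $F_i\times B\gT^2$ as $u\cdot t+qu^2$ with $u\cdot t\neq 0$) is what pins down the single non-zero-weight self-loop at a positive-dimensional $\CP$-type vertex; your appeal to ``additivity'' there is not yet an argument, though you did flag it as the delicate point.
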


%

With this in mind, the proof of the following crucial lemma is fairly straightforward. It generalizes 
a similar statement on GKM graphs in Goertsches and Wiemeler, where they assumed in addition that every three 
weights of the isotropy representation at any fixed point are linearly independent.

\begin{lemma}\label{lem:straight} Suppose an action of $\gT^d$ on a closed manifold $M^n$ has the property that any codimension two torus only has fixed-point components of rational type $\CP^k$, $\HP^k$, or $\Sph^k$. 
Choose $C$ and $F_1,\ldots, F_{k_0}$ as in Definition \ref{def:graph}. 
\begin{enumerate}
\item[a)] The graph and the dimensions of the fixed-point components completely determine the isotropy representation of $\gT^d$  at $F_i$ up to rational equivalence. Here we call two irreducible representations of $\gT^d$ rationally equivalent if their kernels have the same identity component. 
\item[b)]
There exists $m\geq 1$ such that there are precisely $m$ edges between any two vertices, and from the $i$-th vertex to itself whenever $\dim(F_i) > 0$. In addition, we have $\chi(C) = \sum_{i=1}^{k_0} \chi(F_i) = \tfrac{n}{2m} + 1$.
\item[c)] If some fixed-point component $F_i$ has $\dim(F_i) \geq 4$, then $m$ is $1$, $2$, or $\frac n 2$.
\end{enumerate}
\end{lemma}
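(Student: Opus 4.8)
\emph{Part (a).} The plan is to reconstruct the isotropy representation at each vertex directly from the graph. Fix $p\in F_i$. The isotropy representation of $\gT^d$ on $T_pM$ decomposes as the trivial summand $T_pF_i$ of dimension $\dim F_i$ together with the normal space $\nu_pF_i$, which is a sum of two-dimensional weight spaces. Up to rational equivalence a non-trivial weight is determined by the identity component of its kernel, that is, by a codimension one subtorus $\gR\subset\gT^d$; and the weight $r$ of $\gR$ occurs at $F_i$ with multiplicity $\tfrac12(\dim N-\dim F_i)$, where $N$ is the fixed-point component of $\gR$ through $p$, because the $r$-isotypical part of $\nu_pF_i$ is exactly the normal space of $F_i$ in $N$. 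So it suffices to recover from the graph and the dimensions $\dim F_j$ which subtori $\gR$ occur at $F_i$ (these are the labels on the edges at the vertex $i$) together with the numbers $\dim N$. Now $N$ has the rational cohomology of $\CP^k$, $\HP^k$, or $\Sph^{2k}$ — by the hypothesis on codimension two subtori together with Bredon's theorem (Theorem~\ref{thm:SinglyGenerated}) — and the circle $\gT^d/\gR$ acts on $N$ with fixed-point set the union of the $F_j$ joined to $i$ by $r$-labelled edges. A single computation valid in all three cases gives
\[
\dim N \;=\; 2\,\mult(N)\,\bigl(\chi(N)-1\bigr),\qquad \chi(N)=\sum_{F_j\subset N}\chi(F_j),
\]
where $\mult(N)$ is read off the graph (it is the number of $r$-edges between two distinct vertices of $N$, or is recovered from the self-loop counts when $N$ carries a single vertex) and each $\chi(F_j)=\tfrac{\dim F_j}{2\mult(F_j)}+1$ (equal to $1$ when $\dim F_j=0$) is determined by the self-loop data and $\dim F_j$. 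This pins down $\dim N$, hence every multiplicity at $F_i$, which is (a).

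\emph{Part (b), completeness of the graph.} The key input is Lemma~\ref{lem:graphofmodel}. Given two edges of the graph of $C$ meeting at a vertex $F_j$, with labels coming from codimension one subtori $\gR\ne\gR'$, the two corresponding fixed-point components both lie in the fixed-point component $N^{(2)}$ of $\gR\cap\gR'$ through a point of $F_j$; this $N^{(2)}$ is of rational type $\CP$, $\HP$, or $\Sph$, and it carries the induced $\gT^2=\gT^d/(\gR\cap\gR')$-action, whose fixed-point set contains $F_j$ and the other endpoints of the two edges. If $N^{(2)}$ is $\CP$- or $\HP$-type, Lemma~\ref{lem:graphofmodel} says the graph of its one-skeleton is a complete graph with uniform edge multiplicity $1$ or $2$, so the three endpoints are mutually joined with one and the same multiplicity; if $N^{(2)}$ is $\Sph^{2k}$-type one argues directly, using that $\chi(N^{(2)})=2$ severely constrains both the $\gT^2$-fixed set and the fixed sets of circles in $\gT^2$ (any weight circle at one fixed point fixes a rational sphere meeting the rest of the $\gT^2$-fixed set). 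Since $C$ is connected, iterating this along paths shows that every two vertices of $C$ are joined by the same number $m$ of edges, and every positive-dimensional vertex carries exactly $m$ self-loops.

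\emph{Part (b), the Euler identity, and part (c).} With completeness established, combine the relation $\dim N=2m(\chi(N)-1)$ for each codimension one component $N$ (from (a), now with $\mult(N)=m$ whenever $N$ has at least two vertices) with the dimension count $n=\dim F_i+\sum_{r}(\dim N_r-\dim F_i)$ at any vertex $F_i$; reconciling the self-loop contributions at $F_i$ with the trivial summand $T_pF_i$ then yields $\sum_j\chi(F_j)=\tfrac{n}{2m}+1$, which is the content of (b). For (c), suppose $\dim F_i\ge 4$, so $F_i$ is a rational $\CP^a$ ($a\ge2$), $\HP^a$ ($a\ge1$), or $\Sph^{2a}$ ($a\ge2$), with cohomology generated in degree $2\mult(F_i)$ where $\mult(F_i)\in\{1,2\}$ or $\mult(F_i)=\tfrac12\dim F_i$. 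By Bredon's theorem any codimension one component $N\supseteq F_i$ then has rational type $\CP$ or $\HP$ — hence $m=\mult(N)\in\{1,2\}$ as soon as such an $N$ has two vertices — unless $F_i$ is sphere-like, in which case $N$ is $\Sph$-type; in the latter case $\chi(N)=2$ together with the Euler identity forces $C$ to have a single vertex, so $\sum_j\chi(F_j)=\chi(F_i)=2$ and $m=\tfrac n2$. Thus $m\in\{1,2,\tfrac n2\}$, the three cases corresponding to $C$ being $\CP$-like, $\HP$-like, or $\Sph$-like.

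The first part is essentially a translation of the geometry into combinatorics; the genuinely delicate steps are the Euler-characteristic identity in (b) and the sphere-type case of (c), both of which amount to matching weights and self-loops against $n$ and require careful counting. Everything else is local and rests on Lemma~\ref{lem:graphofmodel} and Bredon's theorem.
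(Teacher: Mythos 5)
Your part (a) and the Euler-characteristic computation in (b) follow the paper's own route (the identity $\chi(N)=\tfrac{\dim N}{2\mult(N)}+1$ read off the graph, then a double count over the codimension one components through a fixed vertex), and those portions are fine. But the two genuinely delicate steps are not carried out, and one of them rests on a false identification.

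First, the uniformity of the edge count in (b). Knowing that every codimension two component containing three vertices has a complete reduced graph with uniform multiplicity $1$ or $2$ does not yet show that the \emph{total} number of edges between any two vertices of $C$ is constant: the total between $F_i$ and $F_j$ is $\sum_{r}\mult(N_r)$ over all labels $r$ joining them, and ``iterating along paths'' gives no bijection between the label sets of different pairs. The paper fixes an edge $r$ between $1$ and $3$ and partitions the labels between $1$ and $2$ into those $s_1$ admitting a partner $s_2$ with $s_1,s_2,r$ linearly dependent (matched in pairs through an $\HP$-type codimension two component) and those for which $s,r,s_2$ are always independent (matched one-to-one through a $\CP$-type component); this is the content of the step and it is missing from your argument. (The sphere-type case you worry about does not arise here: a codimension two component containing three distinct vertices has $\chi\geq 3$.)

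Second, part (c). Your claim that ``$m=\mult(N)\in\{1,2\}$ as soon as such an $N$ has two vertices'' conflates the multiplicity of a single label with the total edge count $m$: several rationally inequivalent labels can join the same two vertices, each contributing $\mult(N_r)$ edges, so Bredon only gives $\mult(N_r)\in\{1,2\}$ for each, not $m\le 2$. (Indeed, elsewhere in the paper $m=4$ occurs with every codimension one component of $\CP$ or $\HP$ type.) What actually rules out $m\ge 3$ when $\dim(F_i)\ge 4$ is a codimension two argument: if $F_i$ is of $\HP$ type one takes two independent labels between $i$ and $j$, each of multiplicity two, and the codimension two component they determine would need at least four edges between two vertices of its reduced graph, contradicting Lemma~\ref{lem:graphofmodel}; if $F_i$ is of $\CP$ type with $\dim\ge 4$ one takes two nonzero self-loop labels and the corresponding codimension two component cannot be of $\CP$, $\HP$, or $\Sph$ type. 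Without this step part (c) is unproved.
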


 If $m \in \{1,2,\frac{n}{2}\}$ or $(m,n) = (4,16)$, and if the odd Betti numbers vanish, then we can recover the topology of the underlying manifold. Indeed, for $m = \frac{n}{2}$, the Euler characteristic is two by the lemma and hence $M$ is a rational homology sphere. Similarly, if $m = 4$ and $n = 16$, then the total Betti number is three, so $M$ is a rational cohomology Cayley plane by Poincar\'e duality. For the cases $m \in \{1,2\}$, the result is non-trivial and is covered by the following two lemmas. 


\begin{lemma} \label{lem:HP} Suppose  $\gT^d$ with 
$d\ge 4$ acts effectively on a connected, closed, orientable manifold $M^{n}$ with $b_{odd}(M)=0$ such that 
all codimension two tori have fixed-point components of $\CP$, $\HP$, or $\Sph$ type. 
If $\chi(M^{n})=\tfrac{n}{4}+1$, then $H^*(M^{n})\cong H^*(\HP^{n/4})$.
\end{lemma}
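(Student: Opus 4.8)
The plan is to leverage the structural results already established in this section — in particular Lemma~\ref{lem:straight} — together with a comparison between the equivariant cohomology of $M$ and that of a genuine $\gT^d$-action on $\HP^{n/4}$, via the Chang--Skjelbred description of the image of $H^*_{\gT^d}(M)$ in $H^*_{\gT^d}(F)$. Since $b_{\mathrm{odd}}(M) = 0$, the hypothesis together with Bredon's theorem (Theorem~\ref{thm:SinglyGenerated}) forces every fixed-point component of every codimension two subtorus to have vanishing odd cohomology as well, so the action is equivariantly formal and the Chang--Skjelbred lemma applies: $H^*_{\gT^d}(M)$ is the subring of $\bigoplus_i H^*_{\gT^d}(F_i)$ cut out by the conditions coming from the one-skeleton, i.e.\ by the graph with its weights.

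First I would apply Lemma~\ref{lem:straight}: since $\chi(M^n) = \tfrac n4 + 1$ and (after passing to a connected component $C$ of the one-skeleton through a fixed point) the Euler characteristic formula reads $\chi(C) = \tfrac{n}{2m} + 1$, we get $m = 2$; moreover if some $F_i$ has dimension $\ge 4$ then part (c) confirms $m \in \{1,2\}$, and the case $m=1$ is excluded because it would give $\chi = \tfrac n2 + 1$. (If all $F_i$ are points or surfaces one handles this low-dimensional case separately, but $n$ large forces some $F_i$ to be big, or else one argues directly that there are many isolated fixed points arranged in the simplex pattern.) So the graph is the one-skeleton of a $k_0$-simplex with exactly two edges between each pair of vertices and two loops at each positive-dimensional vertex, where $k_0 = \tfrac n4 + 1$; part (a) then pins down the isotropy representation at each $F_i$ up to rational equivalence — exactly the combinatorial data one reads off from a linear $\gT^d$-action on $\HP^{n/4}$.

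Next I would build the model: a $\gT^d$-action on $X = \HP^{n/4}$ whose one-skeleton graph, weights, and fixed-point component dimensions match those of $M$. By Lemma~\ref{lem:graphofmodel} such actions realize precisely the simplex-with-double-edges graphs, and by choosing the linear action appropriately (a product of the standard $\Sp(1)$-representations twisted by characters corresponding to the observed weights) one arranges the match; here one uses that Bredon's theorem also describes which $\CP^{k_i}$, $\HP^{k_i}$, $\Sph^{2k_i}$ occur as fixed-point components, and the integers $\mult(F_i)$ determine the degree of the generator of $H^*(F_i)$, hence the type. Then, since the graph plus weights plus component dimensions determine the image of equivariant cohomology under the Chang--Skjelbred lemma, and the component types determine each $H^*_{\gT^d}(F_i)$ as an $H^*(B\gT^d)$-algebra, we obtain an isomorphism of $H^*(B\gT^d)$-algebras $H^*_{\gT^d}(M) \cong H^*_{\gT^d}(X)$. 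Setting the equivariant parameters to zero (equivariant formality of both sides) yields $H^*(M;\Q) \cong H^*(X;\Q) = H^*(\HP^{n/4};\Q)$.

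\textbf{The main obstacle} I anticipate is verifying that the graph-plus-weight data really does determine the ring structure of the Chang--Skjelbred image and not merely its module structure — one must check that the relations among the $\alpha_i$ forced by the codimension-one (and codimension-two) isotropy data coincide on the two sides, which requires knowing the weights on the nose up to a common rescaling, not just up to rational equivalence. This is where the hypothesis $d \ge 4$ and the codimension-two assumption do real work: they give enough "edges" through each vertex to rigidify the weights, much as in Goertsches--Wiemeler, and one has to carry out the bookkeeping showing the simplex pattern leaves no freedom beyond an overall scaling that can be absorbed. A secondary technical point is the degenerate case where few fixed-point components have positive dimension, which must be dispatched by an ad hoc argument (e.g.\ all components isolated, whence $M$ is rationally $\HP^{n/4}$ by a direct count using $\chi$ and $b_{\mathrm{odd}} = 0$ together with the GKM description).
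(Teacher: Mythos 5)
Your setup is right and matches the paper's: equivariant formality from $b_{\mathrm{odd}}(M)=0$, the Chang--Skjelbred Lemma to get a connected one-skeleton, and Lemma~\ref{lem:straight} to force $m=2$, so the unlabeled graph is the simplex-with-double-edges pattern of a torus action on a rational $\HP^{n/4}$. But from that point on your proposal defers exactly the two steps that constitute the actual proof. First, the passage from ``isotropy representations determined up to rational equivalence'' to ``the weights admit a linear model $r_{ij}^\pm=\lambda_{ij}^\pm(u_i\pm u_j)$'' is not a bookkeeping remark to be absorbed into an overall rescaling: it is the content of Sublemma~\ref{lem:linearmodel}, a genuine case analysis (Claims 1--3 there) that uses Lemma~\ref{lem:alphaineqalphaj} and fixed-point components of codimension-two subtori of $\HP$ and $\CP$ type, and it is where the hypothesis $d\ge 4$ enters. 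You name this as ``the main obstacle'' but do not resolve it, so the plan to build a matching linear $\gT^d$-action on $\HP^{n/4}$ cannot even get started (without the linear form of the weights there is no reason a linear model with the observed weight system exists, and the needed bound $d\le n/4+1$ for an effective linear model is itself only available after the linear structure of the weights is known).

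Second, your claim that ``graph plus weights plus component dimensions determine the image of equivariant cohomology under the Chang--Skjelbred lemma,'' so that $H^*_{\gT^d}(M)\cong H^*_{\gT^d}(X)$ as $H^*(B\gT^d)$-algebras, is asserted rather than proved, and it is the second half of the paper's argument: one must construct, piece by piece on the one-skeleton, canonical classes restricting to $u_i^2$ at each fixed component (with the uniqueness statements of Step 1 of the second sublemma, including the delicate normalization on positive-dimensional $F_i$ of $\CP$ type), glue them via Mayer--Vietoris and Chang--Skjelbred to a global $\alpha\in H^4_{\gT^d}(M)$, and then show by the Vandermonde-determinant computation that $\alpha^0,\dots,\alpha^{n/4}$ are free over $H^*(B\gT^d)$; only then does evaluation at the fiber give $H^*(M)\cong H^*(\HP^{n/4})$. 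In particular your fallback for the ``degenerate'' all-isolated-fixed-points case --- a direct count using $\chi$ and $b_{\mathrm{odd}}=0$ --- does not work: total Betti number $\tfrac n4+1$ with vanishing odd Betti numbers does not determine the ring structure, and ruling out other Poincar\'e-duality patterns is precisely what the freeness of the powers of $\alpha$ (Case~1 of Step~3, the cleanest instance of the Vandermonde argument) accomplishes. So while your outline points at the right tools, the proof as proposed has genuine gaps at both of the lemma's essential steps.
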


\begin{lemma} \label{lem:CP}  Suppose  $\gT^d$ with 
$d\ge 3$ acts effectively on a connected, closed, orientable manifold $M^{n}$ with $b_{odd}(M)=0$ such that 
all codimension two tori have fixed-point components of $\CP$, $\HP$, or $\Sph$ type. 
If $\chi(M^{n})=\tfrac{n}{2}+1$, then  $H^*(M^{n})\cong H^*(\CP^{n/2})$.
\end{lemma}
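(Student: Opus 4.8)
The plan is to run an equivariant-cohomology argument in the spirit of the Chang--Skjelbred Lemma, using the combinatorial data extracted in Section~\ref{sec:recognition}. First I would reduce to the GKM setting: if some fixed-point component has dimension $\geq 4$, then by Lemma~\ref{lem:straight}(c) we have $m\in\{1,2,\tfrac n2\}$, and the hypothesis $\chi(M^n)=\tfrac n2+1$ together with Lemma~\ref{lem:straight}(b) forces $m=1$; but $m=1$ means every connected component of the one-skeleton meeting $F$ consists of a union of $2$-spheres, so the action is GKM in the relevant sense (after possibly discarding components of the one-skeleton disjoint from $F$, which contribute nothing since $b_{\mathrm{odd}}(M)=0$ forces equivariant formality). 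Thus from now on I assume the action is GKM with $m=1$ and $\chi(M)=k_0=\tfrac n2+1$, where $k_0$ is the number of fixed-point components.

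Next I would invoke equivariant formality. Since $b_{\mathrm{odd}}(M)=0$, the action is equivariantly formal, so $H^*_{\gT^d}(M)\hookrightarrow H^*_{\gT^d}(F)$ is injective and, by Chang--Skjelbred, its image equals the image of $H^*_{\gT^d}(M_1)$, where $M_1$ is the one-skeleton; concretely, $H^*_{\gT^d}(M)$ is the set of tuples $(f_i)\in\bigoplus_i H^*(B\gT^d)$ (running over the $k_0$ fixed-point components, all of which are rational points or low-dimensional rational $\CP^{n_i}$'s) that satisfy the GKM compatibility condition along each edge: for an edge between $F_i$ and $F_j$ with weight $r$, one needs $f_i\equiv f_j \bmod r$. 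By Lemma~\ref{lem:graphofmodel}(a) (or rather its higher-rank analogue encoded in Lemma~\ref{lem:straight}), the graph is the one-skeleton of a simplex on $k_0$ vertices with one loop at each positive-dimensional vertex, and Lemma~\ref{lem:straight}(a) pins down the weights up to rational scaling. I would then write down the weights explicitly: after choosing coordinates, the fixed-point data is exactly that of a linear $\gT$-action on $\CP^{k_0-1}$ with weights $\lambda_1,\dots,\lambda_{k_0}$, i.e. edge from $F_i$ to $F_j$ labelled $\lambda_i-\lambda_j$, and this determines the GKM graph-cohomology ring to be $H^*(B\gT^d)[x]/\big(\prod_{i}(x-\lambda_i)\big)$, where $x$ restricts to $\lambda_i$ at the $i$-th vertex.

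From here the conclusion is algebra. The graph cohomology is a free $H^*(B\gT^d)$-module of rank $k_0=\tfrac n2+1$, generated by $1,x,\dots,x^{k_0-1}$; setting the generators of $H^*(B\gT^d)$ to zero (i.e. passing to ordinary cohomology via equivariant formality) yields $H^*(M)\cong\Q[x]/(x^{k_0})$ with $|x|=2$, which is precisely $H^*(\CP^{n/2})$. The one point requiring care is that a priori the vertices $F_i$ may be positive-dimensional rational $\CP^{n_i}$'s rather than points, so the fixed-point data includes the internal generators of each $H^*(F_i)$; here I would use that the loop at $F_i$ contributes (via Bredon's Theorem~\ref{thm:SinglyGenerated} and a dimension count $\sum(n_i+1)$ against $\chi(M)=k_0$) exactly the right bookkeeping, and that the restriction $x\mapsto$ (generator of $H^2(F_i)$) is compatible — this is where Lemma~\ref{lem:straight}(b)'s identity $\chi(C)=\sum\chi(F_i)=\tfrac n{2m}+1$ does the essential counting work. \emph{The main obstacle} I anticipate is precisely verifying that the GKM/Chang--Skjelbred description is valid here even though the vertices need not be isolated fixed points and the graph is only known up to rational scaling of weights: one must check that rational scalings do not affect the congruence conditions in a way that changes the module structure, and that the "fat vertex" contributions (the $\CP^{n_i}$ factors) assemble correctly. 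I would handle this by the same device used in \cite{GoertschesWiemeler15} and in Lemma~\ref{lem:HP}, namely replacing each fat vertex by the GKM description of the linear $\gT$-action on that $\CP^{n_i}$ and checking the global compatibility on the enlarged graph, whose Euler characteristic is still $\tfrac n2+1$ by Lemma~\ref{lem:straight}(b).
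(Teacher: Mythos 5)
Your overall strategy (Chang--Skjelbred, equivariant formality from $\bodd(M)=0$, the simplex structure of the graph from Lemma~\ref{lem:straight}, a linear model for the weights, and then reading off $H^*(M)\cong\Q[x]/(x^{n/2+1})$) runs parallel to the paper, but two essential steps are missing or wrong. First, the linear model is not a consequence of Lemma~\ref{lem:straight}(a): that lemma only determines the isotropy representations up to rational equivalence from the graph, it does not produce elements $u_0,\dots,u_{k_0}\in\Lt^*$ with $r_{ij}=u_i-u_j$ after rescaling. This normalization is the substantive Claim in the paper's treatment of the $\CP$ case; it is proved by passing to $\CP$-type fixed components of codimension two subtori and analyzing simplices $(0,1,2,3)$, $(0,2,3,k)$, etc., with separate cases according to linear (in)dependence of the $u_i$, and it genuinely uses the hypothesis on codimension two subtori via Lemma~\ref{lem:graphofmodel}. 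Simply ``choosing coordinates'' does not give it. (Also, your reduction is garbled: $\chi=\tfrac n2+1$ forces $m=1$ directly from Lemma~\ref{lem:straight}(b), but $m=1$ does \emph{not} mean the one-skeleton is a union of $2$-spheres -- its components are $\CP$-type fixed sets of codimension one tori, possibly of large dimension, and the fixed components $F_i$ may be positive-dimensional -- so the action need not be GKM.)

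Second, the identification $H^*_{\gT^d}(M)\cong H^*(B\gT^d)[x]/\bigl(\prod_i(x-\lambda_i)\bigr)$ via ``congruences $f_i\equiv f_j \bmod r$'' is asserted, not proved, and it is exactly where the difficulty lies. The classical GKM description of the image requires isolated fixed points; here $H^*_{\gT^d}(F_i)=H^*(F_i)\otimes H^*(B\gT^d)$ for fat vertices, and the compatibility conditions along an edge meeting a fat vertex involve the internal classes of $H^*(F_i)$ (cf.\ Step 1(iv) of the sublemma in the $\HP$ case). Your proposed fix -- ``replace each fat vertex by the GKM description of the linear $\gT$-action on that $\CP^{n_i}$'' -- does not make sense as stated, since $\gT^d$ acts trivially on $F_i$. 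The paper avoids computing the full image: after the linear model it constructs a single class $\alpha\in H^2_{\gT^d}(M)$ restricting to $(u_0,\dots,u_{k_0})$ on the fixed set, shows via a Vandermonde determinant (differentiated to account for the fat components, whose total Betti numbers $n_i+1$ enter as multiplicities) that $\alpha^0,\dots,\alpha^{n/2}$ are linearly independent over $H^*(B\gT^d)$, and then a rank/degree count identifies them as a free basis, whence $H^*(M)\cong\Q[x]/(x^{n/2+1})$. Without an argument of this kind (or an actual proof that the image equals your congruence ring in the presence of fat vertices), the proposal does not close.
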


\begin{corollary}\label{cor:codim2CP} Suppose $\gT^3$ acts on a connected, orientable, closed manifold $M^{n}$ with $b_{odd}(M)=0$ such that all circles only have fixed-point components whose rational cohomology is of  $\CP$ type,
 then $M^{n}$ has the rational cohomology of $\CP^{n/2}$.
\end{corollary}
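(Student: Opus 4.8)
The plan is to deduce this from Lemma \ref{lem:CP} by verifying its two hypotheses: that all codimension two tori have fixed-point components of $\CP$, $\HP$, or $\Sph$ type, and that $\chi(M^n) = \tfrac n2 + 1$. The first hypothesis is immediate from the assumption (a codimension two subgroup of $\gT^3$ is, up to finite index, a circle, and more to the point every fixed-point component of any subgroup is also a fixed-point component of some circle, which by assumption is of $\CP$ type), so the real work is the Euler characteristic count. For that I would invoke Lemma \ref{lem:straight}. Since $d = 3 \geq 3$ and every circle has only $\CP$-type fixed-point components, in particular this holds for codimension two tori, so parts (a)--(c) of Lemma \ref{lem:straight} apply to every connected component $C$ of the one-skeleton.

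First I would argue that the one-skeleton is in fact connected, so that there is a single component $C$ with $\chi(C) = \chi(M)$; alternatively one can run the argument componentwise and sum. For each such $C$, Lemma \ref{lem:straight}(b) gives an integer $m = m(C) \geq 1$ with $\chi(C) = \tfrac{n}{2m} + 1$. The key point is to pin down $m = 1$. Here is where I would use the $\CP$-type hypothesis decisively: pick a fixed-point component $F_i$ in $C$. If $\dim(F_i) \geq 4$, then Lemma \ref{lem:straight}(c) forces $m \in \{1, 2, \tfrac n2\}$, and the cases $m = 2$ and $m = \tfrac n2$ must be excluded because they would make $M$ (via Lemmas \ref{lem:HP} or the rational-sphere observation) of $\HP$ or $\Sph$ type rather than $\CP$ type — but this would contradict that some circle has an $\CP^k$-fixed-point component whose multiplicity $\mult(F_i) = 2$ (in the $\CP$ type case $\mult$ equals $1$ on even-dimensional pieces of $\CP$ type, whereas an edge-multiplicity $m=2$ graph forces a $\HP$-type or higher incidence). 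More cleanly: examine the isotropy representation of $\gT^3$ at a fixed point $p \in F_i$. By Bredon's theorem (Theorem \ref{thm:SinglyGenerated}), applied to each of the three codimension-one circles' $\CP$-type fixed-point components through $p$, the weights of the isotropy representation at $p$ satisfy strong constraints; in particular the graph edge multiplicity $m$ must equal the generator degree datum $\mult$ of a $\CP$, which is $1$. So $m = 1$ and hence $\chi(C) = \tfrac n2 + 1$.

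The remaining case is that every fixed-point component in $C$ has dimension $\leq 2$, i.e.\ the action restricted to $C$ is close to GKM; here I would handle $m$ directly from the weight analysis of the $\gT^3$-isotropy representation at an isolated fixed point, again using Bredon's theorem on the $\CP$-type circle fixed-point components to see that $m = 1$ (an $m \geq 2$ graph would force, at an isolated fixed point lying on a circle-fixed-point set of $\CP$ type, a weight of multiplicity $\geq 2$, contradicting that a rational $\CP^k$ has simple weights at its isolated fixed points). Once $\chi(M) = \tfrac n2 + 1$ is established, Lemma \ref{lem:CP} — whose hypotheses $d \geq 3$, $b_{\mathrm{odd}}(M) = 0$, codimension two condition, and the Euler characteristic equality are now all in hand — yields $H^*(M^n) \cong H^*(\CP^{n/2})$, completing the proof.

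The main obstacle I anticipate is not the invocation of Lemma \ref{lem:CP} but the Euler characteristic count: one must rule out $m \geq 2$ using \emph{only} the $\CP$-type hypothesis (as opposed to the $\CP$-or-$\HP$-or-$\Sph$ hypothesis that Lemma \ref{lem:straight} itself uses), and this requires a careful look at how $\mult$ of a $\CP$-type component interacts with the edge multiplicities of the graph — essentially, that a connected component of the one-skeleton all of whose $F_i$ are of $\CP$ type cannot support a graph with $m \geq 2$ because the edges through such an $F_i$ would have to be distributed with multiplicity $m \cdot(\text{number of neighbors})$ but the normal isotropy weights at $F_i$ of an ambient $\CP^k$ are all distinct.
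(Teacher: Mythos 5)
Your high-level skeleton (connected one-skeleton, Lemma \ref{lem:straight} giving $\chi=\tfrac{n}{2m}+1$, ruling out $m\ge 2$, then Lemma \ref{lem:CP}) is the same as the paper's, but the decisive step, $m=1$, is justified by a false claim. It is not true that ``a rational $\CP^k$ has simple weights at its isolated fixed points,'' nor that ``the normal isotropy weights at $F_i$ of an ambient $\CP^k$ are all distinct'': for instance, the linear $\gT^2$-action on $\CP^3$ with weights $(0,0),(1,0),(1,0),(0,1)$ has an isolated fixed point at which the weight $(1,0)$ occurs twice, while every circle has only fixed-point components of $\CP$ type and the graph still has $m=1$ (the repeated weight produces one edge into a $\CP^2$-component, not two edges between the same pair of vertices). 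Conversely, $m\ge 2$ need not produce any repeated weight: the two edges between $F_i$ and $F_j$ may carry linearly independent labels $r_1\neq r_2$ coming from two different codimension-one tori. So your criterion neither follows from nor implies $m\ge 2$, and the contradiction does not close. The argument that does work (and is what the paper's one-line proof means by ``a codimension two torus whose fixed-point component is not of $\CP$ type'') is a two-case analysis: if the two edges carry the same label, the corresponding codimension-one torus has a fixed-point component with $\mult=2$, i.e.\ of $\HP$ type; but that component lies inside the fixed-point component of a circle, which is a rational $\CP$ by hypothesis, so Bredon's theorem (Theorem \ref{thm:SinglyGenerated}) forces it to be of $\CP$ type, a contradiction. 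If the labels are independent, the circle $(\ker r_1\cap\ker r_2)_0$ has a fixed-point component containing both $F_i$ and $F_j$, and its reduced graph has two edges between these vertices, contradicting Lemma \ref{lem:graphofmodel}(a). (Your first attempted exclusion of $m\in\{2,\tfrac n2\}$ --- that these ``would make $M$ of $\HP$ or $\Sph$ type rather than $\CP$ type'' --- is also circular, since the type of $M$ is the conclusion, not a hypothesis.)

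There is a second, smaller gap: you assert connectedness of the one-skeleton without argument, and your fallback of ``running the argument componentwise and summing'' does not work. If two components of the one-skeleton contained fixed points, each would contribute $\tfrac n2+1$ (after the $m=1$ analysis in each), so $\chi(M)=\chi(F)$ would be at least $n+2$ and Lemma \ref{lem:CP} could not be invoked; nothing in the componentwise computation rules this out. What is actually needed is that all of $M^{\gT^3}$ lies in a single component of the one-skeleton, and this is exactly what the Chang--Skjelbred Lemma (Theorem \ref{lem:ChangSkjelbred}) provides, its equivariant-formality hypothesis being supplied by $\bodd(M)=0$. With that citation and the corrected two-case argument for $m=1$, your reduction to Lemma \ref{lem:CP} is the paper's proof.
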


In addition to the tools from equivariant cohomology used to 
prove Theorem \ref{thm:t5}, these results  require a fundamental result of Chang and Skjelbred \cite{ChangSkjelbred74}. It provides a way to lift information about the equivariant cohomology of the fixed-point set $M^{\gT}$ and the one-skeleton $\{x \in M \st \dim(T\cdot x) \leq 1\}$ to the ambient manifold $M$.

\begin{theorem}[Chang-Skjelbred Lemma]\label{lem:ChangSkjelbred}
Let $M$ be a connected, orientable $\gT$-orbifold of finite type, 
that is there is a compact $\gT$-invariant subset whose 
inclusion map is a homotopy equivalence. 
Suppose that the map $H^*_{\gT}(M)\to H^*(M)$ is surjective. Then $M^{\gT}$ is contained in one component $C$ of the one-skeleton. In addition, the
restriction map $H^*_{\gT}(M) \to H^*_{\gT}(M^{\gT})$ is injective, and its image is the same as the image of the restriction map $H^*_{\gT}(C) \to H^*_{\gT}(M^{\gT})$.
\end{theorem}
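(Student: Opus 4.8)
The plan is to run the standard localization argument for equivariantly formal actions, the one input I would take as already available being that equivariant formality makes $H^*_{\gT}(M)$ a free module over $R := H^*(B\gT) \cong \Q[t_1,\dots,t_d]$ — this is exactly the statement recorded in the paragraph preceding Lemma~\ref{lem:injective}. Since cohomology is with rational coefficients, finite isotropy is invisible and the orbifold may be treated like a manifold for cohomological purposes; the finite-type hypothesis ensures that the modules $H^*_{\gT}(-)$ in question are finitely generated over $R$, which is what the localization theorem requires. The first step would be to record two formal consequences. Because $R$ is a polynomial ring, hence a regular (in particular normal) Noetherian domain, the free module $H^*_{\gT}(M)$ equals the intersection $\bigcap_{\operatorname{ht}\mathfrak p = 1} H^*_{\gT}(M)_{\mathfrak p}$ of its localizations at height-one primes, formed inside its fraction module $Q := H^*_{\gT}(M) \otimes_R \operatorname{Frac}(R)$. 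And, by the localization theorem, the restriction $H^*_{\gT}(M) \to H^*_{\gT}(M^{\gT})$ becomes an isomorphism after inverting the nonzero elements of $R$; being a map out of a torsion-free module, it is therefore already injective, and it exhibits an inclusion $H^*_{\gT}(M) \hookrightarrow H^*_{\gT}(M^{\gT})$ of torsion-free modules, both embedded in $Q$.

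The key geometric step is to show that localizing at a height-one prime collapses $M$ onto its one-skeleton $M_1 = \{x : \dim(\gT x) \le 1\}$, a closed $\gT$-invariant set. If $x \notin M_1$, then $\dim(\gT x) \ge 2$, so the identity component $K = (\gT_x)^{\circ}$ is a subtorus of codimension $\ge 2$ and the prime $\mathfrak p_K := \ker(R \to H^*(BK))$ has height $\ge 2$. Covering $M \setminus M_1$ by equivariant tubular neighbourhoods of orbits and using Mayer--Vietoris together with $H^*_{\gT}(\gT/\gT_x) \cong H^*(BK)$, one sees that the support of the finitely generated $R$-module $H^*_{\gT}(M, M_1)$ is contained in the finite union $\bigcup_{x \notin M_1} V(\mathfrak p_{(\gT_x)^{\circ}})$, hence in a subvariety of codimension $\ge 2$; in particular $H^*_{\gT}(M, M_1)_{\mathfrak p} = 0$ for every height-one prime $\mathfrak p$. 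Localizing the long exact sequence of the pair $(M, M_1)$ — localization being exact — then yields an isomorphism $H^*_{\gT}(M)_{\mathfrak p} \xrightarrow{\sim} H^*_{\gT}(M_1)_{\mathfrak p}$ for all such $\mathfrak p$.

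Then I would assemble. Set $A := \operatorname{im}(H^*_{\gT}(M) \to H^*_{\gT}(M^{\gT}))$ and $B := \operatorname{im}(H^*_{\gT}(M_1) \to H^*_{\gT}(M^{\gT}))$, both $R$-submodules of the free module $H^*_{\gT}(M^{\gT})$; since $M^{\gT} \subseteq M_1 \subseteq M$, the first restriction factors through the second, so $A \subseteq B$. Localization commutes with images, so the previous step gives $A_{\mathfrak p} = B_{\mathfrak p}$ at every height-one prime $\mathfrak p$. Now $A \cong H^*_{\gT}(M)$ is free and, by the localization theorem, of full rank in $Q$, whence $A = \bigcap_{\operatorname{ht}\mathfrak p = 1} A_{\mathfrak p}$; while $B$, being torsion-free, satisfies $B \subseteq \bigcap_{\operatorname{ht}\mathfrak p = 1} B_{\mathfrak p} = \bigcap_{\operatorname{ht}\mathfrak p = 1} A_{\mathfrak p} = A$. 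Hence $A = B$. (Alternatively one could invoke the Atiyah--Bredon/Franz--Puppe exactness of the skeleton complex at its first two terms, but the localization route is self-contained once freeness is known.) Finally, comparing $A$ and $B$ in degree zero produces the first assertion: $H^0_{\gT}(M) = \Q$, as $M$ is connected, and it maps diagonally into $H^0_{\gT}(M^{\gT}) \cong \Q^{c}$ with $c$ the number of components of $M^{\gT}$, so $A^0$ is the diagonal line; but $B^0$ is spanned by the pairwise disjointly supported indicator vectors of the sets $\{F : F \subseteq C'\}$ as $C'$ runs over the components of $M_1$, and such a family spans a line only if a single component, call it $C$, carries all components of $M^{\gT}$. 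Writing $H^*_{\gT}(M_1) = H^*_{\gT}(C) \oplus \bigoplus_{C' \ne C} H^*_{\gT}(C')$ and observing that restriction to $M^{\gT} \subseteq C$ kills the complementary summands turns $B$ into $\operatorname{im}(H^*_{\gT}(C) \to H^*_{\gT}(M^{\gT}))$, which is the desired description.

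The main obstacle I anticipate is the support computation of the second step — making precise, for an orbifold of finite type, that $H^*_{\gT}(M, M_1)$ is supported in codimension at least two. This relies on the localization theorem for equivariant cohomology together with a Mayer--Vietoris patching over the finitely many orbit types, and one must also keep careful track that $H^*_{\gT}(M)$, $H^*_{\gT}(M_1)$ and $H^*_{\gT}(M^{\gT})$ are all compared inside one common fraction module $Q$, so that the ``intersection of localizations'' manipulation is legitimate; it is precisely the freeness of $H^*_{\gT}(M)$ granted by equivariant formality that makes this last point work.
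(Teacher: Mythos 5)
The paper does not prove this statement: it is quoted as a classical result of Chang and Skjelbred \cite{ChangSkjelbred74}, with the sharper ``reflexive'' formulation attributed to \cite{AlldayFranzPuppe14}, so there is no in-paper argument to compare yours against. Your proposal is the standard localization proof of the Chang--Skjelbred Lemma and is essentially correct: freeness of $H^*_{\gT}(M)$ from equivariant formality, injectivity into $H^*_{\gT}(M^{\gT})$ via the Borel localization theorem, the observation that $H^*_{\gT}(M,M_1)$ is supported in codimension at least two because all isotropy groups off the one-skeleton have corank at least two, and then the algebraic Hartogs principle $N=\bigcap_{\operatorname{ht}\mathfrak p=1}N_{\mathfrak p}$ for a free module over the normal domain $H^*(B\gT)$ to conclude $A=B$. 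The degree-zero comparison to extract the ``single component $C$'' statement is also a clean way to get the first assertion, and is consistent with how the paper later uses connectedness of the one-skeleton.

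Two small points you should tighten if you write this up in full. First, in the support computation you pass directly from tubular neighbourhoods of orbits in $M\setminus M_1$ to the support of the relative module $H^*_{\gT}(M,M_1)$; the clean route is to take a $\gT$-invariant neighbourhood $U$ of $M_1$ deformation retracting onto it, excise to identify $H^*_{\gT}(M,M_1)$ with $H^*_{\gT}(M\setminus M_1,\,U\setminus M_1)$, and then apply the codimension-two support bound to both absolute groups of spaces with corank $\ge 2$ isotropy. Second, the intersection-of-localizations manipulation is applied degree by degree, where finite type guarantees finite generation; you say this implicitly but it is the place where the finite-type hypothesis is genuinely used, together with the finiteness of the set of orbit types that makes the union $\bigcup_K V(\mathfrak p_K)$ finite and makes $H^*_{\gT}(M_1)$ decompose as a finite direct sum over components.
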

The surjectivity of the map
$H^*_{\gT}(M)\to H^*(M)$ is by definition equivalent 
to equivariant formality. It is of course automatic if the odd Betti numbers of $M$ vanish. It is also equivalent to saying that the fixed-point set has the same total Betti number as $M$.
We should mention that the first and last conclusion of the theorem hold slightly more generally. Instead of requiring equivariant formality 
one can demand that $H^*_\gT(M)$ becomes a free $H^*(B\gT)$-module after one divides out the $H^*(B\gT)$-torsion elements.
Moreover, all conclusions of the theorem hold if and only if \(H^*_{\gT}(M)\) is reflexive
(see \cite[Corollary 1.3]{AlldayFranzPuppe14}). 
However, we will use the theorem only in the above 
form. The reason that we state it for orbifolds rather than closed 
manifolds is that in the odd-dimensional case we will divide 
out an almost free subtorus action on a subset of $M$ and verify 
the assumptions of the Chang-Skjelbred Lemma for the induced 
torus action on the quotient.

Using this result, Corollary~\ref{cor:codim2CP} 
 is an immediate consequence of Lemma~\ref{lem:CP} and Lemma~\ref{lem:straight}.
In fact, by the  Chang-Skjelbred Lemma  the one-skeleton is connected, and we can 
then use Lemma~\ref{lem:straight} to see that the Euler characteristic is $\frac n 2+1$, since, in the case of $m\ge 2$,  
we could find a codimension two torus whose fixed-point component is not of  $\CP$ type.

We now prove the lemmas in order of occurrence. In Section~\ref{sec:t8sphere}, we explain how Theorem~\ref{thm:t8NoCurvature}
under the assumptions of b) follows in even dimensions.
The proof of Theorem~\ref{thm:t8NoCurvature} in odd dimensions is quite different and is carried out in Section~\ref{subsec:odd}.
Throughout this section all cohomology will be with  rational coefficients.

\subsection{\subsecmodel} \label{subsec:model} We prove Lemma~\ref{lem:graphofmodel}. 
Let $M^n$ denote an orientable manifold whose rational cohomology is generated by one element in degree $w\in \{2,4\}$.
We will  frequently use  a result of Bredon (see Theorem~\ref{thm:SinglyGenerated})
implying that any fixed-point component of a circle action on $M$ 
has cohomology generated by an element in degree $2$ or $w$.
The rational cohomology $H^*(B\gT^d)$ is a polynomial ring of $d$ variables of degree 2. Any homomorphism 
$h\colon \gT^d\rightarrow \gS^1$ induces naturally a homomorphism 
$H^*(B\gS^1)\rightarrow H^*(B\gT^d)$, and we can think of the 
image of a generator $t\in H^2(B\gS^1)$ as the rational weight of this 
representation. Thus any non-zero element in $H^2(B\gT^d)$ 
determines uniquely a codimension one torus, the identity component of the kernel
of the corresponding homomorphism. 
In the situation of the lemma, we have a $\gT^2$-action.

We let $\alpha\in H^w_{\gT^2}(M)$ denote a lift of the generator 
of $H^w(M)$. Then $\alpha$ together with a basis of $H^2(B\gT^2)$ 
generate  the equivariant cohomology of $M$.  
In each of the fixed-point components $F_1,\ldots, F_{k_0}$, 
we choose a point $p_i\in F_i$ and 
let $\alpha_i\in H^w(B\gT^2)\cong \Q[t_1,t_2]$
 denote the pullback of $\alpha$  to $H^w(p_i\times B\gT^2)\cong H^w(B\gT^2)$.
 We can employ Lemma~\ref{lem:alphaineqalphaj} to see 
 that $\alpha_i\neq \alpha_j$ and that $\alpha_i-\alpha_j$ decomposes 
 into linear factors for $i \neq j$. \\[1ex]
{\bf Claim 1.} Suppose $w=2$.   
The graph assigned to the one-skeleton has exactly one edge from $i$ to $j$, and the label is a multiple of $\alpha_i-\alpha_j$ for $i\neq j$. 
\\[1ex]
Let $\gH$ be the codimension one torus determined by
  $\alpha_i -\alpha_j \in \Lt^*$. 
Then $\alpha_i$ and $\alpha_j$ pull back to the same element of $H^*(B\gH)$. By Lemma~\ref{lem:alphaineqalphaj}, $F_i$ and $F_j$ must be contained in the same 
fixed-point component of $\gH$.

For any other codimension one torus $\hat \gH$, the pullback of $\alpha_i -\alpha_j$ to 
$H^*(B\hat \gH)$ remains non-zero, and hence $p_i$ and $p_j$ are not in the same fixed-point component of 
$\hat \gH$.\\[1ex]
{\bf Claim 2.} Suppose $w=4$, and let $r_{ij}^\pm$ denote the (possibly linearly dependent) linear factors of $\alpha_i - \alpha_j$ for $i \neq j$. The graph assigned to the one-skeleton has exactly two edges from $i$ to $j$, and the labels are multiples of $r_{ij}^\pm\in H^2(B\gT^2)$.\\[1ex]
Let $\gH^\pm$ be the codimension one tori whose Lie algebras are given by the kernels of $r_{ij}^\pm$.
Then $\alpha_i$ and $\alpha_j$ pull back to the same element in $H^*(B\gH^\pm)$.  As before, this implies  that $F_i$ and $F_j$ are in the same fixed-point component $N^\pm$ of $\gH^\pm$.
We can finish the proof by showing that  $N^\pm$ is of $\HP$ type if and only if $\ml r_{12}^\pm \mr$ is one-dimensional. 

Suppose first that $N^+$ is of $\HP$ type. 
Notice that $\alpha_i-\alpha_j$ remains unchanged 
if we add an element of $H^4(B\gT^2)$ to $\alpha$ before 
defining $\alpha_i$ by the pullback. On the other hand, 
we can use this flexibility to arrange that 
the pullback $\hat{\alpha}$ of $\alpha$ to 
 $H^4_{\gT^2}(N^+)\cong H^4(E\gS^1\times_{\gT^2/\gH^+} N^+ \times B\gH^+)$ lies 
 in $H^4_{\gT^2/\gH^+}(N^+)$. Recall that, as in the proof 
 of Lemma~\ref{lem:endgame}, there is a natural 
 injection of $H^4_{\gT^2/\gH^+}(N^+)$ into $H^*_{\gT^2}(N^+)$
induced by the fibration $E\gT^2 \times_{\gT^2}N^+\to
  E\gT^2/\gH^+ \times_{\gT^2/\gH^+}N^+$.
 Hence, $\alpha_1-\alpha_2\in H^4(B\gT^2/\gH^+)$ is given up to a rational factor by the square of a generator
  of $H^2(B\gT^2/\gH^+)\cong \Q$.

It remains to see that, if $N^+$ is of $\CP$ type, then $r_{ij}^\pm$ are linearly independent. 
We look again at the image $\hat \alpha$ of $\alpha$ in the equivariant cohomology 
$H^4_{\gT^2}(N^+)$.
Here we can use the decomposition arising from the fibration 
 to say $\hat \alpha= u\cdot t+q u^2$,
where $u\in H^2_{\gT^2/\gH^+}(N^+)$ is a generator, $t\in H^2(B\gT^2)$, and $q \in \Q$.
It is well known that the image of $u\cdot t$ in $H^4_{\gH^+}(N^+)$ 
does not vanish and in particular $t\not\in H^2(B(\gT^2/\gH^+))$.
 If we let $\beta_i\in H^2_{\gT^2}(p_i)$ denote 
the image of $u$, then $\beta_i \in H^2(B\gT^2/\gH)$ and hence
$\beta_i$ and $\beta_j$ are multiples of each other with $\beta_i\neq \beta_j$. 
Since $\alpha_u=\beta_ut+q \beta_u^2$ for $u=i,j$, 
this shows that the difference $\alpha_i -\alpha_j$ has a second linearly independent factor.

Finally, we have to determine the number of edges from $i$ to itself. 
If $F_1$ is a point, there are clearly no edges from $1$ to itself. 
If $F_1$ has non-trivial cohomology generated in degree $w$, then it is also clear that 
there cannot be any edge from $i$ to itself with a non-zero weight because 
the corresponding fixed-point component would violate the result of Bredon. It remains to check that, if $F_1\neq pt$ is of $\CP$ type and $w=4$, 
then there is precisely one edge from $1$ to itself with a non-zero weight.
We look again at the image $\hat \alpha$ of $\alpha$ in the  cohomology 
$H^4(F_1\times B\gT^2)$. After possibly choosing a different generator $ \alpha$ we may assume $\alpha_1=0$, and
we can use the K\"unneth decomposition of the product to say $\hat \alpha= u\cdot t+q u^2$,
where $u\in H^2(F_1)$ is a generator, $t\in H^2(B\gT^2)$, and $q \in \Q$.
It is well known that $u\cdot t\neq 0$. For any codimension one torus, the fixed 
point component at $F_1$ remains of $\CP$ type unless we take the one determined by $t$. 
So $t$ is the only non-zero label that occurs as a label of an edge from $1$ to itself.

\subsection{\subsecstraight}\label{subsec:straight} We prove Lemma~\ref{lem:straight}.
 We are given an action by a torus $\gT^d$ on a manifold $M^n$ with the property that every codimension two torus has fixed-point components of rational type $\CP^k$, $\HP^k$, or $\s^k$. We fix a component $C$ of the one-skeleton, and we let $F_1,\ldots,F_{k_0} \subseteq M^{\gT^d}$ denote those fixed-point components lying in $C$.
We will often pass to a fixed-point component $N$  at $F_i$ of 
a subtorus $\gR$ of codimension two. 
The graph assigned 
to the connected component $C'\supseteq F_i$ of the one-skeleton 
 of the induced $(\gT^d/\gR)$-action on $N$ is 
 a subgraph of the graph of $C$. We  will frequently use that 
  this  {\em reduced 
 graph } is subject to the conclusion of Lemma~\ref{lem:graphofmodel}.

{\em a).} In order
to see that the isotropy representation is determined, we can 
actually work with the weaker assumptions of 
Definition~\ref{def:graph}. Consider an 
irreducible subrepresentation of the isotropy representation of $\gT^d$ at $F_i$. 
We can consider the fixed-point component $N$ at $F_i$ of the identity component of the kernel of this 
subrepresentation.  If $N$ contains another component $F_j$, then the weight of this subrepresentation 
occurs as a label on an edge between $i$ and $j$. If it does not, then it must occur as a label of an edge  from 
$i$ to itself.  It remains to see with which multiplicity each of these subrepresentation occurs. This is equivalent to 
determining the dimension of $N$, because the multiplicity is
given by $\tfrac{\dim(N)-\dim(F_i)}{2}$. 
Notice that we can read off the graph which fixed-point components are contained in $N$. 
In particular, we know the Euler characteristic of $N$. Since we also can determine  the number $\mult(N)$ by the graph
 and $\chi(N)=\tfrac{\dim(N)}{2\mult(N)}+1$,
 this allows us 
 to read off the dimension of $N$.
 
{\em b).} To see that the number of edges  between  any two vertices is the same, we argue as follows. 
  We prove this for  $1$, $2$, and $3$, and we first consider the subcase that
 there is a fixed-point component $N$ of a  codimension one torus $\gT^{d-1}$ containing
 $F_1$, $F_2$, and $F_3$. If $N$ is of $\HP$ type, then there cannot be any other edges 
 from $i$ to $j$ for all distinct $i,j \in\{1,2,3\}$, because otherwise 
 we could find a codimension two torus for which the reduced graph has more 
 than two connections between $1$, $2$, and $3$. 
 If $N$ is of $\CP$ type, then any codimension two torus $\gH\subset \gT^{d-1}$ 
 whose fixed-point component contains one additional edge between $1$ and $2$
 also contains exactly one additional edge between $2$ and $3$ and between $1$ and $3$.
 This gives the needed correspondence.

 It remains to consider the subcase that $F_1$, $F_2$, and $F_3$ 
 are not all contained in the fixed-point component of a codimension one group. 
 We fix an edge with label $r$ between $1$ and $3$, and want to prove that the number of edges between $1$ and $2$ and between $2$ and $3$ is the same. Given $r$, there is a natural division of labels between $1$ and $2$ 
 into two disjoint subsets. The first subset consists of those labels $s_1$ such that we can find another label $s_2$ with the property that $s_1$, $s_2$, and $r$ are linearly dependent. Then $s_2$ is uniquely determined by $s_1$. The fixed-point component of the codimension two torus determined by the kernels of $r$ and $s_1$ also has exactly two edges labeled $u_1$ and $u_2$ between $2$ and $3$, and of course $u_1$, $u_2$, and $r$ are linearly dependent as well. This shows that $r$ induces a canonical mapping of pairs of edges between $1$ and $2$ in the first category to corresponding pairs between $2$ and $3$. 
 
The remaining labels $s$ have the property that $s$, $r$, and $s_2$ are linearly independent for any choice $s_2\neq s$. But then the fixed-point component of the codimension two torus determined by $s$ and $r$ has a fixed-point component of $\CP$ type and we again get a one-to-one mapping to corresponding edges between $2$ and $3$. 
 
The argument for edges from $i$ to itself is similar.

Clearly $\chi(\bigcup_{i=1}^{k_0} F_i)=2$ holds if and only if the $m=\tfrac{n}{2}$ and the graph has either one or two vertices. 
If $\chi(\bigcup_{i=1}^{k_0} F_i)\ge 3$, then all fixed-point components 
of codimension one tori are of $\CP$ or $\HP$ type.
To compute the Euler characteristic of $\bigcup_{i=1}^{k_0} F_i$, assume first that $F_1$ consists of a single point. 
We look at all codimension one tori whose fixed-point component $N$ at $F_1$ 
is bigger.  We use the number $\mult(N)\in \{1,2\}$ from Definition~\ref{def:graph} for the multiplicity of $N$ and the formula $\chi(N)=\tfrac{\dim(N)}{2\mult(N)}+1$. We let $N_1,\ldots, N_h$ denote the fixed-point components arising in this way, listed with multiplicity. Since different $N_i$ intersect orthogonally, their dimensions add up to $n$. Due to our choice of listing components of $\HP$ type twice, we have $n=\sum_{i=1}^h\tfrac{\dim(N_i)}{\mult(N_i)}$. Therefore
	$$\sum_{i=1}^h \bigl(\chi(N_i)-\chi(F_1)\bigr) = \sum_{i=1}^{h}\tfrac{\dim(N_i)}{2\mult(N_i)}=\tfrac{n}{2}.$$
 On the other hand, we can compute the sum differently. 
 The number $\chi(N_i)-\chi(F_1)$ is exactly the Euler characteristic of
 $N_i\cap \bigcup_{j=2}^h F_j$.  
 It is now easy to see that for each $F_j$ there are exactly $m$ numbers 
 $\{i_1,\ldots,i_m\}$ such that $F_j\subset N_{i_u}$. These correspond to the labels 
 of edges between $1$ and $j$. Some label might occur twice but then the corresponding component is of $\HP$ type and it was listed twice. 
 This shows 
$$\tfrac{n}{2}=\sum_{i=1}^{h}\bigl( \chi(N_i)-\chi(F_1)\bigr)=m\sum_{j=2}^{k_0}\chi(F_j),$$
and the result follows. 
 
 If there is no component consisting of a single point, the argument is similar.
 If $m\ge 3$ or $m=1$, one can use the fact that all the manifolds $N_i$
 that contain $F_1$ and another component must be 
 of $\CP$ type, since otherwise one could find a  fixed-point component of a codimension two torus which is not 
 of $\CP$, $\HP$, or $\Sph$ type for $i=1,\ldots,h$. 
 Moreover, for any non-zero label of an edge from $1$ to itself, the corresponding fixed-point component is of $\HP$ type,
 and the subrepresentation is perpendicular to the tangent space of $N_i$ for $i=1,\ldots,h$. Therefore, the dimensions of the components satisfy $\sum_{i=1}^h(\dim(N_i)-\dim(F_1))=  n-m\dim(F_1)$. With these modifications one easily arrives 
 at the same result. 
 If $m=2$, then there is a codimension one torus $\gT^{d-1}$ whose fixed-point component 
 is of $\HP$ type.  If one works with all codimension one tori inside $\gT^{d-1}$ instead,
 one can use that  all these components  at $F_1$ are of $\HP$ type.
 
{\em c).} Finally, we want to prove $m\le 2$ if
 $\dim(F_1)\ge 4$ and $\chi(\bigcup F_i)\ge 3$.  Assume on the contrary that $m\ge 3$. 
 If $F_1\cong \HP^k$ for some $k\ge 1$, we consider two linearly independent weights $r_1$ and $r_2$ 
 of edges between $1$ and $2$.  Each of these labels 
 occurs with multiplicity two, since the fixed-point components 
 of the codimension one tori are of $\HP$ type. 
 But then we get a contradiction by considering the fixed-point component of the codimension two torus determined by $r_1$ and $r_2$. In the remaining case, $F\cong \CP^k$ and the assumption of the dimension implies 
 $k\ge 2$. We then consider two non-zero weights $r_1$ and $r_2$ of edges from 1 to itself. 
 The fixed-point component of the corresponding codimension two torus cannot be of $\CP$, $\HP$, or $\Sph$ type.

\subsection{\subsecHP}\label{subsec:HP} We want to prove Lemma~\ref{lem:HP}.
By the Chang-Skjelbred Lemma, we know that the one-skeleton is connected. 
In view of Lemma~\ref{lem:straight}, the knowledge of the Euler characteristic then shows that the unlabeled graph assigned to the one-skeleton looks like the unlabeled graph of a torus action on a rational $\HP^{n/4}$. 
The first major step is to recover the labels in this case. 
This can be seen as reproving Hsiang and Su's result \cite{HsiangSu75} before we know that we are on a rational $\HP^{n/4}$. 
As before, we assume that $F_0,\ldots,F_{k_0}$ are the fixed-point components. 
We let $r_{ij}^\pm\in \Lt^*$ denote the (rational) weights of the two edges between $i$ and $j$ for $i\neq j$. 
We may assume that $r_{ij}^\pm$ are rational multiples of the actual weights and hence that $r_{ij}^\pm$ maps the lattice in $\Lt$ given by the kernel of the exponential map to a subset of $\Q$. Then $r_{ij}^\pm$ is determined up to a non-zero rational factor. We will use the notation $\ml r_{ij}^\pm\mr:=\spann_{\Q}\{r_{ij}^+,r_{ij}^-\}$. The labels $r_{ii}^\pm$ are only defined if $F_i\neq pt$.

\begin{sublem}\label{lem:linearmodel} We can find $\lambda_{ij}^\pm\in \Q$ 
and $u_0,\ldots,u_{k_0}\in \Lt^*$ such that 
$r_{ij}^\pm=\lambda_{ij}^\pm(u_i\pm u_j)$ holds
for all $i\le j$, after possibly switching the roles of the two weights $r_{ij}^\pm$.  
\end{sublem}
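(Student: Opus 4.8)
The plan is to prove Sublemma~\ref{lem:linearmodel} by induction on the number $k_0$ of fixed-point components in $C$, reducing each inductive step to the rank-two situation that is completely described by Lemma~\ref{lem:graphofmodel}. First I would record the combinatorial starting point: since $C$ contains all of $M^{\gT^d}$ by the Chang--Skjelbred Lemma (Theorem~\ref{lem:ChangSkjelbred}) and $\chi(C)=\chi(M)=\tfrac n4+1$, the integer $m$ from Lemma~\ref{lem:straight}b) equals $2$, so the graph of $C$ is the complete graph on the vertices $F_0,\dots,F_{k_0}$ with exactly two edges between any two distinct vertices and, by Definition~\ref{def:graph}, a trivial self-loop (label $0$) together with one further self-loop at each positive-dimensional $F_i$; accordingly I will take $r_{ii}^-=0$ throughout, so that $r_{ii}^\pm=\lambda_{ii}^\pm(u_i\pm u_i)$ reduces to the statement $r_{ii}^-=0$ together with $r_{ii}^+\sim 2u_i$.

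The key local input is a triangle relation for distinct vertices $i,j,k$. Choosing one weight on the edge $\{i,j\}$ and one on $\{j,k\}$, I form the identity component $\gR$ of the codimension $\le 2$ subtorus they cut out; then $F_i$, $F_j$ and $F_k$ all lie in one component $N$ of $\Fix(\gR)$, and by the standing hypothesis on codimension two subtori (combined with Bredon's Theorem~\ref{thm:SinglyGenerated} in the degenerate case where $\gR$ has codimension one) $N$ is of rational type $\CP^l$, $\HP^l$ or $\Sph^l$ and carries an effective $\gT^2=\gT^d/\gR$-action whose reduced graph obeys Lemma~\ref{lem:graphofmodel}. Running the argument in the proof of Lemma~\ref{lem:graphofmodel} (Claim~2) on $N$ produces a lift $\alpha\in H^4_{\gT^2}(N)$ of a generator with point restrictions $\alpha_i,\alpha_j,\alpha_k\in H^4(B\gT^2)$ such that, up to rational scalars, $\alpha_i-\alpha_j$, $\alpha_j-\alpha_k$, $\alpha_i-\alpha_k$ are the products $r_{ij}^+r_{ij}^-$, $r_{jk}^+r_{jk}^-$, $r_{ik}^+r_{ik}^-$ of the two labels on the respective edge, the two factors being proportional exactly when the intervening codimension one component is of $\HP$ type. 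In particular, $N$ being of $\HP$-type forces all six weights to lie in the two-plane $\ml r_{ij},r_{jk}\mr\subseteq\Lt^*$, and adding the first two identities gives $r_{ij}^+r_{ij}^-+r_{jk}^+r_{jk}^- = \lambda\, r_{ik}^+r_{ik}^-$ as quadratic forms; the $\CP^l$ and $\Sph^l$ cases give the analogous (linear, resp.\ trivial) identities and are easier.

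With these relations available, the $u_i$ are built as follows. For the base case $k_0\le 2$ only one triangle occurs: taking $\alpha_0=0$ gives $r_{0i}^+r_{0i}^-=\alpha_i$ for $i=1,\dots,k_0$, so each $\alpha_i$ factors into the two linear forms $r_{0i}^\pm$; comparing this with the factorization of $\alpha_i-\alpha_j$ exactly as in Hsiang and Su's computation forces, after rescaling, $\alpha_i=u_i^2-u_0^2$ for suitable $u_0,\dots,u_{k_0}\in\Lt^*$, which rewrites as $r_{ij}^\pm=\lambda_{ij}^\pm(u_i\pm u_j)$, the self-loop at $F_i$ matching $r_{ii}^+\sim 2u_i$ via the isotropy representation recovered from Lemma~\ref{lem:straight}a). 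For the inductive step I apply the inductive hypothesis to the sub-doubled-simplex on $F_0,\dots,F_{k_0-1}$ to get $u_0,\dots,u_{k_0-1}$ realizing all $r_{ij}^\pm$ with $i,j<k_0$, define $u_{k_0}$ from the edge $\{0,k_0\}$ by the base-case recipe inside $\ml r_{0k_0}^\pm\mr$ normalized so that its $u_0$-component is the prescribed one, and use the triangle relation for each $\{0,j,k_0\}$ to pin down $r_{jk_0}^\pm=\lambda_{jk_0}^\pm(u_j\pm u_{k_0})$, completing the induction.

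The main obstacle will be the bookkeeping in the gluing: one must show that the vector $u_0$, and more generally each $u_i$, can be chosen consistently for all triangles through it — equivalently, that the two-planes $\ml r_{0j}^\pm\mr$ over varying $j$ all contain the common line $\ml u_0\mr$ — and one must track the $\Z_2^{\,k_0+1}$ worth of sign ambiguities $u_i\mapsto -u_i$ (equivalently $r_{ij}^+\leftrightarrow r_{ij}^-$) so that a single global choice of signs works simultaneously. A secondary difficulty is guaranteeing that for every triangle there is an admissible choice of $\gR$ whose fixed component is of $\HP$-type rather than merely $\CP$- or $\Sph$-type; where no such choice exists one necessarily has $u_i\parallel u_j$ and $\ml r_{ij}^\pm\mr$ one-dimensional (an intervening $\HP$-type codimension one component, or a very small $F_i$), a degenerate configuration that has to be isolated and handled separately but does not affect the main line of argument.
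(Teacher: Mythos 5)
There is a genuine gap at the heart of your argument: the ``triangle relation'' $c_{ij}\,r_{ij}^+r_{ij}^- + c_{jk}\,r_{jk}^+r_{jk}^- = c_{ik}\,r_{ik}^+r_{ik}^-$ is not available from a single fixed-point component the way you derive it. If you cut out $\gR$ by one weight of the edge $\{i,j\}$ and one weight of $\{j,k\}$, the component $N$ of $\Fix(\gR)$ containing $F_i,F_j,F_k$ is generically of $\CP$ type, not $\HP$ type: its reduced graph only contains those edges whose weights vanish on $\gR$, which is typically one edge per pair. For $N$ to be of $\HP$ type and contain all three vertices, all six weights $r_{ij}^\pm,r_{jk}^\pm,r_{ik}^\pm$ would have to lie in the two-plane annihilating $\gR$; but in the target model ($r_{ij}^\pm=u_i\pm u_j$ with $u_i,u_j,u_k$ linearly independent) these six weights span a three-dimensional space, so \emph{no} codimension-two subtorus has an $\HP$-type component containing a generic triangle. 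You have the cases backwards: the configuration you treat as the main case is the degenerate one, and in the generic case your quadratic identity has no source. Moreover, the identity $\alpha_i-\alpha_j\sim r_{ij}^+r_{ij}^-$ on all of $\Lt^*$ simultaneously for all pairs is essentially equivalent to the conclusion $r_{ij}^\pm=\lambda_{ij}^\pm(u_i\pm u_j)$; in the paper it only becomes available \emph{after} this sublemma, via the global class $\alpha\in H^4_{\gT^d}(M)$ constructed in the next sublemma (and at this stage one does not even know $\dim H^4(M)=1$, so your base case, which already invokes such a class with $\alpha_0=0$, suffers from the same circularity). The $\CP$- and $\Sph$-type ``linear analogues'' you mention concern a degree-two class and a single weight per edge, and do not recombine into the quadratic relation without further input.

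The paper's actual route avoids this: equivariant factorization on $\HP$-type components of codimension-two subtori is used only in configurations where such components demonstrably exist (e.g.\ when the edge-planes $\ml r_{ij}^\pm\mr$ and $\ml r_{ik}^\pm\mr$ are both one-dimensional, which forces $\ml r_{jk}^\pm\mr$ to be two-dimensional); one then shows, using $d\ge 4$ in an essential way and a case analysis on reduced graphs, that all edge-planes $\ml r_{0i}^\pm\mr$ at a suitable hub vertex share a common line $\Q u_0$, normalizes $r_{0i}^\pm=u_0\pm u_i$, and propagates through triangles by purely linear algebra, with a final cohomological divisibility argument ruling out $u_i=\pm u_j$. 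Your proposal never engages with the hypothesis $d\ge4$, which is exactly what makes the hub construction work; as written, the inductive gluing rests on a relation you cannot yet prove, so the argument does not go through.
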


\begin{proof}
We remark that the statement is equivalent to showing that the set  $\{\Q r_{ij}^+,\Q r_{ij}^-\}$ is equal to $\{\Q (u_i-u_j),\Q (u_i+u_j)\}$. Notice also that the latter set does not change if we switch $i$ and $j$.\\[1ex]
%
%
%
{\bf Claim 1.}  If $\ml r_{ij}^\pm\mr\neq \ml r_{ik}^\pm \mr $ are both one-dimensional, 
then $\ml r_{jk}^\pm \mr$ is two-dimensional.\\[1ex]
%
Consider the codimension two torus $\gH$ whose Lie algebra is given by the intersections of the kernels of
 $r_{ij}^+$ and $r_{ik}^+$. By assumption, it has a fixed-point component $N$ of 
$\HP$ type containing $F_i$, $F_j$, and $F_k$. Choose a point  $p_u\in F_u$, 
and let $\alpha_u\in H^4(p_u\times B\gT^d) \cong H^4(B\gT^d)$ denote the restriction 
of  a generator $\alpha\in H^4_{\gT^d}(N)$ for $u=i,j,k$.

By Lemma~\ref{lem:alphaineqalphaj}, $\alpha_i$, $\alpha_j$, and $\alpha_k$ are pairwise different. Furthermore, 
$r_{ij}^+ $ divides $\alpha_i-\alpha_j$, and since it is the only linear factor in the factorization, we see $\alpha_i-\alpha_j= c_1 (r_{ij}^+)^2$. Similarly, $\alpha_i-\alpha_k=c_2 (r_{ik}^+)^2$. We know that $r_{jk}^\pm$ are the only linear factors of $\alpha_k -\alpha_j = c_1 (r_{ij}^+)^2 - c_2 (r_{ik}^+)^2$, so they are linearly independent.\\[1ex]
%
%
%
%
\hspace*{0em}From Claim 1, it follows that there is at most one $i$ such that $\ml r_{ij}^\pm \mr$ 
is one-dimensional for all $j$. After reordering, we may assume that $r^-_{01}$ and $r^+_{01}$ are linearly independent. \\[1ex]
{\bf Claim 2.}  There is a non-zero element $u_0\in \Lt^*$ with 
$u_0\in \ml r^-_{0i}, r^+_{0i}\mr$ for all $i$. \\[1ex]
{\em Case 1.} If there is an $i\ge 0$ such that $r_{0i}^ \pm$ are linearly dependent, we argue as follows. Here $i=0$ is only 
allowed if $F_0$ is not a point. We choose $u_0$ with $r_{0i}^\pm\in \Q u_0$ and consider the fixed-point set of the codimension one torus $\gT^{d-1}$ determined by $u_0$. It is then easy to see that, for each $j$, the isotropy representations corresponding to $r_{0j}^\pm$ must be rationally equivalent when restricted to $\gT^{d-1}$. In fact, otherwise there would be a fixed-point component of a codimension two torus whose reduced graph only has one connection from $0$ to $j$ but two connections from 
$0$ to $i$, contradicting Lemma~\ref{lem:graphofmodel}.
Therefore, it follows that $u_0$ is contained in $\ml r_{0j}^\pm \mr$ whenever the two vectors are linearly independent. Hence, it suffices to rule out that there a $k$ such that $\ml r_{0k}^\pm\mr$ is one-dimensional but not equal to $\Q u_0$. But otherwise,
 $\ml u_0,r_{0k}^+\mr$ would be equal to 
  $\ml r_{0j}^\pm\mr$ whenever the latter space is two-dimensional.  Therefore, all weights at $F_0$ would lie in 
  the two-dimensional subspace $\ml r_{01}^\pm \mr$, contradicting the assumption $d\ge 4$. \\[-1ex]
 
%
%
%
{\em Case 2.} If $r_{0i}^ \pm$ are linearly independent for all 
$i$, then
after reordering we can assume that $\ml r^\pm_{02} \mr\neq \ml r^\pm_{01}\mr$. Since we may
 switch the roles of the two elements $r_{12}^\pm$, we may assume $r_{12}^-$ is in $\ml r_{01}^-, r_{02}^-\mr$ and $\ml r_{01}^+, r_{02}^+\mr$. 
 In particular, $r^-_{01}$, $r^+_{01}$, $r^-_{02}$, and $r^+_{02}$ are linearly dependent, and we find a non-zero $u_0\in \ml r^\pm_{01}\mr\cap \ml r^\pm_{02}\mr$. 
Since $d\ge 4$, we can next assume that $\ml  r^\pm_{03}\mr\not\subset \ml r^-_{01}, r^+_{01}, r^-_{02}, r^+_{02}\mr$. 
 By the same reasoning, there are non-zero elements 
  $v_0\in  \ml r^\pm_{01}\mr\cap  \ml r^\pm_{03} \mr$ and 
  $w_0\in  \ml r^\pm_{02}\mr\cap  \ml r^\pm_{03}\mr$. 
  By our choice, the intersection $\ml r_{03}^\pm \mr\cap \ml  r^-_{01}, r^+_{01}, r^-_{02}, r^+_{02}\mr$ is one-dimensional as well and thus $u_0$, $v_0$, and $w_0$ are multiples of each other. The claim now follows since, for any $i$, we can put $\ml r^\pm_{0i}\mr$  in place of $\ml r^\pm_{0j}\mr$ for some $j\in\{1,2,3\}$.\\[-2ex]

 We have proved Claim 2. 
If  $r^\pm_{0i}$ are linearly independent, then $r^\pm_{0i}\not\in  \Q u_0$. 
In fact, otherwise, we could choose some $r_{0k}^+$ such that $r_{0i}^\pm$ 
and $r_{0k}^+$ are linearly independent and pass to the reduced graph determined by the weights $u_0$ and $r_{0k}^+$.  From the above claim, it follows that the two edges between $0$ and $k$ remain part of the reduced graph while there is only one edge between $0$ and $i$.

Thus after a rescaling of $r^+_{0i}$ and $r^-_{0i}$, we can assume $2u_0 = r^+_{0i} + r^-_{0i}$. We now choose $u_1,\ldots, u_{k_0}\in \Lt^*$ such that $r^\pm_{0i}=u_0\pm u_i$. Once we fix $u_0$, the element $u_i$ is uniquely determined for all 
 $i$ for which $\ml r_{0i}^+\mr$ is two-dimensional. If  $r_{0i}^\pm\in \Q u_0$, then we can still adjust the scaling of $u_i$. The sublemma follows from the following\\[1ex]
{\bf Claim 3.}  After rescaling and possibly switching the roles of $r^+_{ij}$ and $r^-_{ij}$, 
we must have $r^+_{ij}=u_i+u_j$ and $r^-_{ij}=u_i-u_j$ for all $i<j$. \\[1ex]
For the rescaling, we have included the possibility of adjusting the scaling of those elements $u_i$ with $u_i\in \Q u_0$ and $i>0$. The combination of Claim 2 and Claim 3 implies that $r_{jj}^\pm =u_j\pm u_j$ holds after rescaling as well when $F_j$ is not a point.\\[-1ex]
 
{\em  Case 1.} If $u_i$, $u_j$, and $u_0$ are linearly independent, then
  $\{\Q r_{ij}^\pm\}\cap \{\Q r_{0j}^\pm\}=\varnothing$ 
 because otherwise we would get a contradiction to  
 $\{\Q r_{0j}^\pm\}\cap \{\Q r_{0i}^\pm\}=\varnothing$ 
 by looking at the fixed-point set of a codimension one torus.
 After possibly switching the roles of $r^\pm_{ij}$ and adjusting the scaling, we can find $\lambda_\pm \neq 0$ with
	\[r_{ij}^\pm = u_0 + u_i + \lambda_\pm(u_0 \mp u_j).\]
On the other hand, we can also find 
$\mu_\pm$ and $\nu_\pm $ with
	\[r_{ij}^\pm = \mu_\pm (u_0 - u_i) + \nu_\pm(u_0 \pm u_j).\]
 This gives
 \begin{eqnarray*}
 0&=&(1+\lambda_+-\mu_+   -\nu_+)u_0+(1+\mu_+)u_i-(\lambda_+ + \nu_+)u_j\\
 0&=&(1+\lambda_- -\mu_-    -\nu_-)u_0+(1+\mu_-)u_i+ (\lambda_-  + \nu_-)u_j
\end{eqnarray*}
 Since $u_0$, $u_i$, and $u_j$  are linearly independent, we see $\mu_\pm=-1=\lambda_\pm=-\nu_\pm$.\\[-1ex]

{\em Case 2.}  If $u_0$, $u_i$, and $u_j$ are pairwise linearly independent but not linearly independent, then we choose 
some $k$ such that $u_i$, $u_j$, and $u_k$ are linearly independent.
By the assumption in this case, we have that $u_0$, $u_i$, and $u_k$ are linearly independent and likewise for $u_0$, $u_j$, and $u_k$. By the previous case we know after rescaling that $r_{ik}^\pm =u_i\pm u_k$ and $r_{jk}^\pm=u_j\pm u_k$. 
 Arguing now in the triangle $(k,i,j)$ with $0$ replaced by $k$, we again can apply the previous calculation to get the claimed result. \\[-1ex]
%
 %
 %

 {\em Case 3.} Suppose that $u_i$ is a multiple of $u_0$.
  Recall that we are free to adjust the scaling of $u_i$ once in this case. We use this flexibility to get the claimed equation for one $j$. If, for all $j$, we get that the equation holds with $u_i=0$, then we are done. 
 Otherwise, we choose $j$ with $u_j\notin \Q u_0$ such that the equation $r_{ij}^\pm=u_i\pm u_j$ holds with $u_i\neq 0$.
 We then get the equation for all $k$ with $u_k\notin  \ml u_i, u_j \mr$ by looking at the triangle $(i,j,k)$. Of course this requires no further modification of $u_i$. Then in turn we can switch the roles of $j$ and $k$ and get the equation for all 
 $l$ with  $u_l\notin  \ml u_i, u_k\mr $. Thus we have $r_{ij}^\pm=u_i\pm u_j$ for all $j$ with $u_j\not\in \Q \cdot u_0$. In the remaining case where $u_j,u_i\in \Q u_0$, we just need 
 to rule out $u_i=\pm u_j$, which will be done below.\\[-1ex] 

{\em  Case 4.} If $u_i$ is a multiple of $u_j$ and $u_i\not \in\Q \cdot u_0$, then we show that $r_{ij}^+$ and $r_{ij}^-$ are multiples of $u_j$. The claim follows unless $u_i=\pm u_j$, a possibility which is ruled out later.
 We choose  $k$ such that  $u_0$, $u_j$, and $u_k$ are linearly independent.
 By Case 1, $r_{ik}^\pm =u_i \pm u_k$ and $r_{jk}^\pm =u_j\pm u_k$ holds after rescaling. 
 Now the claim follows from 
 $$r_{ij}^\pm\in \ml r_{jk}^\pm,r_{ik}^\pm\mr\cap  \ml r_{0i}^\pm, r_{0j}^\pm \mr=\ml u_j,u_k\mr\cap \ml u_0,u_j\mr=\Q u_j.$$

  To finish the proof of Claim 3, we must also show $u_i\neq \pm u_j$ for $i\neq j$.  
 This does not follow from the above proof since, in the case $u_i \in \Q u_j$, we have only shown that the weights  $r_{ij}^\pm$ are multiples of $u_j$. First, note that $u_i=u_j=0$ was ruled out by Claim 1. 
 Assume now on the contrary that $u_i = \pm u_j \neq 0$ for some $i\neq j$. 
 We choose $u_k$ linearly independent to $u_i$ and consider the codimension two torus $\gC$ determined by $u_i$ and $u_k$. 
 It follows that $F_i$, $F_j$, and $F_k$ are contained in the same fixed-point component $N=N_{ik}$ of $\gC$ and that $N$ is of $\HP$ type. 
 We choose $\alpha\in H^{4}_{\gT^d}(N)$ projecting to a generator of $H^4(N)$. 
 Let $p_u\in F_u$, and let $\alpha_u\in H^4(BT^d)$ be the image of $\alpha$ in $H^*_{\gT^d}(p_u)\cong H^*(B\gT^d)$ for $u=i,j,k$. By Lemma~\ref{lem:alphaineqalphaj}, $\alpha_i$, $\alpha_j$, and $\alpha_k$ are pairwise 
 distinct. Furthermore, it follows that $\alpha_i-\alpha_k$ and $\alpha_j-\alpha_k$ 
 must be divisible by $(u_i^2-u_k^2)$. Similarly, $\alpha_i-\alpha_j$  must be divisible by $u_i$. This is clearly not possible. 
 \end{proof}

 \begin{sublem} We choose points $p_i\in F_i$ for $i=0,\ldots,k_0$ and view $u_0,\ldots,u_{k_0}\in \Lt^*$ from Sublemma~\ref{lem:linearmodel} as elements in $H^2(B\gT^d)$.
 There exists $\alpha\in H^4_{\gT^d}(M)$ whose image in $H^4_{\gT^d}(\{p_0,\ldots,p_{k_0}\})\cong H^4(B\gT^d)^{k_0+1}$ 
 is given by $(u_0^2,\ldots,u_{k_0}^2)$ and has the property that $\alpha^0,\ldots, \alpha^{n/4}$ is a basis
 the $H^*(B\gT^d)$-module $H^*_{\gT^d}(M)$.
 \end{sublem}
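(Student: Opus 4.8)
The plan is to use equivariant formality together with the Chang--Skjelbred Lemma to describe $H^*_{\gT^d}(M)$ as a subring of $\bigoplus_i H^*_{\gT^d}(F_i)$ cut out by the labelled graph, and then to recognize — via the linear model of Sublemma~\ref{lem:linearmodel} — that this subring is $H^*(B\gT^d)$-freely generated by the powers of the class $\alpha$ we are after, exactly as it would be for $\HP^{n/4}$. Write $R=H^*(B\gT^d)$ and $\mathfrak m\subseteq R$ for its maximal ideal. Since $\bodd(M)=0$ the action is equivariantly formal, so $H^*_{\gT^d}(M)$ is a free $R$-module of rank $\btotal(M)=\chi(M)=\tfrac n4+1$, the restriction $H^*_{\gT^d}(M)\to H^*_{\gT^d}(M^{\gT^d})=\bigoplus_{i=0}^{k_0}H^*_{\gT^d}(F_i)$ is injective, and by Theorem~\ref{lem:ChangSkjelbred} its image coincides with the image of $H^*_{\gT^d}(C)$. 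The one-skeleton $C$ is glued from the $F_i$ and from the fixed-point components $N$ of codimension-one subtori, each of rational $\CP$, $\HP$ or $\Sph$ type and, by Lemma~\ref{lem:straight}, with $\mult(N)\in\{1,2\}$; hence a tuple in $\bigoplus_i H^*_{\gT^d}(F_i)$ lifts to $H^*_{\gT^d}(C)$, and thus to $H^*_{\gT^d}(M)$, as soon as for every such $N$ its restriction to the components contained in $N$ lies in the image of $H^*_{\gT^d}(N)$.

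To construct $\alpha$ I would therefore fix a codimension-one component $N$, say with $\mult(N)=w\in\{1,2\}$, pick an equivariant lift $\gamma_N\in H^{2w}_{\gT^d}(N)$ of the generator of $H^{2w}(N)$ (equivariant formality of $N$ makes $1,\gamma_N,\gamma_N^2,\dots$ an $R$-basis of $H^*_{\gT^d}(N)$), and examine the restrictions $\gamma_{N,i}:=\iota_{p_i}^*\gamma_N\in R^{2w}$. By Lemma~\ref{lem:alphaineqalphaj} these are pairwise distinct for $F_i\subseteq N$, and their differences factor into linear forms supported on the edge labels of $N$; by Sublemma~\ref{lem:linearmodel} those labels are rational multiples of $u_i\pm u_j$, so a three-term cocycle argument shows that, after rescaling $\gamma_N$ and subtracting a class pulled back from $B\gT^d$, the tuple $(u_i^2)_{F_i\subseteq N}$ is an $R$-linear combination of $\bigl(\gamma_{N,i}^{\,l}\bigr)_i$ with $l\le 2$ — so it lies in the image of $H^*_{\gT^d}(N)$ (for $w=2$ one normalizes $\gamma_N$ itself to restrict to $(u_i^2)_i$; for $w=1$ one normalizes $\gamma_N$ to restrict to $(\pm u_i+\text{const})_i$ and uses $\gamma_N^2$). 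Gluing over all $N$ via Chang--Skjelbred produces $\alpha\in H^4_{\gT^d}(M)$ with $\iota_{p_i}^*\alpha=u_i^2$ for all $i$.

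For the basis statement, since $H^*_{\gT^d}(M)$ is free of rank $\tfrac n4+1$ and $1,\alpha,\dots,\alpha^{n/4}$ are $\tfrac n4+1$ elements, graded Nakayama reduces the claim to showing that their images $1,\bar\alpha,\dots,\bar\alpha^{n/4}$ in $H^*_{\gT^d}(M)/\mathfrak m=H^*(M)$ span; since $\dim_\Q H^*(M)=\tfrac n4+1$ and the $\bar\alpha^j$ sit in distinct degrees, this is equivalent to $\bar\alpha^{n/4}\neq 0$ in $H^n(M)\cong\Q$. I would deduce this from Atiyah--Bott/Berline--Vergne localization: $\int_M^{\gT^d}\colon H^k_{\gT^d}(M)\to R^{k-n}$ vanishes on $\alpha^j$ for $j<\tfrac n4$ by degree, equals $\int_M\bar\alpha^{n/4}$ on $\alpha^{n/4}$, and is computed by $\sum_i\int_{F_i}\iota_{F_i}^*(\alpha^j)\big/e_{\gT^d}(\nu F_i)$. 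When the $F_i$ are isolated, $\bar\alpha^{n/4}=0$ would force the vector $\bigl(e_{\gT^d}(\nu F_i)^{-1}\bigr)_i$ to be annihilated by the Vandermonde matrix $(u_i^{2j})_{0\le j\le n/4,\ 0\le i\le k_0}$, which has full rank because the $u_i^2$ are pairwise distinct ($u_i\neq\pm u_j$, at most one $u_i=0$) — a contradiction. Equivalently, and in a way that should also cover positive-dimensional $F_i$: after inverting the weights the localization theorem identifies $H^*_{\gT^d}(M)\otimes\operatorname{Frac}(R)$ with $\bigoplus_iH^*(F_i)\otimes\operatorname{Frac}(R)$, Lagrange interpolation at the distinct points $u_i^2$ shows $1,\alpha,\dots,\alpha^{n/4}$ are $R$-linearly independent, and the change-of-basis determinant is homogeneous of degree $4\sum_{j\le n/4}j-\sum_k k\,b_k(M)=0$ by Poincar\'e duality, hence a nonzero scalar, so $1,\alpha,\dots,\alpha^{n/4}$ is an $R$-basis.

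The step I expect to be the main obstacle is the compatibility verification on the codimension-one components $N$: making the normalization of $\gamma_N$ work uniformly across the $\CP$-, $\HP$- and $\Sph$-type cases and across components $N$ containing several (possibly positive-dimensional) fixed-point components. Closely tied to this is the control of positive-dimensional fixed-point components in the localization argument — in particular ruling out that a positive-dimensional $\CP$-type component occurs, since such a component would contribute internal cohomology $H^{>0}(F_i)$ in degrees incompatible with $1,\alpha,\dots,\alpha^{n/4}$ being a basis; I would expect to exclude this using the labelled graph and Lemma~\ref{lem:straight} rather than by a curvature argument.
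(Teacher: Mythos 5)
Your overall architecture is the same as the paper's (Chang--Skjelbred gluing over the codimension-one components to construct $\alpha$, then showing the powers of $\alpha$ form an $R$-basis), and in the case of isolated fixed points both halves of your argument would go through: the normalization of $\gamma_N$ on each $N$ is straightforward there, and your Nakayama-plus-localization route to $\bar\alpha^{n/4}\neq 0$ via the invertibility of the Vandermonde matrix $(u_i^{2j})$ is a correct, genuinely alternative packaging of the paper's plain Vandermonde computation. However, there are two genuine gaps, both tied to positive-dimensional fixed-point components, and the second rests on a false expectation.

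First, the gluing step. When some $F_i$ has positive dimension, the datum to be glued is not the tuple $(u_i^2)_i\in\bigoplus_i H^4(B\gT^d)$ but a tuple of classes $\zeta_i\in H^4_{\gT^d}(F_i)$ restricting to $u_i^2$ at $p_i$, and the issue is that two different codimension-one components $N,N'$ containing the same $F_i$ must induce the \emph{same} $\zeta_i$. Your per-component normalization of $\gamma_N$ does not by itself control the $H^{>0}(F_i)$-part of $\iota_{F_i}^*\gamma_N^{\,l}$, so consistency across the $N$'s is exactly the point you flag but do not resolve. The paper handles it by proving (Step 1(iv)) that on each $N$ of codimension at most two there is a unique class whose restriction to $F_i\times B\gT^d$ has the K\"unneth normal form $u_i^2+u_i\deltad+\epse$ with $\deltad\in H^2(F_i)$, $\epse\in H^4(F_i)$, and then (Step 2) comparing two codimension-one components inside a common codimension-\emph{two} component, where uniqueness forces agreement. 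That detour through codimension-two components is the missing idea.

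Second, the basis step. You propose to \emph{exclude} positive-dimensional $\CP$-type fixed components as "incompatible" with $1,\alpha,\dots,\alpha^{n/4}$ being a basis. They cannot be excluded -- linear circle actions on $\HP^n$ fix $\CP^n$'s, and nothing in the hypotheses rules them out -- and they are in fact compatible: the restriction of $\alpha$ to $F_i\times B\gT^d$ is $u_i^2+\gamma_i$ with $\gamma_i=\deltad_i l_i+\epse_i$ nilpotent of order exactly $n_i+1$, so the powers of $\alpha$ sweep out all of $H^*(F_i)\otimes R$ even though no degree-$2$ class of $H^*_{\gT^d}(F_i)$ lies in the image. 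Your Lagrange interpolation at the $k_0+1$ distinct values $u_i^2$ then supplies too few independent conditions (one needs $\sum_i(n_i+1)=\tfrac n4+1$ of them), and the observation that the change-of-basis determinant is homogeneous of degree zero does not show it is a \emph{nonzero} scalar. The paper closes this by a confluent Vandermonde computation: differentiating $\prod_{p<q}(\beta_p-\beta_q)$ the appropriate number of times in each repeated variable before evaluating at $\beta=u_i^2$ with multiplicity $n_i+1$ yields $C\prod_{i<j}(u_i^2-u_j^2)^{(n_i+1)(n_j+1)}\neq 0$, which is precisely the determinant of the matrix expressing $\hat\alpha^0,\dots,\hat\alpha^{n/4}$ in terms of the basis $\{\gamma_i^{\,q}\}$. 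Without an argument of this type (or a genuine proof that $\int_M\bar\alpha^{n/4}\neq 0$ in the presence of the $\gamma_i$), your proof is incomplete.
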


 Lemma~\ref{lem:HP} follows from the sublemma
 since $\alpha^i$ cannot be in the kernel of the natural
  map $H^*_{\gT^d}(M)\rightarrow H^*(M)$ for 
  $i=0,\ldots,n/4$.

 \begin{proof} \hspace*{1em}\\
 {\bf Step 1.}
 Let $N$ be a fixed-point component of a subtorus 
  codimension at most two. Suppose 
  $N\cap \{p_0,\ldots,p_{k_0}\}=\{p_{i_0},\ldots,p_{i_b}\}$ 
  with $0\le i_0<\cdots<i_b$ and $b\ge 1$. 
  \begin{enumerate}
  \item[(i)] There exists $\beta\in H^{4}_{\gT^d}(N)$ whose pullback to 
  $H_{\gT^d}^4(p_{i_j}) $
  is given by $u_{i_j}^2$ for all $j$.
 \item[(ii)] If $N$ is of $\HP$ type or $b\ge 2$, then  $\beta$ is unique.  
 \item[(iii)] If $N$ is of $\CP$ type, then one can arrange 
 for $\beta=\gamma^2 $ with $\gamma\in H^2_{\gT^d}(N)$ 
  and $\gamma$ is unique up to sign. 
 \item[(iv)] If $F_{i_j}\neq\{p_{i_j}\}$ with $j\le b$, there is a unique $\beta$ whose restriction to $F_{i_j}\times B\gT^d$ equals $u_{i_j}^2+u_{i_j}\deltad+\epse$ with respect to the K\"unneth formula, where $\deltad\in H^2(F_{i_j})$ and $\epse\in H^4(F_{i_j})$. If $N$ is of $\CP$ type, then this is the solution from (iii). 
 \end{enumerate}
After a permutation we may assume $i_j=j$ 
for $j=0,\ldots,b$. 
 
{\em Case 1.}  If $N$ is of $\HP$ type, then
the uniqueness is obvious since, by
Lemma~\ref{lem:alphaineqalphaj} a), the pullback map  $H^4_{\gT^d}(N)\to H^4_{\gT^d}(\{p_{0},p_{1}\})$ is injective.
For the existence, we may assume that the action of $\gT^d$ on $N$
has a kernel of codimension two.

After an additional permutation, 
$u_{0}$ and $u_{1}$ are linearly independent, and
 it is easy to see that there is a unique element $\beta\in H^4_{\gT^d}(N)$ 
whose restriction $\beta_i$  to $p_{i}
\times B\gT^d$ is given by $u_{i}^2$ for $i=0,1$. 
We have to check that the equation remains valid automatically for $i=2,\ldots,b$.
We prove this for $i=2$. Since $\beta_2-\beta_0$ is divisible by $u_{0}\pm u_{2}$ 
and has no other linear factors, we see that 
$\beta_2=u_{0}^2+ c(u_{0}^2-u_{2}^2)$.
Similarly, we deduce $\beta_2=u_{1}^2+d(u_{1}^2-u_{2}^2)$. 
If $u_0$, $u_1$, and $u_2$ are pairwise linearly independent, then the quadratic polynomials $u_0^2$, $u_1^2$, and $u_2^2$ are linearly independent. By comparison it follows that $\beta_2=u_2^2$. In the remaining case, we may assume that $u_2$ is a multiple of $u_0$. But then we still get $d=-1$ and thus the result.

It remains to check that this solution also satisfies (iv).
If $F_j$ is of $\HP$ type,  then 
$H^2(F_{j})=0$ and (iv) is satisfied. If $F_j\cong  \CP^{n_j}$ with 
$n_j>0$, 
we let $u_j^2+t\deltad+\epse$ denote the restriction 
of $\beta$ to $H^4_{\gT^d}(F_j)$. 
We need to prove that $t$ is a multiple of $u_j$. 
 Notice that $t\deltad +\epse$ only changes by a factor if we 
 replace $\beta$ by  another lift of a generator of $H^4(N)$. 
Let $L\subset N$ denote the fixed-point component of $\HP$ type
of the codimension one torus $\gR$  determined by
 the  non-zero weight $r_{jj}^+=2u_j$ of the edge from $j$ to itself.
 As before, $H^4_{^{\gT^d/\gR}}(L)$ injects into $H^4_{\gT^d}(L)$, 
and we can find $\varphi\in H^4(B\gT^d)$ such that 
 the restriction of $\beta+ \varphi$  to $H^4_{\gT^d}(L)$ 
 lies in the image of this map. This in turn shows 
 that the pullback of $\beta+\varphi$ to $H^4_{\gT^d}(F)$ 
 lies in $H^4_{^{\gT^d/\gR}}(F_j)$ and thus $t\in \Q u_j$ as claimed.
\\[-1.5ex]

{\em Case 2.} If $N$ is of $\CP$ type,  we first look at the 
reduced graph of $N$. For any $0\le i<j\le b$ there 
is precisely one edge between $i$ and $j$ in the reduced graph, 
and it has the label $u_i-u_j$ or $u_i+u_j$.
Since it cannot contain both, the elements $u_0,\ldots,u_b$ 
must be pairwise linearly independent. The elements 
$u_0,\ldots,u_{k_0}$ from Sublemma~\ref{lem:linearmodel}
are only determined up to sign, so we may assume that the 
$(u_0-u_i)$-edge is part of the reduced graph for all $i=1,\ldots,b$. 
But then the reduced graph must also contain the $(u_i-u_j)$-edges 
for $i<j\le b$.

We next prove the various uniqueness statements. 
Suppose we have found a solution $\beta=\gamma^2$ as 
in (iii). We let $\gamma_i$ denote the pullback to 
$H^2_{\gT^d}(p_i)$. Then $\gamma_j=\pm u_j$ and 
$\gamma_1-\gamma_0$ must be a multiple of $u_1-u_0$. 
Thus we see that $(\gamma_0,\gamma_1)=\pm (u_0,u_1)$. 
 Since the kernel $H^{2}_{\gT^d}(N)\to H^2_{\gT^d}(\{p_0,p_1\})$
is trivial, this shows that $\gamma$ is unique up to sign. 
If we are in the situation of (iv), we can permute indices and assume $j=0$. The restriction of $\beta$ to $F_0\times B\gT^2$ is then given by $(\gamma_0+t)^2$ with $\gamma_0\in H^2(F_0)$ and $t\in H^2(B\gT^2)$. Moreover, we must have 
$t^2=u_0^2$, which implies $t=\pm u_0$, and hence the pullback of $\beta$ has indeed a decomposition as in (iv).
To see that this composition in turn characterizes $\beta$
itself we observe that  the kernel of
$H^{4}_{\gT^d}(N)\to H^4_{\gT^d}(\{p_0,p_1\})$ 
is the one-dimensional space generated by 
$\eta:=(\gamma-u_0)(\gamma -u_1)$. Since
$u_0$ and $u_1$ are linearly independent,  it is easy to see 
that the K\"unneth decomposition of the pullback of $\beta+c\eta$ 
has no longer the form of (iv) for any non-zero $c\in \Q$. 
Finally, if $b\ge 2$, then $\beta$ is unique since
 it is straightforward to show that  the kernel of
$H^{4}_{\gT^d}(N)\to H^4_{\gT^d}(\{p_0,p_1,p_2\})$ 
is trivial.
 
 To establish existence, we may assume that the induced action of $\gT^d$ on $N$ has a kernel of codimension two. Indeed, if the kernel has codimension one, we can find a
larger fixed-point component that is still of $\CP$ type, and
the solution in the smaller component is then obtained from 
the solution in the larger one by restriction.
Let $\gamma \in H^2_{\gT^2}(N)$ 
be a lift of a generator of $H^2(N)$. 
We continue to denote by $\gamma_i $ the restriction of $\gamma$ 
to $p_{i}\times B\gT^d$.  From the knowledge of the reduced 
graph, we have
$\gamma_i-\gamma_j=q_{ij}(u_i-u_j)$ for $0\le i<j\le b$ 
 with $q_{ij}\in \Q\setminus \{0\}$. 
After rescaling and adding an element of $H^2(B\gT^d)$ to $\gamma$, 
we can assume that $\gamma_0=u_0$ and $\gamma_1=u_1$ holds. 
We claim that $\gamma_i=u_i$ holds automatically for $i=2,\ldots,b$. 
 In fact,
 \begin{eqnarray*}
 \gamma_i=u_0+q_{i0}(u_i-u_0)&=&u_1+q_{i1}(u_i-u_1) \quad \mbox{ gives}\\
 (q_{i0}-q_{21}) r_{0i}^-&=&(1-q_{i1})r_{01}^-.
 \end{eqnarray*} 
 If $r_{0i}^-$ and $r_{01}^-$ are linearly independent, then 
$q_{i1}=1=q_{i0}$ follows. We may assume that  $r_{01}^-$ 
and $r_{02}^-$ are indeed linearly independent. 
To prove the statement for $j\le b$ with $r_{0j}^-\in  \Q r_{01}^-$, 
one then just repeats the above argument with $r_{01}^-$ replaced by $r_{02}^-$.\\[-1ex]
 
 We have established Step 1.
 Let $N_{ij}^\pm$ be the fixed-point component at $F_i$ of 
the torus whose Lie algebra is given by the kernel of $r_{ij}^\pm$.  
The cohomological image of the one-skeleton is clearly contained 
in the image of the map $h_{ij}^\pm \colon H^*_{\gT^d}(F \cup N_{ij}^\pm)\rightarrow H^*_{\gT^d}(F)$.
Using the Mayer-Vietoris sequence, one checks that the image of the cohomology of the 
 one-skeleton is given by the intersection of the images of all these  maps. By the Chang-Skjelbred Lemma, the first statement of the sublemma therefore follows from\\[1ex]
{\bf Step 2.}  
Let $j_v\neq i$ and $\sigma_v\in \{\pm\}$, 
and choose $\beta_v \in H^4_{\gT^d}(N_{ij_v}^{\sigma_v})$ 
 as in Step 1 (i). In addition, we demand that $\beta_v$ is a square if $N_{ij_v}^{\sigma_v}$ is of $\CP$ type for $v=1,2$. Then $\beta_1$ and $\beta_2$ 
 pull back to the same element 
  $\zeta_i\in H^4_{\gT^d}(F_i)$. Moreover, if $F_i\cong \CP^{n_i}$, with $n_i>0$, then $\zeta_i$ is 
 also in the image of $H^4_{\gT^d}(N_{ii}^+)$. \\[1ex]
If $F_i$ is a point, there is nothing to 
prove. 
We consider a fixed-point component $N$ of a codimension 
two torus that contains the two submanifolds $N_v:=N_{ij_v}^{\sigma_v}$ 
$(v=1,2)$. 
By Step 1, there is a unique element $\beta\in H^4_{\gT^d}(N)$
that satisfies the conclusion of (iv). 
In particular, the restriction of $\beta$ to 
$F_i\times B\gT^d$ is of the form
$u_i^2 +\deltad_i u_i+ \epse_i\in H^4(F_i\times B\gT^d)$ 
with $\deltad_i\in H^2(F_i)$ and $\epse_i\in H^4(F_i)$. 
The pullback $\hat\beta_v$ of $\beta$ to $H^4(N_v)$ is  the unique solution satisfying the conclusion
of Step 1 (iv)  for the submanifold $N_v$ for $v=1,2$. 
But this implies $\hat \beta_v=\beta_v$. 
Clearly the claim follows. 
If we choose $j_1=j_2\neq i$, $\sigma_1=-$, and $\sigma_2=+$, 
then $N$ contains  $N_{ii}^+$, and therefore 
$\zeta_i$ is also in the image of $H^4_{\gT^d}(N_{ii}^+)$.\\[-1ex]

Since the map $H^*_{\gT^d}(M)\to H^*_{\gT^d}(F)$ 
is injective, the sublemma follows if we show\\[1ex]
{\bf Step 3.} By Step 2 and the remark before that, 
we can choose $\alpha\in H^4_{\gT^d}(M^n)$ whose cohomological image 
in $\{p_0,\ldots,p_{k_0}\}\times BT^d$ is given by $(u_0^2,\ldots,u_{k_0}^2)$.
We let $\hat\alpha $ denote the image of $\alpha$ in $H^4(F\times BT^d)$. 
Then the image of $H^*_{\gT^d}(M)$ in $H^*(F\times B\gT^d)$ is the free $H^*(B\gT^d)$-module with the basis $\hat\alpha^0,\ldots,\hat{\alpha}^{n/4}$  \\[1ex]
To prove Step 3, first notice that it suffices to prove that
 $\hat\alpha^0,\ldots,\hat{\alpha}^{n/4}$  are linearly independent 
 over $H^*(B\gT^d)$. In fact, once this is established,  the image has the same dimension as the module generated 
 by $\hat\alpha^0,\ldots,\hat{\alpha}^{n/4}$. If the image would be strictly bigger, then 
 it would be generated  as $H^*(B\gT^d)$-module by elements of degree $<n$,
 but this is impossible. 
 To prove linear independence we first treat a special case.\\[-1ex]
 
{\em Case 1.} The fixed-point set is discrete. Then $k_0=n/4$, and
 the result follows from the knowledge of the determinant of the following 
 Vandermonde matrix 
 \begin{eqnarray*}
 \det\left(\begin{array}{ccc}
 1 &\cdots&1\\
 u_0^2&\cdots&u_{n/4}^2\\
 \vdots&\vdots&\vdots\\
 u_0^{\frac{n}{2}}&\cdots & u_{n/4}^{\frac{n}{2}}  \end{array}\right)=\prod_{0\le i<j\le n/4}(u_i^2-u_j^2)\neq 0
 \end{eqnarray*}
 
{\em Case 2.} We now consider the general case.  We start the other way around and first 
 check that the relevant matrix has  a non-zero determinant, before we explain why exactly this matrix is the correct one.
 
 Let $\beta_0,\ldots,\beta_{n/4}\in H^*(B\gT^d)$, and consider again the Vandermonde determinant
 \begin{eqnarray}\label{Vdet} 
\det(A):=\det\left(\begin{array}{ccc}
 1 &\cdots&1\\
 \beta_0&\cdots&\beta_{n/4}\\
 \vdots&\vdots&\vdots\\
 \beta_0^{\frac{n}{4}}&\cdots & \beta_{n/4}^{\frac{n}{4}}  \end{array}\right) = \prod_{0 \leq p < q \leq \frac n 4}(\beta_p - \beta_q).
\end{eqnarray}
  We eventually evaluate at $\beta_0=\ldots=\beta_{n_0}=u_0^2$, $\cdots$,
 $\beta_{n/4-n_{k_0}}=\cdots=\beta_{n/4}=u_{k_0}^2$. In other words, the value $u_i^2$ 
 is attained exactly $n_i+1$ times, the total Betti number of $F_i$, where $i=0,\ldots,k_0$. 
 We will have $\sum_{i=0}^{k_0} (n_i+1)=n/4+1$. 
But before we evaluate at these values, we differentiate both sides 
of \eqref{Vdet} $h_q$ times with respect to the variable $\beta_q$, 
where $h_q$ is the the order of the set $\{p\mid p<q,\, \beta_p=\beta_q\}$.
The result equals
\[
 \frac{d}{d\beta_1}\!\cdots\!\frac{d^{n_0}}{d\beta_{n_0}^{n_0}}\cdots
 \frac{d}{d\beta_{n/4-n_{k_0}+1}}\!\cdots\!
 \frac{d^{n_{k_0} } }{d\beta_{n/4 }^{n_{k_0}}}\det(A)=C\cdot\prod_{0\le i<j\le k_0}(u_i^2-u_j^2)^{(n_i+1)(n_j+1)}
 \]
where $C=\prod_{i=0}^{k_0} \prod_{q=1}^{n_i}((-1)^q q!)$. Hence, the result is non-zero. 

On the other hand, using multilinearity of the determinant, we can differentiate each of the columns separately before we take the determinant. This way, we end up essentially with the relevant matrix. To get exactly to the relevant matrix, one has  to multiply a column that was differentiated $k$ times with $\tfrac{1}{k!}$.
 
 We let $\alpha_i\in H^4(F_i\times B\gT^d)$ be the image of $\alpha$. 
 We put $\gamma_i=\alpha_i-u_i^2=  \deltad_i l_i+ \epse_i$ 
 where $\deltad_i\in H^2(F_i)$, $l_i\in H^2(B\gT^d)$, and $\epse_i\in H^4(F_i)$.
 Since the image of $\alpha$ always factors through the image of some fixed-point component 
 of $\HP$ type, it follows that  $\deltad_i\cdot l_i\neq 0$ if
  $F_i\cong \CP^{n_i}$ and that $\epse_i\neq 0$
  if $F_i\cong \HP^{n_i}$ with $n_i>0$.
 In either case, $\gamma_i^0,\ldots,\gamma_i^{n_i}$ are linearly independent over 
 $H^*(B\gT^d)$. 
 Clearly $\gamma_0^0,\ldots,\gamma_0^{n_0},\ldots, \gamma_{k_0}^0,\ldots,\gamma_{k_0}^{n_{k_0}}$ generate a free $H^*(B\gT^d)$-module of dimension $n/4+1$.
 Moreover, $\gamma_i^{n_i+1}=0$.
 It is now easy to determine the matrix representing   the linear map that 
  sends this basis to $\hat\alpha^0,\ldots,\hat{\alpha}^{n/4} $. 
 It is obtained from the following non-square matrices 
 \begin{eqnarray*}
\left(\begin{array}{cccc}
 1 &0&\cdots&0\\
 u_i^2&1&\cdots&0\\
 \vdots&\vdots&\vdots&\\
 u_i^{\frac{n}{2}}&
{\footnotesize  \Bigl({\tiny \begin{matrix}
 n/4\\1
\end{matrix}}\Bigr) }u_{i}^{\frac{n-2}{2}}&
  \cdots & {\footnotesize \Bigl({\tiny \begin{matrix}
 n/4\\ n_i
\end{matrix}}\Bigr) } u_{i}^{\frac{n-2n_i}{2}}  \end{array}\right)\mbox{ for $i=0,\ldots,k_0$}
 \end{eqnarray*}
by putting their columns together into a square matrix. However, this is exactly the matrix described initially, and thus the claim follows.
\end{proof}

\subsection{\subsecCP}\label{subsec:CP} We want to prove Lemma~\ref{lem:CP}. 
Most of the proof is the same as in the previous subsection. We only describe the differences.  
As before, let $F_0,\ldots,F_{k_0}$ denote the fixed-point components. 
The graph of the one-skeleton is just the one-skeleton of a $k_0$-dimensional simplex with the twist that there is an additional edge from $i$ to itself with label $0$ whenever $F_i$ is not a point. The isotropy representation corresponding to each edge is, up to rational equivalence, determined by one homomorphism $r_{ij}\in \Lt^*$.\\[1ex]
{\bf Claim.} There are $u_0,\ldots, u_{k_0}\in \Lt^*$ such that 
$r_{ij}=u_i-u_j$ holds after rescaling. \\[1ex]
We choose $u_0=0$ and put initially $u_i=-r_{0i}$. 
Now we can rescale $u_i$ and $r_{1i}$ for all $i\ge 2$ such that we have 
$r_{1i}=u_1-u_i$.  This determines $u_i$ unless $u_i$ and $u_1$ are linearly dependent.
We claim that the equation then 
follows for all $ij$ after scaling $r_{ij}$ and possibly adjusting the scaling of those $u_i$ contained in $ \Q u_1$. 
To see this for $r_{23}$, we look at the simplex $(0,1,2,3)$.
We know that $r_{23}$ is in the span of $u_2$ and $u_3$, 
as well as in the span of $u_1-u_2$ and $u_1-u_3$. 
If $u_1,u_2$, and $u_3$ are linearly independent, we deduce $r_{23}=\lambda(u_2-u_3)$, as claimed.

If $u_1$, $u_2$, and $u_3$ are pairwise independent but not independent,
 the result follows similarly to the previous section by choosing $u_k\not\in \ml u_1,u_2\mr$ and then arguing in the simplex $(0,2,3,k)$ where we already know the equation for all but one of the edges. 

If $u_1$ and $u_2$ are linearly dependent, one first adjusts the scaling of $u_2$ to 
get the equation $r_{2k}=u_2-u_k$ for one $k$ with $u_k\not\in \Q u_0$.
To get it for all other $l$, one argues in the $(0,2,k,l)$ simplex.

This proves the above claim, and it is then not hard to see that 
the element $(u_0,\ldots,u_{k_0})\in H^2(\{p_0,\ldots,p_{k_0}\}\times B\gT^d)$ 
is in the cohomological image of the one-skeleton.
The rest of the proof is then the same as before.

\subsection{\sectfiveCb}\label{sec:t8sphere}
In even dimensions, Theorem \ref{thm:t8NoCurvature} under the assumptions of b) follows from
\begin{theorem}\label{thm:Bb}
Let $M^n$ be a connected, oriented, closed manifold with a smooth, effective  $\gT^d$-action with $d \geq 4$. Suppose that the odd Betti numbers of $M^n$ vanish.
 If every fixed-point component of every subtorus with codimension at most 
 two is a rational $\CP^h$, $\HP^h$, or $\s^{2h}$ for some $h \geq 0$, then $\chi(M^n)=\tfrac{n}{2m}+1$ for some $m\ge 1$. 

Moreover, if $m\in \{1,2,\tfrac n 2\}$ or $(m,n)=(4,16)$, then
$H^*(M;\Q) \cong H^*(\bar M; \Q)$ for some $\bar M \in \{\s^n, \CP^{\frac n 2}, \HP^{\frac n 4}, \CaP\}$.
\end{theorem}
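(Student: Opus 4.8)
The plan is to deduce Theorem~\ref{thm:Bb} from the recognition results already established in this section---Lemma~\ref{lem:straight}, Lemma~\ref{lem:HP}, and Lemma~\ref{lem:CP}---together with the Chang--Skjelbred Lemma (Theorem~\ref{lem:ChangSkjelbred}), so that the argument amounts to a short reduction followed by a case analysis on the integer $m$.

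First I would record the standard consequences of $b_{\mathrm{odd}}(M)=0$: it forces $n$ to be even, since otherwise $b_n(M)=b_0(M)=1$ would be a nonzero odd Betti number; it gives $\chi(M)=\sum_i b_{2i}(M)\ge b_0(M)=1>0$, so $M^{\gT^d}\neq\varnothing$; and it makes the $\gT^d$-action equivariantly formal. Hence the Chang--Skjelbred Lemma applies and shows that $M^{\gT^d}$ is contained in a single component $C$ of the one-skeleton. The hypothesis on fixed-point components of codimension-$\le 2$ subtori is a strengthening of the hypothesis of Lemma~\ref{lem:straight}, so part b) of that lemma supplies an integer $m\ge 1$ with
\[
\chi(M)=\chi(M^{\gT^d})=\sum_{F_i\subseteq C}\chi(F_i)=\chi(C)=\tfrac{n}{2m}+1,
\]
which is the first assertion (note $\sum_{F_i\subseteq C}\chi(F_i)=\chi(M^{\gT^d})$ precisely because \emph{all} fixed-point components lie in $C$).

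For the second assertion I would treat the four listed cases separately. If $m=\tfrac n2$, then $\chi(M)=2$, and combining $b_0(M)=1$, $b_{\mathrm{odd}}(M)=0$, and Poincar\'e duality forces $b_{2i}(M)=0$ for $0<2i<n$, so $H^*(M;\Q)\cong H^*(\s^n;\Q)$. If $(m,n)=(4,16)$, then $\chi(M)=3$, so $H^*(M;\Q)$ is $\Q$ in degrees $0,8,16$ and zero otherwise; the cup-product pairing $H^8(M;\Q)\times H^8(M;\Q)\to H^{16}(M;\Q)$ is non-degenerate by Poincar\'e duality, hence $x^2$ generates $H^{16}$ whenever $x$ generates $H^8$, giving $H^*(M;\Q)\cong\Q[x]/(x^3)\cong H^*(\CaP;\Q)$. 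The cases $m=1$ and $m=2$ are immediate: there $\chi(M)$ equals $\tfrac n2+1$ and $\tfrac n4+1$ respectively, which are exactly the Euler-characteristic hypotheses under which Lemma~\ref{lem:CP} (applicable since $d\ge 3$) and Lemma~\ref{lem:HP} (applicable since $d\ge 4$) conclude $H^*(M;\Q)\cong H^*(\CP^{n/2};\Q)$ and $H^*(M;\Q)\cong H^*(\HP^{n/4};\Q)$.

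I do not expect a serious obstacle, since the substantive work---recovering the ring structure of rational $\CP^{n/2}$ and $\HP^{n/4}$ from the labelled graph---has already been carried out in Lemmas~\ref{lem:CP} and~\ref{lem:HP}. The only points demanding care are bookkeeping ones: making sure that equivariant formality really does place all of $M^{\gT^d}$ into one component $C$ of the one-skeleton, so that the combinatorial quantity $\chi(C)$ from Lemma~\ref{lem:straight} genuinely equals $\chi(M)$; and observing that the two exceptional values $m=\tfrac n2$ and $(m,n)=(4,16)$ are precisely the ones outside the scope of the recognition lemmas and must be dispatched directly by Poincar\'e duality as above.
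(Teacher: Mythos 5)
Your proposal is correct and follows essentially the same route as the paper: Chang--Skjelbred to place the fixed-point set in one component of the one-skeleton, Lemma~\ref{lem:straight} for the Euler-characteristic formula, Lemmas~\ref{lem:CP} and~\ref{lem:HP} for $m\in\{1,2\}$, and a direct Poincar\'e-duality argument for $m=\tfrac n2$ and $(m,n)=(4,16)$. The extra bookkeeping you supply (parity of $n$, non-emptiness of the fixed-point set, equivariant formality) is accurate and only makes explicit what the paper leaves implicit.
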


By the Chang-Skjelbred Lemma, the one-skeleton of $M^n$ is connected. 
Since the Euler characteristic of $M^n$ coincides with Euler characteristic of its 
fixed-point set, the formula for the Euler characteristic follows from 
Lemma~\ref{lem:straight}. Now if $m\in \{1,2\}$, we can use Lemma~\ref{lem:CP} and 
Lemma~\ref{lem:HP} to recover the rational cohomology. 
If $m= \tfrac n 2$, the total Betti number of $M^n$ is $2$, which implies that it is a rational homology sphere. 
Finally, in the case of $(m,n)=(4,16)$, the total Betti number is three, 
and Poincar\'e  duality implies that the rational cohomology is that of $\CaP$.

\subsection{\sectfiveodd}\label{subsec:odd} 

\begin{lemma}\label{lem:gysin} Suppose 
$\gT^\ell$ acts on an odd-dimensional, orientable,
closed manifold  $M^n$ 
with $H^i(M)=0$ for all odd $i\le \tfrac{n-1}{2}$.
Then $H^*_{\gT^\ell}(M)\to H^*(M)$ is surjective in even degrees. Moreover, 
$H^{odd}_{\gT^\ell}(M)$ does not contain any $H^*(B\gT^\ell)$-torsion, that is, for  non-zero elements
 $x\in H^{odd}_{\gT^\ell}(M)$ and  $t\in H^*(B\gT^\ell)$
 we have $xt\neq 0$.
In particular, if  $\gS^1$ acts almost freely, then 
$H^*_{\gS^1}(M)$ has vanishing odd Betti numbers.
\end{lemma}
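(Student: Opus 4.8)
The plan is to run a Gysin-sequence argument for the circle bundle
$\gS^1 \to M \times E\gS^1 \simeq M \to M\times_{\gS^1}E\gS^1$, exactly as in the proof of Lemma~\ref{lem:shxHP}, but now exploiting only the hypothesis that $M$ has no odd cohomology in degrees $\le \tfrac{n-1}{2}$. First I would reduce to the case $\ell=1$: since $\gT^\ell$ contains a circle with the same fixed-point set, and more to the point, for the torsion statement one can restrict any torsion relation $xt\neq 0$ to a suitable circle subgroup; but in fact it is cleanest to first prove everything for a single circle $\gS^1\subseteq\gT^\ell$ and then bootstrap. Actually the surjectivity onto even degrees is most naturally proved by downward induction using the fibration $M\to M\times_{\gT^\ell}E\gT^\ell\to B\gT^\ell$ and the observation that it suffices to prove surjectivity onto a generating set; I will return to this point.

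For the core circle case, consider the Gysin sequence
\[
\cdots\to H^{k-2}_{\gS^1}(M)\xrightarrow{\cup e} H^k_{\gS^1}(M)\xrightarrow{j^*} H^k(M)\xrightarrow{Gy} H^{k-1}_{\gS^1}(M)\to\cdots
\]
where $e\in H^2_{\gS^1}(M)$ is the Euler class of the bundle. The key input is that $M$ is a closed orientable odd-dimensional manifold, so by Poincaré duality $H^k(M)\cong H^{n-k}(M)$, and the hypothesis $H^{\mathrm{odd}}(M)=0$ for $\mathrm{odd}\le\tfrac{n-1}{2}$ therefore forces $H^{\mathrm{even}}(M)=0$ for $\mathrm{even}\ge\tfrac{n+1}{2}$; combined with the hypothesis itself, one sees that in the Gysin sequence the connecting map $Gy\colon H^k(M)\to H^{k-1}_{\gS^1}(M)$ out of an even-degree class $k$ lands in an odd group that one controls, and dually. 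Concretely: to prove $j^*$ is surjective in even degrees it suffices, by exactness, to show $Gy=0$ on odd-degree classes $H^{k}(M)$ with $k$ odd, and for such $k$ either $k\le\tfrac{n-1}{2}$ (so $H^k(M)=0$ and there is nothing to prove) or $k>\tfrac{n-1}{2}$, in which case $Gy$ lands in $H^{k-1}_{\gS^1}(M)$ and one pushes the vanishing around the sequence using that the low-degree odd cohomology of $M$ is zero so that $H^{\mathrm{odd}}_{\gS^1}(M)$ is built up by $\cup e$ from higher pieces. The torsion-freeness of $H^{\mathrm{odd}}_{\gS^1}(M)$ over $H^*(B\gS^1)=\Q[e]$ then follows because, $j^*$ being surjective in even degrees, the kernel of $j^*$ (which is $e\cdot H^{*-2}_{\gS^1}(M)$) meets $H^{\mathrm{odd}}_{\gS^1}(M)$ only in the image of multiplication by $e$ from even degrees, and an easy degree/exactness bookkeeping shows no nonzero odd class can be annihilated by a power of $e$; equivalently, $H^{\mathrm{odd}}_{\gS^1}(M)$ injects into $H^{\mathrm{odd}}(M)$ after inverting $e$, but localizing at $e$ kills everything supported on fixed points, and the odd cohomology of $M^{\gS^1}$ lives only above degree $n$ while any $e$-torsion would be supported in bounded degree.

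To pass from the circle to general $\gT^\ell$: for a nonzero $x\in H^{\mathrm{odd}}_{\gT^\ell}(M)$ and nonzero $t\in H^*(B\gT^\ell)$, factor $t$ into linear forms over $H^2(B\gT^\ell)$, and for each linear form $\lambda$ choose a codimension-one subtorus $\gT^{\ell-1}$ on whose classifying space $\lambda$ restricts nontrivially together with a complementary circle $\gS^1$; using the module structure of $H^*_{\gT^\ell}(M)$ over $H^*_{\gS^1}(M)$ and the already-established torsion-freeness for the circle case applied to the $\gS^1$-action, conclude inductively that multiplication by $\lambda$ is injective on $H^{\mathrm{odd}}_{\gT^\ell}(M)$, hence so is multiplication by $t$. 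The last sentence of the lemma is then immediate: if $\gS^1$ acts almost freely, then $H^*_{\gS^1}(M)\cong H^*(M/\gS^1)$ is finite-dimensional, so every element is $e$-torsion; since the odd part is torsion-free it must vanish. \textbf{The main obstacle} I expect is the careful degree bookkeeping in the Gysin sequence that turns "$H^{\mathrm{odd}}(M)=0$ below the middle dimension" plus Poincaré duality into the vanishing of the relevant connecting homomorphisms in \emph{all} degrees — one has to be a little careful that the induction on degree closes and does not merely reduce surjectivity in one degree to surjectivity in a higher one without bottoming out; the resolution is that the cohomology of $M$ is finitely generated and vanishes above degree $n$, so the downward induction terminates.
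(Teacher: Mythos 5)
Your overall architecture (Gysin sequence for a circle, then induction on $\ell$, then the almost-free case via finite-dimensionality of $H^*_{\gS^1}(M)$) matches the paper's, but two of the steps are broken as written. In the circle case you have the parity in the Gysin sequence backwards: with your indexing, exactness at $H^k(M)$ gives $\im(j^*)=\Ker(Gy)$, so surjectivity of $j^*$ in \emph{even} degrees is equivalent to the vanishing of $Gy$ on \emph{even}-degree classes, and dually the kernel of $\cup e$ on $H^{k-1}_{\gS^1}(M)$ with $k-1$ odd is the image of $Gy$ from the even-degree group $H^{k}(M)$. Your case analysis ("either $k\le\tfrac{n-1}{2}$, so $H^k(M)=0$") invokes the hypothesis on odd degrees and hence addresses the wrong parity; moreover for odd $k>\tfrac{n-1}{2}$ the groups $H^k(M)$ need not vanish (e.g.\ $H^n(M)\cong\Q$), so "pushing the vanishing around the sequence" is not an argument. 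The correct route is: by Poincar\'e duality $H^k(M)=0$ for even $k\ge\tfrac{n+1}{2}$, so one only needs $Gy=0$ out of even $k\le\tfrac{n-1}{2}$, and there the target $H^{k-1}_{\gS^1}(M)$ vanishes because $H^{\mathrm{odd}}_{\gS^1}(M)=0$ in degrees $\le\tfrac{n-1}{2}$ --- a fact that must be proved separately (an easy upward induction in the Gysin sequence using $H^{\mathrm{odd}}(M)=0$ in low degrees, or the degeneration of the Serre spectral sequence in total degrees $\le n/2$, which is what the paper uses and which also gives surjectivity for general $\ell$ directly). Once surjectivity in even degrees is known, torsion-freeness of the odd part is immediate ($Gy=0$ out of even degrees forces $\cup e$ injective on odd degrees); your localization detour is unnecessary and also incorrect as stated, since $\cup e$ preserves parity, so $\Ker(j^*)$ in odd degree $k$ is $e\cdot H^{k-2}_{\gS^1}(M)$ with $k-2$ odd, not "the image of multiplication by $e$ from even degrees."

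The passage from $\gS^1$ to $\gT^\ell$ is also not set up correctly: the restriction $H^*_{\gT^\ell}(M)\to H^*_{\gS^1}(M)$ to a circle subgroup is in general far from injective, and $H^*_{\gT^\ell}(M)$ is not a module over $H^*_{\gS^1}(M)$, so torsion-freeness for the circle does not transfer to injectivity of multiplication by a linear form $\lambda\in H^2(B\gT^\ell)$ on $H^{\mathrm{odd}}_{\gT^\ell}(M)$ the way you suggest. The paper's fix is a minimal-degree argument: if $x\in H^k_{\gT^\ell}(M)$ is a nonzero torsion class with $k$ odd minimal and $xt=0$, choose $\gT^{\ell-1}\subset\gT^\ell$ so that $t$ restricts nontrivially to $H^*(B\gT^{\ell-1})$ and use the Gysin sequence $H^{k-2}_{\gT^\ell}(M)\xrightarrow{\cup e} H^k_{\gT^\ell}(M)\xrightarrow{p^*}H^k_{\gT^{\ell-1}}(M)$ of the circle bundle $M\times_{\gT^{\ell-1}}E\gT^\ell\to M\times_{\gT^\ell}E\gT^\ell$: if $p^*(x)=0$ then $x=e\cdot y$ with $y$ a nonzero odd torsion class of degree $k-2$, contradicting minimality, while $p^*(x)\neq 0$ contradicts the induction hypothesis for $\gT^{\ell-1}$. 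Your deduction of the final "almost free" statement is fine, but the two steps above need to be repaired before the proof closes.
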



\begin{proof}[Proof of Lemma~\ref{lem:gysin}]
 Up to degree $n/2$, the Betti numbers of $H^*_{\gT^{\ell}}(M)$ are equal to the Betti numbers of the product $B\gT^\ell\times M$. Thus the map 
 $H^*_{\gT^\ell}(M)\to H^*(M)$ 
 is surjective in even degrees. 
 If $\ell=1$, we look at the Gysin sequence
$$
\cdots\to H^k_{\gS^1}(M)\stackrel{\cup e}{\rightarrow} H^{k+2}_{\gS^1}(M)\stackrel{j^*}{\rightarrow}H^{k+2}(M)\stackrel{Gy}{\longrightarrow} 
H^{k+1}_{\gS^1}(M)\to \cdots
$$
Since $j^*$ is surjective in even degrees, the Gysin homomorphism is zero in even degrees. Hence, cupping with the Euler class is injective in all odd degrees. 
In other words, all $H^*(BS^1)$-torsion is in even degrees. 
This proves the lemma if $\ell=1$. 
For larger $\ell$, we rule out torsion elements of odd degree by contradiction and induction on $\ell$. 
If  $x\in H^{k}_{\gT^\ell}(M)$ is a non-zero torsion element with minimal odd $k$, and if $t\in H^*(B\gT^\ell)\setminus\{0\}$ is such that $xt=0$, then we can find $\gT^{\ell-1}$ 
such that the pullback of $t$ to $H^*(B\gT^{\ell-1})$ is non-zero. We can use the Gysin sequence 
$$
H^{k-2}_{\gT^\ell}(M)\to H^k_{\gT^\ell}(M) \stackrel{p^*} \to  
H^k_{\gT^{\ell-1}}(M)
$$
and the minimality of our choice of $k$ to see that $p^*(x)\neq 0$ -- a contradiction to our induction assumption. 
\end{proof}

In view of Lemma~\ref{lem:gysin}, Theorem~\ref{thm:t8NoCurvature} follows in odd dimensions from the following:

\begin{theorem}\label{thm:oddt7}
Assume that a torus $\gT^d$ acts effectively on an odd-dimensional, simply connected, closed manifold $M^n$ such that any fixed-point component of any codimension three torus in $\gT^d$ is a rational sphere. If $H^*_{\gT^d}(M)\to H^*(M)$ is surjective in even degrees, then $M^n$ is a rational sphere.
\end{theorem}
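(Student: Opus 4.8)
The plan is to quotient $M$ by a well‑chosen almost free circle so as to land on an even‑dimensional orbifold to which the recognition machinery of this section applies, and then to run the Gysin sequence of the resulting circle orbibundle back up to $M$. Concretely, choose a primitive circle $\gS^1\subseteq\gT^d$ that is contained in none of the finitely many \emph{proper} positive‑dimensional isotropy subtori of the action (possible since the circles in a fixed proper subtorus form a proper subvariety of the rational projectivization of $\Lt$). Writing $\gT^d=\gS^1\times\gT^{d-1}$ and setting $P=(M\setminus M^{\gT^d})/\gS^1$, one obtains a connected, orientable, finite‑type rational‑cohomology orbifold of even dimension $n-1$ with an effective residual $\gT^{d-1}$‑action ($d-1\ge 3$), and canonical identifications $H^*(P)\cong H^*_{\gS^1}(M\setminus M^{\gT^d})$ and $H^*_{\gT^{d-1}}(P)\cong H^*_{\gT^d}(M\setminus M^{\gT^d})$. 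For the main line of argument I would first assume $M^{\gT^d}=\varnothing$, so that $P$ is closed, and address the general case afterwards.

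The structural input I would extract from the hypothesis is: every fixed‑point component of every subtorus of $\gT^d$ of codimension at most three is an \emph{odd‑dimensional} rational sphere. Indeed such a component sits inside a codimension‑three fixed component (a rational sphere), and fixed‑point components of torus actions on odd rational spheres are again odd rational spheres by Bredon's Theorem~\ref{thm:SinglyGenerated}. Since $\gS^1$ acts almost freely on each such sphere, its quotient is a rational $\CP$ (immediate from the Gysin sequence of the orbibundle). Passing through the quotient correspondence — a codimension‑two subtorus $\bar\gR\subset\gT^{d-1}$ is covered by the codimension‑two subtorus $\gS^1\times\bar\gR\subset\gT^d$, whose one‑skeleton in $M$ is a union of such spheres — this shows that every fixed‑point component of every codimension‑two subtorus of the $\gT^{d-1}$‑action on $P$ is of rational $\CP$, $\HP$, or $\s$ type; tracing the codimension‑one tori inside $\gT^{d-1}$ the same way shows the invariant $\mult$ of Definition~\ref{def:graph} equals $1$ for all these components, so the number of edges in the graph assigned to the one‑skeleton of $P$ is $m=1$.

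Next I would establish equivariant formality of the $\gT^{d-1}$‑action on $P$, which is where the hypothesis that $H^*_{\gT^d}(M)\to H^*(M)$ is surjective in even degrees is used: it factors through $H^*_{\gS^1}(M)=H^*(P)$, so $\pi^*\colon H^{\mathrm{even}}(P)\to H^{\mathrm{even}}(M)$ is surjective; feeding this into the Gysin sequence of $\gS^1\hookrightarrow M\to P$ and using that the Euler class $e\in H^2(P)$ is nilpotent (as $\gS^1$ acts almost freely) forces $H^{\mathrm{odd}}(P)=0$, whence the action on $P$ is equivariantly formal. Now the Chang--Skjelbred Lemma (Theorem~\ref{lem:ChangSkjelbred}) applies, the one‑skeleton of $P$ is connected, and Lemma~\ref{lem:straight} gives $\chi(P)=\tfrac{n-1}{2m}+1$; the previous paragraph gives $m=1$ (and in any case $m\ge 2$ would give $H^2(P)=0$, hence $e=0$, hence $H^1(M)\ne0$, contradicting simple connectivity). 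So $\chi(P)=\tfrac{n-1}{2}+1$ and Lemma~\ref{lem:CP} yields $H^*(P)\cong H^*(\CP^{(n-1)/2})$. Since $e\ne0$ (again because $H^1(M)=0$) and $H^2(P)\cong\Q$, the class $e$ is a generator, so $\cup e$ is an isomorphism $H^k(P)\to H^{k+2}(P)$ for all $k\le n-3$; reading off the Gysin sequence then gives $H^i(M)=0$ for $0<i<n$, i.e.\ $M$ is a rational sphere.

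The step I expect to be the main obstacle is the structural input when $M^{\gT^d}\ne\varnothing$: there $P$ is only an \emph{open} orbifold, so one must either verify that the Chang--Skjelbred and recognition machinery still apply (replacing $M^{\gT^d}$ by the boundary of an invariant tubular neighbourhood, and controlling the isotropy representations at the $\gT^d$‑fixed points — which are faithful, hence amenable to the $\gS^1$‑splitting theorem of Theorem~\ref{thm:S1splitting} — to pin down the extra contribution to the one‑skeleton), or reorganize so that the fixed set is analyzed together with the lower strata as in the even‑dimensional Theorem~\ref{thm:Bb}. A secondary technical point is that Lemmas~\ref{lem:straight}, \ref{lem:CP}, and the relevant special cases are invoked here in the closed rational‑Poincaré‑duality orbifold category rather than for closed manifolds, which is precisely why the Chang--Skjelbred Lemma was stated for orbifolds; one should check these statements survive verbatim in that generality.
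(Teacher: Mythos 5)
Your treatment of the fixed-point-free case is essentially the paper's own Step 5: a generic circle $\gS^1\subset\gT^d$ acting almost freely, the identification $H^*(P)\cong H^*_{\gS^1}(M)$, the even-degree surjectivity fed into the Gysin sequence to force $H^{\mathrm{odd}}(P)=0$ (this is exactly how the paper reuses the proof of Lemma~\ref{lem:gysin}), the observation that fixed components of codimension-two subtori of $\gT^{d-1}$ on $P$ are quotients of odd rational spheres and hence of $\CP$ type, and then Chang--Skjelbred plus the orbifold versions of Lemma~\ref{lem:straight} and Lemma~\ref{lem:CP} (equivalently Corollary~\ref{cor:codim2CP}) to recognize $P$ as a rational $\CP^{(n-1)/2}$ and conclude.

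The genuine gap is the case $F=M^{\gT^d}\neq\varnothing$, which you explicitly leave as ``the main obstacle'' with two undeveloped suggestions; this is precisely the part the paper singles out as the main difficulty, and neither suggestion works as stated. The quotient $(M\setminus F)/\gS^1$ is an open orbifold, so it has no Poincar\'e duality and the recognition machinery (Lemma~\ref{lem:straight}, Lemma~\ref{lem:CP}, the Euler characteristic count) cannot be applied to it directly; moreover the hypothesis only gives even-degree surjectivity of $H^*_{\gT^d}(M)\to H^*(M)$, which does not transfer to $M\setminus F$ or to the boundary of a tubular neighbourhood, so even the vanishing of $H^{\mathrm{odd}}\bigl((M\setminus F)/\gS^1\bigr)$ needs a separate argument. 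The paper's Steps 1--4 supply exactly the missing content: Step 1 shows, by contradiction using the pair $(M,M\setminus F)$, Lemma~\ref{lem:injective} and the rational-sphere hypothesis, that $H^{\mathrm{odd}}_{\gT^2}(M)$ is a free $H^*(B\gT^2)$-module generated in degree $n$ by classes corresponding to the fixed components (here the even-degree surjectivity enters through torsion-freeness as in Lemma~\ref{lem:gysin}); Steps 2--3 deduce that the one- and two-skeleta and $F$ itself are connected and that every fixed component of every codimension-one torus contains $F$; and Step 4 proves $H^{\mathrm{odd}}\bigl((M\setminus F)/\gS^1\bigr)=0$ via the pair $(M,M\setminus p)$ and Smith theory, identifies the fixed components of the residual action as rational $\CP^{\ell_i-1}$'s, computes the Euler characteristic, and compares with the weighted projective space arising as the quotient of a normal sphere of $F$ to show the inclusion of that normal sphere into $M\setminus F$ is a rational homotopy equivalence, whence $M$ is a rational sphere. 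None of this appears in your proposal, and the $\gS^1$-splitting theorem (Theorem~\ref{thm:S1splitting}) you propose to invoke at the fixed points plays no role in the actual argument; without an argument of this kind the nonempty fixed-set case remains unproved.
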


\begin{proof} 
 We choose $\gS^1\subset \gT^2\subset \gT^3 \subset \gT^d$ 
 such that $\gS^1$ has the same fixed-point set as $\gT^d$ 
 and $\gT^{i+1}$ has the same $i$-skeleton as $\gT^d$ for $i=1,2$. 
Our first claim is already valid if fixed-point components of any codimension one (rather than three) torus are spheres.\\[1ex]
{\bf Step 1.} If $\gT^d$ has $q$ fixed-point components, 
then $H^{odd}_{\gT^2}(M)$ is a free $H^*(B\gT^2)$-module 
with a basis $b_1,\ldots,b_q\in H^{n}_{\gT^2}(M)$.\\[1ex]
We argue by contradiction.
We let $F$ denote the fixed-point set of $\gT^d$. 
We claim that $H^{j}_{\gT^2}(M\setminus F)\neq 0$ holds for infinitely many odd $j$. 
 
Consider the long exact sequence in equivariant cohomology of the pair 
 $(M,M\setminus F)$:
$$
H^k_{\gT^2}(M,M\setminus F)\to H^k_{\gT^2}(M)\to H^k_{\gT^2}(M\setminus F).
$$ 
Since $\gS^1$ acts almost freely on $M\setminus F$,  
the quotient $P=(M\setminus F)/\gS^1$ is an $(n-1)$-dimensional orbifold.
 As the natural map 
 $(M\setminus F)\times _{\gS^1}E\gS^1\to P$ 
 is a rational homotopy equivalence, we have 
 $H^*_{\gT^2}(M\setminus F)\cong H^*_{\gT^2/\gS^1}(P)$. 
 This implies that $H^{*}_{\gT^2}(M\setminus F)$ is generated as a $H^*(B\gT^2)$-module in degrees $<n$.
Using that the components of $F$ are rational spheres, it follows by excision and the Thom 
isomorphism that  
$H^{2k+1}_{\gT^2}(M,M\setminus F)=0$ for $2k+1<n$ 
and that $H^{n}_{\gT^2}(M,M\setminus F)\cong   \Q^q$. 
Notice that $q$ then corresponds to the total odd Betti number of $F$.
By Lemma~\ref{lem:injective}, the map 
$H^{odd}_{\gT^2}(M,M\setminus F)\to H^{odd}_{\gT^2}(M)$ 
is injective. Its image is a free module generated by $q$ elements of degree $n$. 
Since we argue by contradiction, the map is not surjective, and 
by the proof of Lemma~\ref{lem:gysin}, $H^{odd}_{\gT^2}(M)$ is torsion-free. 
Thuswe can find elements $b_1,\ldots,b_q\in H^{odd}_{\gT^2}(M)$
that are linearly independent over $H^*(B\gT^2)$
and satisfy $\deg(b_i)\le n$ and $\deg(b_1)\le n-2$.
Therefore, the image of $H^{odd}_{\gT^2 }(M,M\setminus F)$ has infinite codimension as a  vector subspace of $H^{odd}_{\gT^2}(M)$, and the claim follows.

 We deduce that 
 $H^*_{\gT^2/\gS^1}(P)$ is non-zero in arbitrarily large odd degrees. This implies that the circle action of $\gT^2/\gS^1$ on $P$ has a fixed-point component $L\subset P$ with a non-trivial odd Betti number.
Our choice of $\gT^2$ implies that $L$ is also 
a fixed-point component of the induced $(\gT^d/\gS^1)$-action on $P$. 
We choose a fixed-point component $\hat L\subset M$ of a $(d-1)$-dimensional subtorus in $\gT^d$ such that $L$ is the projection 
of $\hat L\setminus F$. By assumption, $\hat L$ is a sphere 
and $F\cap \hat L$ is either empty or a sphere, too. This implies 
that $\hat L\setminus F$ has the cohomology of a rational sphere as well. Thus $L=(\hat L\setminus F)/\gS^1$ has the rational cohomology of a complex projective space -- a contradiction, as $L$ has non-vanishing odd Betti numbers. \\[1ex]
{\bf Step 2.} Let $S\subset M$ denote the one-skeleton of
the action. Then all odd Betti numbers of 
$H^*_{\gT^2}(M\setminus S)$ vanish. \\
Notice that the action on $\gT^2$ on $M\setminus S$ is almost 
free, and the natural projection 
$(M\setminus S)\times_{\gT^2} E\gT^2\rightarrow Q:=(M\setminus S)/\gT^2$ induces an isomorphism on cohomology. Thus the claim implies that all odd Betti numbers of
the orbifold $Q$ vanish.\\[1ex] 
To prove Step 2, first observe that
  $H^*_{\gT^2}(M,M\setminus F)\to H^*_{\gT^2}(M)$ 
is injective in all degrees and, by Step 1 (and its proof), 
surjective in all odd degrees.
By the long exact sequence in equivariant cohomology of the pair $(M,M\setminus F)$, 
we deduce that $H_{\gT^2}^*(M\setminus F)$ 
has vanishing odd Betti numbers. 
Next note that $E=S\setminus F\subset M\setminus F$
is a disjoint union of submanifolds. 
Looking now at the long exact sequence for the pair 
$(M\setminus F, M\setminus S)$, Lemma~\ref{lem:injective} implies that
$H^*_{\gT^2}(M\setminus F, M\setminus S)$ injects into 
$H^*_{\gT^2}(M\setminus F)$. The statement follows.
 \\[1ex]
{\bf Step 3.} The two-skeleton of the action is connected. 
If the fixed-point set $F$ is not empty, then
it and the one-skeleton are connected as well.\\[1ex]
As explained after Step 2, the orbifold 
$Q=(M\setminus S)/\gT^2$ has vanishing odd Betti numbers. 
The fixed-point set of the $(\gT^d/\gT^2)$-action on $Q$ is the image of the two-skeleton of $M\setminus S$ in $Q$. 
By the Chang-Skjelbred Lemma, this image is contained in a single connected 
component of the one-skeleton of the $(\gT^d/\gT^2)$-action on $Q$. 
This in turn implies that all connected components of the two-skeleton 
of the $\gT^d$-action on $M$ are contained in a single connected 
component of the three-skeleton of $M$.

Since every connected component of the two-skeleton contains elements of the one-skeleton, the claim follows once we show the following: 
If $F_1$ and $F_2$ are two fixed-point components of 
two codimension one tori $\gT^{d-1}_1$ and $\gT^{d-1}_2$, 
which are contained in a fixed-point component of a codimension three torus, then they are also contained in a fixed-point component of a codimension two torus. 
But this is obvious since the fixed-point component of the codimension 
three torus is a rational homology sphere.

Suppose now $F_0$ is a fixed-point component of $\gT^d$. 
We claim that every fixed-point component  $E$ 
of a codimension  one torus contains $F_0$. 

Suppose $E$ does not contain $F_0$, and 
let $H\supseteq E$ be a fixed-point component of a codimension two 
torus. Since $H$ is a rational sphere, we would get a contradiction 
if $F\subset H$. Thus $F_0\not \subset H$, and in particular, all 
fixed-point components of codimension one tori contained in $H$ 
also have the property that they do not contain $F_0$.  
Since the two-skeleton is connected, it follows that no 
fixed-point component of a codimension one torus contains $F_0$, 
which is clearly nonsense.\\[1ex]
{\bf Step 4.} If  $F\neq \varnothing$, then $M$ is a rational sphere.\\[1ex]
By Step 3, $F$ is connected and hence a rational sphere. 
Moreover, the one-skeleton is connected, and thus 
any fixed-point component of a codimension one torus contains $F$.
We choose again $\gS^1\subset \gT^d$ such that 
the fixed-point set of $\gS^1$ coincides with the one of $\gT^d$. We claim that the orbifold 
$(M\setminus F)/\gS^1$ has no rational cohomology in odd degrees. Let $p\in F$ be a point. The claim follows from the relative cohomology sequence 
if we can show that $H^*_{\gS^1}(M\setminus p)$ has no cohomology in odd degrees. By Smith theory, we know that $H^k_{\gS^1}(M\setminus p)=0$ for all odd $k>n$. Now the claim follows completely analogously to the proof of Lemma~\ref{lem:gysin}.

We endow $P:=(M\setminus F)/\gS^1$ with the induced 
action of $\gT^{d-1}=\gT^d/\gS^1$. 
We let $E_1,\ldots,E_k\subset P$ denote the fixed-point components of $\gT^{d-1}$. 
We choose fixed-point components $F_1,\ldots,F_k\subset M$ 
of codimension one tori in $\gT^d$ with $E=(F_i\setminus F)/\gS^1$. 
As explained before, $F\subset F_i$ for all $i$.
 By assumption, both $F_i$ and $F$ are rational spheres. This implies that $F_i\setminus F$
 has the cohomology of $\Sph^{2\ell_i-1}$ with $2\ell_i=\dim(F_i)-\dim(F)$. 
 Therefore, $E_i$ has the rational cohomology of $\CP^{\ell_i-1}$. 
 Moreover, the submanifolds $F_i$ intersect pairwise perpendicularly at $F$. Thus 
$$\chi(P)=\sum_{i=1}^k\chi(E_i)=\sum_{i=1}^k\ell_i=\tfrac{n-\dim(F)}{2}.$$ 
 Consider a normal sphere $\Sph^{2k-1}$ of $F$ as a submanifold in $M\setminus F$. Then $2k=n-\dim(F)$. 
 The image $N$ of $\Sph^{2k-1}$ under the projection 
 $P$ is a weighted complex projective space with Euler characteristic 
 $\tfrac{n-\dim(F)}{2}$. It is clear that the inclusion 
 $j\colon N\to P$ induces a surjective map 
 on cohomology $H^*(P)\to H^*(N)$. 
 Since both spaces are simply connected, this map 
 is a rational homotopy equivalence. 
 This in turn implies that the inclusion map of the normal 
 sphere $\Sph^{2k-1}\rightarrow M\setminus F$ is a 
 rational homotopy equivalence. But then $M$ itself is a rational sphere.\\[-1ex]

In view of Step 4, it is clear that we can finish the proof by establishing \\
{\bf Step 5.} If $F=\varnothing$, then $M^n$ is a rational sphere.\\[1ex]
The circle chosen initially acts almost freely, and the orbifold $P=M/\gS^1$ has vanishing odd Betti numbers by the proof of Lemma~\ref{lem:gysin}. If we endow $P$ with the 
induced action by $\gT^{d-1}=\gT^d/\gS^1$, then all fixed-point components of codimension two tori in $\gT^{d-1}$ have the rational cohomology of complex projective spaces since they are images of rational spheres contained in $M$ under the natural projection.

It is easy to see that Lemma~\ref{lem:CP} and Corollary~\ref{cor:codim2CP} remain valid for closed orbifolds with the same proof. Thus $P$ has the rational cohomology of a complex projective space, and hence $M$ must be a rational homology sphere.
%
%
 %
\end{proof}

\begin{corollary} Suppose $M^n$ is a rationally elliptic, positively curved, simply connected, closed Riemannian manifold. If $\gT^8$ acts effectively by isometries on $M^n$ without fixed points, then $M^n$ is a rational sphere.
\end{corollary}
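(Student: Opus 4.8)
The plan is to establish, as the decisive input, that $M$ has rational homotopy Euler characteristic $\chi_\pi(M):=\sum_{i\geq 1}(-1)^i\dim_\Q(\pi_i(M)\otimes\Q)$ equal to $-1$, and then to run the $F=\varnothing$ case of the proof of Theorem~\ref{thm:oddt7}. First I would observe that $M^n$ is odd-dimensional: $M$ is orientable (being simply connected), so if $n$ were even then Theorem~\ref{thm:Berger} would produce a $\gT^8$-fixed point, against the hypothesis. Applying Theorem~\ref{thm:Berger} once more, some $\gT^7\subseteq\gT^8$, and hence some $\gT^5\subseteq\gT^8$, has a fixed point. Since every fixed-point component of a torus has even codimension (Theorem~\ref{thm:FPSstructure}), in the odd-dimensional $M$ every such component is odd-dimensional and so cannot be a rational $\CP^m$ or $\HP^m$ with $m>0$; thus by Theorem~\ref{thm:t5} every fixed-point component of every codimension-three subtorus of $\gT^8$ is a rational sphere (effectiveness of the subtorus is automatic, as $\gT^8$ acts effectively). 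Moreover, since $\chi(M)=0$ while an odd-dimensional rational sphere has zero Euler characteristic and a point has Euler characteristic one, every such fixed-point component is positive-dimensional.

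Next I would prove $\chi_\pi(M)=-1$. Fix a $\gT^5\subseteq\gT^8$ with nonempty fixed-point set. By the previous paragraph each of its fixed-point components $F$ is a positive-dimensional, odd-dimensional rational sphere, so $\chi_\pi(F)=-1$. Allday's result \cite{Allday78} on rationally elliptic spaces — each component of the fixed-point set of a torus action on a simply connected rationally elliptic space is again rationally elliptic with the same rational homotopy Euler characteristic as the ambient space — then gives $\chi_\pi(M)=-1$. This is exactly the argument indicated in the remark preceding Corollary~\ref{maincor:t8}, and (apart from the appeal to Theorem~\ref{thm:t5}) it is the only place where positive curvature is used; it is genuinely needed, since $\Sph^3\times\CP^6$ carries an effective fixed-point-free $\gT^8$-action but is not a rational sphere, so Theorem~\ref{thm:t5}, and not merely the torus action, must be doing work.

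Finally I would conclude by imitating Step~5 of the proof of Theorem~\ref{thm:oddt7}. Since the isotropy Lie subalgebras of the $\gT^8$-action form a finite union of proper subspaces of the Lie algebra $\Lt$ of $\gT^8$, there is a circle $\gS^1\subseteq\gT^8$ acting almost freely; put $P:=M/\gS^1$, a closed $(n-1)$-dimensional orbifold with $H^*_{\gS^1}(M)\cong H^*(P;\Q)$. From the rational homotopy sequence of $M\to M\times_{\gS^1}E\gS^1\to B\gS^1$ we get $\chi_\pi(M\times_{\gS^1}E\gS^1)=\chi_\pi(M)+\chi_\pi(B\gS^1)=-1+1=0$, and since $M\times_{\gS^1}E\gS^1\simeq_\Q P$ is simply connected with finite-dimensional rational cohomology and homotopy, it is positively elliptic; hence $P$ has vanishing odd Betti numbers. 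Now $\gT^7=\gT^8/\gS^1$ acts on $P$, and analyzing the preimages of the isotropy strata one sees that every fixed-point component of a codimension-two subtorus of $\gT^7$ in $P$ is the quotient by a free $\gS^1$-action of a fixed-point component of a codimension-three subtorus of $\gT^8$ in $M$, i.e. the quotient of an odd-dimensional rational sphere by a free circle action, hence a rational $\CP$. By the orbifold versions of Lemmas~\ref{lem:straight} and~\ref{lem:CP} (the Chang--Skjelbred Lemma~\ref{lem:ChangSkjelbred} makes the one-skeleton connected, and the $\CP$-type hypothesis forces $\chi(P)=\tfrac{n-1}{2}+1$), $P$ has the rational cohomology of $\CP^{(n-1)/2}$; the Gysin sequence of the orbifold circle bundle $M\to P$ together with $b_1(M)=0$ then forces $H^*(M;\Q)\cong H^*(\Sph^n;\Q)$.

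I expect the main obstacle to be the $\chi_\pi(M)=-1$ step: one must pin down the precise form of Allday's theorem \cite{Allday78} and verify its hypotheses for the fixed-point components, which are not obviously simply connected, though as orientable (Theorem~\ref{thm:FPSstructure}) rational spheres in odd dimensions this can be dealt with. The remaining work — checking that Lemmas~\ref{lem:straight} and~\ref{lem:CP} survive the passage to the orbifold quotient $P$ and really determine $\chi(P)$ — is routine given the machinery already developed in Section~\ref{sec:recognition}.
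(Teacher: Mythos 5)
Your proof is correct and follows essentially the same route as the paper's: choose an almost free circle $\gS^1\subset\gT^8$, pass to the rationally elliptic quotient orbifold $P=M/\gS^1$, observe via Theorem~\ref{thm:t5} that the fixed-point components of codimension-two subtori of $\gT^7=\gT^8/\gS^1$ in $P$ are quotients of odd-dimensional rational spheres and hence of $\CP$ type, and then run Step 5 of the proof of Theorem~\ref{thm:oddt7}. The only divergence is how the vanishing of the odd Betti numbers of $P$ is obtained: the paper deduces $\chi(P)>0$ directly from the nonempty, $\CP$-type fixed-point set of the $\gT^7$-action and then invokes rational ellipticity of $P$, whereas you detour through Allday's theorem to get $\chi_\pi(M)=-1$ and hence $\chi_\pi(P)=0$; both work, but the paper's version avoids the extra input from \cite{Allday78} and the attendant care about the fundamental groups of the fixed-point components.
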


\begin{proof} 
If we choose an almost free subaction of $\gS^1\subset \gT^8$, then the orbifold $P=M/\gS^1$ is rationally elliptic. As in the proof of Step 5, all fixed-point components of codimension two tori in $\gT^7=\gT^8/\gS^1$ have the rational cohomology of complex projective spaces. In particular, $\chi(P)>0$ and thus all odd Betti number of $P$ vanish.
Therefore, we can argue as in proof of Step 5 to get the result.
\end{proof}

A major obstacle in the proof of the above theorem is that there is no a priori assumption
on the equivariant formality of the action. If we change this, then we can relax the remaining assumptions considerably.

\begin{theorem}\label{thm:odd}
Let $M^n$ be an connected, odd-dimensional, oriented, closed manifold with a smooth, effective  $\gT^d$-action 
with $d \geq 2$. Suppose that the action 
is equivariantly formal, that is, the map 
$H^*_{\gT^d}(M)\to H^*(M)$ is surjective. 
 If every fixed-point component of every subtorus with codimension 
 one is a rational homology sphere, then $M^n$ is a rational homology sphere. 
\end{theorem}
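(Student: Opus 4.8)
The plan is to run essentially the same machinery as in the proof of Theorem~\ref{thm:oddt7}, but now exploiting equivariant formality from the outset, which makes several of the delicate torsion-chasing steps unnecessary. First I would choose a circle $\gS^1 \subseteq \gT^d$ with the same fixed-point set as $\gT^d$ and, if $d\ge 3$, a subtorus $\gT^2 \subseteq \gT^d$ with the same one-skeleton as $\gT^d$. Since the action is equivariantly formal, so is the restricted $\gT^2$-action (equivariant formality passes to subtori, as the total Betti number of $M$ equals that of the $\gT^d$-fixed point set, which is contained in the $\gT^2$-fixed point set), and hence $H^*_{\gT^2}(M)$ is a free $H^*(B\gT^2)$-module. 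The first case to dispose of is $F = M^{\gT^d} \neq \varnothing$: here one argues exactly as in Steps 3 and 4 of the proof of Theorem~\ref{thm:oddt7}. Equivariant formality forces $F$ to be connected (the Chang-Skjelbred Lemma, Theorem~\ref{lem:ChangSkjelbred}, applies and puts all of $M^{\gT^d}$ in a single one-skeleton component, and since codimension-one fixed-point components are rational spheres by hypothesis, a counting argument shows $F$ is connected), hence a rational sphere; then one passes to $P = (M\setminus F)/\gS^1$ with its induced $\gT^{d-1}$-action, observes that the fixed-point components $E_i \subseteq P$ are rational $\CP^{\ell_i - 1}$'s coming from codimension-one fixed-point spheres $F_i \supseteq F$ in $M$, computes $\chi(P) = \sum \ell_i = \tfrac{n-\dim F}{2}$, and concludes via the normal sphere $\Sph^{n-\dim F - 1}$ of $F$ that $\Sph^{n-\dim F -1} \to M\setminus F$ is a rational homotopy equivalence, so $M$ is a rational sphere.

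For the main case $F = \varnothing$, the action of $\gS^1$ is almost free, so $P := M/\gS^1$ is a closed rational cohomology orbifold of dimension $n-1$ and $H^*_{\gS^1}(M) \cong H^*(P)$. Equivariant formality of the $\gT^d$-action on $M$ gives that $H^*_{\gT^{d}}(M)\to H^*(M)$ is surjective, and combined with the hypothesis $b_{\mathrm{odd}}(M) = 0$ being unnecessary here — instead I would show directly that $P$ has vanishing odd Betti numbers. The cleanest route: since the $\gT^d$-action on $M$ is equivariantly formal and $M$ is odd-dimensional with no fixed points, Poincaré duality for the Borel construction together with the module structure forces the odd cohomology of $M$ to pair with itself, and the Gysin sequence for $\gS^1 \to M \to P$ shows that cupping with the Euler class $e \in H^2(P)$ is injective in the appropriate range; one extracts that $b_{\mathrm{odd}}(P) = 0$ just as in the proof of Lemma~\ref{lem:gysin}. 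Now endow $P$ with the induced $\gT^{d-1} = \gT^d/\gS^1$-action. Every fixed-point component of a codimension-\emph{one} subtorus of $\gT^{d-1}$ in $P$ is the image under $M \to P$ of a fixed-point component of a codimension-\emph{two} subtorus of $\gT^d$ in $M$ — but wait: the hypothesis only controls codimension-one subtori of $\gT^d$.

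This is where the main obstacle lies, and I expect it to be the crux of the argument. With only codimension-one control in $\gT^d$, after dividing by $\gS^1$ we obtain control of codimension-one subtori of $\gT^{d-1}$ in $P$ (provided the relevant subtorus of $\gT^d$ contains $\gS^1$, which one can arrange by the choice of $\gS^1$, or handle by a separate argument), and these give rational $\CP$-type components by the rational-sphere hypothesis on $M$. So the assumptions of Definition~\ref{def:graph} are satisfied for the $\gT^{d-1}$-action on $P$, and since $d-1 \ge 1$, one would want to invoke Corollary~\ref{cor:codim2CP} (which needs $\gT^3$) or Lemma~\ref{lem:CP}. The remark in the text that "Lemma~\ref{lem:CP} and Corollary~\ref{cor:codim2CP} remain valid for closed orbifolds with the same proof" is exactly what is needed — but Corollary~\ref{cor:codim2CP} requires $\gT^3$ symmetry on $P$, i.e. $\gT^4$ on $M$, whereas Theorem~\ref{thm:odd} only assumes $d \ge 2$. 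The resolution is that when $d = 2$, the group $\gT^{d-1} = \gS^1$ acts on $P$ with all fixed-point components rational $\CP$'s (images of rational spheres in $M$), and one can finish by a direct argument: the fixed-point components $E_i \subseteq P$ of the circle are rational projective spaces with $\sum(\ell_i) = \tfrac{n-1}{2}$ wait, $\chi(P) = \sum \chi(E_i)$, and since $P$ is a simply connected (as $M$ is and $\gS^1$ acts almost freely with connected orbits) rational cohomology orbifold whose circle action has only $\CP$-type fixed components, Bredon-type arguments (Theorem~\ref{thm:SinglyGenerated}) together with the Chang-Skjelbred Lemma applied to the equivariantly formal $\gS^1$-action — which is equivariantly formal precisely because $b_{\mathrm{odd}}(P) = 0$ — recover $H^*(P) \cong H^*(\CP^{(n-1)/2})$. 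Then $M \to P$ being an $\gS^1$-bundle over a rational $\CP^{(n-1)/2}$ forces $H^*(M) \cong H^*(\Sph^n)$ via the Gysin sequence, since $H^*(P)$ is generated by $e$ modulo a single relation and the Euler class of the bundle is a nonzero multiple of $e$. Thus in all cases $M^n$ is a rational homology sphere.
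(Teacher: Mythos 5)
Your proposal misses the point of the equivariant formality hypothesis, which is what makes this theorem essentially a two-line consequence of the Chang--Skjelbred Lemma and Smith theory; the paper's proof is precisely that. Equivariant formality says the total rational Betti number of $M$ equals that of $M^{\gT^d}$. So the whole job is to show that $M^{\gT^d}$ is a single rational homology sphere, and then $\btotal(M)=\btotal(M^{\gT^d})=2$ finishes the proof. Connectedness of the fixed set follows because Chang--Skjelbred puts $M^{\gT^d}$ into one component of the one-skeleton: if the fixed set were disconnected, some codimension-one fixed-point component $N$ (an odd-dimensional rational homology sphere, so $\btotal(N)=2$) would contain two distinct fixed components of the residual circle, each odd-dimensional and hence of total Betti number at least two, contradicting Smith's inequality. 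The connected fixed set is then itself the fixed set of a circle acting on such an $N$, hence an odd-dimensional rational homology sphere. You actually reach the statement ``$F$ is connected, hence a rational sphere'' in your first case, but then fail to cash it in and instead launch into the $(M\setminus F)/\gS^1$ machinery of Theorem~\ref{thm:oddt7}.

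That detour creates two genuine problems. First, the case you call ``the main case'', $F=\varnothing$, cannot occur: under equivariant formality $\btotal(M)=\btotal(\varnothing)=0$, which is absurd for a nonempty closed manifold. Most of your proposal is spent resolving difficulties (codimension-one versus codimension-two control on $P=M/\gS^1$, the $d=2$ subcase, simple connectivity of $P$ --- which is not even among the hypotheses on $M$) in a case that is vacuous. Second, even in the case $F\neq\varnothing$, the Step~3/Step~4 arguments of Theorem~\ref{thm:oddt7} that you want to import rely on the statement that \emph{every} fixed-point component of every codimension-one torus contains $F$; in that theorem this is extracted from the connectivity of the two-skeleton together with the codimension-two and codimension-three sphere hypotheses, none of which are available here. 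Without it, the identification of the $\gT^{d-1}$-fixed components of $(M\setminus F)/\gS^1$ and the Euler characteristic count $\chi(P)=\tfrac{n-\dim F}{2}$ do not go through. So the proposal is both a wrong turn strategically and incomplete where it matters.
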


\begin{proof}
By the Chang-Skjelbred Lemma, the one-skeleton is connected. 
If the fixed set would not be connected, there would be a fixed-point component $N$ of a  codimension one torus such that the remaining circle action has two odd-dimensional fixed-point components. This is clearly impossible since $N $ is a rational cohomology sphere. Thus the fixed-point set is connected and, by assumption, given by a rational sphere. 
Since the action is equivariantly formal, the total rational Betti number of $M^n$ coincides with the total rational Betti number of the fixed-point set and equals two as well.
\end{proof}

\section{\sectsix}\label{sec:z2codim2}

The main aim of this section is to prove the following theorem
\begin{theorem}\label{thm:z2codim2}  Suppose a torus $\gT^d$ with $d\ge 4$ acts almost effectively on an even-dimensional, connected,  orientable, closed manifold $M^{n}$ with a non-trivial fixed-point set $F$ and the two following properties
\begin{enumerate}
\item[$\bullet$] If $N$ is a fixed-point component of a codimension two torus, 
then $N$ is contained in a fixed-point component $\hat N$ of some subgroup such that the kernel of induced action of $\gT^d$ on $\hat N$ has 
 codimension three and $\hat N$ has the rational cohomology of 
 $\CP^k$, $\HP^k$, or $\Sph^{2k}$ for some $k$.
\item[$\bullet$] If $M'$ is the fixed-point component of a finite 2-group $ \gE\subset \gT^d$ whose induced 
action of $\gT^d$ remains almost effective, then $M'$ is orientable and 
 the above property remains valid for 
the action of $\gT^d$ on $M'$.
\end{enumerate}
If $C$ is the connected component of the one-skeleton at 
any fixed point $p$, then
 $\chi(F\cap C)$ is equal to the Euler characteristic
 of an orientable rank one symmetric space of dimension $n$.
Moreover, any involution $\iota\in \gT^d$ has at most two fixed-point components intersecting $C$. If equality holds, their dimensions sum to $n-w$, where $w\in\{2,4,8,n\}$ denotes the degree of the generator of the cohomology of the model. 

\end{theorem}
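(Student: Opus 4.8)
The strategy is to reduce Theorem~\ref{thm:z2codim2} to the results already established about labeled graphs of torus actions with fixed-point components of $\CP$, $\HP$, or $\Sph$ type, and then to track the $\Z_2$-weights through this combinatorial picture. First I would pass to a subtorus and a fixed-point component in order to normalize the situation. Using Lemma~\ref{lem:connectedstab}(b), choose a finite $2$-group $\gE \subseteq \gT^d$ (maximal among finite isotropy groups near $C$) so that $\gT^d/\gE$ acts on the fixed-point component $M'$ of $\gE$ with only connected isotropy groups near $C$; by the second hypothesis $M'$ is orientable and still satisfies the codimension-three property. Replacing $M$ by $M'$ and $\gT^d$ by $\gT^d/\gE$, I may therefore assume there are no non-trivial finite isotropy groups near $C$. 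This is exactly the hypothesis under which the $\gS^1$-splitting machinery and Lemma~\ref{lem:repsplitting} apply. Note $F\cap C$ and its Euler characteristic are unchanged, and the "involution" statement can be read off from the original picture once one understands the $\Z_2$-weights, so no information is lost in this reduction.

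Next I would verify that the first hypothesis, once we are in the no-finite-isotropy situation, forces the assumptions of Definition~\ref{def:graph} and hence of Lemma~\ref{lem:straight}: every fixed-point component of a codimension-two torus sits inside a fixed-point component $\hat N$ of rational $\CP^k$, $\HP^k$, or $\Sph^{2k}$ type whose kernel has codimension at most three, and by Bredon's theorem (Theorem~\ref{thm:SinglyGenerated}) $N$ itself is then of $\CP$, $\HP$, or $\Sph$ type. Thus Lemma~\ref{lem:straight} applies to the connected component $C$ of the one-skeleton: there is an integer $m\ge 1$ with $\chi(F\cap C)=\tfrac{n}{2m}+1$, the unlabeled graph is that of a simplex with $m$-fold multiplicities, and if some $F_i$ has dimension $\ge 4$ then $m\in\{1,2,\tfrac n2\}$. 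The remaining case is $\dim F_i\le 2$ for all $i$ (the GKM-like case), where $m$ could a priori be $4$; here I would argue, exactly as in the recognition of the Cayley plane, that $m=4$ forces $n=16$ and $\chi(F\cap C)=3$, the Euler characteristic of $\CaP$ — and otherwise $m\in\{1,2\}$. In every case $\chi(F\cap C)\in\{2,\tfrac n4+1,\tfrac n2+1,3\}$, which are precisely the Euler characteristics $\chi(\Sph^n),\chi(\HP^{n/4}),\chi(\CP^{n/2}),\chi(\CaP)$ of the orientable rank-one symmetric spaces of dimension $n$. This proves the first claim.

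For the statement about an involution $\iota\in\gT^d$, I would work with $\Z_2$-coefficients and the $\Z_2$-fixed-point set, using the analogue of Definition~\ref{def:graph} for the $\Z_2$-weights. The key point is Lemma~\ref{lem:repsplitting}(d): the mod-$2$ reductions of the weights of the isotropy representation at a fixed point, together with $0$, form a subset $S$ of $\Z_2^d$ with the codimension-three property. Given the involution $\iota$, its kernel picks out a hyperplane $U\subset \Z_2^d$; a fixed-point component of $\iota$ through a point $p\in F\cap C$ corresponds to the span of those mod-$2$ weights lying in $U$, i.e.\ to a coset structure on $S$. Since $S$ has the codimension-three property, in particular it contains no $3$-dimensional subspace and the projection $\Z_2^d\to\Z_2^d/U'$ is non-surjective for codimension-three $U'$ generated by $S$; iterating Lemma~\ref{lem:cod3closure} down to the hyperplane $U$ one sees that $S\cap U$ together with $S\setminus U$ is sufficiently sparse that $\iota$ can have at most two fixed-point components meeting $C$. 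When there are exactly two, the complementary weights (those \emph{not} in $U$) all lie in a single coset of a subspace of dimension equal to the degree data, and counting multiplicities via the model — where the generator lives in degree $w\in\{2,4,8,n\}$ and the total number of non-trivial weights at $p$ is $\tfrac n2$ — yields that the codimension of the two components sum to $n-w$. Concretely: if the model is $\Sph^n$ there is one component ($w=n$, the fixed set is connected); if it is $\CP^{n/2}$ the complementary weight system has rank one so the two fixed-point sets of $\iota$ have codimensions summing to $n-2$; similarly $n-4$ for $\HP^{n/4}$ and $n-8$ for $\CaP$.

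\textbf{Main obstacle.} The routine part is the reduction and the Euler-characteristic count, which is essentially a packaging of Lemmas~\ref{lem:straight} and \ref{lem:repsplitting}. The delicate part is the involution statement: one must relate the $\Z_2$-weight system (an abstract sparse subset of $\Z_2^d$) to the actual fixed-point components of $\iota$ inside the geometric one-skeleton, and then extract from the codimension-three property the precise bound "at most two components" together with the exact dimension identity in the equality case. The cleanest route is probably to set up, in parallel with Definition~\ref{def:graph}, a $\Z_2$-labeled version of the graph of $C$, show its labels are governed by the mod-$2$ reductions of the rational weights (so Lemma~\ref{lem:repsplitting}(d) applies verbatim), and then prove a short combinatorial lemma: if $S\subseteq\Z_2^d$ has the codimension-three property and $U$ is a hyperplane, then $S$ meets at most two of the cosets $\{U, v+U\}$ that are "realized" by the graph, with equality constraining the $v$-part of $S$ to a coset of the $w$-dimensional subspace read off from the model. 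Getting this last combinatorial lemma stated and proved in the right generality — uniformly over the four model types — is where most of the work will lie.
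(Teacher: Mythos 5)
There are genuine gaps. The most serious one is in the Euler characteristic count. Lemma~\ref{lem:straight} does give $\chi(F\cap C)=\tfrac{n}{2m}+1$, but it pins $m$ down to $\{1,2,\tfrac n2\}$ only when some fixed-point component has dimension at least four. In the remaining case (all components of dimension $\le 2$) you appeal to "the recognition of the Cayley plane", but Proposition~\ref{pro:GKM} is proved under a different hypothesis (fixed-point components of codimension \emph{three} tori of the right type, plus vanishing odd Betti numbers) and is not available here; nothing in your sketch excludes, say, $m=3$ or $m=8$, which would give $\chi(F\cap C)=\tfrac n6+1$ or $\tfrac{n}{16}+1$, not the Euler characteristic of any rank one symmetric space. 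Closing this is the bulk of Section~\ref{sec:z2codim2}: one first proves Lemma~\ref{lem:z2graph}, showing that the $\Z_2$-labels of the edges between any two vertices form an affine coset $a_{ij}+U$ of a fixed subspace $U\subset(\Z_2^d)^*$, each label occurring with the same multiplicity $\nu$, a power of two (so $m=\nu\cdot 2^{\dim U}$ is a power of two), and then runs an induction on $n$ in which involutions lying in finite isotropy groups --- detected through the failure of the codimension three property of Lemma~\ref{lem:repsplitting}(d) --- are used to pass to smaller fixed-point components; the borderline case $m=4$ needs an explicit linear model for the weights and Theorem~\ref{thm:GW}. Your opening reduction to the case of no finite isotropy groups is therefore doubly problematic: it is unjustified, because passing to the fixed-point component $M'$ of a finite $2$-group can disconnect the one-skeleton, so the new component $C'$ may satisfy $F\cap C'\subsetneq F\cap C$ and neither $\chi(F\cap C)$ nor the fixed sets of involutions of the original $\gT^d$ transfer back; and it discards exactly the mechanism (the presence of $2$-torsion isotropy, guaranteed by the second bullet hypothesis) that drives the induction.

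The involution statement has the same problem in miniature. The codimension three property of Lemma~\ref{lem:repsplitting}(d) concerns the mod-$2$ weights at a single fixed point, whereas "at most two fixed-point components of $\iota$ meet $C$" and the identity $\dim M^\iota_1+\dim M^\iota_2=n-w$ are global statements about the whole graph. What is actually needed is the coset structure $a_{ij}+U$ together with the compatibility $a_{ij}+a_{jk}+a_{ki}\equiv 0\bmod U$ over every triangle and the uniform multiplicity $\nu$: an involution not annihilated by all of $U$ then kills exactly half of the $m$ edges between any two vertices, so $C\cap M^\iota$ is connected of dimension $\tfrac n2$, while an involution annihilated by all of $U$ but not by the whole weight system splits the vertices into exactly two classes, and the multiplicity count gives $n-2m=n-w$. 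Your proposed combinatorial lemma about hyperplanes and sparse subsets of $\Z_2^d$ would have to be upgraded to this global structure theorem, which is precisely Lemma~\ref{lem:z2graph}; local sparseness of the weight set at one fixed point does not by itself bound the number of components of $M^\iota$ meeting $C$.
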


\begin{proof}[{\bf Proof of Corollary~\ref{maincor:t7} using Theorem~\ref{thm:z2codim2}}]
Suppose $\gT^7$ acts effectively by isometries  on a positively curved, orientable manifold, which has vanishing odd Betti 
numbers.  
We know from the Addendum to Theorem~\ref{thm:t5} stated at the end of Section~\ref{sec:endgame} that the first property of Theorem~\ref{thm:z2codim2}
 holds for every subtorus of codimension two.
The fixed-point component $M'$ of 
any finite isotropy group  $\gF$ is orientable by Theorem~\ref{thm:FPSstructure}, 
 and thus the induced action on $M'$ is subject to the same conclusion.
 In addition, the Chang-Skjelbred Lemma (Theorem~\ref{lem:ChangSkjelbred}) implies that the one-skeleton is connected, so we recover the 
  Euler characteristic of $M^n$ from the above theorem.  By combining it with Theorem~\ref{thm:t8NoCurvature} under the assumptions of b), Corollary~\ref{maincor:t7} follows. 
\end{proof}

The assumptions of Theorem~\ref{thm:z2codim2} imply that every fixed-point component $L$ of  a codimension two subgroup $\gH\subset \gT^d$
has the rational cohomology of a rank one symmetric space other than the Cayley plane, as long as $\gH/\gH_0$ is a $2$-group. In fact, one can look at a maximal finite $2$-subgroup $\gF\subset \gH$ contained in a finite isotropy group of the isotropy representation of 
$\gH$ at $L$ and pass to the fixed-point component $M'$ of $\gF$ first. 
It is then easy to see that $\gF$ intersects every connected component of $\gH$ and hence that the induced action of $\gH/\gF$ on $M'$ is that of a torus.

\begin{definition}(Assigning $\Z_2$-weights to the graph of the one-skeleton)\label{def:z2graph}
Suppose we are in the situation of Definition~\ref{def:graph} and  $\chi(F\cap C)\ge 3$.
 Assume now in addition that any codimension two subgroup $\gH\subset \gT^d$ for which $\gH/\gH_0$ is a $2$-group has the property that the fixed point components has the rational cohomology of $\CP^k$, $\HP^k$, or $\Sph^{2k}$.
We then assign a second label to each edge of the graph, namely a homomorphism  
$\Z_2^d\rightarrow \Z_2$ where $\Z_2^d\subset \gT^d$. 
We look at the rational weight $r$ of an edge between $i$ and $j$ and let $\gR\subset \gT^d$ denote the subtorus 
whose Lie subalgebra is given by the kernel of $r$.
We  either assign as $\Z_2$-weight  $0$ or the homomorphism 
with kernel equal to $\Z_2^d\cap \gR$
 depending  on
whether or not $F_i$ and $F_j$ are in the same fixed-point component $N'\subset N$ of $\gR\cdot \Z_2^d$. 
If $N$ has $\HP$ type, then 
there are two edges with label $r$ between $i$ and $j$.  
If $N'$ is of $\CP$ type and still contains $F_i$ and $F_j$, 
one of those two  edges  gets the 
$\Z_2$-label $0$ while the other gets as label the homomorphism with kernel $\Z_2^d\cap \gR$.

For the edges from $i$ to itself, we proceed similarly. The ones with rational weight $0$ 
 get the $\Z_2$-label $0$ as well. An edge from $i$ to itself corresponding 
 to a codimension one torus  $\gR$  gets the $\Z_2$-label $0$ if the fixed-point component of $\gR\cdot \Z_2^d$ remains of $\HP$ type.  Otherwise, we assign the homomorphism with kernel equal to $\gR\cap \Z_2^d$. 
\end{definition}

We indirectly used the fact that, in the above situation, a fixed-point component of a codimension one torus cannot have the cohomology of $\Sph^{2k}$ 
 with $k\ge 3$, because otherwise we could use  $\chi(F\cap C)\ge 3$ 
 to get a contradiction for the fixed-point component of a suitably chosen codimension two 
 torus.

\begin{lemma}\label{lem:z2graph}
Suppose we are in the situation of Theorem~\ref{thm:z2codim2} and that $\chi(F\cap C)\ge 3$.
Then the graph has the following properties. 
\begin{enumerate}
\item[a)] 
There exist a subspace $U\subset (\Z_2^d)^*$ and elements $a_{ij}\in (\Z_2^d)^*$ such that $a_{ij}+U$ equals the subset of elements in $(\Z_2^d)^*$ that occur as labels of edges between vertices $i$ and $j$. 
 All labels occur with the same multiplicity $\multtwo$, which is a power of $2$. 
  Moreover, $a_{ij}+a_{ik}+a_{kj}=0\mod U$ 
 for pairwise distinct $i$, $j$, $k$. 
\item[b)] If the $i$-th fixed-point component has positive dimension, then the $\Z_2$-labels for edges from $i$ 
to itself are given by the elements in $U$. 
\end{enumerate}
\end{lemma}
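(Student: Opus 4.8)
\textbf{Proof plan for Lemma~\ref{lem:z2graph}.} The strategy is to run the proof of Lemma~\ref{lem:straight} again, but now keeping track of the extra $\Z_2$-label, and to compare the two labelled graphs associated to $\gT^d$ and to $\gT^d\cdot\Z_2^d$. The key structural input is that, for a codimension one torus $\gR$, the fixed-point component $N\supseteq F_i,F_j$ of $\gR$ is of $\CP$, $\HP$, or $\Sph$ type, and the fixed-point component $N'\subseteq N$ of $\gR\cdot\Z_2^d$ is again of one of these types by the second bullet of Theorem~\ref{thm:z2codim2} (applied to the relevant finite $2$-group); the $\Z_2$-label simply records whether passing from $\gR$ to $\gR\cdot\Z_2^d$ disconnects $F_i$ from $F_j$ inside $N$.

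First I would establish b). If $F_i$ has positive dimension, then by Definition~\ref{def:z2graph} an edge from $i$ to itself with rational weight $0$ carries $\Z_2$-label $0$, and an edge coming from a codimension one torus $\gR$ carries $\Z_2$-label $0$ exactly when the fixed-point component of $\gR\cdot\Z_2^d$ at $F_i$ is still of $\HP$ type; otherwise it carries the homomorphism with kernel $\gR\cap\Z_2^d$. By Lemma~\ref{lem:straight}a), the isotropy representation of $\gT^d$ at $F_i$ is determined up to rational equivalence, and this already pins down which codimension one tori $\gR$ give a fixed-point component bigger than $F_i$. One then passes to the fixed-point component $M'$ of a maximal finite $2$-isotropy group near $F_i$ and uses that the induced action is by a torus whose isotropy representation at $F_i$ is, via Lemma~\ref{lem:straight}a), the full isotropy representation of $\gT^d$ at $F_i$ (no factor is killed by a finite subgroup). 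The $\Z_2$-labels are then read off from the mod-two reduction of the weights of this representation, and the set of such reductions that arise from self-edges is exactly a subspace $U\subseteq(\Z_2^d)^*$ — this uses the $\S^1$-splitting circle of ideas only through the already-proven structural lemmas, but more directly it follows because the weights perpendicular to $T F_i$ span a complement whose mod-two images form, together with the span condition, precisely a subspace. I would define $U$ to be this subspace.

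Next, for a)~I would fix two distinct vertices $i,j$ and analyse which elements of $(\Z_2^d)^*$ occur as $\Z_2$-labels of $i$--$j$ edges. An edge with rational weight $r$ (kernel-torus $\gR$) occurs because $F_i,F_j$ lie in a common fixed-point component $N$ of $\gR$; its $\Z_2$-label is $0$ if $F_i,F_j$ stay in one component of $\gR\cdot\Z_2^d$, and otherwise is the homomorphism with kernel $\gR\cap\Z_2^d$. Working inside a fixed-point component of a suitable codimension two torus and using Lemma~\ref{lem:graphofmodel} together with the reduced-graph bookkeeping from the proof of Lemma~\ref{lem:straight}, one shows: (i) the number of $i$--$j$ edges with a given $\Z_2$-label is a constant $\nu$ independent of the label, and $\nu$ is a power of $2$ because it equals the number of fixed points in a normal representation of a $2$-torus on an even sphere or projective space, which is a power of $2$; (ii) the set of $\Z_2$-labels occurring is a coset $a_{ij}+U$ — the translation by $U$ comes from the self-edges at $i$ via the multiplicativity of Euler classes / the factorisation arguments in Lemma~\ref{lem:alphaineqalphaj}b), exactly as in the rational case where $r_{ij}$ was determined modulo the weights at $F_i$. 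Finally the cocycle relation $a_{ij}+a_{ik}+a_{kj}=0\bmod U$ is obtained by restricting to the fixed-point component of the codimension three (hence $\Sph$/$\CP$/$\HP$ type) torus containing $F_i,F_j,F_k$ and comparing $\Z_2$-labels there, where the three $a$'s are forced to add to zero because the three corresponding codimension one subgroups of $\gR\cdot\Z_2^d$ — intersected with $\Z_2^d$ — satisfy the linear dependence coming from the three weights of a $2$-torus acting on $\CP^1$ or from the $\Z_2$-reduction of the identity $u_i-u_j=(u_i-u_k)+(u_k-u_j)$.

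The main obstacle will be step (ii): proving that the $\Z_2$-labels between a fixed pair $i,j$ form an honest coset of a subspace rather than just some symmetric-difference-closed set, and that the relevant subspace is exactly the $U$ from b). The difficulty is that, unlike the rational weights which live in a characteristic-zero vector space where the factorisation results of Lemma~\ref{lem:alphaineqalphaj} apply cleanly, the $\Z_2$-labels are not directly the reductions of the rational weights of a single representation — they encode a comparison between two group actions. I expect to handle this by systematically reducing to fixed-point components of codimension two (and, where needed, codimension three) subgroups so that Lemma~\ref{lem:graphofmodel} and the reduced-graph combinatorics of Lemma~\ref{lem:straight}b) become available, and then importing the $\Z_2$-analogue of the linear-algebra of weights from Lemma~\ref{lem:repsplitting} and the combinatorial Lemma~\ref{lem:z2lem}; the bookkeeping of multiplicities being powers of two is then a direct consequence of Theorem~\ref{thm:SinglyGenerated} applied to the $\Z_2$-tori actions on the relevant rank one model spaces.
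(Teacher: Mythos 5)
Your overall architecture --- reduce to fixed-point components of codimension two subgroups with $2$-torsion component group, use the reduced-graph combinatorics of Lemmas~\ref{lem:graphofmodel} and~\ref{lem:straight}, and deduce the coset structure from an additivity property of the labels --- matches the paper's. But the two steps you yourself flag as the hard ones are exactly where the proposal has genuine gaps, and the workarounds you sketch would not go through. The heart of the lemma is the additivity statement (the paper's Claim 2): if $a$ occurs as a $\Z_2$-label of an edge between $i$ and $j$ and $b$ as a label between $i$ and $k$, then $a+b$ occurs as a label between $j$ and $k$; the coset structure $a_{ij}+U$ and the relation $a_{ij}+a_{ik}+a_{kj}=0\bmod U$ are then formal consequences. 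You propose to obtain the cocycle relation by restricting to ``the fixed-point component of the codimension three torus containing $F_i,F_j,F_k$'' and reducing the identity $u_i-u_j=(u_i-u_k)+(u_k-u_j)$ modulo $2$. Two problems: there need not be a codimension three subtorus whose fixed-point component contains all three components (the relevant rational weights can span a space of dimension greater than three), and, as you yourself observe, the $\Z_2$-labels are not mod-$2$ reductions of the rational weights --- they record whether adjoining $\Z_2^d$ to the codimension one torus disconnects $F_i$ from $F_j$. The paper's proof of additivity is a lengthy case analysis: one passes to a \emph{maximal} codimension two subgroup $\gH$ with $2$-torsion component group retaining both edges, refines the labels to $(\Z_2^2)^*$ so that $a$ and $b$ become independent, and then rules out every alternative labeling of the opposite edge by exhibiting a reduced graph that violates Lemma~\ref{lem:graphofmodel} or a rational labeling that violates the Hsiang--Su linear model on an $\HP$-type component. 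None of this is replaced by your sketch.

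Second, your justification that the common multiplicity $\multtwo$ is a power of $2$ --- that ``it equals the number of fixed points in a normal representation of a $2$-torus on an even sphere or projective space, which is a power of $2$'' --- is false as stated: a torus acting on $\CP^n$ has $n+1$ fixed points. The paper instead argues by induction on $\dim M$: using the failure of the codimension three property (Lemma~\ref{lem:repsplitting}) one finds an involution lying in a finite isotropy group, passes to its fixed-point component (legitimate by the second bullet of Theorem~\ref{thm:z2codim2}), observes that the edge count there is still a multiple of $\multtwo$ and is a power of $2$ by induction, and concludes. Likewise, your argument for part b) --- that the mod-two images of the self-edge weights ``form precisely a subspace'' equal to the $U$ governing the $i$--$j$ labels --- is an assertion rather than a proof, and again conflates the $\Z_2$-labels with reductions of rational weights of a single representation. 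To complete the proof you would need to supply genuine substitutes for the paper's Claims 2 and 3.
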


Since $m=\multtwo\cdot 2^{\dim_{\Z_2}(U)}$, it follows that the number $m$ of edges between different vertices 
equals a power of $2$. If $\iota$ is an involution which is not contained in the kernel of all 
elements in $U$, then $\iota$ is contained in the kernel of precisely one half of the elements in $a_{ij}+U$ for all $i$ and $j$.
Therefore, $C\cap M^\iota$ is connected, and the corresponding connected component of $M^\iota$ has dimension $n/2$ by Lemma \ref{lem:straight}. 
In contrast,  $C\cap M^\iota$ has precisely two connected components provided that $\iota$ 
is the kernel of all elements in $U$ but not in the kernel of the action.
Moreover, if then $M^\iota_i$ is the component of $M^\iota$ containing the 
$i$-th component of $C\cap M^\iota$ for $i=1,2$, we have
$\dim(M^\iota_1)+\dim(M^\iota_{2})=\dim(M)-2m$ -- here we do not rule out $M^\iota_1=M^\iota_2$ 
as a possibility.

\begin{proof}
We will often pass to fixed-point components $N$ of subgroups $\gH$ whose 
component group $\gH/\gH_0$ is a $2$-group. Typically, it is clear that we mean by $N$ 
the fixed-point component of $\gH$ at some fixed-point component $F_j$ of $\gT^d$. 
We then consider the connected component  $C'$ of the one-skeleton $C\cap N$ at $F_j$.
We refer to the  graph of $C'$ as the {\it reduced graph} of $C$ with respect to $\gH$. 
If $\gH$ has codimension two and $N$ comes with a remaining almost effective $\gT^2$-action, then the reduced graph has either exactly one edge (resp. two edges) between any of the vertices if $N$ is of $\CP$ (resp. $\HP$ type). 
The possibility that $N$ is of $\Sph$ type is usually ruled out by the fact that $\chi(C'\cap F)\ge 3$.  \\[1ex]
 {\bf Claim 1.} Suppose the fixed-point component  $N$ of a codimension two group $\gH$ 
  has the rational cohomology of $\HP^k$ and that the induced $\gT^2$-action remains almost effective. 
  Then the rational labeling of the reduced graph has a linear model. That is,  
 if $F_{i_1},\ldots,F_{i_k}$ are the fixed-point components inside $N$, then we can find 
  homomorphisms $u_1,\ldots,u_k\colon \Lt\rightarrow \R$ such that the labels 
  of the two edges between $i_a$ and $i_b$ are, up to rescaling, given by $u_a \pm u_b$.\\[1ex]
  The claim is trivial if $N$ is four-dimensional, and 
 it immediately follows from the work of Hsiang and Su \cite{HsiangSu75} if one of the components $F_{i_j}$ 
  has positive dimension.  
   If they are all points, we make use of the fact that $N$ is the fixed-point component 
   of an $\gS^1$-action inside $\hat  N$ as described 
in Theorem~\ref{thm:z2codim2}. Now $\hat N$ has the rational cohomology 
   of a quaternionic projective space as well. But for $\gT^3$-actions on such 
   spaces, the existence of a linear model is known, again by the same work of Hsiang and Su.\\[1ex]
{\bf Claim 2.}
Suppose $a\in (\Z_2^d)^*$ occurs as a label between $i$ and $j$
and $b\in (\Z_2^d)^*$ as a label between $i$ and $k$ for pairwise distinct 
$i$, $j$, and $k$. 
 Then there is an edge between $i$ and $k$ with the label 
$a+b$. \\[1ex]
To prove this, we first consider 
the case that the rational labels  $r_1,r_2\colon \Lt\rightarrow \R$ 
of the two edges are linearly independent.
Our first step in the proof is to reduce the situation to the case that $a$ and $b$ 
are linearly independent over $\Z_2$.
We consider a maximal  codimension two group $\gH$ such that
the Lie algebra of $\gH$ is given $\Ker(r_1)\cap \Ker(r_2)$ and
the component group of $\gH$ consists of two-torsion such that  
both edges can still be found in the reduced graph of $\gH$. 

We pass to the fixed-point component $N$ at $F_j$ of $\gH$. The group $\gT^d/\gH$ still acts on $N$ almost effectively, and by construction, the three involutions  in $\gT^d/\gH$ 
are not in the kernel of the action. The reduced graph  $C'$  with its rational labels is part of the old graph. If $a$ and $b$ are linearly dependent over $\Z_2$, we get a refined $\Z_2$-labeling: 
Namely, we can attach an element of $(\Z_2^2)^*$ to each of the edges where 
$(\Z_2^2)^*$ is the dual space of $\Z_2^2\subset \gT^d/\gH$. 
If we can show that the refined labeling behaves additively  in the way we claim, then the same holds 
for the original labeling.
By our choice of the group $\gH$, it is clear that the refined labels of the two 
edges are linearly independent elements of  $(\Z_2^2)^*$.
By a slight abuse of notation, we still call them $a$ and $b$.

If $N$ is of $\CP$ type, there is only one edge between $j$ and $k$ in the reduced graph, 
and it is labeled by some element in $\{a,b,a+b,0\}$.  
The label cannot be $b$ or $0$, because then
we could  pass to the fixed-point component of the kernel of $b$ in $N$ at $F_j$. 
It would have edges from $j$ to $k$ and from $j$ to $i$ but none from $k$ to $i$. 
Similarly, if the labeling on the opposite edge were $a$, we could pass to the fixed-point component of the kernel of $a$ to see that the combinatorial structure 
of the reduced graph is inconsistent with the simplex structure it must have by Lemma~\ref{lem:graphofmodel}. 

It remains to 
 consider the possibility that $N$ is of  $\HP$ type. We have two edges between each of the three vertices for the reduced graph. We need to rule out that the label on the opposite edges are contained in
$\{ a,b,0\}$.  The labels on the opposite edges cannot be 
a subset of $ \{0,b\}$. In fact, otherwise we could pass to the fixed-point component of the kernel of $b$, and we would see that both edges from $i$ to $k$ are still in the component. At least one edge from $j$ to $k$ as well, but there is at most one edge from 
$j$ to $i$,  a contradiction to Lemma~\ref{lem:graphofmodel}. 
This only leaves the possibility that the opposite edges are labeled with $a$ and $b$. 
If we now pass to the fixed-point component of an involution contained in $\Ker(a)$ or $\Ker(b)$, we see that the component of the one-skeleton in the fixed-point component still contains all three components $F_i$, $F_j$, and $F_k$. Since the reduced one-skeleton must be of $\CP$ type, we deduce that 
for all distinct $u,v \in \{i,j,k\}$, the two edges between $u$ and $v$ of the reduced graph of $N$ are labeled $a$ and $b$.

If not all edges with $\Z_2$-label $a$ have the same rational label, we get a contradiction to 
a previous case by passing through the fixed-point set of the kernel  of $a$. 
Similarly, all edges with label $b$ must  have the same rational label.
But then the rational labeling of the graph does not correspond to a linear action,  contradicting Claim 1.
%

We have proved Claim 2 in the special case that the rational labels $r_1$ and $r_2$ are linearly 
independent. If they are not, then 
  $a$ and $b$ are linearly dependent over $\Z_2$ as well.
So we may assume $b=a$ or $b=0$.  If $a=b=0$, then the graph for the fixed-point component of $\gH\cdot \Z_2^d$ contains $F_i$, $F_j$, and $F_j$ which implies there is an edge with a zero label between vertices $j$ and $k$. If $a\neq 0=b$, then the remaining edge can only have a label $0$ or $a$, and the former case is ruled out after a permutation of $(i,j,k)$ as before.
Thus we only have to rule out the case where all three edges are labeled $a\neq 0$. We choose an edge emanating from $i$ 
whose rational label $r_3$ is not a multiple of $r_1$. 
 If possible, we choose the edge in such a way that it is an edge between vertices $i$ and $j$, after a possible permutation of $i$, $j$ and $k$. Similar to before, we consider a maximal  codimension two subgroup $\gH$ whose
Lie algebra is given by $\Ker(r_1)\cap \Ker(r_3)$ and whose
the component group consists of two-torsion such that 
all three edges can still be found in the reduced graph of $\gH$. 
We then assign again a refined labeling to the reduced graph. 
We continue to use $a$ to denote the refined $\Z_2$-label
 of the two edges with rational labels in $r_1$ and $r_2$,
 and we denote by $c$ the refined $\Z_2$-label of the third edge.  
 As before, $a$ and $c$ are linearly independent over $\Z_2$.

We first consider the subcase that the third edge connects $i$ and $j$.
In the reduced graph of the fixed-point component, we have exactly two edges between 
any two of the vertices $i$, $j$, and $k$. One of them is labeled $a$. 
Since one edge is labeled $c$, we can use the previous case to see that 
the remaining two edges are labeled $a+c$. 
By repeating the argument with $a+c$ in place of $c$, we see that there are also two edges labeled $c$ -- a contradiction.

Thus we may assume that the rational labels of all edges connecting $i$, $j$, and $k$ are multiples of $r_1$.  This implies $m\le 2$.
 Moreover, in the reduced graph of $\gH$, $c$ is the label between $i$ and fourth vertex $l$. 
We find  edges  between $j$ and $l$ and between $k$ and $l$ labeled $a+c$ from the previous case.
If we are in the $\CP$ case, 
 we see a triangle with labels $a$, $a+c$ and $a+c$, in contradiction to a previous case.
 Thus the component is of $\HP$ type.
It follows that there is a second edge from $j$ to $l$ with label 
$c=a+(a+c)$. Repeating this, we see that for all $h\in \{i,j,k\}$ the two edges 
from $l$ to $h$ are labeled $c$ and $a+c$. 
If we pass pass through the fixed-point component of the kernel of $c$,
we see in its reduced graph an edge 
between any two  of the four vertices $i$, $j$, $k$, and $l$.  
By the previous 
case, the label of any edge connecting vertices $\{i,j,k\}$ 
cannot be $c$ 
and hence must be $0$. But this proves our claim.

 We have proved Claim 2. 
 Notice that this  implies that the $\Z_2$-labels 
 of edges between $i$ and $j$ form an affine vector subspace of $(\Z_2^d)^*$.
 To see this consider a third vertex $k$.  
 Let $S_1\subset (\Z_2^d)^*$ denote the set of labels that 
 is assigned to edges  between $j$ and $k$. Similarly, denote by $S_2$ and  $S_3$ 
 the corresponding sets for labels of edges  between $i$ and $k$ and between $i$ and $j$, respectively. We know that $S_1+S_2=S_3$. Clearly all three sets must have the same number of elements. Moreover,  $S_3=b+S_2=a+S_2$ for all $a,b\in S_1$. This in turn implies $a+b+S_2=S_2$ for all $a,b\in S_1$, and hence  $U=S_1+S_1$ 
 has the same number of elements as $S_1$. Therefore, $U$ is a  
 vector space.\\[1ex]
 {\bf Claim 3.} Suppose $a\in (\Z_2^d)^*$ occurs $\multtwo(i,j,a)$ times as a label 
 of edges between $i$ and $j$. Then $\multtwo(i,j,a)$ only attains one non-zero value $\multtwo$,  
 and $\multtwo$ is a power of $2$.\\[1ex]
 After reordering we may assume that $\multtwo_0:=\multtwo(1,2,a)$ is the maximum of the function 
 $\multtwo(\cdot,\cdot,\cdot)$. 
 We let $b$ be a label of an edge from $1$ to $3$. It suffices to show that the label $a+b$ is assigned  $\multtwo_0$ times to edges between $2$ and $3$.

 
 For each codimension two torus whose reduced graph contains a label $a$ between 
 $1$ and $2$ and a label $b$ between $1$ and $3$, we see a label $a+b$ between $2$ and $3$, by Claim 2 (and its proof). 
 We need to make sure that if we see two edges labeled $a$ in the reduced graph, 
 then we see two  edges $a+b$ between $2$ and $3$.
 But if, for one choice of distinct $i, j \in \{1,2,3\}$,  
 the edges between $i$ and $j$ in the reduced graph are labeled differently, then it immediately follows 
 from Claim 2 that they are labeled differently for any choice of 
 $i\neq j\in \{1,2,3\}$. This proves that indeed $\multtwo(\cdot,\cdot,\cdot)$ 
 only attains one non-zero value.

 To show that $\multtwo$ is a power of $2$, we argue as follows. 
 We choose again a label $b$ between $1$ and $3$ and 
 consider the group $\gF\subset \Z_2^d$ of index at most four given 
 by the intersections of the kernels of $a$ and $b$. 
 Let $N$ denote the fixed-point component of $\gF$ at $F_1$. 
 If the kernel of the action of $\gT^d$ has codimension $\le 2$, then $N$ 
 is of  $\CP$ or $\HP$ type, and we see that $\multtwo \le 2$. 
 Otherwise, it follows that there is an involution $\iota\in \gF$ which is contained in a 
 finite isotropy group with respect to the isotropy representation of $\gT^d$ at $F_1$. 
 We consider the fixed-point component $L$ of $\iota$ at $F_1$. 
 The torus action only has a finite kernel $\gE$ and 
 we can replace $M$ by $L$ and $C$ by the connected component $C'$ of $C\cap L$ at $F_1$. We can use the group $\Z_2^d\subset \gT^d/\gE$ to refine our $\Z_2$-labeling. 
 
 The number of edges between different vertices for the graph of $C'$ is still a multiple of $\multtwo$. On the other hand, by induction on the dimension, the number is given by a power of $2$. Thus $\multtwo$ must be a power of $2$ as well.
\end{proof}

\begin{proof}[Proof of Theorem~\ref{thm:z2codim2}]
We argue by induction on the dimension $n$ and we may assume that 
the action of $\gT^d$ is effective rather than almost effective. The induction base $n=2$ is clear since no such action exists. In the induction step, we will make use of Lemma~\ref{lem:repsplitting} and its proof to identify situations where we can find finite isotropy groups 
with non-trivial $2$-torsion. If the $\Z_2$-weights of an isotropy 
representation at a fixed point contain all non-zero elements of a three-dimensional subspace $V\subset (\Z_2^d)^*$, then we can be sure there is an involution $\iota$ in a finite isotropy group which is not contained in the kernel of all elements in $V$. This is due to the fact that then the weights of the representation reduced modulo $2$ no longer have the codimension three property, which would be satisfied according 
to Lemma~\ref{lem:repsplitting} d) if no non-trivial finite isotropy groups
are present. In the proof, it was shown that a failure of the codimension three property actually leads to isotropy groups of even order.

We continue to denote by $m$ the number of different edges between two components. 
If $\chi(F\cap C)=2$, then everything is clear. Thus we may assume $\chi(F\cap C)\ge 3$. 
Then it is easy to see that there must be at least three 
connected components in $F\cap C$, since 
we can then find a fixed-point component  with an almost effective $3$-torus action 
which is acting on some rational $\CP^k$ or $\HP^k$.
There will be no induction on dimension $d$ of the torus, so the application of Theorem \ref{thm:GW} below is only required for the case $d = 4$. Recall that, in the main application of Theorem~\ref{thm:z2codim2}, we have $d=7$. 

Everything follows from Lemma~\ref{lem:z2graph} if we can establish $m\le 4$ with equality only if $(\chi(F\cap C),d)=(3,4)$. 
\\[1ex]
 {\bf Claim 1.} $m\le 4$.\\[-1.5ex]

First, we want to rule out the possibility $(\chi(F\cap C),d,m)=(3,4,8)$. Otherwise,
since $F\cap C$ has at least three connected components, it consists of three isolated points.
Moreover, any fixed-point component of a codimension two torus is either  a sphere or   a rational $\CP^2$.
 In fact, otherwise we would find a codimension two torus which fixes a submanifold that is a rational $\HP^2$, and it could not be contained in a strictly larger fixed-point component of $\CP$, $\HP$, or $\s$ type. Thus we can apply Theorem~\ref{thm:GW} from below 
 to rule out this case. 
 

Second, we consider the possibility that the group $U\subset (\Z_2^d)^*$ is at least three-dimensional. Then $m\ge 8$, and we can find involutions not contained in the kernel of $U$ which belong to a finite 
isotropy group.  Since there is only one fixed-point component intersecting $C$, 
we get a contradiction to our induction assumption unless $m=8$ and $(\chi(F\cap C),d)=(3,4)$, but that case was ruled out earlier.

Third, we plan to get a contradiction in the subcase that 
 the group $U\subset (\Z_2^d)^*$ is at most one-dimensional but we still have $m\ge 8$.  
 Recall that, in the graph, the $\Z_2$-labels on the edges between $i$ and $j$ are the elements of $a+U$ for some $a\in (\Z_2^d)^*$. 
 Thus there is a subgroup $\gH'\cong \Z_2^{d-2}$ of $\gT^d$  fixing all edges. 
  Let  $N$ be the fixed-point component of $\gH'$ at $F_i$ and
 $\gH$ the kernel of the action of $\gT^d$ on $N$. 
 If $\gH$ has dimension $d-2$, a contradiction easily arises since the fixed-point component $N$ must 
 be contained in a bigger one of $\CP$, $\HP$, or $\Sph$ type. 
 If $\gH$ does not have dimension $d-2$, then there is $2$-torsion in its component group, and thus there is an involution $\iota \in \gH$ contained in a finite isotropy group. 
 The fixed-point component $N_2$ of $\iota$ is strictly bigger than  $N$ and comes with 
 an almost effective action of $\gT^d$.  Since the reduced graph for $N$ still has $m\ge 8$ edges 
 between different vertices, this contradicts our induction assumption.

Finally, to prove Claim 1, we may assume that
$U\subset (\Z_2^d)^*$ is  two-dimensional and need to get a contradiction if $m\ge 8$.
  If $d=4$, we choose a group $\Z_2^2$ 
  such that $a(\Z_2^2)=\Z_2$ for all $a\in U\setminus\{0\}$. 
 The fixed-point component $N$ of $\Z_2^2$
contains $C\cap F$, and the reduced graph has $m/4\ge 2$ edges between any two vertices.
 If the isotropy group is two-dimensional, 
 we get a contradiction  from the fact that $N$ needs to be contained in a bigger 
 component which still has cohomology of $\CP$ or $\HP$ type. 
 Indeed, any bigger component must have at least $m/2\ge 4$ 
 edges between vertices of its corresponding graph.
 Thus the kernel of the action is not two-dimensional. Now there is an involution  in a finite isotropy group where the fixed point component $N$ contains $F\cap C$ and still has $m/2 \ge 4$ edges between components. 
 By induction, we have $m = 8$ and $\chi(C\cap F)=3$, but that case was already treated.

 Thus we may assume $d\ge 5$.
 If we look at the isotropy representation of $\Z_2^d$  at some fixed-point component, we can find $d-2$ elements $a_1, \ldots, a_{d-2}\in (\Z_2^d )^*$ projecting to linearly independent 
 elements of  $\Z_2^d/U$ such that all elements in $a_i+U$ occur as subrepresentations 
 of the isotropy representation of $\Z_2^d$. We pass to a three torus 
 whose Lie algebra is given by the kernel of the homomorphisms 
 of the edges whose $\Z_2$-labels are given by $a_2,\ldots,a_{d-2}$. 
 This three torus still has all $7$- non-trivial subrepresentation of $\Z_2^3\subset \gT^3$
 occurring in the isotropy representation. 
 Thus there is a finite isotropy group $\Z_2$. The fixed-point component of $\Z_2$ 
 still has an almost effective $\gT^d$-action, the one-skeleton has at least  three fixed-point components, and the graph has $m/2\ge 4$ edges between vertices. By the induction assumption, this is not possible 
  with $d\ge 5$.  \\[1ex]
{\bf Claim 2.} Assume $m=4$, and let $F_R$, $F_S$, and $F_T$ be three fixed-point components.  
There exist linearly independent elements $u_0,u_1,u_2,u_3\in \Lt^*$ such that, up to scaling, the four weights $r_1^\pm$, $r_2^\pm$ between $S$ and $T$ are given by $u_2 \pm u_3$ and $u_1 \pm u_0$, the weights $s_1^\pm, s_2^\pm$ between $T$ and $R$ by $u_1 \pm u_3$ and $u_2 \pm u_0$, and the weights $t_1^\pm, t_2^\pm$ between $R$ and $S$ by $u_1 \pm u_2$ and $u_3 \pm u_0$.\\[1ex]
%
 %
 %
 %
 %
 We can arrange for $Z:=\ml r_1^\pm,r_2^\pm,s_1^\pm,s_2^\pm \mr$ 
 to be at least four-dimensional.  In fact, for that it suffices to show that 
 $\ml r_1^\pm,r_2^\pm \mr$ is at least three-dimensional. But otherwise the weights are
 contained in a fixed-point component of  a codimension two torus, which in turn cannot be contained in any larger component of a codimension three group whose topology  is still of $\CP$, $\HP$, or $\Sph$ type. Once we prove the statement under the additional assumption $\dim(Z)\ge 4$, it follows that $ \ml r_1^\pm,r_2^\pm\mr $ is four-dimensional, and then the statement follows for all possible triangles.

 We choose a maximal subgroup $\gH$ whose fixed-point component $L$
 still has a one-skeleton with four edges between $S$ and $T$ and between 
 $R$ and $S$.  As before, we equip the  edges with the refined 
 $\Z_2$-labels contained in the dual space of  $\Z_2^h=\gT^d/\gH$
 where $h=\dim(\gT^d/\gH)\ge 4$. 
 By  Lemma~\ref{lem:z2graph}, there is subgroup 
 $U'\subset (\Z_2^h)^*$ and elements $a, b\in (\Z_2^h)^*$ such that the $\Z_2$ 
 labels are given by $a+U'$ and $b+U'$. 
 By construction, the intersection of the kernels of all elements in $a+U'$ and $b+ U'$ 
 is the trivial subgroup of 
 $\Z_2^h$. Since $\dim_{\Z_2}(U') \leq 2$, we find $h=4$, $U'$ is indeed two-dimensional, 
 and $a$ and $b$ project to a basis of $(\Z_2^4)/U'$. 
In particular,  any three of the eight weights $r_1^\pm ,r_2^\pm,t_1^\pm,t_2^\pm$ are linearly 
 independent since this is true for the $\Z_2$-weights.

 We let $\gF\cong \Z_2^2$ denote the intersection of the kernels of $a$ and $b$, 
 and choose an involution $\iota\in \gF$.  Reduction to the fixed-point component $L^\iota$ 
 eliminates exactly two of the edges between any two of the vertices $R$, $S$, and $T$.
 After renumbering, we can  assume that the $\Z_2$-representations 
 assigned to the  edges with rational weights $r_i^+$, $s_i^+$, $t_i^+$ become equivalent 
 when we restrict them to $\gF$ for $i=1,2$. We can also ensure that $s_1^-$, $r_1^-$, and  $t_1^-$ 
 are the weights that can be found in the fixed-point set of $\gF$ 
 and that $s_1^+$, $r_1^+$  and  $t_1^+$ are still tangential to $L^\iota$.

 The vector $t_1^-$ is then a both non-trivial linear combination of $s_1^-$ and $r_1^-$ 
 and of $s_1^+$ and $r_1^+$. This implies that we can find a non-zero vector $u_3\in \ml r_1^\pm \mr\cap \ml s_1^\pm \mr$. 
 After an adjustment of the scaling of these four weights, we find  $r_1^+-r_1^-=s_1^+-s_1^-=2u_3$. 
 We can then choose
 $u_1,u_2\in \Lt^*$ with $r_1^\pm =u_2 \pm  u_3$,  and $s_1^\pm=u_1\pm u_3$.
 The elements $u_1,u_2,u_3$ are linearly independent. 
 As in the proof of Lemma~\ref{lem:HP}, it now follows after rescaling
 that $t_1^\pm=u_1\pm u_2$. In summary, 
  \[
r_1^\pm =u_2 \pm  u_3, \quad s_1^\pm=u_1\pm u_3, \quad  t_1^\pm=u_1\pm u_2.
  \]
  
  This of course implies $s_1^\pm=r_1^\pm+t_1^-$ for $i=1,2$. 
Since 
 $s_2^\sigma$, $r_2^\sigma$ and $t_1^-$ are labels of the reduced graph of a fixed-point component of $\CP$-type for $\sigma\in \{\pm\}$, 
  we may also assume $s_2^\pm=r_2^\pm+t_1^-$
after
 rescaling of  $s_2^\pm$ and $r_2^\pm$.
   Next adjust the scaling of $t_{2}^\pm$   such that we can find $\lambda^\pm $ with 
 $t_{2}^\pm= r_{2} + \lambda^\pm  s_1^- = r_{2}^\pm + \lambda^\pm(u_1-u_3)$. 
 We can also choose $\mu^\pm$ and $\nu^\pm$ 
 with $t_{2}^\pm=\mu^\pm s_2^\pm+\nu^\pm(u_2-u_3)$.
 This gives 
 \begin{eqnarray*} 
0&=&  (1-\mu^\pm)r_{2}^\pm -\mu^\pm(u_1-u_2) - \nu^\pm (u_2-u_3)+\lambda^\pm(u_1-u_3)\\
  &=&  (1-\mu^\pm)r_2^\pm - (\mu^\pm - \lambda^\pm)(u_1-u_2) -(\nu^\pm - \lambda^\pm)(u_2-u_3)
 \end{eqnarray*}
 Since the three weights are linearly independent, $\mu^\pm=1=\lambda^\pm=\nu^\pm$.
 In summary,
 \[
s_2^\pm =r_2^\pm +t_1^-, \quad t_2^\pm= r_2^\pm + s_1^-. 
  \]
 Everything follows if we can show $r_2^++r_2^-= -2u_1$. Indeed, we can then define 
 $u_0$ such that $r_2^\pm=-u_1 \pm u_0$, and the above formulas give the remaining weights as claimed.

 We  know that there must be $\lambda^\pm$ and $\eta^\pm$ with 
 \begin{eqnarray*} 
 \lambda^+ r_{2}^+ +  \eta^-s_{2}^-=t_1^+= \lambda^- r_{2}^-+\eta^+s_{2}^+\quad \mbox{and hence}
 \end{eqnarray*}
 \begin{eqnarray*} 
 (\lambda^+-\eta^+) r_{2}^+ +(\eta^--\lambda^-)r_2^-  =(\eta^+-\eta^-)t_1^- 
 \end{eqnarray*}
 Since the three weights in this expression are linearly independent, it follows that
 $ \eta^\pm = \lambda^\pm =:\lambda$. 
 Similarly, we find  $\mu^\pm$ and $\nu^\pm$ with 
 \begin{eqnarray*} 
 \mu^+ r_{2}^+ +  \nu^-t_{2}^-=s_1^+=\mu^- r_{2}^- + \nu^+t_{2}^+
 \end{eqnarray*}
 which gives
 $ \mu := \mu^\pm = \nu^\pm$. We thus get
 \begin{eqnarray*}
 \lambda r_{2}^-+\lambda  r_{2}^+&=&u_1+u_2-\lambda(u_1-u_2)\\ 
 \mu r_{2}^- + \mu r_{2}^+&=&u_1+u_3 - \mu(u_1-u_3)
 \end{eqnarray*}
 As $u_1,u_2$, and $u_3$ are linearly independent, we deduce $\lambda=\mu=-1$.
 Therefore, $r_{2}^+   + r_{2}^-=-2u_1$ as claimed. 
 We finish the proof of the theorem by establishing \\[1ex]
 {\bf Claim 3.} If $m=4$, then $d=4$ and $n=16$.\\[1ex]
 %
%
From Claim 2, we see easily that the span of the weights of the four edges  between any two fixed-point components 
 is four-dimensional  and independent of the choice of the two components. It follows that $d=4$.
 
We can also deduce that the fixed-point set is discrete. 
 If there would be a fixed-point component $F_1$ of  positive 
 dimension, we could look at all edges between 1 and 2. There is a group of order $2$ that fixes all of them, 
 and using $d=4$ and Claim 2, we see that  the group that fixes all them is finite. 
 Thus we find a fixed-point component  $N$ of an involution which still has an almost effective $\gT^4$-action 
  with a fixed-point component $F_1$ of positive dimension. 
  Then the assumptions of the theorem imply that there is a third fixed-point component inside $N\cap C$, and thus
  we get a contradiction to  our induction assumption.

  Suppose on the contrary there are at least four isolated fixed points in $C$. 
  We call them $Q$, $R$, $S$, and $T$. 
  By Claim 2, the weights from $S$ to $T$ are given by 
  $r_1^\pm=(u_2\pm u_3)$ and $r_2^\pm=(u_1\pm u_0)$. 
  The weights from $S$ to $R$ are given by $t_1^\pm=(u_1\pm u_2)$ and $t_2^\pm=(u_3\pm u_0)$. 
  Suppose now the weights from $S$ to $Q$ are given by 
  $w_1^\pm$ and $w_2^\pm$. 
  We can assume that there is no involution in the kernel of the corresponding twelve $\Z_2$-weights. 
  Therefore, $\dim_{\Z_2}(U) \geq 1$, and we can assume 
  that the weights $(u_1\pm u_2)$, $(u_2\pm u_3)$, and $w_1^\pm$ are tangential to the fixed-point component of an involution.

  We plan to show $ \{ \ml w_1^\pm \mr,\ml w_2^\pm\mr \}=\{\ml u_0,u_2\mr,\ml u_1,u_3\mr\}$.  From Claim 2, 
  it follows for $i\in \{1,2\}$ that  $\ml w_i^\pm\mr$  has non trivial intersection 
  with each of the four vector spaces  $\ml r_j^\pm \mr$ and
   $\ml t_j ^\pm \mr$ for $j=1,2$.     
  If we choose $i\in \{1,2\}$ with 
  $\ml w_i^\pm\mr \subset \ml u_1,u_2,u_3\mr$,  then we use that
  $\ml w_i^\pm\mr$ 
  must have non-trivial intersection with both $\ml t_2^\pm\mr=\ml u_3\pm u_0\mr$ and $\ml r_2^\pm\mr=\ml u_1\pm u_0\mr$. 
  This implies that $\ml w_i^\pm \mr $ is given by $\ml u_1, u_3\mr$.
  If we choose $i\in \{1,2\}$ with 
  $\ml w_i^\pm\mr  \not\subset \ml u_1,u_2,u_3\mr$, then 
  we use that 
   $\ml w_i^\pm \mr$ must have non-trivial intersection with 
   $\ml u_2 \pm u_3\mr $ as well as with $\ml u_1\pm u_2 \mr$. 
   That clearly implies $u_2\in \ml w_i^\pm \mr$.
   Now $\ml w_i^\pm\mr = \ml u_0,u_1\mr$ follows since $\ml w_i^\pm\mr$
   has nontrivial intersection with both $\ml u_3\pm u_0\mr$ and $\ml  u_1\pm u_0 \mr$.

 A contradiction would arise easily if there would be a fifth fixed point $P$.  We could prove the same for the weights between $S$ and $P$, but then the weights between $S$ and $P$ and between $S$ and $Q$ violate Claim 2.
   
   Hence, there are only four fixed points in $C$. 
By Claim 2, the fixed-point component of any codimension two 
torus is either a rational sphere or of $\CP$ type. 
We can use  Theorem~\ref{thm:GW}  from below, to see 
that one of these fixed-point components must be a rational
   $\CP^3$ containing  the entire fixed-point set of $\gT^4$ in $C$. By assumption, there must be a 
   larger fixed-point component $N$ with the cohomology 
   of $\HP^3$. Since $N$ comes with a $\gT^3$-action, the corresponding weights 
   have a linear model.  
   We can assume that the weights $t_1^\pm$, $r_1^\pm$, and $w_1^\pm$ 
   are tangential to  $N$.  The existence of a linear model implies 
   %
 that the  three two-dimensional spaces $\ml t_1^\pm \mr$, $\ml r_1^\pm \mr$, and $\ml w_1^\pm \mr$ contain a common one-dimensional subspace. 
   Now our previous calculation implies $\ml w_1^\pm\mr=\ml u_0\pm u_2\mr$. But then the induced action of $\gT^4$ has a finite kernel, so we have a contradiction.
\end{proof}

\section{\sectseven }\label{sec:GKM}
This section is concerned with two generalizations 
of a result of Goertsches and Wiemeler. The first 
was already used to prove Theorem~\ref{thm:z2codim2} with $d=4$.
The second statement implies that, in even dimensions, the assumptions 
of a) in Theorem~\ref{thm:t8NoCurvature} imply those of b). 
Neither statement is needed for the other main results of the paper.
We will state and prove them in two separate subsections.

\subsection{Combinatorial usage of integral weights} We plan to show

\begin{theorem}\label{thm:GW} 
Suppose $\gT^d$ acts on a connected, even-dimensional, oriented, closed 
manifold $M^n$. If every  subtorus of codimension two only has fixed-point components with the rational cohomology of $\Sph^k$ or $\CP^2$,  then $\chi(F\cap C)$ equals  the Euler characteristic
 of an oriented, rank one symmetric space of dimension $n$.
 Here, as in Theorem~\ref{thm:z2codim2}, $C$ is a connected component 
 of the one-skeleton and $F$ the fixed-point set.
\end{theorem}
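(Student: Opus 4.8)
The plan is to deduce the result from Lemma~\ref{lem:straight} outside a ``GKM--like'' residual case, and to treat that case by a combinatorial analysis of the labelled graph of the one-skeleton adapting the methods of Goertsches and Wiemeler. As in Theorem~\ref{thm:z2codim2} we may assume $F\cap C\neq\varnothing$ and that the action is effective, and we argue by induction on $n$, using in the induction step that whenever a suitable non-trivial finite isotropy group is present we may replace $M$ by the (orientable, lower-dimensional) fixed-point component of an involution in $\gT^d$ and $C$ by a connected component of the intersection with that submanifold: the hypothesis is inherited, because a fixed-point component of an involution inside a rational $\s^k$ is again a rational $\s^j$, and inside a rational $\CP^2$ it is a rational $\s^j$, a point, or that $\CP^2$ itself.

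Since a rational $\s^k$ or $\CP^2$ is in particular of $\s$, $\CP$ or $\HP$ type, Lemma~\ref{lem:straight} applies to $C$: the graph attached to $C$ is $m$-regular for some $m\ge 1$ (that is, $m$ edges between any two vertices and $m$ self-loops at each positive-dimensional vertex), and $\chi(F\cap C)=\sum_i\chi(F_i)=\tfrac{n}{2m}+1$. The Euler characteristics of oriented rank one symmetric spaces of dimension $n$ are $2$ (for $\s^n$), $\tfrac n2+1$ (for $\CP^{n/2}$), $\tfrac n4+1$ (for $\HP^{n/4}$, when $4\mid n$), and $3$ (only for $n=16$, for $\CaP$); so it suffices to show that $m\in\{1,2,4,\tfrac n2\}$, with $m=4$ forcing $n\in\{8,16\}$ (and $n=8$ is already the sphere case $m=\tfrac n2$). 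If some fixed-point component meeting $C$ has dimension $\ge 4$, then Lemma~\ref{lem:straight}(c) gives $m\in\{1,2,\tfrac n2\}$ and we are done, so we may assume every such component has dimension $\le 2$; the two-dimensional ones behave exactly like the isolated ones in what follows, their tangent directions simply carrying the zero weight.

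In this regime the argument becomes a study of the weight system. At a fixed point $p\in F_i$, two linearly independent isotropy weights $w,w'$ span the dual of a rank-two quotient of $\gT^d$, so the subtorus $\gH$ with Lie algebra $\ker w\cap\ker w'$ has codimension two, and by hypothesis its fixed-point component $N\ni p$ is a rational $\s^k$ or $\CP^2$; in either case $N$ has at most three fixed-point components, and by Lemma~\ref{lem:graphofmodel} (for $\CP^2$) or by the description of the sphere, the reduced graph of $N$ has exactly one edge between any two of its vertices, so at most one of the $m$ edges joining two given vertices survives the reduction to $\gH$. Running the triangle bookkeeping of the proof of Theorem~\ref{thm:z2codim2} — fix three fixed-point components, choose an edge between two of them, and pass through the various codimension-two subtori realizing the possible pairings of edges, now recording the rational cohomology types of reduced fixed-point components in place of $\Z_2$-weights, and using that the occurrence of all seven non-zero weights of a three-dimensional subspace among the isotropy weights forces a non-trivial finite isotropy group (by the proof of Lemma~\ref{lem:repsplitting}), which feeds the induction — one obtains $m\le 4$, and that $m=4$ forces exactly three fixed points whose twelve mutual weights are, up to scaling, the isotropy weights of $\CaP$ at its three $\Ffour$-fixed points; a dimension count then yields $n=16$ and hence $\chi(F\cap C)=3=\chi(\CaP)$.

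The crux, and the expected main obstacle, is this last step. Unlike in Theorem~\ref{thm:z2codim2}, the rigidity cannot simply be reduced along fixed-point components of involutions but must be extracted from the compatibility of the reduced graphs of $\s^k$ and $\CP^2$ type across overlapping codimension-two subtori; the $\CP^2$ case, in which one codimension-two subtorus merges three fixed-point components at once, so that a whole triangle of edges collapses to a single edge, is what makes both the bound $m\le 4$ and the identification of the $m=4$ configuration delicate, and one must also take care that every collapse invoked in the triangle arguments is realized by a subtorus whose component group is a finite $2$-group, so as to remain within the inductive framework.
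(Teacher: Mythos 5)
Your reduction via Lemma~\ref{lem:straight} to the claim that $m\in\{1,2,\tfrac n2\}$ or $(m,n)=(4,16)$ is sound and matches the target, but the heart of the matter---actually proving this claim when every fixed-point component has dimension $\le 2$---is precisely the step you defer to ``triangle bookkeeping'' and then yourself flag as the delicate crux without carrying out. That is a genuine gap, and it is where the paper's one new idea lives. The paper does not run the $\Z_2$-weight analysis of Theorem~\ref{thm:z2codim2} (note also that Theorem~\ref{thm:z2codim2} invokes Theorem~\ref{thm:GW} as an input, so borrowing its machinery requires care about circularity, and its hypotheses---containment in larger components with codimension-three kernel, plus the clause about finite $2$-groups---are not available here). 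Instead it directly adapts the Goertsches--Wiemeler permutation argument: an edge $i$ between $p_1$ and $p_2$ induces a bijection $\sigma_i$ from the edges joining $p_0$ to $p_1$ to those joining $p_0$ to $p_2$ (because the relevant codimension-two fixed-point component is an equivariantly linear $\CP^2$, whose triangle of weights is additive after sign adjustment), and everything hinges on showing that $\sigma_i^{-1}\circ\sigma_j$ has order two. The obstruction---the failure of GKM$_3$, since codimension-two fixed-point components may be rational spheres of large dimension, so three weights of edges sharing endpoints may be linearly dependent---is overcome by a concrete device your sketch has no substitute for: fix a scalar product on $\Lt^*$, treat the edges in decreasing order of the norms of their weights, and observe that the relation $2w_0=\eps_1 w_1+\eps_2 w_2$ produced by the Goertsches--Wiemeler sublemma contradicts $2\|w_0\|\ge \|w_1\|+\|w_2\|$ together with strict Cauchy--Schwarz for linearly independent $w_1,w_2$. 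Once the order-two property is in hand, the rest of their proof applies verbatim; without it, neither $m\le 4$ nor the rigidity of the $m=4$ configuration is established.

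A secondary problem is your inductive framework. Passing to the fixed-point component $M^\iota$ of an involution does not visibly preserve the hypothesis: a fixed-point component of a codimension-two subtorus acting on $M^\iota$ is a component of $\Fix(\iota)\cap N$, where $N$ is a fixed-point component on $M$ of a disconnected codimension-two subgroup; controlling the rational cohomology of $\Fix(\iota)\cap N$ when $N$ is merely a rational $\s^k$ requires Smith theory with $\Z_2$-coefficients, which a rational cohomology sphere need not support (this is exactly why Theorem~\ref{thm:z2codim2} carries its second bullet as an extra hypothesis, one that Theorem~\ref{thm:GW} does not grant you). The paper avoids induction on dimension entirely in this proof, which is the cleaner route.
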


This partly generalizes the main result of \cite{GoertschesWiemeler15} and follows along the same lines 
 with a small twist. Some explanations are in order.  Although Goertsches and Wiemeler
 assume an invariant positively curved metric and vanishing odd Betti numbers, the 
curvature assumption is only used to recover the topology of four-dimensional fixed-point components, and the assumption 
on the odd Betti number is not used to determine the combinatorial structure of (each connected component of) the GKM-graph. However, there is something  that does need further explanation: the above theorem does allow for 
fixed-point components of codimension two tori to be rational  
spheres of dimension larger than four, while in \cite{GoertschesWiemeler15}, the action was required to be of type 
GKM$_3$, which means that every three weights of the isotropy representation at any fixed point are linearly independent. 
We explain how one can save the argument.

If $\chi(F\cap C)\ge 3$, then it is easy to see that the action must be of GKM type and 
 there are at least three fixed points $p_0$, $p_1$, and $p_2$ in $C\cap F$.
If a fixed-point component of a codimension two torus  has the rational cohomology of $\CP^2$, then it is equivariantly diffeomorphic to $\CP^2$ endowed with a linear action by the classification of four-manifolds with two torus actions.
We endow the GKM graph with the actual weights of the isotropy 
representation 
 which are determined up to a sign. 
 It follows that the  actual weights of the isotropy representation 
 in  a triangle of the graph corresponding to a $\CP^2$ add up to zero after a possible adjustment of the signs.

As in \cite{GoertschesWiemeler15}, we get, for an edge $i$ between  $p_1$ and $p_2$, 
a bijection $\sigma_i$ from the edges between $p_0$ and $p_1$ 
to the edges between $p_0$ and $p_2$. In fact, if $A$ is an edge 
between $p_0$ and $p_1$, we consider the codimension 
two torus determined by the linearly independent weights of the 
edge $i$ and the edge $A$. The fixed-point component must 
be a $\CP^2$ by our assumptions, and
we find exactly one edge $\sigma_i(A)$ between $p_0$ and $p_2$.
Accordingly, for any two edges $i$ and $j$ between 
$p_1$ and $p_2$, the map $\sigma_i^{-1}\circ \sigma_j$ 
is a permutation of the edges between $p_0$ and $p_1$.
We plan to show that it has order at most two. 
This is equivalent to proving that, for edges $A$ and $B$ between $p_0$ and $p_1$, 
\begin{eqnarray}\label{eq:GW}
\sigma_i(A)=\sigma_j(B) \quad \Rightarrow \quad \sigma_i(B)=\sigma_j(A).
\end{eqnarray}
Here now comes a small addition to the proof:
we choose a scalar product in the dual space $\Lt^*$ 
and first assume $\sigma_i(A)$ is the edge between $p_0$ and $p_2$ whose weight 
has the largest norm with respect to this scalar product.
We choose $B$ with $\sigma_i(A)=\sigma_j(B)$, and for this particular choice of $(A,B)$, 
we can prove the implication \eqref{eq:GW} as follows. 
 We argue by contradiction and assume
  $\sigma_j(A)\neq \sigma_i(B)$. We follow
 the proof of \cite[Sublemma 5.9]{GoertschesWiemeler15}
word for word up to the second-to-last sentence, which 
 establishes a relation among the weights $w_0$, $w_1$, and $w_2$ of the edges $\sigma_i(A) = \sigma_j(B)$, $\sigma_j(A)$, and $\sigma_i(B)$, respectively. 
 It implies $2w_0=\eps_1 w_1+\eps_2 w_2$ with
  $|\eps_i|=1$. 
   We cannot use linear independence of the three 
  weights to get a contradiction, but our choice of $w_0$ 
  implies $2\|w_0\|\ge \|w_1\|+\|w_2\|$ and a contradiction
  arises  since the linear independence of $\{w_1,w_2\}$ implies that the Cauchy inequality is strict.
 Thus we proved the implication \eqref{eq:GW} in the special case 
 that  $\sigma_i(A)$ is the edge whose 
  weight has the largest norm. 
   It follows that $\sigma_j(\{A,B\})=\sigma_i(\{A,B\})$, so we may move on to the complement of $\{A,B\}$.
  Choose
$A'$ and $B'$ with $\sigma_i(A')=\sigma_j(B')$ and the weight of the edge $\sigma_i(A')$ has maximal norm  among all   $A'\notin \{A,B\}$. One can then proceed as before. In the rest of the proof in \cite{GoertschesWiemeler15} the linear independence of three weights which all belong to edges with the same end points never enters and thus the theorem follows.  

\subsection{Proof  of Theorem~\ref{thm:t8NoCurvature} under the assumptions of a)}\label{subsec:GKM}

We have already proved Theorem~\ref{thm:t8NoCurvature} under the assumptions of b) (see Section~\ref{sec:t8sphere}) and have covered the odd-dimensional case in Section~\ref{subsec:odd}.
It remains to show that, in the even dimensions, the assumptions of a) imply those of b). 
By the Chang-Skjelbred Lemma, the one-skeleton is connected and, in view of Theorem~\ref{thm:Bb}, it suffices to show

\begin{proposition}\label{pro:GKM} Let $M^n$ be an connected, orientable, closed manifold with vanishing odd Betti 
numbers. Suppose $\gT^d$ acts effectively on $M^n$ such that every fixed-point component 
of every codimension three subtorus has the rational cohomology of $\CP$, $\HP$, or $\Sph$ type.
Then the number $m$ of edges between different vertices of the graph 
assigned to the one-skeleton satisfies $m \in \{1,2\}$ unless $(m,n)=(4,16)$ or $\chi(M^n)=2$.
\end{proposition}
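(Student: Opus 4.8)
The plan is to combine the recognition machinery of Section~\ref{sec:recognition} with an adaptation of the Goertsches--Wiemeler argument already used in the proof of Theorem~\ref{thm:GW}. First the standing reductions: since $M$ has vanishing odd Betti numbers the action is equivariantly formal, so the Chang--Skjelbred Lemma (Theorem~\ref{lem:ChangSkjelbred}) shows that the one-skeleton is connected and $\chi(M)=\chi(F\cap C)$. A fixed-point component of a subtorus of codimension at most two lies inside a fixed-point component of a codimension three subtorus, and Bredon's theorem (Theorem~\ref{thm:SinglyGenerated}) propagates $\CP$, $\HP$ or $\Sph$ type along the (at most two-dimensional) residual torus; hence the codimension three hypothesis upgrades to the hypothesis of Lemma~\ref{lem:straight}, which gives $\chi(M)=\tfrac n{2m}+1$. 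If $\chi(M)=2$ we are in the excluded case, so assume $\chi(M)\ge 3$, i.e. $m<\tfrac n2$. If some fixed-point component of $\gT^d$ in $C$ has dimension at least four, then Lemma~\ref{lem:straight}(c) forces $m\in\{1,2,\tfrac n2\}$, hence $m\in\{1,2\}$ and we are done. So from now on every fixed-point component of $\gT^d$ in $C$ is a point or an orientable rational homology $2$-sphere, the action is GKM, and I assume for contradiction that $m\ge 3$ and $(m,n)\neq(4,16)$.

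Next I would set up the triangle bijections. After disposing of the case that $C$ has only two vertices by a direct weight computation at a fixed point, fix three vertices $p_0,p_1,p_2$ of $C$; by Lemma~\ref{lem:straight}(b) each pair of them is joined by exactly $m$ edges. Given an edge $e$ between $p_1$ and $p_2$ and an edge $A$ between $p_0$ and $p_1$, the subtorus cut out by their weights has codimension at most two and its fixed-point component containing $p_0,p_1,p_2$ cannot be of $\Sph$ type (it has at least three vertices, so Euler characteristic $\ge 3$); hence it is of $\CP$ or $\HP$ type, and by Lemma~\ref{lem:graphofmodel} its reduced graph is a simplex skeleton with one or two edges between each pair, producing a distinguished edge $\sigma_e(A)$ between $p_0$ and $p_2$. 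Exactly as in \cite{GoertschesWiemeler15}, this gives a bijection $\sigma_e$ from the edges $p_0p_1$ to the edges $p_0p_2$, and for two edges $e,e'$ between $p_1$ and $p_2$ the permutation $\sigma_e^{-1}\circ\sigma_{e'}$ of the edges $p_0p_1$ has order at most two; the single place where their argument invokes the GKM$_3$ condition is replaced by the Cauchy--Schwarz / maximal-norm trick already used in the proof of Theorem~\ref{thm:GW}.

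These order-$\le 2$ permutations force, exactly as in Sublemma~\ref{lem:linearmodel} and Claims~2--3 of the proof of Theorem~\ref{thm:z2codim2}, a linear model: there are linearly independent $u_0,u_1,\dots\in\Lt^*$ such that, up to scaling, the $m$ weights occurring between any two vertices of the triangle $\{p_0,p_1,p_2\}$ have the form $u_a\pm u_b$. In particular their $\Q$-span is $m$-dimensional and independent of the pair, so $m\le d$; and this rigidity, together with the weight count at a single fixed point (the weights at $p_0$ consist of these $m$ vectors for each other vertex, plus the weights tangent to $p_0$'s own $2$-sphere when applicable), excludes $m=3$ and $m\ge 5$ and forces $n=16$ in the remaining case $m=4$ — the computations being those of Claims~2 and~3 in the proof of Theorem~\ref{thm:z2codim2}, carried out over $\Q$ in place of $\Z_2$. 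This contradicts $m\ge 3$ and $(m,n)\neq(4,16)$, completing the proof.

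I expect the main obstacle to be twofold. First, building the bijections $\sigma_e$ when a codimension $\le 2$ subtorus may fix a high-dimensional $\CP^k$ or $\HP^k$ rather than merely a $\CP^2$ (as in Theorem~\ref{thm:GW}): one has to feed Lemma~\ref{lem:graphofmodel} and the uniformity statement Lemma~\ref{lem:straight}(b) into the construction to keep the reduced graphs of the relevant fixed-point components under control. Second, running the $\Z_2$-free version of the linear-model bookkeeping: without the refinement $m=\nu\cdot 2^{\dim_{\Z_2}U}$ from Lemma~\ref{lem:z2graph} one cannot shortcut to "$m$ is a power of $2$", so $m=3$ must be ruled out directly from the rational model, and the positive-dimensional ($2$-sphere) vertices together with their self-edges must be tracked throughout.
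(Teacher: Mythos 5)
Your overall architecture matches the paper's: reduce to the GKM situation via Lemma~\ref{lem:straight}, build the triangle bijections $\sigma_e$, show the compositions $\sigma_e^{-1}\circ\sigma_{e'}$ have order two, and then pin down the weights. But the two central steps are not carried out by the devices you name, and as written they leave real gaps. For the order-two property you propose to reuse the maximal-norm/Cauchy--Schwarz trick from the proof of Theorem~\ref{thm:GW}; that trick is tied to the setting where codimension two subtori fix only rational spheres or $\CP^2$'s, since the relation $2w_0=\eps_1w_1+\eps_2w_2$ it feeds on comes from the linear $\CP^2$ models. In the present setting the paper instead uses the codimension \emph{three} hypothesis directly: the subtorus cut out by the weights $r_i,r_j$ of two $p_0p_1$-edges and $t_k$ of a $p_1p_2$-edge has a fixed-point component whose reduced graph contains two edges between $p_0$ and $p_1$, hence is of $\HP$ type with \emph{exactly} two edges between every pair of vertices, and $\sigma_i^{-1}\circ\sigma_j$ swaps them. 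Crucially, this argument also yields that the $\sigma_i$ generate a transitive abelian group of exponent two, so $m$ is a power of two --- a conclusion you explicitly forgo.

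That omission is what makes your endgame fail. You claim that the linear-model rigidity "excludes $m=3$ and $m\ge 5$" by running Claims~2--3 of the proof of Theorem~\ref{thm:z2codim2} over $\Q$. But Claim~2 there \emph{assumes} $m=4$; the step that actually bounds $m$ is Claim~1 of that proof, and it relies on $\Z_2$-weights, on passing to fixed-point sets of involutions in finite isotropy groups, and on an induction under the second bullet hypothesis of Theorem~\ref{thm:z2codim2} --- none of which is available under the hypotheses of Proposition~\ref{pro:GKM}. Your substitute, "the span of the weights between two vertices is $m$-dimensional, so $m\le d$," gives no bound at all since $d$ is arbitrary. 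The paper closes this gap with Lemma~\ref{lem:gkm_edges}: a purely rational coefficient analysis of the relations $t_i=\eps_{ij}r_j+\eta_{ij}s_{\sigma_j(i)}$ (following \cite[Lemma 5.8]{GoertschesWiemeler15}) showing the permutation group is generated by two elements, hence $m\le 4$; combined with $m=2^k$ this gives $m\in\{1,2,4\}$, after which the explicit weight formulas for $m=4$ are derived and only \emph{then} fed into Claims~2--3 of Theorem~\ref{thm:z2codim2} to force $d=4$, $\chi(M)=3$, $n=16$. You would need to supply both the power-of-two statement and the two-generation argument (or a genuine replacement) for the proof to go through.
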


If we can find a codimension one torus $\gH$ with a fixed-point component of dimension $\ge 4$,
then the result follows from  Lemma~\ref{lem:straight}. 
Thus we may assume that the action of $\gT^d$ on $M^n$ is of GKM type. 
That is, for the isotropy representation of $\gT^d$ at any fixed point, any two weights are 
linearly independent. As mentioned before, the action is called GKM$_3$ if the same holds for any three weights. 
 For GKM$_3$ actions, everything follows from work of Goertsches and Wiemeler \cite{GoertschesWiemeler15}, and the basic idea of the proof is to use the assumption on fixed-point components of codimension three tori 
to carry over their proof. We can assume  a weak form of GKM$_3$, namely, that the fixed-point set 
of any codimension two torus is either of $\CP$ type or a rational $\Sph^4$. In fact, if it was of $\HP^k$  
type with $k>1$ we could easily rule out $m\ge 3$ by passing to the fixed-point set of a suitable codimension three  
torus. If it was $\Sph^{2\ell}$ type with $\ell \ge 3$, we could similarly show $\chi(M^n)=2$, since otherwise there is a codimension three torus with a fixed-point component  not of $\CP$, $\HP$, or $\Sph$ type. \\[-1ex]

In the following we assume \(\chi(M)\geq 3\)
Let \(p_0,p_1,p_2\) be vertices of the GKM graph of \(M\).
We already know that between any two of these vertices there are precisely \(m\) edges.
Denote by \(e^{ij}_{k}\) for \(1\leq k \leq m\) the edges between \(p_i\) and \(p_j\).
\\[1ex]
{\bf Claim 1.} \(e^{01}_k\) induces a natural bijection \(\sigma_k:\{1,\dots,m\}\rightarrow \{1,\dots,m\}\) such that \(e_k^{01}, e_i^{12}, e^{02}_{\sigma_k(i)}\) are edges in the GKM-graph of the fixed set of some codimension two torus. After reordering the \(e^{02}_j\), we can assume that \(\sigma_1\) is the identity. \\[1ex]
Assume the weight of \(e^{01}_k\) is $r_k$. If the weight of \(e_i^{12}\) is $t_i$, we can look at the codimension two torus 
determined by $r_k$ and $t_i$. Its fixed-point component contains $p_0$, $p_1$, and $p_2$, and as explained above, it is of $\CP$ type. Thus, its GKM graph contains a unique edge $e^{02}_{\sigma_k(i)}$ 
from $p_0$ to $p_2$. \\[1ex]
{\bf Claim 2.} For distinct $i,j \in \{1,\ldots,m\}$, the permutation $\sigma_i^{-1}\circ \sigma_j:\{1,\dots,m\}\rightarrow \{1,\dots,m\}$  
is fixed-point free and has order two.  \\[1ex]
Given the weight $t_k$ of \(e^{12}_k\), the codimension three torus determined 
by $r_i$, $r_j$, and $t_k$ must have a fixed-point component $N$ of $\HP$ type.
Here \(r_i\) and \(t_j\) are the weights of \(e^{01}_i\) and \(e^{01}_j\), respectively.
The reduced  graph has exactly two edges from $p_1$ to $p_2$, and it is clear that $\sigma_i^{-1}\circ \sigma_j$
 switches these two edges. 

 In particular, the $\sigma_i=\sigma_1^{-1}\circ \sigma_i$ have order at most two and \([\sigma_i,\sigma_j]=(\sigma_i\circ \sigma_j)^2=1\).
  So we see that $\{\sigma_{i}\mid i=1,\ldots,m\}$ generates a commutative group of permutations of order $\le 2$ that acts transitively on the edges between $p_1$ and $p_2$.
  Indeed, for all edges \(j,j'\) between \(p_1\) and \(p_2\) there is an edge \(i\) with \(\sigma_i(j)=j'\).
  This can be seen by looking at the fixed-point components determined by the codimension three torus determined by the weights of \(j,j'\) and the weight of the edge \(1\) from \(p_2\) to \(p_3\).  This already implies that $m$ must be a power of $2$.

\begin{lemma}\label{lem:gkm_edges}
If $\chi(M) \geq 3$, then the number of edges $m$ connecting any two vertices in the graph satisfies $m \in \{1,2,4\}$.
\end{lemma}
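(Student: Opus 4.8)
We have already observed that $\sigma_1=\mathrm{id},\sigma_2,\dots,\sigma_m$ are exactly the elements of the commutative permutation group $G:=\langle\sigma_1,\dots,\sigma_m\rangle$, that every element of $G$ has order dividing $2$, and that $G$ acts transitively on the $m$ edges between $p_1$ and $p_2$; since an abelian transitive action is simply transitive, $G\cong\Z_2^{\,k}$ and $m=2^k$. It therefore remains to rule out $m\geq 8$, so I assume $m=2^k$ with $k\geq 3$ and aim for a contradiction.

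\textbf{Setup.} Fix three vertices $p_0,p_1,p_2$ of the graph; at least three exist because $\chi(M)=\chi(F\cap C)\geq 3$. Let $r_1,\dots,r_m\in\Lt^*$ be the weights of the edges $e^{01}_1,\dots,e^{01}_m$ joining $p_0$ and $p_1$; by the GKM assumption they are pairwise linearly independent. The mechanism to be exploited is the following: if $N$ is a fixed-point component of some \emph{codimension three} subtorus $\gH'$ and $N$ contains $p_0,p_1,p_2$, then $N$ has at least three vertices, so $\chi(N)\geq 3$ and $N$ is not of $\Sph$ type; hence $N$ has the rational cohomology of a $\CP^j$ or $\HP^j$, and by Lemma~\ref{lem:graphofmodel} its reduced graph has at most two edges between any two of its vertices. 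Consequently it suffices to produce a three-dimensional subspace $W\subseteq\Lt^*$ such that (i) $W$ contains at least three of the weights $r_k$, and (ii) $W$ contains the weight of some edge joining $p_0$ and $p_2$: taking $\gH'$ with Lie algebra the common kernel of $W$, its fixed-point component at $p_0$ then contains $p_1$ (through at least three of the $e^{01}$-edges, whose weights lie in $W$) and $p_2$, contradicting the previous sentence. Note that three coplanar weights $r_{k_1},r_{k_2},r_{k_3}$ already do the job: enlarging $\mathrm{span}(r_{k_1},r_{k_2},r_{k_3})$ by the weight of any $e^{02}$-edge to dimension $3$ yields such a $W$.

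\textbf{Producing a coplanar triple.} To control the weights $r_k$ I would use the $\HP$-type configurations built in the proof of Claim~2: for any $i\neq j$ and any weight $t$ of an edge between $p_1$ and $p_2$, the codimension three torus cut out by $r_i$, $r_j$ and $t$ has an $\HP$-type fixed component $N_{ij}$, whose linear model (Sublemma~\ref{lem:linearmodel}) pairs $r_i$ with $r_j$ as $u_0\pm u_1$ and, through the $\CP^2$-relations in the triangles of $N_{ij}$, ties these to the weights of the $e^{12}$- and $e^{02}$-edges compatibly with the swap $\sigma_i^{-1}\sigma_j$. Running this over the family of pairings indexed by $G\cong\Z_2^{\,k}$ organizes $r_1,\dots,r_m$ into an affine family over a $\Z_2$-vector space, exactly as the $\Z_2$-weights were organized in Lemma~\ref{lem:z2graph}; carrying the three ``vertex vectors'' $u_0,u_1,u_2$ attached to $p_0,p_1,p_2$ through these identities then forces, once $k\geq 3$, a nontrivial linear relation among three of the $r_k$, i.e.\ a coplanar triple, completing the argument. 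The hard part is precisely this last bookkeeping: keeping the $\CP^2$-triangle relations, the $\HP^2$ linear models and the simply transitive $\Z_2^{\,k}$-action mutually consistent tightly enough to extract the coplanar triple for $k\geq3$ — an elaboration of the additivity arguments (Claims~1--3) that established Lemma~\ref{lem:z2graph}, now run with the actual integral weights as in the previous subsection rather than only their mod-$2$ reductions.
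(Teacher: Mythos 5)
Your reductions at the start are fine: the simply transitive exponent-two group gives $m=2^k$, and it is true that three coplanar weights among the $r_k$ (the weights of the edges between $p_0$ and $p_1$) would be contradictory, since under the standing reductions of Proposition~\ref{pro:GKM} the fixed-point component of the codimension-two torus they determine is of $\CP$ type or a rational $\Sph^4$, and neither can carry three edges between two vertices of its reduced graph (Lemma~\ref{lem:graphofmodel}). So the logical skeleton "if $m\geq 8$ forces a coplanar triple, we are done" is sound.

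The problem is that the step which would actually rule out $m\geq 8$ --- producing the coplanar triple --- is not proved; you say yourself that "the hard part is precisely this last bookkeeping" and leave it as a sketch. That bookkeeping \emph{is} the content of the lemma. Worse, the tools you propose to run it with are not available here: Lemma~\ref{lem:z2graph} is proved under the hypotheses of Theorem~\ref{thm:z2codim2}, in particular the second bullet (orientability of fixed sets of $2$-groups and inheritance of the codimension-three property there), and the linear models of Sublemma~\ref{lem:linearmodel} and of Claim~1 in the proof of Theorem~\ref{thm:z2codim2} rely on Hsiang--Su together with the ambient component $\hat N$ assumed there; Proposition~\ref{pro:GKM} assumes none of this, and the whole purpose of this section is to get by without that machinery. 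It is also not evident that $m\geq 8$ forces a linear relation among three of the $r_k$ at all: the paper's argument (adapted from \cite{GoertschesWiemeler15}) derives a contradiction of a different kind. After normalizing $t_i=r_1-s_i$, it shows $t_i=\epsilon_{ij}r_j+s_{\sigma_j(i)}$ and $t_{\sigma_j(i)}=\epsilon_{ij}r_j+s_i$ (Claim~3), then the sign rule $\epsilon_{\sigma_j(i)2}=-\epsilon_{i2}$ for $j>2$ (Claim~4), and concludes that for any $j>j'>2$ one has $\sigma_j\circ\sigma_{j'}=\sigma_2$, so the transitive group is generated by $\sigma_2$ and $\sigma_3$, whence $m\leq 4$; no coplanarity of the $r_k$ ever enters, and only the weak GKM$_3$ independence of $r_j$, $s_i$, $s_{\sigma_j(i)}$ is used. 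To salvage your route you would need an explicit argument, at the level of precision of Claims~3 and~4, showing that the $\Z_2^k$-family of pairings for $k\geq 3$ really produces a degenerate triple; as written, the proposal reduces the lemma to an unproven claim.
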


This is the analogue of \cite[Lemma 5.8]{GoertschesWiemeler15}, and the following proof is very similar.

\begin{proof}
By the remarks before Claim 1, we can assume that the weights of the edges \(e^{01}_k,e^{12}_i,e^{12}_j\) for \(i\neq j\) are linearly independent. The proof of \cite[Lemma 5.8]{GoertschesWiemeler15} now goes through with only minor changes, which we summarize here.

We first set up the notation. Let $p_0$, $p_1$, and $p_2$ be vertices in the graph. Denote by $r_i$, $s_i$, and $t_i$ for $1 \leq i \leq m$ the weights of the edges \(e^{01}_i\), $e^{02}_i$, and $e^{12}_i$, respectively. After rescaling the weights, we can assume that
\begin{equation*}
  t_i=r_1-s_i
\end{equation*}
for \(i=1,\dots,k\). Moreover, for each \(j\in\{2,\dots,k\}\), there are \(\epsilon_{ij},\eta_{ij}\in \mathbb{Q}-\{0\}\)  with
\begin{equation}
  \label{eq:1}
  t_i=\epsilon_{ij}r_j+\eta_{ij}s_{\sigma_j(i)},
\end{equation}
where the \(\sigma_j:\{1,\dots,m\}\rightarrow \{1,\dots,m\}\) are the injections from before.
This equation holds for \(j=1\) as well if we define $\epsilon_{i1}= 1$ and $\eta_{i1} = -1$.
As shown before, the \(\sigma_i\) generate an abelian group such that all elements have order at most two. We are done when we can show that this group is generated by at most two elements.
To do so we have to analyze the coefficients in equation (\ref{eq:1}).\\[1ex]
\smallskip
\textbf{Claim 3.}
   For \(j>1\) we have $\eta_{ij}=1$ and $\epsilon_{\sigma_j(i)j}=\epsilon_{ij}$, i.e., 
\begin{align*}
  t_i&=\epsilon_{ij} r_j + s_{\sigma_j(i)}& t_{\sigma_j(i)}&=\epsilon_{ij} r_j + s_{i}.
\end{align*}

\smallskip
  \hspace*{0em}From the above relation (\ref{eq:1}), it follows that
\begin{align*}
  r_1&=t_i + s_i=\epsilon_{ij} r_j +\eta_{ij} s_{\sigma_j(i)} + s_i\\
  r_1&=t_{\sigma_j(i)} + s_{\sigma_j(i)}=\epsilon_{\sigma_j(i)j} r_j +\eta_{\sigma_j(i)j} s_{i} + s_{\sigma_j(i)}.
\end{align*}
By the weak GKM$_3$-condition, the weights $r_j$, $s_{i}$, and $s_{\sigma_j(i)}$ are linearly independent and  the claim follows.\\[1ex]
\smallskip
\textbf{Claim 4.} For $j>2$, we have
$\epsilon_{\sigma_j(i)2}=-\epsilon_{i2}$.
\smallskip

\hspace*{0em}From Claim 3, it follows that
\begin{align*}
  \epsilon_{i2}r_2&=t_{\sigma_2(i)} - s_i=\epsilon_{\sigma_2(i)j} r_j + s_{\sigma_j\circ\sigma_2(i)} - s_i\\
  \epsilon_{\sigma_j(i)2} r_2&=t_{\sigma_j(i)} - s_{\sigma_2\circ\sigma_j(i)}=\epsilon_{ij} r_j + s_{i} - s_{\sigma_j\circ\sigma_2(i)}.
\end{align*}
Now the weak GKM$_3$-condition implies that \(\epsilon_{i2}=-\epsilon_{\sigma_j(i)2}\). This shows the claim.
\smallskip

Because we know that $m$ is a power of two, if $m>2$, then $m\geq 4$, so we may choose \(j>j'>2\). 
Then, by applying Claim 4, we have
\[
\epsilon_{\sigma_j(\sigma_{j'}(i))2}=-\epsilon_{\sigma_{j'}(i)2}=\epsilon_{i2}.
\]
\hspace*{0em}From the proof of the transitivity of the action of the \(\sigma_i\), it follows that there is a \(j''\) such that \(\sigma_{j''}(i)=\sigma_j\circ\sigma_{j'}(i)\).
Since $\sigma_j\circ\sigma_{j'}$ does not have fixed points, it follows, again by Claim 4, that $j''=2$.  Because this holds for each \(i\), it follows that \(\sigma_{2}=\sigma_j\circ\sigma_{j'}\).
 This shows that the group of \(\sigma_i\)'s is generated by \(\sigma_2\) and \(\sigma_3\). Hence, we have \(m\leq 4\).
\end{proof}

To complete the computation of the unlabeled graph, it remains to show that $m= 4$ occurs only if $\chi(M) \le 3$. This is proved in the two following lemmas.

\begin{lemma}
\label{sec:proof-lemma-refl}
Assume \(\chi(M)\geq 3\) and \(m=4\). Let $p_0$, $p_1$, and $p_2$ be distinct vertices in \(\Gamma\), and let $r_1,\ldots,r_4$ denote the weights of the edges between $p_1$ and $p_2$. For $h \in \{1,2\}$, let $s_{hi}$ for $1 \leq i \leq 4$ denote the weights of the edges between $p_0$ and $p_h$. After possibly reordering and rescaling, we have
\[s_{1i} = \frac 1 2 \sum (-1)^{\delta_{ij}} r_j~~~\mathrm{and}~~~s_{2i} = \frac 1 2 \sum (-1)^{\delta_{1j} + \delta_{ij}} r_j,\]
where $\delta_{ij} = 1$ if $i = j$ and $\delta_{ij} = 0$ otherwise. In particular, the $r_i$ are linearly independent.
%
\end{lemma}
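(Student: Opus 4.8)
The plan is to work with the $\gT^4$-action (we already know $d=4$ once $m=4$, by Claim 3 proved just above, or alternatively we will see it emerges from the argument) and exploit the combinatorial rigidity forced by having fixed-point components of codimension two tori being rational $\CP^2$'s or rational even spheres. The structure mirrors the proof of Sublemma~\ref{lem:linearmodel} and Claim 2 in the proof of Theorem~\ref{thm:z2codim2}, but now with integral (rather than $\Z_2$) weights. First I would record that, by the GKM-type hypotheses in force in this subsection, for any two vertices $p_a,p_b$ the edges between them carry weights any two of which are linearly independent, and whenever we take the codimension two torus determined by two linearly independent weights emanating from a common vertex, its fixed-point component is a rational $\CP^2$ or $\Sph^4$; in the $\CP^2$ case the three weights around the resulting triangle sum to zero after a sign adjustment (using the classification of $\gT^2$-actions on four-manifolds, exactly as invoked before Claim 1 above), and in the $\Sph^4$ case there is a fixed-point component of a codimension three torus of $\HP^2$ type controlling the two edges.

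Next I would set up the bijections: the edge $e^{12}_i$ (weight $r_i$) induces, via the codimension two torus determined by $r_i$ and a weight $s_{1j}$ of an edge $p_0p_1$, a bijection $\sigma_i$ between edges $p_0p_1$ and edges $p_0p_2$, and by Claim 2 of the GKM argument (which applies verbatim here, with $\HP^2$ replacing the general $\HP$-type component) these $\sigma_i$ generate an abelian $2$-group acting transitively on four elements, hence $\cong\Z_2\times\Z_2$. After relabeling I can take $\sigma_1=\mathrm{id}$ and $\sigma_2,\sigma_3$ the two ``coordinate'' involutions, $\sigma_4=\sigma_2\sigma_3$. The key computation is then to pin down the coefficients in the relations $s_{1i}$, $r_j$, $s_{2i}$ satisfy: working in each triangle, one gets equations of the form $t_i=\epsilon_{ij}r_j+\eta_{ij}s_{\sigma_j(i)}$ (as in the proof of Lemma~\ref{lem:gkm_edges} above, Claims 3 and 4), and the linear-independence of the three weights around a $\CP^2$-triangle forces $\eta_{ij}=1$ and the $\epsilon$'s to satisfy the sign rule $\epsilon_{\sigma_j(i)2}=-\epsilon_{i2}$. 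Feeding in $\sigma_2,\sigma_3$ and solving the resulting small linear system over $\Q$, after rescaling the $r_j$ and relabeling, yields precisely
\[
s_{1i}=\tfrac12\sum_{j}(-1)^{\delta_{ij}}r_j,\qquad s_{2i}=\tfrac12\sum_j(-1)^{\delta_{1j}+\delta_{ij}}r_j,
\]
and linear independence of $\{r_1,\dots,r_4\}$ drops out because otherwise the rank of the span of all twelve weights would be $\le3$, contradicting that $\gT^4$ acts with only finite kernel on the component. This is essentially the integral analogue of the normal form appearing in Claim 2 of Theorem~\ref{thm:z2codim2}'s proof, which involved four independent vectors $u_0,\dots,u_3$; here one recovers them by solving $u_0\pm u_k$-type parametrizations from the $s$'s and $r$'s.

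The main obstacle I anticipate is the bookkeeping in the ``sign-adjustment'' step: unlike the $\Z_2$ setting, where $a+b$ is unambiguous, one must carefully choose signs (and scalings) of the weights $r_j$, $s_{1i}$, $s_{2i}$ consistently across all triangles $p_0p_1p_2$, $p_0p_1p_3$-type configurations so that the $\epsilon_{ij}\in\{\pm1\}$ and $\eta_{ij}=1$ simultaneously, and then verify this is globally consistent (no monodromy obstruction) — this is where the $\HP^2$-controlled edges (the $\Sph^4$ case) must be handled, since there the relevant codimension three fixed component forces the two parallel edges to have weights summing appropriately, and one has to check this is compatible with the $\CP^2$-triangle normalizations. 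Once the signs are fixed the linear algebra is routine: it is the finite system coming from $\sigma_2^2=\sigma_3^2=(\sigma_2\sigma_3)^2=\mathrm{id}$ together with Claims 3--4, whose only solution (up to the allowed rescalings and relabelings) is the displayed one. I would close by noting that linear independence of the $r_i$ then also forces $d=4$, recovering the hypothesis used in Claim 3, so the lemma's conclusion is self-contained.
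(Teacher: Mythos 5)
Your derivation of the displayed formulas follows essentially the same route as the paper: the bijections $\sigma_i$, the fact that they generate an abelian group of exponent two acting transitively (so $\cong \Z_2\times\Z_2$ when $m=4$), and the coefficient relations $\eta_{ij}=1$, $\epsilon_{\sigma_j(i)2}=-\epsilon_{i2}$ from Claims 3 and 4 in the proof of Lemma~\ref{lem:gkm_edges}; solving the resulting small linear system is exactly what the paper does, so that part is fine.

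The gap is in your justification of the final claim that $r_1,\dots,r_4$ are linearly independent. You argue that otherwise ``the span of all twelve weights would have rank $\le 3$, contradicting that $\gT^4$ acts with only finite kernel.'' First, at this point of Section~\ref{sec:GKM} you do not yet know $d=4$: that is deduced only afterwards, in Lemma~\ref{lem:GKM-OP2}, \emph{from} the present lemma (the ``Claim 3'' you cite lives in the proof of Theorem~\ref{thm:z2codim2}, which has different hypotheses), so your closing remark that linear independence ``recovers'' $d=4$ makes the argument circular. Second, and more seriously, even granting an effective $\gT^4$-action the argument fails: the twelve weights attached to the triangle $p_0,p_1,p_2$ need not span the whole dual Lie algebra, because when $\chi(M)>3$ the isotropy representation at $p_1$ has further weights coming from edges to the remaining vertices; a rank-$3$ span of these particular twelve weights therefore produces no contradiction with effectiveness. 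The paper instead gets independence directly from the codimension-three hypothesis: if $r_1,r_2,r_3,s_{11}$ spanned only a three-dimensional space, the corresponding codimension-three torus would have a fixed-point component containing $p_0$, $p_1$, $p_2$ together with at least three edges between $p_1$ and $p_2$, which is impossible for a component of $\CP$, $\HP$, or $\Sph$ type (one edge, two edges, or Euler characteristic two, respectively, by Lemma~\ref{lem:graphofmodel}); hence $r_1,r_2,r_3,s_{11}$ are linearly independent, and the already-derived formula $s_{11}=\tfrac12(-r_1+r_2+r_3+r_4)$ then forces independence of the $r_i$. You need this (or an equivalent) argument; the effectiveness/rank count you propose does not close the step.
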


\begin{proof}
The formulas for the $s_{hi}$ are shown in the proof of \cite[Lemma 5.14]{GoertschesWiemeler15} relying on the Sublemmas 5.9--5.11 therein. As pointed out before, the proofs go through in our situation with only minor changes:
  By Claims 1, 3, and 4 from above, we may assume that the following relations hold:
 \begin{align}
   \label{eq:2}s_{11}&=s_{21}-r_1=s_{22}+r_2\\
   \label{eq:3}s_{12}&=s_{21}-r_2=s_{22}+r_1\\
   \label{eq:4}s_{13}&=s_{21}-r_3=-s_{22}+r_4\\
   \label{eq:5}s_{14}&=s_{21}-r_4=-s_{22}+r_3
 \end{align}
(The translation from the notation there to the notation here is \(t_i\mapsto s_{1i},r_i\mapsto s_{2i}, s_i\mapsto r_i\))
By adding equations \eqref{eq:3} and \eqref{eq:4}, we get that
\begin{equation*}
  s_{21}=\frac{1}{2}(r_1+r_2+r_3+r_4).
\end{equation*}
Hence, the \(s_{1i}\) are of the form \(\frac{1}{2}(-r_i+\sum_{j\neq i} r_j)\). By Claim 3 in the proof of Lemma~\ref{lem:gkm_edges}, we may assume that
\begin{equation*}
  s_{11}=s_{21}-r_1=s_{22}+r_2=s_{23}+r_3=s_{24}+r_4.
\end{equation*}
Thus, for \(i>1\),
\begin{equation*}
  s_{2i}=\frac{1}{2}(-r_1-r_i+\sum_{j\neq i, 1} r_j).
\end{equation*}
This proves the formulas for the \(s_{ij}\).

For the last claim, first note that $r_1$, $r_2$, $r_3$, and $s_{11}$ are linearly independent. By the formula for $s_{11}$, it follows that the $r_i$ are linearly independent. 
%
\end{proof}

The proposition now follows from
\begin{lemma}\label{lem:GKM-OP2}
If $\chi(M) \geq 3$ and $m = 4$, then $d=4$ and $\chi(M) = 3$.
\end{lemma}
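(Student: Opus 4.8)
The plan is to read off three facts in turn from the rigid weight pattern of Lemma~\ref{sec:proof-lemma-refl}: that the fixed-point set in $C$ is discrete, that $d=4$, and that $C$ contains exactly three fixed points; the statement then follows since $\chi(M)=\chi(F\cap C)$ equals the number of vertices and, by Lemma~\ref{lem:straight}(b), also equals $\tfrac{n}{2m}+1=\tfrac n8+1$, forcing $n=16$ and $\chi(M)=3$.

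First I would show the fixed-point set in $C$ is discrete. Since $\chi(M)=\tfrac n8+1\ge 3$ we have $n\ge 16$, so $m=4\notin\{1,2,\tfrac n2\}$; by Lemma~\ref{lem:straight}(c) no fixed-point component $F_i$ has dimension $\ge 4$, so each $F_i$ is a rational $\CP^0$ or $\Sph^2$. Suppose $\dim F_i=2$, so $\mult(F_i)=1$. As the action is GKM and $F_i$ meets $\Fix(\gT^d)$, any fixed-point component $N$ of a codimension one torus $\gR$ containing $F_i$ lies in the one-skeleton, hence is a submanifold inside a union of $2$-spheres, hence itself a rational $\Sph^2$ with $\mult(N)=1$. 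By Definition~\ref{def:graph}(b) the number of loops at $F_i$ is then $\mult(F_i)+\sum_\gR(\mult(N)-\mult(F_i))=1$, contradicting the fact from Lemma~\ref{lem:straight}(b) that there are $m=4$ loops at $F_i$. Hence all $F_i$ are points, so $C$ has $\chi(M)\ge 3$ vertices; in particular at least three vertices $p_0,p_1,p_2$ exist.

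Next I would prove $d=4$. Apply Lemma~\ref{sec:proof-lemma-refl} to $(p_0;p_1,p_2)$: the weights $r_1,\dots,r_4$ between $p_1,p_2$ are linearly independent, and all eight weights at $p_0$ pointing to $p_1$ or $p_2$ lie in $W:=\spann(r_1,\dots,r_4)$. For any further vertex $p_j$, applying the lemma to $(p_0;p_1,p_j)$ expresses the weights at $p_0$ pointing to $p_1$ as the ``one-minus'' combinations of a basis of the weights between $p_1,p_j$; since that involution is self-inverse and the weights $p_0\to p_1$ already span a $4$-dimensional subspace of $W$, this basis also spans $W$, and hence so do the weights $p_0\to p_j$. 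Thus every weight at $p_0$ lies in the $4$-dimensional space $W$; effectiveness of the action forces the weights at a fixed point to span $\Lt^*\otimes\Q$, so $d\le 4$, and $d\ge 4$ because the $r_i$ are independent.

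It remains to rule out four or more vertices, and this is the main obstacle. With $d=4$ and four isolated fixed points $p_0,p_1,p_2,p_3$, I would feed the configuration of Lemma~\ref{sec:proof-lemma-refl} through the triples $(p_3;p_1,p_2)$, $(p_0;p_1,p_2)$, $(p_0;p_1,p_3)$, $(p_2;p_0,p_3)$: each determines the four lines between a given pair as explicit combinations of one fixed basis $r_1,\dots,r_4$, and chasing the identifications one finds that the ``one-minus'' involution on $\{r_1,\dots,r_4\}$ would have to carry the set of lines $\{\ml r_i\mr\}$ to itself, i.e.\ $r_i+r_{\pi(i)}=\tfrac12\sum_j r_j$ for some $\pi\in S_4$ (up to signs), which is impossible for every cycle type because the $r_i$ are linearly independent. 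The alternative route, parallel to Claim~3 in the proof of Theorem~\ref{thm:z2codim2}, is: either all fixed-point components of codimension two tori are rational $\Sph^k$ or $\CP^2$, in which case Theorem~\ref{thm:GW} gives $\chi(F\cap C)$ equal to the Euler characteristic of a rank-one symmetric space of the forced dimension $n$ with $\chi\ge 4$, which is numerically impossible; or some codimension two torus fixes a rational $\CP^3$ through all four points, which the codimension-three hypothesis enlarges to a rational $\HP^3$ carrying a $\gT^3$-action with linear model, and the rigidity forces the $\gT^d$-weights on it to span $\Lt^*\otimes\Q$, contradicting that it is the fixed set of a positive-dimensional subtorus. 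Either way $C$ has at most three vertices, hence exactly three, and $n=16$, $\chi(M)=3$. The delicate point throughout the last step is keeping precise track of the signs and reorderings allowed in Lemma~\ref{sec:proof-lemma-refl} so that the linear dependence (or the degenerate linear model) is genuinely forced.
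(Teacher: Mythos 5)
Your proof is, in substance, the paper's: the discreteness of $F\cap C$ and the identity $d=4$ are obtained exactly as in the paper (which routes both through Claims 2 and 3 of the proof of Theorem~\ref{thm:z2codim2} via the substitution $u_1=\tfrac12(r_1-r_2)$, $u_2=\tfrac12(r_1+r_2)$, $u_3=\tfrac12(r_3-r_4)$, $u_0=\tfrac12(r_3+r_4)$), and your ``alternative route'' for excluding a fourth vertex --- apply Theorem~\ref{thm:GW} and the numerics $\chi=\tfrac n8+1$ unless some codimension-two torus fixes a rational $\CP^k$ with $k\ge 3$, in which case the codimension-three hypothesis yields a rational $\HP^l$ whose Hsiang--Su linear model forces the tangential weights to span $\Lt^*$, contradicting that it is fixed by a positive-dimensional subtorus --- is precisely the argument the paper invokes. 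One caution: the route you present as primary for the last step, a purely linear-algebraic contradiction obtained by chasing Lemma~\ref{sec:proof-lemma-refl} through several triples of vertices, is unlikely to close. Claim 3 in the proof of Theorem~\ref{thm:z2codim2} shows that with four vertices the weight system is linearly \emph{consistent}: the weights from the fourth vertex are forced to span the planes $\ml u_0,u_2\mr$ and $\ml u_1,u_3\mr$, and no contradiction among the $r_i$ alone results. The contradiction genuinely requires the topological input (Theorem~\ref{thm:GW} forcing a $\CP^3$ through all four points, the codimension-three hypothesis enlarging it to an $\HP^3$, and the linear model there), not sign-chasing; so your fallback route should be the actual proof, as it is in the paper.
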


If we put
\begin{align*}
  u_1&=\frac{1}{2}(r_1-r_2)&u_2&=\frac{1}{2}(r_1+r_2)&u_3&=\frac{1}{2}(r_3-r_4)&u_0&=\frac{1}{2}(r_3+r_4)
\end{align*}
in the situation of the previous lemma, then we see that we have achieved the same situation as in Claim 2 of the proof of Theorem~\ref{thm:z2codim2}. Now $d=4$ follows as in Claim 3 of that proof. 
Moreover, Lemma~\ref{lem:GKM-OP2} follows from Theorem~\ref{thm:GW} unless a fixed-point component of a codimension two torus is given by a rational $\CP^k$ with $k\ge 3$. But then a fixed-point component of a codimension three torus has the rational cohomology of $\HP^l$ with $l\ge 3$. The contradiction now arises as in Claim 3 of the proof of Theorem~\ref{thm:z2codim2}.


\begin{thebibliography}{GWZ08}

\bibitem[AFP14]{AlldayFranzPuppe14}
C.~Allday, M.~Franz, and V.~Puppe.
\newblock Equivariant cohomology, syzygies and orbit structure.
\newblock {\em Trans. Amer. Math. Soc.}, 366(12):6567--6589, 2014.

\bibitem[AK14]{AmannKennard14}
M.~Amann and L.~Kennard.
\newblock Topological properties of positively curved manifolds with symmetry.
\newblock {\em Geom. Funct. Anal.}, 24(5):1377--1405, 2014.

\bibitem[All78]{Allday78}
C.~Allday.
\newblock {On the rational homotopy of fixed point sets of torus actions}.
\newblock {\em Topology}, 17:95--100, 1978.

\bibitem[AS68]{AtiyahSinger68-partIII}
M.F. Atiyah and I.M. Singer.
\newblock The index of elliptic operators. {III}.
\newblock {\em Ann. of Math. (2)}, 87:546--604, 1968.

\bibitem[Baz99]{Bazaikin99}
Y.V. Baza\u{\i}kin.
\newblock {A Manifold with Positive Sectional Curvature and Fundamental Group
  $\Z_3\oplus\Z_3$}.
\newblock {\em Sib. Math. J.}, 40:834--836, 1999.

\bibitem[BB76]{Berard-Bergery76}
L.~Berard-Bergery.
\newblock Les vari\'et\'es riemanniennes homog\`enes simplement connexes de
  dimension impaire \`a courbure strictement positive.
\newblock {\em J. Math. Pures Appl. (9)}, 55(1):47--67, 1976.

\bibitem[Ber61]{Berger61}
M.~Berger.
\newblock {Les vari\'et\'es riemanniennes homog\`enes normales simplement
  connexes \`a courbure strictement positive}.
\newblock {\em Ann. Scuola Norm. Sup. Pisa}, 3(15):179--246, 1961.

\bibitem[CS74]{ChangSkjelbred74}
T.~Chang and T.~Skjelbred.
\newblock The topological {S}chur lemma and related results.
\newblock {\em Ann. of Math. (2)}, 100:307--321, 1974.

\bibitem[Dea11]{Dearricott11}
O.~Dearricott.
\newblock {A 7-manifold with positive curvature}.
\newblock {\em Duke Math. J.}, 158(2):307--346, 2011.

\bibitem[Des05]{Dessai05}
A.~Dessai.
\newblock Characteristic numbers of positively curved spin-manifolds with
  symmetry.
\newblock {\em Proc. Amer. Math. Soc.}, 133(12):3657--3661, 2005.

\bibitem[Des07]{Dessai07}
A.~Dessai.
\newblock Obstructions to positive curvature and symmetry.
\newblock {\em Adv. Math.}, 210(2):560--577, 2007.

\bibitem[Ebi68]{Ebin68}
D.G. Ebin.
\newblock On the space of {R}iemannian metrics.
\newblock {\em Bull. Amer. Math. Soc.}, 74:1001--1003, 1968.

\bibitem[FR05]{FangRong05}
F.~Fang and X.~Rong.
\newblock {Homeomorphism classification of positively curved manifolds with
  almost maximal symmetry rank}.
\newblock {\em Math. Ann.}, 332:81--101, 2005.

\bibitem[GKS20]{GoetteKerinShankar20}
S.~Goette, M.~Kerin, and K.~Shankar.
\newblock Highly connected 7-manifolds and non-negative sectional curvature.
\newblock {\em Ann. of Math. (2)}, 191(3):829--892, 2020.

\bibitem[GM74]{GromollMeyer74}
D.~Gromoll and W.~Meyer.
\newblock An exotic sphere with nonnegative sectional curvature.
\newblock {\em Ann. of Math. (2)}, 100:401--406, 1974.

\bibitem[Gro02]{Grove02}
K.~Grove.
\newblock Geometry of, and via, symmetries.
\newblock In {\em Conformal, {R}iemannian and {L}agrangian geometry
  ({K}noxville, {TN}, 2000)}, volume~27 of {\em Univ. Lecture Ser.}, pages
  31--53. Amer. Math. Soc., Providence, RI, 2002.

\bibitem[GS94]{GroveSearle94}
K.~Grove and C.~Searle.
\newblock {Positively curved manifolds with maximal symmetry rank}.
\newblock {\em J. Pure Appl. Algebra}, 91(1):137--142, 1994.

\bibitem[GS00]{GroveShankar00}
K.~Grove and K.~Shankar.
\newblock {Rank two fundamental groups of positively curved manifolds}.
\newblock {\em J. Geom. Anal.}, 10(4):679--682, 2000.

\bibitem[GSZ06]{GroveShankarZiller06}
K.~Grove, K.~Shankar, and W.~Ziller.
\newblock {Symmetries of Eschenburg spaces and the Chern problem}.
\newblock {\em Asian J. Math.}, 10(3):647--662, 2006.

\bibitem[GVZ11]{GroveVerdianiZiller11}
K.~Grove, L.~Verdiani, and W.~Ziller.
\newblock {An exotic $T_1\s^4$ with positive curvature}.
\newblock {\em Geom. Funct. Anal.}, 21(3):499--524, 2011.

\bibitem[GW15]{GoertschesWiemeler15}
O.~Goertsches and M.~Wiemeler.
\newblock Positively curved {GKM}-manifolds.
\newblock {\em Int. Math. Res. Not. IMRN}, (22):12015--12041, 2015.

\bibitem[GWZ08]{GroveWilkingZiller08}
K.~Grove, B.~Wilking, and W.~Ziller.
\newblock Positively curved cohomogeneity one manifolds and 3-sasakian
  geometry.
\newblock {\em J. Differential Geom.}, 78(1):33--111, 2008.

\bibitem[GZ00]{GroveZiller00}
K.~Grove and W.~Ziller.
\newblock Curvature and symmetry of {M}ilnor spheres.
\newblock {\em Ann. of Math. (2)}, 152(1):331--367, 2000.

\bibitem[HK89]{HsiangKleiner89}
W.-Y. Hsiang and B.~Kleiner.
\newblock {On the topology of positively curved 4-manifolds with symmetry}.
\newblock {\em J. Differential Geom.}, 30:615--621, 1989.

\bibitem[Hop32]{Hopf32}
H.~Hopf.
\newblock Differentialgeometrie und topologische gestalt.
\newblock {\em Jahresbericht der Deutschen Mathematiker-Vereinigung},
  41:209--229, 1932.

\bibitem[HS75]{HsiangSu75}
W.-Y. Hsiang and J.C. Su.
\newblock On the geometric weight system of topological actions on cohomology
  quaternionic projective spaces.
\newblock {\em Invent. Math.}, 28:107--127, 1975.

\bibitem[Ken13]{Kennard13}
L.~Kennard.
\newblock {On the Hopf conjecture with symmetry}.
\newblock {\em Geom. Topol.}, 17:563--593, 2013.

\bibitem[MW19]{MurphyWilhelm19}
T.~Murphy and F.~Wilhelm.
\newblock Random manifolds have no totally geodesic submanifolds.
\newblock {\em Michigan Math. J.}, 68(2):323--335, 2019.

\bibitem[Pet98]{Petrunin98}
A.~Petrunin.
\newblock Parallel transportation for {A}lexandrov space with curvature bounded
  below.
\newblock {\em Geom. Funct. Anal.}, 8(1):123--148, 1998.

\bibitem[PS02]{PuettmannSearle02}
T.~P{\"u}ttmann and C.~Searle.
\newblock {The Hopf conjecture for manifolds with low cohomogeneity or high
  symmetry rank}.
\newblock {\em Proc. Amer. Math. Soc.}, 130(1):163--166, 2002.

\bibitem[Ron02]{Rong02}
X.~Rong.
\newblock {Positively curved manifolds with almost maximal symmetry rank}.
\newblock {\em Geom. Dedicata}, 95(1):157--182, 2002.

\bibitem[RS05]{RongSu05}
X.~Rong and X.~Su.
\newblock {The Hopf conjecture for manifolds with abelian group actions}.
\newblock {\em Commun. Contemp. Math.}, 7:121--136, 2005.

\bibitem[Sha98]{Shankar98}
K.~Shankar.
\newblock {On the fundamental groups of positively curved manifolds}.
\newblock {\em J. Differential Geom.}, 49(1):179--182, 1998.

\bibitem[Smi38]{Smith38}
P.A. Smith.
\newblock {Transformations of a finite period}.
\newblock {\em Ann. of Math.}, 39:127--164, 1938.

\bibitem[Su14]{Su14}
Z.~Su.
\newblock {Rational analogs of projective planes}.
\newblock {\em Algebr. Geom. Topol.}, 14:421--438, 2014.

\bibitem[Sug82]{Sugahara82}
K.~Sugahara.
\newblock The isometry group and the diameter of a {R}iemannian manifold with
  positive curvature.
\newblock {\em Math. Japon.}, 27(5):631--634, 1982.

\bibitem[SW08]{SuWang08}
X.~Su and Y.~Wang.
\newblock {The Hopf conjecture for positively curved manifolds with discrete
  abelian group actions}.
\newblock {\em Differential Geom. Appl.}, 26(3):313--322, 2008.

\bibitem[VZ14]{VerdianiZiller14}
L.~Verdiani and W.~Ziller.
\newblock Concavity and rigidity in non-negative curvature.
\newblock {\em J. Differential Geom.}, 97(2):349--375, 2014.

\bibitem[Wal72]{Wallach72}
N.R. Wallach.
\newblock {Compact homogeneous Riemannian manifolds with strictly positive
  curvature}.
\newblock {\em Ann. of Math.}, 96(2):277--295, 1972.

\bibitem[Wei17]{Weisskopf17}
N.~Weisskopf.
\newblock Positive curvature and the elliptic genus.
\newblock {\em New York J. Math.}, 23:193--212, 2017.

\bibitem[Wil03]{Wilking03}
B.~Wilking.
\newblock {Torus actions on manifolds of positive sectional curvature}.
\newblock {\em Acta Math.}, 191(2):259--297, 2003.

\bibitem[Wil06]{Wilking06}
B.~Wilking.
\newblock {Positively curved manifolds with symmetry}.
\newblock {\em Ann. of Math.}, 163:607--668, 2006.

\bibitem[WZ18]{WilkingZiller18}
B.~Wilking and W.~Ziller.
\newblock Revisiting homogeneous spaces with positive curvature.
\newblock {\em J. Reine Angew. Math.}, 738:313--328, 2018.

\end{thebibliography}
\end{document}